\numberwithin{equation}{section}
\newtheorem{thm}{Theorem}[section]
  \theoremstyle{plain}
  \newtheorem{lem}[thm]{Lemma}
  \theoremstyle{plain}
  \newtheorem{prop}[thm]{Proposition}
  \theoremstyle{plain}
  \newtheorem{cor}[thm]{Corollary}
  \theoremstyle{plain}
  \newtheorem{defn}[thm]{Definition}
  \theoremstyle{definition}
\newtheorem{remark}[thm]{Remark}
\newcommand{\R}{\mathbb{{R}}}
\newcommand{\N}{\mathbb{{N}}}
\newcommand{\Ne}{\mathcal{N}}
\newcommand{\Lp}{L^p}
\newcommand{\Lq}{L^q}
\newcommand{\Ho}{H_{0}^{1}(\Omega)}
\newcommand{\Hoo}{H_{0}^{2}(\Omega)}
\newcommand{\Omegabar}{\overline{\Omega}}
\newcommand{\ubar}{\overline u}
\newcommand{\Holder}{\text{H\"{o}lder}}
\newcommand{\Frechet}{\text{Fr\'{e}chet}}
\newcommand{\dOmega}{{\partial\Omega}}
\newcommand{\dB}{{\partial B}}
\title[]{Positivity for fourth-order semilinear problems related to the Kirchhoff-Love functional}
\author[]{Giulio Romani*}
\address[Giulio Romani]{Aix-Marseille Universit\'e, CNRS
	\newline\indent
	Centrale Marseille, I2M, UMR 7373, 39 Rue Frederic Joliot
	\newline\indent
	Curie, 13453 Marseille, France}
\email{giulio.romani@univ-amu.fr, giulio.romani@unimi.it}
\thanks{*This work has been carried out thanks to the support of the A*MIDEX grant (n.ANR-11-IDEX-0001-02) funded by the French Government "Investissements d'Avenir" program.}
\subjclass[2010]{}
\keywords{}
\begin{document}

\begin{abstract}
	We study the ground states of the following generalization of the Kirchhoff-Love functional,
	$$J_\sigma(u)=\int_\Omega\dfrac{(\Delta u)^2}{2} - (1-\sigma)\int_\Omega det(\nabla^2u)-\int_\Omega F(x,u),$$
	where $\Omega$ is a bounded convex domain in $\R^2$ with $C^{1,1}$ boundary and the nonlinearities involved are of sublinear type or superlinear with power growth. These critical points correspond to least-energy weak solutions to a fourth-order semilinear boundary value problem with Steklov boundary conditions depending on $\sigma$. Positivity of ground states is proved with different techniques according to the range of the parameter $\sigma\in\R$ and we also provide a convergence analysis for the ground states with respect to $\sigma$. Further results concerning positive radial solutions are established when the domain is a ball.
\end{abstract}

\maketitle

\section{Introduction}
The energy of a thin hinged plate under the action of a vertical external force of density $f$ can be computed by the Kirchhoff-Love functional
\begin{equation*}\label{I}
	I_\sigma(u)=\int_\Omega\dfrac{(\Delta u)^2}{2} - (1-\sigma)\int_\Omega det(\nabla^2u)-\int_\Omega fu,
\end{equation*}
where the bounded domain $\Omega\subset\R^2$ describes the shape of the plate and $u$ its deflection from the original unloaded position. Since the plate is supposed to be hinged, the natural space in which to consider our problem is $H^2(\Omega)\cap\Ho$. The coefficient $\sigma$, called \textit{Poisson ratio}, depends on the material and measures its transverse expansion (resp. contraction), according to its positive (resp. negative) sign, when subjected to an external compressing force. Due to some thermodynamic considerations in elasticity theory, the physical relevant interval for $\sigma$ is $(-1,\frac{1}{2})$. A detailed derivation of the model can be found in \cite{VK}, while a mathematical analysis concerning the positivity preserving property for $I_\sigma$ has been carried out by Parini and Stylianou in \cite{PS}. Besides a further extension of their results, here we are interested in a direct generalization of the Kirchhoff-Love functional, namely when the density $f$ may depend also on the deflection of the plate itself:
\begin{equation}\label{J}
J_\sigma(u)=\int_\Omega\dfrac{(\Delta u)^2}{2} - (1-\sigma)\int_\Omega det(\nabla^2u)-\int_\Omega F(x,u),
\end{equation}
where $F(x,s)=\int_0^sf(x,t)dt$, and furthermore we let $\sigma\in\R$.  We are mainly interested in a power-type nonlinearity, namely
\begin{equation}\label{FNL}
F(x,u)=\dfrac{g(x)|u|^{p+1}}{p+1},\quad\mbox{where}\;\,g\in L^1(\Omega)\;\,\mbox{and}\;\, g>0\;\, \mbox{in}\,\,\Omega.
\end{equation}
In particular we look for existence and positivity of those critical points which have the lowest energy, referred in the literature as \textit{ground states}.\\
\noindent If the boundary is sufficiently smooth, searching critical points of $J_\sigma$ with the nonlinearity \eqref{FNL} is equivalent to find weak solutions of the following fourth-order semilinear boundary problem
	\begin{equation}\label{PDEpSteklovg}
	\begin{cases}
	\Delta^2u=g(x)|u|^{p-1}u\quad&\mbox{in }\Omega,\\
	u=\Delta u-(1-\sigma)\kappa u_n=0\quad&\mbox{on }\dOmega,
	\end{cases}
	\end{equation}
where $u_n$ stands for the normal derivative of $u$ on $\dOmega$ and $\kappa$ is the signed curvature of the boundary (positive on convex parts). This kind of mixed boundary conditions are usually called \textit{Steklov} from their first appearance in \cite{Stek} 
and they are an intermediate situation between Navier boundary conditions (when $\sigma=1$) and Dirichlet boundary conditions ($u=u_n=0$, seen as the limit case as $\sigma\rightarrow+\infty$).\\
\noindent Although fourth-order (or more generally, higher-order) problems have arisen attention even from the first decade of the 20th century, most of the literature deals with the Navier case, where the maximum principle still holds, or with Dirichlet boundary conditions, where Green's function arguments are available. Conversely, problems like \eqref{PDEpSteklovg} have been intensively studied only in the last decade, focusing on the associated boundary eigenvalue problems (see \cite{FGW2,BucurFG}), the positivity preserving property of the solution operator (see \cite{GS}) and some semilinear problems (for instance, \cite{BG,BGM,BGW}).\\
\noindent This paper is a contribution to the study of semilinear subcritical biharmonic Steklov problems in low dimension. Here, we mainly focus on a nonlinearity of power-type as in \cite{BGW}, where the critical exponent in high dimensions is considered and the domain is a ball. On the other side, although some related subcritical problems have already appeared in \cite{BGM}, we consider slightly different kind of nonlinearity, we let $\sigma$ be not only lying in the physical relevant interval and the techniques involved are different.\\
\noindent Besides the existence of ground states for $J_\sigma$, we mainly investigate their positivity. The question is quite challenging since, like most fourth-order problems, one has to face the lack of a maximum principle. Moreover, we will show that positivity is strongly related to the parameter $\sigma$ and different techniques are needed to cover different regions in which $\sigma$ lies: the superharmonic method, some convergence arguments and the dual cones decomposition.\\
\noindent The main results contained in this paper may be summarized as follows:
\begin{thm}[Existence, Positivity]\label{thm1}
Let $\Omega\subset\R^2$ be a bounded convex domain with $\dOmega$ of class $C^{1,1}$ and let $f(x,s)=g(x)|s|^{p-1}s$, with $p\in(0,1)\cup(1,+\infty)$ and $g\in L^1(\Omega)$, $g>0$ a.e. in $\Omega$. Then there exist $\sigma^*\leq-1$ and $\sigma_1>1$ (depending on $\Omega$ and possibly infinite) such that the functional $J_\sigma$ has no positive critical points if $\sigma\leq\sigma^*$, while it admits (at least) a positive ground state if $\sigma\in(\sigma^*,\sigma_1)$.
\end{thm}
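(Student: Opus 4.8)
The plan is to argue in four parts; the positivity statement across the whole parameter range will be the genuine difficulty.

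\smallskip
\noindent\emph{Functional setting and coercivity.} Write $\mathcal H:=H^2(\Omega)\cap\Ho$ and let
$$a_\sigma(u,v):=\int_\Omega\Delta u\,\Delta v-(1-\sigma)\int_\Omega\bigl(u_{xx}v_{yy}+u_{yy}v_{xx}-2u_{xy}v_{xy}\bigr)$$
be the symmetric bilinear form generated by the quadratic part of $J_\sigma$, so that $J_\sigma(u)=\tfrac12a_\sigma(u,u)-\int_\Omega F(x,u)$. On $\mathcal H$ one has $\int_\Omega\det(\nabla^2u)=\tfrac12\int_{\dOmega}\kappa\,u_n^2$ and, by polarization, $a_\sigma(u,v)=\int_\Omega\Delta u\,\Delta v-(1-\sigma)\int_{\dOmega}\kappa\,u_nv_n$, hence $a_\sigma(u,u)=\sigma\int_\Omega(\Delta u)^2+(1-\sigma)\int_\Omega|\nabla^2u|^2=\int_\Omega(\Delta u)^2-(1-\sigma)\int_{\dOmega}\kappa u_n^2$. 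Since $(\Delta u)^2\le2|\nabla^2u|^2$ pointwise, the first expression shows $a_\sigma$ is positive definite on $\mathcal H$ for every $\sigma>-1$; combined with the second expression and the first Steklov eigenvalue $\delta_1:=\inf\{\int_\Omega(\Delta u)^2:u\in\mathcal H,\ \int_{\dOmega}\kappa u_n^2=1\}$, it is an equivalent scalar product on $\mathcal H$ precisely for $\sigma>\sigma^*:=1-\delta_1$, and $\sigma^*\le-1$. I write $\|\cdot\|_\sigma$ for the induced norm. Finally, in dimension two $\mathcal H\hookrightarrow C^{0,\alpha}(\Omegabar)$ is compact, so $g\in L^1(\Omega)$ makes $u\mapsto\int_\Omega F(x,u)=\tfrac1{p+1}\int_\Omega g|u|^{p+1}$ well defined, of class $C^1$ and sequentially weakly continuous on $\mathcal H$ for every $p>0$.

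\smallskip
\noindent\emph{Existence of ground states.} Let $\sigma>\sigma^*$. If $p\in(0,1)$, then $J_\sigma(u)\ge\tfrac12\|u\|_\sigma^2-C\|g\|_{L^1}\|u\|_\sigma^{p+1}$ is coercive and weakly lower semicontinuous, hence attains its global minimum; evaluating on small multiples of a fixed function gives a negative infimum, so the minimum is achieved at some $u\not\equiv0$, which is the ground state. If $p>1$, then $J_\sigma$ has the mountain-pass geometry and every Palais--Smale sequence is bounded (from $J_\sigma(u_k)-\tfrac1{p+1}\langle J_\sigma'(u_k),u_k\rangle=(\tfrac12-\tfrac1{p+1})\|u_k\|_\sigma^2$) and therefore, by the compact embedding applied to the $L^{p+1}$ term, strongly convergent; the mountain-pass level $c>0$ is attained, equals $\inf_{\mathcal N}J_\sigma$ over the Nehari manifold $\mathcal N=\{u\neq0:\|u\|_\sigma^2=\int_\Omega g|u|^{p+1}\}$, and its minimizer is a ground state. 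By the equivalence recalled in the introduction, ground states are least-energy weak solutions of \eqref{PDEpSteklovg}.

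\smallskip
\noindent\emph{Positivity.} For $\sigma^*<\sigma\le1$ I would use that the solution operator $L_\sigma^{-1}$ of $\Delta^2$ under the Steklov conditions is strongly positivity preserving on this range (this is the superharmonic method, cf.~\cite{GS}), which also forces the polar cone $\mathcal C^*$ of $\mathcal C:=\{v\in\mathcal H:v\ge0\}$ with respect to $a_\sigma$ to lie in $-\mathcal C$. Given a ground state $u$: in the sublinear case its dual-cone decomposition $u=u_1+u_2$ (with $u_1\in\mathcal C$, $u_2\in\mathcal C^*$, $a_\sigma(u_1,u_2)=0$) yields $\tilde u:=u_1-u_2\ge|u|$ a.e., $\|\tilde u\|_\sigma=\|u\|_\sigma$ and $\int_\Omega F(x,\tilde u)\ge\int_\Omega F(x,u)$, so $J_\sigma(\tilde u)\le J_\sigma(u)$ and $\tilde u\ge0$ is again a minimizer; in the superlinear case $v:=L_\sigma^{-1}(g|u|^p)$ satisfies $v\ge|u|$ (apply positivity preservation to $v\mp u$), and projecting $v$ onto $\mathcal N$ together with the H\"older bound $\int_\Omega g|u|^pv\le(\int_\Omega g|u|^{p+1})^{p/(p+1)}(\int_\Omega gv^{p+1})^{1/(p+1)}$ forces $\int_\Omega gv^{p+1}=\int_\Omega g|u|^{p+1}$, whence $v=|u|\in\mathcal H$ is a ground state. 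In either case one obtains a nonnegative ground state, which being a weak solution equals $L_\sigma^{-1}(g\,u^p)$ with $g\,u^p\ge0$ nontrivial, so $u>0$ in $\Omega$ by strong positivity preservation. For $1<\sigma<\sigma_1$, where $L_\sigma^{-1}$ need no longer preserve positivity, I would argue by continuity in $\sigma$: ground states are uniformly bounded in $\mathcal H$ near $\sigma=1$, and along any sequence $\sigma_k\downarrow1$ a subsequence converges strongly in $\mathcal H$, hence in $C^1(\Omegabar)$ by elliptic regularity, to a ground state $u_1$ of $J_1$, which is positive with $\partial_nu_1<0$ on $\dOmega$; thus $u_{\sigma_k}>0$ in $\Omega$ for $k$ large, and taking $\sigma_1$ to be the supremum of the $\sigma>1$ up to which this persists gives $\sigma_1>1$ and a positive ground state for every $\sigma\in(\sigma^*,\sigma_1)$.

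\smallskip
\noindent\emph{Nonexistence for $\sigma\le\sigma^*$, and the main obstacle.} Suppose $\sigma\le\sigma^*$ and $u>0$ were a critical point. Testing the weak form of \eqref{PDEpSteklovg} with the first Steklov eigenfunction $\psi_1\ge0$, which satisfies $\int_\Omega\Delta\psi_1\,\Delta v=\delta_1\int_{\dOmega}\kappa\,\psi_{1,n}v_n$ for all $v\in\mathcal H$, gives
$$0<\int_\Omega g\,u^p\psi_1=a_\sigma(u,\psi_1)=(\delta_1-1+\sigma)\int_{\dOmega}\kappa\,\psi_{1,n}u_n,$$
whose right-hand side is $\le0$ since $\delta_1-1+\sigma\le0$ while $\kappa\ge0$ and $\psi_{1,n},u_n\le0$ on $\dOmega$ — a contradiction. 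The genuinely hard step is the positivity statement over the whole interval $(\sigma^*,\sigma_1)$: on $(\sigma^*,1]$ one must transfer the nontrivial positivity-preserving property of $L_\sigma^{-1}$ to the nonlinear ground state in the absence of a maximum principle, and on $(1,\sigma_1)$, where that property fails, the continuity argument requires uniform a priori bounds and a delicate passage to the limit complicated by the possible non-uniqueness of ground states; the mere $C^{1,1}$-regularity of $\dOmega$ (convex, but with $\kappa$ possibly vanishing on part of it) also calls for care in the Hopf-type steps.
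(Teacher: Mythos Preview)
Your existence argument and the nonexistence test for $\sigma\le\sigma^*$ against the first Steklov eigenfunction coincide with the paper's (Sections~3 and~5). The treatment for $\sigma>1$ via convergence to the Navier ground state is also the paper's first method (Section~6.1--6.3), including the passage to $C^1(\Omegabar)$ through elliptic regularity; note that this step, both in your sketch and in the paper (Propositions~6.6--6.9, Theorem~6.10), actually needs $\partial\Omega\in C^2$, slightly stronger than the $C^{1,1}$ in the statement.

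Where you genuinely diverge is the positivity on $(\sigma^*,1]$. The paper does \emph{not} invoke the linear positivity-preserving property of $L_\sigma^{-1}$ here. It uses the \emph{superharmonic method} (Lemma~4.8, Propositions~4.9--4.10): from a ground state $u$ one solves the second-order problem $-\Delta\tilde u=|\Delta u|$, $\tilde u|_{\partial\Omega}=0$, getting $\tilde u\ge|u|$ and $\tilde u_n^2\ge u_n^2$; the boundary identity $\int_\Omega\det(\nabla^2u)=\tfrac12\int_{\partial\Omega}\kappa u_n^2$ together with $\kappa\ge0$ then gives $\|\tilde u\|_{H_\sigma}\le\|u\|_{H_\sigma}$ for $\sigma\le1$, and a Nehari comparison forces $\tilde u=u$, so $u$ is strictly superharmonic. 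Your route---importing the fourth-order PPP from \cite{GS} to get $\mathcal C^*\subset-\mathcal C$ (sublinear) or to squeeze $v:=L_\sigma^{-1}(g|u|^p)$ down to $|u|$ via H\"older and the Nehari level (superlinear)---is correct and rather elegant, but it takes Gazzola--Sweers as a black box, and their PPP is formulated for $C^2$ boundaries. The paper's second-order comparison is more elementary and self-contained, works under $C^{1,1}$ alone, and yields the stronger conclusion that the ground state is strictly superharmonic. Conversely, your approach is more uniform in spirit: the dual-cone machinery is exactly what the paper eventually deploys for $\sigma>1$ (Section~6.4), so your argument would handle $(\sigma^*,\sigma_1)$ by a single mechanism, at the cost of requiring $C^2$ throughout.
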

\begin{thm}[Convergence]\label{thm2}
Under the previous assumptions for $\Omega$ and $f$, let $(u_k)_{k\in\N}$ be a sequence of ground states for the respective sequence of functionals $(J_{\sigma_k})_{k\in\N}$. Up to a subsequence,
\begin{enumerate}[label=(\roman*)]
	\item if $\sigma_k\searrow\sigma^*$, then $u_k\rightarrow0$ in $H^2(\Omega)$ in case $p>1$, while $u_k\rightarrow+\infty$ in $L^\infty(\Omega)$ if $p\in(0,1)$;
	\item if $\sigma_k\rightarrow1$, then $u_k\rightarrow\ubar$ in $W^{2,q}(\Omega)$ for every $q>2$, where $\ubar$ is a ground state for the Navier problem;
	\item if $\sigma_k\rightarrow+\infty$, then $u_k\rightarrow U$ in $H^2(\Omega)$, where $U$ is a ground state for the Dirichlet problem.
\end{enumerate}
\end{thm}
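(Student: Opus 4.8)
\emph{Reduction to a generalized Rayleigh quotient.} The plan is to reduce the whole statement to the behaviour of one quotient. Writing $a_\sigma(u,v)=\int_\Omega\Delta u\,\Delta v-(1-\sigma)\int_{\dOmega}\kappa\,u_n v_n$ for the bilinear form of the quadratic part of $J_\sigma$ (via $2\int_\Omega\det(\nabla^2u)=\int_{\dOmega}\kappa\,u_n^2$ on $H^2(\Omega)\cap\Ho$), recall that $a_\sigma$ is a coercive symmetric scalar product on $H^2(\Omega)\cap\Ho$ precisely for $\sigma>\sigma^*$, with coercivity constant degenerating as $\sigma\searrow\sigma^*$. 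Since every nonzero critical point of $J_\sigma$ satisfies $a_\sigma(u,u)=\int_\Omega g|u|^{p+1}$, one has $J_\sigma(u)=\tfrac{p-1}{2(p+1)}a_\sigma(u,u)$, and optimizing over the Nehari rescaling gives
\[
c_\sigma=\tfrac{p-1}{2(p+1)}\,S_\sigma^{\frac{p+1}{p-1}},\qquad
S_\sigma:=\inf\Bigl\{a_\sigma(u,u)\ :\ u\in H^2(\Omega)\cap\Ho,\ \textstyle\int_\Omega g|u|^{p+1}=1\Bigr\}
\]
for the ground state energy. By convexity $\kappa\ge0$, so $\sigma\mapsto S_\sigma$ is nondecreasing, and $|a_\sigma(u,u)-a_{\sigma'}(u,u)|\le C|\sigma-\sigma'|\int_\Omega(\Delta u)^2$ makes it continuous on $(\sigma^*,+\infty)$. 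I would first pin down the three limits: $S_\sigma\to0$ as $\sigma\searrow\sigma^*$ (the degenerate form $a_{\sigma^*}$ vanishes on its extremal direction, which is admissible since $g>0$ a.e.); $S_\sigma\to S_{\mathrm{Nav}}$, the Navier Sobolev constant, as $\sigma\to1$; and $S_\sigma\to S_{\mathrm{Dir}}$, the Dirichlet Sobolev constant, as $\sigma\to+\infty$ ($S_\sigma\le S_{\mathrm{Dir}}$ by testing with $\Hoo$-functions, whose curvature term vanishes, and $S_{\mathrm{Dir}}\le\liminf S_\sigma$ by the compactness below, since boundedness of $a_\sigma(u,u)$ forces $u_n\to0$ on $\dOmega$). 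Hence $c_{\sigma_k}$ converges to the ground state energy $c_{\mathrm{lim}}$ of the limiting problem in each regime.

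\emph{Common compactness scheme.} In regimes (ii), (iii) and in (i) with $p>1$, $c_{\sigma_k}$ is bounded, so $a_{\sigma_k}(u_k,u_k)=\tfrac{2(p+1)}{p-1}c_{\sigma_k}$ is bounded, and since $\sigma_k$ stays away from $\sigma^*$ the coercivity estimate makes $(u_k)$ bounded in $H^2(\Omega)$. Up to a subsequence $u_k\rightharpoonup u_\infty$ in $H^2(\Omega)$, and by the two-dimensional compact embeddings $H^2(\Omega)\hookrightarrow\hookrightarrow W^{1,q}(\Omega)\cap C(\Omegabar)$ and $H^{1/2}(\dOmega)\hookrightarrow\hookrightarrow L^2(\dOmega)$ one gets $u_k\to u_\infty$ in $L^\infty(\Omega)$ and $(u_k)_n\to(u_\infty)_n$ in $L^2(\dOmega)$; with $g\in L^1(\Omega)$ this allows passing to the limit both in the weak formulation $a_{\sigma_k}(u_k,\phi)=\int_\Omega g|u_k|^{p-1}u_k\phi$ and in $\int_\Omega g|u_k|^{p+1}$. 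When $\sigma_k\to1$ the curvature term in $a_{\sigma_k}(u_k,\phi)$ is $O(|1-\sigma_k|)$ and drops, so $u_\infty=:\ubar$ solves the Navier problem; when $\sigma_k\to+\infty$, testing only against $\phi\in\Hoo$ removes the curvature term, while boundedness of $(\sigma_k-1)\int_{\dOmega}\kappa(u_k)_n^2\le a_{\sigma_k}(u_k,u_k)$ forces $(u_\infty)_n\equiv0$ on $\dOmega$, so $u_\infty=:U\in\Hoo$ solves the Dirichlet problem. Nontriviality follows since $\int_\Omega g|u_k|^{p+1}=\tfrac{2(p+1)}{p-1}c_{\sigma_k}\to\tfrac{2(p+1)}{p-1}c_{\mathrm{lim}}>0$; then $u_\infty$ lies on the limiting Nehari set, so $J_{\mathrm{lim}}(u_\infty)\ge c_{\mathrm{lim}}$, while weak lower semicontinuity of $\|\Delta\,\cdot\,\|_{L^2}^2$ and the identity $\int_\Omega(\Delta u_\infty)^2=\int_\Omega g|u_\infty|^{p+1}=\lim_k\int_\Omega g|u_k|^{p+1}$ give $J_{\mathrm{lim}}(u_\infty)=c_{\mathrm{lim}}$ and $\int_\Omega(\Delta u_k)^2\to\int_\Omega(\Delta u_\infty)^2$. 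As $\|\Delta\,\cdot\,\|_{L^2}$ is an equivalent norm on $H^2(\Omega)\cap\Ho$, this upgrades to strong $H^2(\Omega)$-convergence, proving (iii) and the $H^2$ part of (ii).

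\emph{The endpoint $\sigma^*$.} For $p\in(0,1)$, $J_\sigma$ is coercive and weakly lower semicontinuous, the ground state is a global minimizer, and $c_{\sigma_k}=-\tfrac{1-p}{2(p+1)}\int_\Omega g|u_k|^{p+1}$; since $S_{\sigma_k}\to0^+$ with $\tfrac{p+1}{p-1}<0$, $c_{\sigma_k}\to-\infty$, hence $\int_\Omega g|u_k|^{p+1}\to+\infty$, hence $\|u_k\|_{L^\infty(\Omega)}\to\infty$ via $\int_\Omega g|u_k|^{p+1}\le\|g\|_{L^1}\|u_k\|_{L^\infty}^{p+1}$. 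For $p>1$ one has $c_{\sigma_k}\to0$, so $a_{\sigma_k}(u_k,u_k)=\int_\Omega g|u_k|^{p+1}\to0$, but the coercivity constant degenerates and does not by itself give $u_k\to0$; I would instead rescale $w_k=u_k/\|\Delta u_k\|_{L^2}$ and argue by contradiction: if $\|\Delta u_k\|_{L^2}\not\to0$, then from $\int_{\dOmega}\kappa(w_k)_n^2$ bounded and $a_{\sigma_k}(w_k,w_k)\to0$ the $w_k$ converge strongly in $H^2(\Omega)$ to an extremal $w\neq0$ of $a_{\sigma^*}$, whereas $\int_\Omega g|u_k|^{p+1}=\|\Delta u_k\|_{L^2}^{p+1}\int_\Omega g|w_k|^{p+1}$ with $\int_\Omega g|w_k|^{p+1}\to\int_\Omega g|w|^{p+1}>0$ contradicts $a_{\sigma_k}(u_k,u_k)\to0$ unless $\|\Delta u_k\|_{L^2}\to0$ (a similar rescaling first rules out $\|u_k\|_{H^2}\to\infty$).

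\emph{The $W^{2,q}$ improvement in (ii), and the main obstacle.} Given $u_k\to\ubar$ strongly in $H^2(\Omega)\hookrightarrow C(\Omegabar)$, I would write $u_k-\ubar$ as the solution of the biharmonic equation with Navier boundary conditions $u_k-\ubar=0$ and $\Delta(u_k-\ubar)=(1-\sigma_k)\kappa(u_k)_n$ on $\dOmega$ and right-hand side $h_k:=g\bigl(|u_k|^{p-1}u_k-|\ubar|^{p-1}\ubar\bigr)$. Local Lipschitz continuity of $s\mapsto|s|^{p-1}s$ on bounded sets gives $\|h_k\|_{L^1(\Omega)}\to0$, while $(u_k)_n$ bounded in $H^{1/2}(\dOmega)\hookrightarrow L^q(\dOmega)$ for every $q<\infty$ and $\sigma_k\to1$ give $(1-\sigma_k)\kappa(u_k)_n\to0$ in $L^q(\dOmega)$ for every $q<\infty$. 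Splitting $u_k-\ubar$ into a Navier-biharmonic part with datum $h_k$ (handled by $(-\Delta)^{-1}:L^1(\Omega)\to L^s(\Omega)$ for every $s<\infty$, composed with Calder\'on--Zygmund estimates) and a harmonic-correction part with boundary data $(1-\sigma_k)\kappa(u_k)_n$ then yields $u_k\to\ubar$ in $W^{2,q}(\Omega)$ for every $q>2$. I expect this last bootstrap to be the principal difficulty, since it forces one to reconcile the mere $L^1$-integrability of $g$ (and the only $L^\infty$-regularity of $\kappa$ on a $C^{1,1}$ boundary) with sharp $L^q$-type elliptic estimates; the other genuine obstacle, in (i) and (iii), is the degeneration of the coercivity of $a_\sigma$ at $\sigma^*$ and at $+\infty$, which changes the nature of the limiting object (blow-up, resp.\ the clamped space $\Hoo$) and is exactly what forces the contradiction and compactness arguments above in place of a direct passage to the limit.
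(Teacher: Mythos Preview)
Your overall architecture is sound and matches the paper's in spirit: reduce to the behaviour of the weighted Sobolev-type constant $S_\sigma$, extract weak $H^2$-limits via compactness, identify the limiting equation, and upgrade weak to strong convergence by showing norm-convergence of $\|\Delta u_k\|_2$. Parts (ii) (the $H^2$ step) and (iii) are essentially carried out as in the paper (Theorems~\ref{convergence} and~\ref{towardsDirichlet}). Two points, however, deserve comment.

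\smallskip
\textbf{Part (i), $p>1$: a genuinely different argument.} The paper does \emph{not} use your rescaling/contradiction device. After obtaining $\int_\Omega g|u_k|^{p+1}\to0$ exactly as you do, it decomposes $u_k=\alpha_k\Phi_1+\psi_k$ in the orthogonal splitting $H^2(\Omega)\cap\Ho=\mathrm{span}(\Phi_1)\oplus\mathrm{span}(\Phi_1)^\perp$ relative to $(\cdot,\cdot)_{H_{\sigma_k}}$, and then uses the \emph{second} Steklov eigenvalue $\tilde\delta_2(\Omega)>\tilde\delta_1(\Omega)$ to get a uniform coercivity bound $(\psi_k,\psi_k)_{H_{\sigma_k}}\ge c\|\Delta\psi_k\|_2^2$ with $c>0$ independent of $k$; this forces $\|\Delta\psi_k\|_2\to0$ directly, and then $\alpha_k\to0$ follows. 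Your route avoids the spectral gap and is more elementary; note however that the ``strong $H^2$ convergence of $w_k$'' you claim is neither true in general nor needed --- what you actually use is only $w_k\to w$ in $L^\infty$ (compact embedding) and $w\neq0$ (from $\int_{\partial\Omega}\kappa w_n^2=\tilde\delta_1(\Omega)^{-1}>0$), which suffice for the contradiction.

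\smallskip
\textbf{Part (ii), the $W^{2,q}$ upgrade: a gap.} Here your approach diverges from the paper and is incomplete. The paper does not split into a Navier-biharmonic piece and a harmonic correction; it applies the Agmon--Douglis--Nirenberg estimate (Lemma~\ref{GGSestimate}, requiring $\partial\Omega\in C^2$) directly to $u_k-\bar u$, obtaining
\[
\|u_k-\bar u\|_{W^{2,q}}\le C\bigl(\|u_k-\bar u\|_q+\|\Delta^2(u_k-\bar u)\|_{W^{-2,q}}+|1-\sigma_k|\,\|\kappa(u_k)_n\|_{W^{-1/q,q}(\partial\Omega)}\bigr),
\]
and the delicate term is the last one: the paper controls $\|\kappa(u_k)_n\|_{W^{-1/q,q}(\partial\Omega)}$ via $\|(u_k)_n\|_{L^q(\partial\Omega)}$ and then bounds the latter by $\|u_k\|_{H^2(\Omega)}$ through a fractional interpolation (Lemma~\ref{interp}) showing $H^2(\Omega)\hookrightarrow W^{1+3/2q,q}(\Omega)$, whose normal trace lies in $L^q(\partial\Omega)$. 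Your proposed alternative relies on estimating the harmonic extension of $L^q(\partial\Omega)$ data in $L^q(\Omega)$ and then bootstrapping by Calder\'on--Zygmund; this is plausible on smooth domains but is not standard on merely $C^{1,1}$ boundaries, and in any case you have not justified the key step $\|\Delta w_k\|_{L^q(\Omega)}\le C\|(1-\sigma_k)\kappa(u_k)_n\|_{L^q(\partial\Omega)}$. Since the paper itself needs $\partial\Omega\in C^2$ for this part, you should either adopt its ADN route or supply the missing harmonic-extension estimate explicitly.

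\smallskip
Finally, for (iii) note that ``$\int_{\partial\Omega}\kappa(u_\infty)_n^2=0$ forces $(u_\infty)_n\equiv0$'' requires $\kappa>0$ a.e.\ on $\partial\Omega$; this additional hypothesis is made explicitly in the paper (Theorem~\ref{towardsDirichlet}) and should be stated in your argument as well.
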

\noindent Notice that Theorem \ref{thm1} might also be seen as an extension to the semilinear setting of the main positivity results established by Gazzola and Sweers in \cite[Theorem 4.1]{GS} for the linear case.\\
\noindent Finally, we want to stress our attempt to impose only the strictly necessary assumptions on the domain in order to obtain our results and to have a well-defined second boundary condition in \eqref{PDEpSteklovg}.
\vskip0.2truecm
 The paper is organized as follows: after a few preliminary results (Section 2), we establish existence (Section 3) and positivity (Section 4) of ground states of $J_\sigma$ when $\sigma$ belongs to the range $(-1,1]$ (which contains the relevant physical interval) both for $f$ sublinear and superlinear; the latter is due to an argument based on the Nehari manifold. Except for the last section, the rest of the paper is devoted to complete Theorem \ref{thm1}
in the cases $\sigma\leq-1$ (Section 5) and $\sigma>1$ (Section 6). While the first situation is quite easy to handle, the positivity in the second is more delicate and requires different tools. In this context and also for this purpose, Theorem \ref{thm2} will be established. Finally, Section 7 provides a further investigation in case $\Omega$ is the unit ball, concerning generic positive radially symmetric solutions.
\vskip0.4truecm
\textbf{Acknowledgements}: The author wishes to express his gratitude to Enea Parini for his active interest in the publication of this paper and to Fran\c{c}ois Hamel for many valuable and stimulating conversations. The author wants also to thank the anonymous referee for the careful reading of the manuscript and helpful comments and suggestions.

\section{Notation and preliminary results}

Throughout the paper, $\nabla^2u$ stands for the Hessian matrix of $u$ and the derivatives are denoted by subscripts ($u_x$, $u_{xy}$, ...). Moreover, $n$ and $\tau$ will be the exterior normal and the tangent vector, and $u_n$ and $u_\tau$ the normal and the tangential derivative of $u$. We say that $u$ is \textit{superharmonic} in $\Omega$ when $-\Delta u\geq 0$ in $\Omega$ and $u=0$ on $\dOmega$; $u$ is \textit{strictly superharmonic} when we have in addition that $-\Delta u\not\equiv 0$.\\
\noindent Let $N\geq 2$. $\Omega\subset\R^N$ is a \textit{domain} when it is open and connected; moreover, $\Omega$ has \textit{a boundary of class} $C^{k,1}$ if $\dOmega$ can be described in local coordinates by a $C^{k}$ function with Lipschitz continuous $k$-th derivatives. Finally, $\Omega$ satisfies a \textit{uniform external ball condition} if there exists $R>0$ such that $\forall x\in\dOmega$ there exists a ball $B_R$ of radius $R$ such that $x\in\dB_R$ and $B_R\subset\R^N\setminus\Omegabar$.\\
\noindent The topological dual of a normed space $X$ is denoted by $X^*$; for $q\in[1,+\infty]$, $\|\cdot\|_q$ stands for the $\Lq(\Omega)$ norm and
$$\||\nabla^k\cdot|\|_q:=\biggl(\sum_{|\alpha|=k}\|D^\alpha\cdot\|_q^q\biggr)^{\frac{1}{q}},$$
where $\alpha$ is a multi-index.\\
Let us also recall that, if $\Omega\subset\R^2$ is a bounded domain with Lipschitz boundary, by Sobolev embeddings (see \cite[Theorem 4.12, Part II]{A}), $H^2(\Omega)\hookrightarrow C^{0,\lambda}(\Omegabar)$ for any $\lambda\in(0,1)$, thus we have continuous embedding in $\Lq(\Omega)$ for every $q\in[1,\infty]$.\\
Finally, we present some very useful facts about equivalence of norms in $H^2(\Omega)\cap\Ho$. The quoted results have been already obtained by Nazarov, Stylianou and Sweers in \cite{NStyS}; we will include the proof of the second equivalence in order to have a self-contained exposition.

\begin{lem}\label{eqnorm}
	Let $\Omega\subset\R^N$ bounded with a Lipschitz boundary and $\sigma\in(-1,1]$.
	\begin{enumerate}[label=(\roman*)]
		\item $\||\nabla^2\cdot|\|_2$ and $\|\cdot\|_{H^2(\Omega)}$ are equivalent norms on $H^2(\Omega)\cap\Ho$.
		\item If $\sigma=1$, assume additionally that $\Omega$ satisfying a uniform external ball condition. Then
		\begin{equation}\label{usualequivnorm}
		\|u\|_{H_\sigma(\Omega)}:=\bigg(\int_\Omega(\Delta u)^2 - 2(1-\sigma)\int_\Omega det(\nabla^2u)\bigg)^{\frac{1}{2}}
		\end{equation}
		defines a norm on $H^2(\Omega)\cap\Ho$ equivalent to the standard norm.
	\end{enumerate}
\end{lem}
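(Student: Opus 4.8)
The first equivalence in (i) is a Poincaré‑type statement proved in \cite{NStyS}; I recall the short argument for completeness. One inequality is trivial since $\||\nabla^2 u|\|_2\le\|u\|_{H^2(\Omega)}$. For the reverse, argue by contradiction: if there is $(u_k)\subset H^2(\Omega)\cap\Ho$ with $\|u_k\|_{H^2(\Omega)}=1$ and $\||\nabla^2 u_k|\|_2\to0$, then, $\Omega$ being bounded with Lipschitz boundary, the embedding $H^2(\Omega)\hookrightarrow H^1(\Omega)$ is compact, so up to a subsequence $u_k\rightharpoonup u$ in $H^2(\Omega)$ and $u_k\to u$ in $H^1(\Omega)$; weak lower semicontinuity forces $\nabla^2u=0$, hence $\Delta u=0$, and Green's identity (valid on a Lipschitz domain for $u\in H^2(\Omega)\cap\Ho$) gives $\||\nabla u|\|_2^2=-\int_\Omega u\Delta u=0$, so $u$ is constant and, lying in $\Ho$, $u\equiv0$. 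Then $\|u_k\|_{H^1(\Omega)}\to0$ together with $\||\nabla^2 u_k|\|_2\to0$ yields $\|u_k\|_{H^2(\Omega)}\to0$, a contradiction.

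For (ii), the crux is the pointwise algebraic identity (here $N=2$, $\det(\nabla^2u)=u_{xx}u_{yy}-u_{xy}^2$)
$$(\Delta u)^2=|\nabla^2u|^2+2\det(\nabla^2u)\qquad\text{a.e. in }\Omega,$$
obtained by expanding $(u_{xx}+u_{yy})^2$; it holds for every $u\in H^2(\Omega)$ and needs no integration by parts. Integrating and inserting it into \eqref{usualequivnorm} gives, for all $u\in H^2(\Omega)\cap\Ho$,
$$\|u\|_{H_\sigma(\Omega)}^2=\sigma\int_\Omega(\Delta u)^2+(1-\sigma)\||\nabla^2u|\|_2^2 .$$
Combined with the elementary bound $(\Delta u)^2\le2|\nabla^2u|^2$ (Cauchy--Schwarz), this immediately yields two–sided estimates: for $\sigma\in(-1,0]$ one gets $(1+\sigma)\||\nabla^2u|\|_2^2\le\|u\|_{H_\sigma(\Omega)}^2\le(1-\sigma)\||\nabla^2u|\|_2^2$, and for $\sigma\in(0,1)$ one gets $(1-\sigma)\||\nabla^2u|\|_2^2\le\|u\|_{H_\sigma(\Omega)}^2\le(1+\sigma)\||\nabla^2u|\|_2^2$. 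Since $1\pm\sigma>0$ on $(-1,1)$, part (i) then gives equivalence with $\|\cdot\|_{H^2(\Omega)}$. Finally, $\|\cdot\|_{H_\sigma(\Omega)}^2$ is the quadratic form obtained by polarizing the symmetric expression $\int_\Omega(\Delta u)^2-2(1-\sigma)\int_\Omega\det(\nabla^2u)$, and the lower bounds show this form is positive definite, so $\|\cdot\|_{H_\sigma(\Omega)}$ is genuinely a norm.

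The delicate point — indeed the only one not handled by the soft argument above — is the endpoint $\sigma=1$, where the coefficient $(1-\sigma)$ vanishes and $\|u\|_{H_1(\Omega)}^2=\int_\Omega(\Delta u)^2$. This is precisely where the extra uniform external ball condition enters: under that hypothesis $\|\Delta\cdot\|_2$ is a well‑known equivalent norm on $H^2(\Omega)\cap\Ho$ (see \cite{NStyS}), which closes the proof. I expect this to be the main obstacle; the lower threshold $\sigma>-1$ is on the other hand sharp, since at $\sigma=-1$ the form reduces to $\int_\Omega\bigl[(u_{xx}-u_{yy})^2+4u_{xy}^2\bigr]$, which is merely nonnegative and vanishes on nonzero functions of $\Ho$ of the type $u(x)=\tfrac c2|x|^2+\text{affine}$ (e.g. on a disc).
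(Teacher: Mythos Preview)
Your proof is correct and reaches the same two-sided bounds $(1-|\sigma|)\||\nabla^2u|\|_2^2\le\|u\|_{H_\sigma(\Omega)}^2\le(1+|\sigma|)\||\nabla^2u|\|_2^2$ as the paper, which obtains them by directly expanding and applying Young's inequality to the cross term $2\sigma u_{xx}u_{yy}$; your rewriting via the identity $(\Delta u)^2=|\nabla^2u|^2+2\det(\nabla^2u)$, giving $\|u\|_{H_\sigma}^2=\sigma\|\Delta u\|_2^2+(1-\sigma)\||\nabla^2u|\|_2^2$, is a slightly cleaner packaging of the same algebra. You additionally supply a compactness proof of (i), which the paper only cites from \cite{NStyS}, and a correct sharpness remark at $\sigma=-1$; for the endpoint $\sigma=1$ the paper invokes \cite{Adolfsson} rather than \cite{NStyS} for the equivalence of $\|\Delta\cdot\|_2$ on $H^2(\Omega)\cap\Ho$ under the exterior ball condition.
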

\begin{proof}
	We prove here only (ii) and we refer to \cite[Corollary 5.4]{NStyS} for a proof of (i).	Firstly
	\begin{equation*}
	\begin{split}
	\|u\|_{H_\sigma(\Omega)}^2&=\int_\Omega u_{xx}^2+u_{yy}^2+2u_{xy}^2+2\sigma(u_{xx}u_{yy}-u_{xy}^2)\\
	&\leq\||\nabla^2u|\|_2^2+2|\sigma|\bigg(\dfrac{u_{xx}^2+u_{yy}^2}{2}+u_{xy}^2\bigg)=(1+|\sigma|)\||\nabla^2u|\|_2^2.
	\end{split}
	\end{equation*}
	Moreover, if $\sigma\in(-1,1)$, one has
	\begin{equation}\label{ineqnorm}
	\begin{split}
	\|u\|_{H_\sigma(\Omega)}^2&=\int_\Omega u_{xx}^2+u_{yy}^2+2(1-\sigma)u_{xy}^2+2\sigma u_{xx}u_{yy}\\
	&\geq\int_\Omega u_{xx}^2+u_{yy}^2+2(1-\sigma)u_{xy}^2-|\sigma|(u_{xx}^2+u_{yy}^2)\geq(1-|\sigma|)\||\nabla^2u|\|_2^2.
	\end{split}
	\end{equation}
	The proof is completed applying (i) and noticing that the map
	\begin{equation*}
	(u,v)_{H_\sigma}\mapsto\int_\Omega\Delta u\Delta v -(1-\sigma)\int_\Omega u_{xx}v_{yy}+u_{yy}v_{xx}-2u_{xy}v_{xy}
	\end{equation*}
	defines a scalar product on $H^2(\Omega)\cap\Ho$ for every $\sigma\in(-1,1)$ by the inequality \eqref{ineqnorm}. In the special case $\sigma=1$, one has $\|u\|_{H_1(\Omega)}=\|\Delta u\|_2$, which is an equivalent norm on $H^2(\Omega)\cap\Ho$ provided the external ball condition is satisfied (see \cite{Adolfsson}).
\end{proof}
\noindent In the following, $C_0=C_0(\Omega)$ and $C_A=C_A(\Omega)$ indicate the smallest positive constants such that 
\begin{equation}\label{C_0}
\|u\|_{H^2(\Omega)}^2:=\|u\|_2^2+\||\nabla u|\|_2^2+\||\nabla^2u|\|_2^2\leq C_0\||\nabla^2u|\|_2^2
\end{equation}
and 
\begin{equation}\label{C_A}
\|u\|_{H^2(\Omega)}^2\leq C_A\|\Delta u\|_2^2
\end{equation}
for every $u\in H^2(\Omega)\cap\Ho$.

\section{Existence of ground states}

In this section we investigate the existence of critical points of the generalized Kirchhoff-Love functional $J_\sigma:H^2(\Omega)\cap\Ho\rightarrow\R$ defined in \eqref{J}
in the physical relevant interval $\sigma\in(-1,1]$. Hereafter, we assume $\Omega$ to be a bounded domain in $\R^2$. Concerning the nonlinearity, the functional $J_\sigma$ is well-defined once we impose $F(\cdot, s)\in L^1(\Omega)$ and $F(x,\cdot)\in C^1(\R)$ (and thus there exists $f(x,\cdot)$ continuous such that $F(x,s)=\int_0^sf(x,t)dt$) and a power-type growth control on $F$, namely the existence of $a,b\in L^1(\Omega)$ such that $|F(x,s)|\leq a(x)+b(x)|s|^q$ for some $q>0$. With these assumptions on $F$, it is a standard fact to prove that $J_\sigma$ is a $C^1$ functional with $\Frechet$ derivative
$$J_\sigma'(u)[v]=\int_\Omega\Delta u\Delta v-(1-\sigma)\int_\Omega (u_{xx}v_{yy}+u_{yy}v_{xx}-2u_{xy}v_{xy})-\int_\Omega f(x,u)v.$$

\noindent Notice that, if $\Omega$ satisfies the assumptions of Lemma \ref{eqnorm}, we can rewrite the functional as
\begin{equation*}\label{Jequiv}
J_\sigma(u)=\dfrac{1}{2}\|u\|_{H_\sigma(\Omega)}^2-\int_\Omega F(x,u).
\end{equation*}

\noindent Our aim is to investigate the \textit{ground states} of the functional $J_\sigma$, i.e. the critical points on which the functional assumes the lowest value. In fact, besides the interest from a physical point of view, we are able to characterize them variationally and thus to apply a larger number of analytical tools.\\
\noindent Since the geometry of the functional plays an important role, from now on we have to distinguish between the sublinear case, that is, when the density $f$ has at most a slow linear growth in the real variable (as it will be specified in the following), and the superlinear case, the opposite one. In fact, we will see that in the first case $J_\sigma$ behaves similarly to the linear Kirchhoff-Love functional studied in \cite{PS} since it is coercive and ground states are global minima, while, in the second case, $J_\sigma$ has a mountain pass geometry and the ground states are saddle points. Moreover, although in the sequel we will be mainly interested in the power-type nonlinearity as in \eqref{FNL}, in the sublinear case we can easily generalize our analysis to a larger class of nonlinearities, as specified in Proposition \ref{coercKLg}.\\
\noindent We exclude from our analysis the case of general linear growths for the nonlinearity, for instance $f(x,u)=\lambda g(x)u$, since \eqref{PDEpSteklovg} becomes an eigenvalue problem and can be investigated with standard techniques (see also \cite[Theorem 4]{BGM}).

\subsection{Sublinear case}

\begin{prop}\label{coercKLg}
	With the assumptions for $\sigma$ and $\Omega$ as in Lemma \ref{eqnorm}, let $p\in(0,2)$ and suppose
	\begin{equation}\tag{H}\label{growth}
	|F(x,s)|\leq d(x)+c(x)|s|^p+\dfrac{1}{2}(1-|\sigma|)C_0^{-1}s^2
	\end{equation}
	where $c,d\in L^1(\Omega)$. Then the functional $J_\sigma$ is weakly lower semi-continuous and coercive, hence there exists a global minimizer of $J_\sigma$ in $H^2(\Omega)\cap\Ho$.
\end{prop}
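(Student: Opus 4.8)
The plan is to run the direct method of the calculus of variations in the reflexive Hilbert space $X:=H^2(\Omega)\cap\Ho$. By Lemma~\ref{eqnorm} the hypotheses on $\sigma$ and $\Omega$ guarantee that $\|\cdot\|_{H_\sigma(\Omega)}$ is a norm equivalent to the standard one, so I would work with the representation $J_\sigma(u)=\tfrac12\|u\|_{H_\sigma(\Omega)}^2-\Phi(u)$, where $\Phi(u):=\int_\Omega F(x,u)$ is well defined thanks to \eqref{growth} and the embedding $H^2(\Omega)\hookrightarrow L^\infty(\Omega)$. Two facts then suffice: coercivity and weak lower semicontinuity of $J_\sigma$.

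For coercivity I would first bound the quadratic part from below, splitting the two subcases of Lemma~\ref{eqnorm}: for $\sigma\in(-1,1)$ inequality \eqref{ineqnorm} gives $\|u\|_{H_\sigma(\Omega)}^2\ge(1-|\sigma|)\||\nabla^2u|\|_2^2$, while for $\sigma=1$ one has $\|u\|_{H_1(\Omega)}^2=\|\Delta u\|_2^2$, which by \eqref{C_A} controls $\|u\|_{H^2(\Omega)}^2$ (this is the only place the external ball condition enters). Integrating \eqref{growth} yields $\Phi(u)\le\|d\|_1+\int_\Omega c\,|u|^p+\tfrac12(1-|\sigma|)C_0^{-1}\|u\|_2^2$, and the point of the precise coefficient in \eqref{growth} is that, via \eqref{C_0}, this last term is absorbed into $\tfrac12\|u\|_{H_\sigma(\Omega)}^2$ leaving a strictly positive multiple of $\||\nabla^2u|\|_2^2$. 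The residual term $\int_\Omega c\,|u|^p$ is of lower order: since $p<2$, Young's inequality together with $H^2(\Omega)\hookrightarrow L^\infty(\Omega)$ gives $\int_\Omega c\,|u|^p\le\varepsilon\|u\|_{H^2(\Omega)}^2+C_\varepsilon$ for arbitrarily small $\varepsilon>0$, so after choosing $\varepsilon$ small one is left with $J_\sigma(u)\ge\delta\|u\|_{H^2(\Omega)}^2-M$ for some $\delta>0$; hence $J_\sigma(u)\to+\infty$ as $\|u\|_{H^2(\Omega)}\to\infty$.

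For weak lower semicontinuity, the convex and strongly continuous map $u\mapsto\tfrac12\|u\|_{H_\sigma(\Omega)}^2$ is automatically weakly lower semicontinuous, so it remains to show that $\Phi$ is weakly continuous. This follows from compactness: a sequence $u_k\rightharpoonup u$ in $X$ is bounded and, by the compact embedding $H^2(\Omega)\hookrightarrow\!\hookrightarrow C^0(\Omegabar)$ (bounded Lipschitz domain in $\R^2$, via $H^2\hookrightarrow C^{0,\lambda}$ and Arzel\`a--Ascoli), converges to $u$ in $C^0(\Omegabar)$ along subsequences; since $\sup_k\|u_k\|_\infty<\infty$, the bound \eqref{growth} provides an $L^1(\Omega)$ dominating function and the dominated convergence theorem gives $\Phi(u_k)\to\Phi(u)$, a routine subsequence argument upgrading this to the full sequence. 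Consequently $J_\sigma$ is weakly lower semicontinuous, and the direct method applies: a minimizing sequence is bounded by coercivity, admits a weakly convergent subsequence by reflexivity, and its weak limit realizes $\inf_X J_\sigma$.

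The only genuine obstacle is the coercivity estimate, more precisely the bookkeeping showing that the quadratic part of $F$ permitted by \eqref{growth} is strictly dominated by $\tfrac12\|u\|_{H_\sigma(\Omega)}^2$; this is exactly why the coefficient $\tfrac12(1-|\sigma|)C_0^{-1}$, the lower bound \eqref{ineqnorm}, and the constant $C_0$ from \eqref{C_0} are arranged as they are, and why the borderline exponent $p=2$ is excluded. The degenerate case $\sigma=1$, for which \eqref{ineqnorm} is vacuous, has to be treated separately through \eqref{C_A}.
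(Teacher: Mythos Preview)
Your proof is correct and follows essentially the same approach as the paper: the direct method, splitting $J_\sigma$ into the equivalent norm $\tfrac12\|u\|_{H_\sigma}^2$ and the nonlinear term, proving coercivity by absorbing the quadratic part of \eqref{growth} via \eqref{C_0} and treating $\int_\Omega c|u|^p$ as lower order (the paper bounds it directly by $\|c\|_1\|u\|_{H^2}^p$ rather than invoking Young, but this is cosmetic), and obtaining weak lower semicontinuity from compact embedding plus dominated convergence. The separate handling of $\sigma=1$ through \eqref{C_A} is also exactly what the paper does.
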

\begin{proof}
	Let $(u_k)_{k\in\N}\subset H^2(\Omega)\cap\Ho\ni u$ be such that $u_k\rightharpoonup u$ weakly in $H^2(\Omega)$; since it is bounded in $H^2(\Omega)$ and consequently in $L^\infty(\Omega)$, one has
	$$|F(x,u_k)|\leq d(x)+c(x)M^p+\dfrac{1}{2}(1-|\sigma|)C_0^{-1}M^2,$$
	for some $M>0$, which is integrable over $\Omega$. Moreover, by the compactness of the embedding $H^2(\Omega)\hookrightarrow L^p(\Omega)$, there exists a subsequence $(u_{k_j})_{j\in\N}$ such that $u_{k_j}\rightarrow u$ in $L^p(\Omega)$ for a suitable $p\geq 1$, so $F(x,u_{k_j}(x))\rightarrow F(x,u(x))$ a.e. in $\Omega$ by continuity of $F(x,\cdot)$. Hence, by Dominated Convergence Theorem, we have $\int_\Omega F(x,u_{k_j})\rightarrow\int_\Omega F(x,u)$. This, together with the weakly lower semicontinuity of the norm, implies the same property for $J_\sigma$. If $\sigma\in(-1,1)$, by \eqref{C_0}:
	\begin{equation*}
	\begin{split}
	J_\sigma(u)&\geq\dfrac{1}{2}(1-|\sigma|)\||\nabla^2u|\|_2^2-\|d\|_1-C^p\|c\|_1\|u\|_{H^2(\Omega)}^p- \dfrac{1}{2}(1-|\sigma|)C_0^{-1}\|u\|_2^2\\
	&\geq\dfrac{1}{2}(1-|\sigma|)C_0^{-1}\||\nabla^2u|\|_2^2
	-\|c\|_1C^pC_0^{\frac{p}{2}}\||\nabla^2 u|\|_2^p-\|d\|_1;
	\end{split}
	\end{equation*}
	by (i) of Lemma \ref{eqnorm}, we deduce that $J_\sigma(u)\rightarrow+\infty$ as $\|u\|_{H^2(\Omega)}\rightarrow+\infty$, since $p\in(0,2)$.
	Easier computations provide a similar estimate to conclude the proof also if $\sigma=1$.
\end{proof}

\begin{remark}[model case]\label{modelcase}
	As an application of Proposition \ref{coercKLg}, we may consider the following kind of sublinearity: $F(x,u)=g(x)|u|^{p+1}+d(x)u$
	where $p\in(0,1)$ and $d,g\in L^1(\Omega)$.	In this case the functional is coercive and verifies (H). Notice also that if $g=0$ we retrieve the linear Kirchhoff-Love functional considered in \cite{PS}.
\end{remark}

\subsection{Superlinear case}

This case is more involved and we have to restrict to the nonlinearity \eqref{FNL} with $p>1$:
\begin{equation}\label{Jsuper}
J_\sigma(u):=\int_\Omega\dfrac{(\Delta u)^2}{2}- (1-\sigma)\int_\Omega det(\nabla^2 u)-\int_\Omega\dfrac{g(x)|u|^{p+1}}{p+1}.
\end{equation}
Here the functional is not coercive anymore: in fact, fixing any $u\in H^2(\Omega)\cap\Ho\setminus\{0\}$, we have $J_\sigma(tu)\rightarrow-\infty$ as $t\rightarrow+\infty$.
Following some arguments of \cite{CCN,PG}, we will make use of the method of the Nehari manifold to infer the existence of a (nontrivial) critical point. After some preliminary results, we will show that in our manifold the infimum of $J_\sigma$ is attained and then, using a deformation lemma, we will prove it is a critical point for $J_\sigma$ in $H^2(\Omega)\cap\Ho$.
\vskip0.2truecm
Let us define the \textit{Nehari manifold} of $J_\sigma$ as the set
$$\Ne_\sigma:=\{u\in (H^2(\Omega)\cap\Ho)\setminus\{0\} \,|\, J_\sigma'(u)[u]=0\},$$
which clearly contains all nontrivial critical points of $J_\sigma$. First of all, notice that $u\in\Ne_\sigma$ if and only if
$$\int_\Omega(\Delta u)^2-2(1-\sigma)\int_\Omega det(\nabla^2u)=\int_\Omega g(x)|u|^{p+1},$$
so one has the following two equivalent formulations for $J_\sigma$ restricted on $\Ne_\sigma$:
\begin{equation}\label{equivonNehari}
{J_\sigma}_{|_{\Ne_\sigma}}(u)=\bigg(\dfrac{1}{2}-\dfrac{1}{p+1}\bigg)\int_\Omega g(x)|u|^{p+1}=\bigg(\dfrac{1}{2}-\dfrac{1}{p+1}\bigg)\bigg(\int_\Omega(\Delta u)^2-2(1-\sigma)\int_\Omega det(\nabla^2u)\bigg),
\end{equation}
which implies ${J_\sigma}_{|\Ne_\sigma}(u)>0$ for every $u\in\Ne_\sigma$.

\noindent A crucial step will be to study what happens on the half-lines of $H^2(\Omega)\cap\Ho$:

\begin{lem}\label{t*lemma}
	Let $u\in H^2(\Omega)\cap\Ho\setminus\{0\}$ and the half-line $r_u:=\{tu\,|\,t> 0\}$. The intersection between $r_u$ and $\Ne_\sigma$ consists in a unique point $t^*(u)u$, where
	\begin{equation}\label{t*}
	t^*(u):=\bigg(\dfrac{\int_\Omega(\Delta u)^2-2(1-\sigma)\int_\Omega det(\nabla^2 u)}{\int_\Omega g(x)|u|^{p+1}}\bigg)^{\frac{1}{p-1}}.
	\end{equation}
	Moreover $J_\sigma(t^*(u)u)=\displaystyle\max_{t> 0}J_\sigma(tu)$.
\end{lem}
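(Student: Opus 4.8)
The statement has two parts: first, that the half-line $r_u$ meets $\Ne_\sigma$ in exactly one point, whose parameter is given explicitly by \eqref{t*}; second, that this point is the global maximum of $t\mapsto J_\sigma(tu)$ on $(0,+\infty)$. My plan is to reduce everything to a one-variable analysis of the function $\varphi(t):=J_\sigma(tu)$. Write $A:=\int_\Omega(\Delta u)^2-2(1-\sigma)\int_\Omega\det(\nabla^2 u)=\|u\|_{H_\sigma(\Omega)}^2$ and $B:=\int_\Omega g(x)|u|^{p+1}$. Using the homogeneity of each term in the functional, one has
\begin{equation*}
\varphi(t)=J_\sigma(tu)=\frac{t^2}{2}A-\frac{t^{p+1}}{p+1}B.
\end{equation*}
Since $\sigma\in(-1,1]$ and $u\neq 0$, Lemma \ref{eqnorm} (together with (i) and the norm equivalence) gives $A=\|u\|_{H_\sigma(\Omega)}^2>0$; moreover $g>0$ a.e. and $u\not\equiv 0$ imply $B>0$. (Strictly: since $u\in H^2(\Omega)\hookrightarrow C(\Omegabar)$ and $u\not\equiv 0$, $|u|^{p+1}>0$ on a set of positive measure, so $B>0$.)

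\textbf{Step 1: the Nehari intersection.} A point $tu$ with $t>0$ lies on $\Ne_\sigma$ precisely when $J_\sigma'(tu)[tu]=0$, i.e. $\varphi'(t)=0$ (indeed $\varphi'(t)=J_\sigma'(tu)[u]$ and $t\varphi'(t)=J_\sigma'(tu)[tu]$, so for $t>0$ the two vanish together). From the explicit form,
\begin{equation*}
\varphi'(t)=tA-t^pB=t\bigl(A-t^{p-1}B\bigr),
\end{equation*}
which, for $t>0$, vanishes if and only if $t^{p-1}=A/B$, that is $t=(A/B)^{1/(p-1)}$. Since $p>1$ and $A,B>0$, this equation has exactly one positive solution, namely $t^*(u)=\bigl(A/B\bigr)^{1/(p-1)}$, which is precisely \eqref{t*}. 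Hence $r_u\cap\Ne_\sigma=\{t^*(u)u\}$, a single point.

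\textbf{Step 2: it is the maximum.} Still on $(0,+\infty)$, from $\varphi'(t)=t(A-t^{p-1}B)$ we read off the sign: since $t\mapsto t^{p-1}$ is strictly increasing (as $p>1$) and $B>0$, the factor $A-t^{p-1}B$ is positive for $t<t^*(u)$ and negative for $t>t^*(u)$. Therefore $\varphi$ is strictly increasing on $(0,t^*(u))$ and strictly decreasing on $(t^*(u),+\infty)$, so $t^*(u)$ is the unique (global) maximiser of $\varphi$ on $(0,+\infty)$, which gives $J_\sigma(t^*(u)u)=\max_{t>0}J_\sigma(tu)$. This completes the proof.

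\textbf{Main obstacle.} There is essentially no hard step: once the homogeneity is exploited to write $\varphi(t)=\tfrac{t^2}{2}A-\tfrac{t^{p+1}}{p+1}B$, the whole lemma is elementary one-variable calculus. The only points requiring a word of care are the strict positivity of $A$ (which uses that $\sigma$ lies in $(-1,1]$, via Lemma \ref{eqnorm}, so that $\|\cdot\|_{H_\sigma(\Omega)}$ is genuinely a norm) and of $B$ (which uses $g>0$ a.e., $u\not\equiv 0$, and the continuity of $u$ from the Sobolev embedding); without these the critical-point equation $t^{p-1}=A/B$ could degenerate.
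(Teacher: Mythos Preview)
Your proof is correct and follows essentially the same route as the paper: both reduce to the one-variable function $t\mapsto J_\sigma(tu)=\tfrac{t^2}{2}A-\tfrac{t^{p+1}}{p+1}B$, solve $\varphi'(t)=0$ to obtain the unique $t^*(u)=(A/B)^{1/(p-1)}$, and conclude by a sign analysis of $\varphi'$ that this is the global maximum. Your version is slightly more explicit about the positivity of $A$ and $B$, but the argument is the same.
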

\begin{proof}
	For $t>0$ and a fixed $u\in H^2(\Omega)\cap\Ho\setminus\{0\}$, $tu\in\Ne_\sigma$ if and only if
	\begin{equation*}
	t^2\biggl[\int_\Omega(\Delta u)^2-2(1-\sigma)\int_\Omega det(\nabla^2 u)\biggr]=t^{p+1}\int_\Omega g(x)|u|^{p+1},
	\end{equation*}
	from which we deduce $t=t^*(u)$.
	Moreover, define
	$$\eta(t):=J_\sigma(tu)=\dfrac{t^2}{2}\biggl[\int_\Omega(\Delta u)^2-2(1-\sigma)\int_\Omega det(\nabla^2 u)\biggr]-\dfrac{t^{p+1}}{p+1}\int_\Omega g(x)|u|^{p+1}.$$
	If we look for $\bar{t}>0$ such that $\eta'(\bar t)=0$, we find again that $\bar{t}=t^*(u)$ and, since $\eta'(t)(t-t^*(u))<0$ for $t\neq t^*(u)$, we have that $t^*(u)u$ is the unique global maximum in the half-line $r_u$.
\end{proof}

\begin{lem}\label{Neclosed}
	The Nehari manifold is bounded away from $0$, i.e. $0\notin\overline{\Ne_\sigma}$.
\end{lem}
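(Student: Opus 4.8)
The plan is to show that there is a constant $c > 0$ such that $\|u\|_{H^2(\Omega)} \geq c$ for every $u \in \Ne_\sigma$, which immediately gives $0 \notin \overline{\Ne_\sigma}$. The key tool is the membership relation defining the Nehari manifold together with the norm equivalences from Lemma~\ref{eqnorm}. Indeed, if $u \in \Ne_\sigma$, then
\begin{equation*}
\|u\|_{H_\sigma(\Omega)}^2 = \int_\Omega(\Delta u)^2 - 2(1-\sigma)\int_\Omega \det(\nabla^2 u) = \int_\Omega g(x)|u|^{p+1}.
\end{equation*}
The left-hand side is, by Lemma~\ref{eqnorm}(ii), comparable to $\|u\|_{H^2(\Omega)}^2$, say bounded below by $c_1\|u\|_{H^2(\Omega)}^2$ for some $c_1 > 0$. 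For the right-hand side, I would estimate $\int_\Omega g(x)|u|^{p+1} \leq \|g\|_1 \|u\|_\infty^{p+1}$ and then use the Sobolev embedding $H^2(\Omega) \hookrightarrow L^\infty(\Omega)$ recalled in Section~2 to bound $\|u\|_\infty \leq c_2 \|u\|_{H^2(\Omega)}$. Combining these yields
\begin{equation*}
c_1 \|u\|_{H^2(\Omega)}^2 \leq \|g\|_1 c_2^{p+1} \|u\|_{H^2(\Omega)}^{p+1},
\end{equation*}
and since $u \neq 0$ we may divide by $\|u\|_{H^2(\Omega)}^2$ (recall $p > 1$, so the exponent $p+1 > 2$) to obtain $\|u\|_{H^2(\Omega)}^{p-1} \geq c_1 / (\|g\|_1 c_2^{p+1}) > 0$, i.e. a uniform positive lower bound for $\|u\|_{H^2(\Omega)}$ on $\Ne_\sigma$.

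I would carry this out in three short steps: first, rewrite the Nehari condition as the equality of $\|u\|_{H_\sigma(\Omega)}^2$ with $\int_\Omega g(x)|u|^{p+1}$; second, bound the two sides from below and above respectively using Lemma~\ref{eqnorm}(ii) and the continuous embeddings $H^2(\Omega) \hookrightarrow L^\infty(\Omega) \hookrightarrow L^{p+1}(\Omega)$ together with $g \in L^1(\Omega)$; third, cancel a factor of $\|u\|_{H^2(\Omega)}^2$ (legitimate since $u \neq 0$ on $\Ne_\sigma$) to extract the bound $\|u\|_{H^2(\Omega)} \geq c$ with $c$ independent of $u$. The conclusion $0 \notin \overline{\Ne_\sigma}$ is then immediate, since $\overline{\Ne_\sigma}$ is contained in the complement of the open ball of radius $c$.

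The only mild subtlety — not really an obstacle — is making sure the norm $\|\cdot\|_{H_\sigma(\Omega)}$ is genuinely equivalent to $\|\cdot\|_{H^2(\Omega)}$, which is exactly the content of Lemma~\ref{eqnorm}(ii); for this one needs $\sigma \in (-1,1]$ and, in the borderline case $\sigma = 1$, the uniform external ball condition on $\Omega$. Since we are in the superlinear regime the standing hypotheses on $\Omega$ and $\sigma$ already guarantee this, so the estimate goes through. One should also note that the argument only uses $g \in L^1(\Omega)$ and $g > 0$ a.e., so the lower bound depends on $\Omega$, $\sigma$, $p$ and $\|g\|_1$ but not on $u$, which is what "bounded away from $0$" requires.
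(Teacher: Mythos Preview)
Your proof is correct and follows essentially the same approach as the paper's: both use the Nehari identity $\|u\|_{H_\sigma}^2=\int_\Omega g|u|^{p+1}$, the norm equivalence of Lemma~\ref{eqnorm}, and the Sobolev embedding $H^2(\Omega)\hookrightarrow L^\infty(\Omega)$ to obtain a uniform lower bound on $\|u\|_{H^2(\Omega)}$ over $\Ne_\sigma$. Your version is in fact slightly more streamlined, since you work directly with an element $u\in\Ne_\sigma$, whereas the paper starts from an arbitrary nonzero $u$ and passes through its projection $t^*(u)u$ onto $\Ne_\sigma$ via Lemma~\ref{t*lemma}; the two routes are equivalent and yield the same estimate.
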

\begin{proof}
	Suppose first $\sigma\in(-1,1)$ and let $u\in H^2(\Omega)\cap\Ho\setminus\{0\}$. By Lemma \ref{eqnorm}, Lemma \ref{t*lemma} and the embedding $H^2(\Omega)\hookrightarrow L^\infty(\Omega)$, the following chain of inequalities holds:
	\begin{equation*}
	\begin{split}
	(1+|\sigma|)\|t^*(u)u\|_{H^2(\Omega)}^2&\geq\|t^*(u)u\|_{H_\sigma(\Omega)}^2\\
	&=(t^*(u))^{p+1}\int_\Omega g(x)|u|^{p+1}\\
	&\geq(C_0^{-1}(1-|\sigma|))^{\frac{p+1}{p-1}}\dfrac{\|u\|_{H^2(\Omega)}^\frac{2(p+1)}{p-1}}{(\int_\Omega g(x)|u|^{p+1})^\frac{2}{p-1}}\\
	&\geq C(\Omega,p,\sigma)\dfrac{\|u\|_{H^2(\Omega)}^\frac{2(p+1)}{p-1}}{(\|g\|_1\|u\|_{H^2(\Omega)}^{p+1})^\frac{2}{p-1}}=\dfrac{C(\Omega,p,\sigma)}{\|g\|_1^{\frac{2}{p-1}}}.
	\end{split}
	\end{equation*}
	If $\sigma=1$, one can deduce the same result using the equivalent norm on $H^2(\Omega)\cap\Ho$ given by $\|\Delta\cdot\|_2$. In both cases, there exists a uniform bound from below for the $H^2(\Omega)$ norm of the elements in Nehari manifold and thus $0$ cannot be a cluster point for $\Ne_\sigma$.
\end{proof}

\begin{prop}\label{infattained}
	There exists $u\in\Ne_\sigma$ such that $J_\sigma(u)=\displaystyle\inf_{v\in\Ne_\sigma}J_\sigma(v)=:c$
\end{prop}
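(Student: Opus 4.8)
The plan is to apply the direct method of the calculus of variations to the constrained minimization problem on $\Ne_\sigma$, exploiting the positivity $J_\sigma|_{\Ne_\sigma}>0$, the characterization \eqref{equivonNehari}, and the fact (Lemma \ref{Neclosed}) that $\Ne_\sigma$ stays away from the origin. First I would take a minimizing sequence $(u_k)_{k\in\N}\subset\Ne_\sigma$, i.e. $J_\sigma(u_k)\to c$. Since $c<+\infty$ (test with any single half-line via Lemma \ref{t*lemma}) and since on $\Ne_\sigma$ we have, by \eqref{equivonNehari}, that $J_\sigma(u_k)=\left(\frac12-\frac1{p+1}\right)\|u_k\|_{H_\sigma(\Omega)}^2$ when $\sigma\in(-1,1)$ (and the analogous identity with $\|\Delta\cdot\|_2^2$ when $\sigma=1$), the sequence is bounded in the $H_\sigma$-norm, hence in $H^2(\Omega)$ by Lemma \ref{eqnorm}. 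So up to a subsequence $u_k\rightharpoonup u$ in $H^2(\Omega)$ and, by compactness of $H^2(\Omega)\hookrightarrow L^{p+1}(\Omega)$, $u_k\to u$ strongly in $L^{p+1}(\Omega)$; in particular $\int_\Omega g|u_k|^{p+1}\to\int_\Omega g|u|^{p+1}$ (here one uses $g\in L^1$ together with the uniform $L^\infty$ bound coming from $H^2\hookrightarrow L^\infty$, so dominated convergence applies).

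Next I would show the weak limit $u$ is nonzero and lies on $\Ne_\sigma$. Nontriviality: since $u_k\in\Ne_\sigma$, \eqref{equivonNehari} gives $\int_\Omega g|u_k|^{p+1}=\frac{2(p+1)}{p-1}J_\sigma(u_k)\to\frac{2(p+1)}{p-1}c$; by Lemma \ref{Neclosed} we have $\|u_k\|_{H^2(\Omega)}\geq\delta>0$ uniformly, so the Nehari identity forces $\int_\Omega g|u_k|^{p+1}$ to be bounded below by a positive constant, whence $\int_\Omega g|u|^{p+1}>0$ and $u\neq 0$. To land on $\Ne_\sigma$: set $A_k:=\|u_k\|_{H_\sigma(\Omega)}^2$ and $B_k:=\int_\Omega g|u_k|^{p+1}$, so $A_k=B_k$ for each $k$ and $B_k\to B:=\int_\Omega g|u|^{p+1}>0$. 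By weak lower semicontinuity of the equivalent norm $\|\cdot\|_{H_\sigma(\Omega)}$ (Lemma \ref{eqnorm}), $\|u\|_{H_\sigma(\Omega)}^2\leq\liminf A_k=B$. If equality held we would have $u\in\Ne_\sigma$ directly; to handle the possibility of strict inequality I would use Lemma \ref{t*lemma}: there is $t^*=t^*(u)\in(0,1]$ with $t^*u\in\Ne_\sigma$, where $(t^*)^{p-1}=\|u\|_{H_\sigma(\Omega)}^2/B\leq1$. Then by \eqref{equivonNehari} and $t^*\le1$,
\begin{equation*}
c\leq J_\sigma(t^*u)=\Big(\tfrac12-\tfrac1{p+1}\Big)(t^*)^{p+1}B\leq\Big(\tfrac12-\tfrac1{p+1}\Big)\liminf_k B_k=\liminf_k J_\sigma(u_k)=c,
\end{equation*}
so all inequalities are equalities, forcing $t^*=1$, i.e. $u\in\Ne_\sigma$, and $J_\sigma(u)=c$. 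This simultaneously yields membership in the manifold and attainment of the infimum. The case $\sigma=1$ is identical after replacing $\|\cdot\|_{H_\sigma(\Omega)}^2$ by $\|\Delta\cdot\|_2^2$ throughout.

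The main obstacle is the step showing the weak limit does not lose mass in the quadratic (``energy'') part while the potential part converges strongly — i.e. reconciling weak lower semicontinuity of the $H_\sigma$-norm with the equality constraint $A_k=B_k$. The trick above, namely scaling the weak limit back onto $\Ne_\sigma$ via $t^*(u)$ and checking that the energy cannot drop below $c$, is exactly what rules out the bad scenario; it crucially uses that $t^*(u)\le1$ (a consequence of lower semicontinuity) and that $t\mapsto J_\sigma(tu)$ is increasing on $(0,t^*(u)]$ from Lemma \ref{t*lemma}. A minor point worth stating carefully is that $c>0$ (so that $B>0$ and the argument is not vacuous), which follows from Lemma \ref{Neclosed} combined with \eqref{equivonNehari}. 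No deformation-lemma or Palais–Smale machinery is needed for this proposition; that is deferred to the subsequent step where one shows the minimizer is in fact a critical point of $J_\sigma$ on the whole space.
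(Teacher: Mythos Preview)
Your proof is correct and follows essentially the same scheme as the paper's: direct method on $\Ne_\sigma$, boundedness via \eqref{equivonNehari}, weak compactness, and projection of the weak limit back onto $\Ne_\sigma$ via $t^*(u)$. The only minor difference is in closing the sandwich: the paper uses $J_\sigma(t^*u)\le\liminf J_\sigma(t^*u_{k_j})\le\liminf J_\sigma(u_{k_j})$ (invoking the maximality of $J_\sigma$ at $u_{k_j}$ along its half-line) and then observes \emph{a posteriori} that $t^*=1$, whereas you instead deduce $t^*(u)\le 1$ from weak lower semicontinuity and use $J_\sigma(t^*u)=(\tfrac12-\tfrac1{p+1})(t^*)^{p+1}B\le(\tfrac12-\tfrac1{p+1})B=c$, obtaining $t^*=1$ in the same stroke; both closures are standard and equivalent here.
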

\begin{proof}
	As already noticed, $c\geq 0$, since it attains positive values on $\Ne_\sigma$.
	Let now $(u_k)_{k\in\N}\subset\Ne_\sigma$ be a minimizing sequence for $J_\sigma$: we claim that $(u_k)_{k\in\N}$ is bounded in $H^2(\Omega)$ norm. In fact, if $\sigma\in(-1,1)$, there exists a constant $C>0$ such that, for every $k\in\N$,
	\begin{equation*}
	C\geq J_\sigma(u_k)=\biggl(\dfrac{1}{2}-\dfrac{1}{p+1}\biggr)\|u_k\|_{H_\sigma(\Omega)}^2\geq \biggl(\dfrac{1}{2}-\dfrac{1}{p+1}\biggr)(1-|\sigma|)C_0^{-1}\|u_k\|_{H^2(\Omega)}^2,
	\end{equation*}
	while \eqref{C_A} provides the right estimate in case $\sigma=1$. Hence, there exists a subsequence $(u_{k_j})_{j\in\N}\subset\Ne_\sigma$ and $u\in H^2(\Omega)\cap\Ho\setminus\{0\}$ such that $u_{k_j}\rightharpoonup u$ weakly in $H^2(\Omega)$ (and so weakly in $(H^2(\Omega)\cap\Ho,\|\cdot\|_{H\sigma})$ by Lemma \ref{eqnorm}) and strongly in $L^\infty(\Omega)$, by compact embedding. Consider now $t^*=t^*(u)$ such that $t^*u\in\Ne_\sigma$: by weak semicontinuity of the norm
	\begin{equation}\label{infattaineddim}
		\begin{split}
			&c=\inf_{v\in\Ne_\sigma}J_\sigma(v)\leq J(t^*u)=(t^*)^2\biggl[\int_\Omega\dfrac{(\Delta u)^2}{2}-(1-\sigma)\int_\Omega det(\nabla^2u )\biggr]-(t^*)^{p+1}\int_\Omega\dfrac{g(x)|u|^{p+1}}{p+1}\\
			&\leq \liminf_{j\rightarrow+\infty}\biggl((t^*)^2\biggl[\int_\Omega\dfrac{(\Delta u_{k_j})^2}{2}-(1-\sigma)\int_\Omega det(\nabla^2u_{k_j})\biggr]-(t^*)^{p+1}\int_\Omega\dfrac{ g(x)|u_{k_j}|^{p+1}}{p+1}\biggr)\\
			&=\liminf_{j\rightarrow+\infty}J_\sigma(t^*u_{k_j})\leq\liminf_{j\rightarrow+\infty}J_\sigma(u_{k_j})=c
		\end{split}
	\end{equation}
	where the last inequality holds because the supremum of $J_\sigma$ in each half-line $\{tu_{k_j}\,|\,t>0\}$ is achieved exactly in $u_{k_j}$ by Lemma \ref{t*lemma}. Hence, the infimum of $J_\sigma$ on $\Ne_\sigma$ is attained on $t^*u$.
\end{proof}

	\noindent In the proof of Proposition \ref{infattained} something weird happened: we took a minimizing sequence, which converges to an element $u$ and we proved that there exists $\alpha=t^*(u)\in\R$ such that $\alpha u$ is the minimum point of our functional $J_\sigma$. One expects that the minimum is $u$ itself and not a dilation of it. Indeed, one may prove that $t^*=1$.
	In fact, with the same notation as in that proof, from \eqref{infattaineddim} we deduce $J_\sigma({u_k}_j)\rightarrow c=J_\sigma(t^*u)$ by construction and $t^*u\in\Ne_\sigma$, so
	$$J_\sigma({u_k}_j)\rightarrow\bigg(\dfrac{1}{2}-\dfrac{1}{p+1}\bigg)\int_\Omega g(x)|t^*u|^{p+1}.$$
	Moreover, we took the sequence to be in the Nehari manifold itself, so $J_\sigma({u_k}_j)=(\frac{1}{2}-\frac{1}{p+1})\int_\Omega g(x)|{u_k}_j|^{p+1}$,
	and we have that ${u_k}_j\rightarrow u$ strongly in $L^\infty(\Omega)$, thus
	$$J_\sigma({u_k}_j)\rightarrow\bigg(\dfrac{1}{2}-\dfrac{1}{p+1}\bigg)\int_\Omega g(x)|u|^{p+1}.$$
	By the uniqueness of the limit, we must have $t^*=1$, so $u\in\Ne_\sigma$.

\begin{thm}\label{criticalpoint}
	The minimum $u$ of $J_\sigma$ in $\Ne_\sigma$ is a critical point for $J_\sigma$ in $H^2(\Omega)\cap\Ho$.
\end{thm}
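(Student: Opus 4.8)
The cleanest route, and the one I would try first, is to view $u$ as a minimiser of $J_\sigma$ subject to the single scalar constraint defining the Nehari manifold, and then to invoke the Lagrange multiplier rule in the Hilbert space $H^2(\Omega)\cap\Ho$. Put $G(v):=J_\sigma'(v)[v]=\|v\|_{H_\sigma}^2-\int_\Omega g(x)|v|^{p+1}$, so that $\Ne_\sigma=G^{-1}(0)\setminus\{0\}$. Because $p>1$ and $H^2(\Omega)\hookrightarrow L^\infty(\Omega)$, the map $v\mapsto\int_\Omega g(x)|v|^{p+1}$ is of class $C^1$, hence so is $G$, with differential $G'(u)[w]=2(u,w)_{H_\sigma}-(p+1)\int_\Omega g(x)|u|^{p-1}uw$. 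Using that $u\in\Ne_\sigma$ — equivalently $\int_\Omega g(x)|u|^{p+1}=\|u\|_{H_\sigma}^2$ — one obtains $G'(u)[u]=(1-p)\|u\|_{H_\sigma}^2$, which is nonzero since $p>1$ and $u\neq0$ by Lemma~\ref{Neclosed}. Thus $G'(u)\neq0$, so in a neighbourhood of $u$ the set $\Ne_\sigma$ is a $C^1$ submanifold of codimension one.

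Since $u$ is a global, and in particular local, minimiser of the restriction of $J_\sigma$ to this submanifold, the Lagrange multiplier rule yields $\lambda\in\R$ with $J_\sigma'(u)=\lambda\,G'(u)$ in $(H^2(\Omega)\cap\Ho)^*$. Pairing this identity with $u$ and using $u\in\Ne_\sigma$ once more gives $0=J_\sigma'(u)[u]=\lambda\,G'(u)[u]=\lambda(1-p)\|u\|_{H_\sigma}^2$; since $(1-p)\|u\|_{H_\sigma}^2\neq0$ this forces $\lambda=0$, i.e.\ $J_\sigma'(u)=0$, which is the assertion.

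I would also keep in reserve the more robust deformation-theoretic argument that the introductory remarks point to, which uses no regularity of $G$. One assumes $J_\sigma'(u)\neq0$, fixes $\beta,\delta>0$ with $\|J_\sigma'(v)\|\geq\beta$ on $B_{2\delta}(u)$, and picks $\bar t_1<1<\bar t_2$ so close to $1$ that the segment $\{tu:t\in[\bar t_1,\bar t_2]\}$ lies in $B_{2\delta}(u)$ and $m:=\max\{J_\sigma(\bar t_1u),J_\sigma(\bar t_2u)\}<c$ — the strict inequality coming from the fact (Lemma~\ref{t*lemma}, together with $t^*(u)=1$) that $t=1$ is the \emph{strict} maximum of $t\mapsto J_\sigma(tu)$ on $r_u$. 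A quantitative deformation lemma applied with $\varepsilon\in(0,\min\{\tfrac{c-m}{2},\tfrac{\beta\delta}{8}\})$ produces a continuous deformation $\eta$ on $[0,1]\times(H^2(\Omega)\cap\Ho)$ that is the identity off $J_\sigma^{-1}([c-2\varepsilon,c+2\varepsilon])$, never increases $J_\sigma$, sends $u$ into $\{J_\sigma\leq c-\varepsilon\}$, and displaces points by at most $\delta$. The path $t\mapsto\eta(1,tu)$ then has endpoints $\bar t_iu$ unchanged, satisfies $\max J_\sigma<c$ along it (continuity on a compact interval plus the strict pointwise inequality $J_\sigma<c$), and stays away from $0$; yet $t\mapsto J_\sigma'(\eta(1,tu))[\eta(1,tu)]$ equals $\bar t_i^{2}(1-\bar t_i^{p-1})\|u\|_{H_\sigma}^2$ at $t=\bar t_i$, hence changes sign, so by the intermediate value theorem the path meets $\Ne_\sigma$ — contradicting $c=\inf_{\Ne_\sigma}J_\sigma$.

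For the Lagrange argument there is essentially no obstacle: the only things to verify are $G\in C^1$ and $G'(u)[u]\neq0$, both immediate for $p>1$. For the deformation argument the one genuinely delicate point is making sure the intersection delivered by the intermediate value theorem lies on $\Ne_\sigma$ and not at the origin; this is exactly why one needs $c>0$ (from Lemma~\ref{Neclosed} and \eqref{equivonNehari}) and the displacement estimate $\|\eta(t,v)-v\|\leq\delta$, which together keep the deformed path uniformly away from $0$ — plus the routine bookkeeping ordering the constants $\delta,\bar t_1,\bar t_2,\varepsilon$ so that the hypothesis $\|J_\sigma'\|\geq8\varepsilon/\delta$ of the deformation lemma is met on $B_{2\delta}(u)\cap J_\sigma^{-1}([c-2\varepsilon,c+2\varepsilon])$.
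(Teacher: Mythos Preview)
Your Lagrange-multiplier proof is correct and complete: the key computations $G'(u)[u]=(1-p)\|u\|_{H_\sigma}^2\neq0$ and the pairing that forces $\lambda=0$ are exactly right, and the $C^1$ regularity of $G$ is genuinely available here since $p>1$ and $H^2(\Omega)\hookrightarrow L^\infty(\Omega)$. Your reserve deformation argument is also sound and matches the paper's own strategy.

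The paper chooses only the deformation route: it assumes $J_\sigma'(u)\neq0$, invokes a deformation lemma to produce a homotopy $\Gamma_t$ that is the identity outside a ball around $u$ and strictly decreases $J_\sigma$ near $u$, then looks at the deformed segment $\Gamma(1,\,\cdot\,)$ of the half-line $r_u$ and uses the sign change of $v\mapsto J_\sigma'(\Gamma(1,v))[\Gamma(1,v)]$ at the two endpoints (where $\Gamma$ is the identity) to find an intersection with $\Ne_\sigma$ at energy below $c$. This is precisely your second argument, with the same intermediate-value step and the same source of the sign change (Lemma~\ref{t*lemma}); the paper just packages the constants differently and cites a different deformation lemma. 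Your Lagrange approach is genuinely shorter and more elementary for this specific nonlinearity, trading the deformation machinery for the easy verification that $\Ne_\sigma$ is a $C^1$ manifold near $u$; the deformation argument, by contrast, needs no regularity of the constraint and would survive for nonlinearities where $G$ is merely continuous.
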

\begin{proof}
	Suppose by contradiction that $u$ is not a critical point. Since the functional is $C^1$, there exists a ball centered in $u$ and $\varepsilon>0$ such that, for all $v\in B$,
	$$c-\varepsilon\leq J_\sigma(v)\leq c+\varepsilon,$$
	$$\|J_\sigma'(v)\|_{(H^2(\Omega)\cap\Ho)^*}\geq\dfrac{\varepsilon}{2},$$
	where $c=J_\sigma(u)=\displaystyle\inf_{v\in\Ne_\sigma}J_\sigma(v)$.
	Notice that on the half-line $r_u$, the point $u$ is the global maximum, so $J_\sigma(v)<c$ for each $v\in B\cap r_u$, $v\neq u$.\\
	If we denote $a=c-\varepsilon$, $b=c+\varepsilon$, $\delta=8$, $S=\overline{B_r(u)}$ and $S_0=\overline{H^2(\Omega)\cap\Ho\setminus B'}$, where $r>0$ such that $B_r(u)\subset\subset B'\subset\subset B$, 
	applying \cite[Proposition 5.1.25]{GP}, there exists a locally Lipschitz homotopy of homeomorphisms $\Gamma_t$ on $H^2(\Omega)\cap\Ho$ such that:
	\begin{enumerate}[label=(\roman*)]
		\item $t\mapsto J_\sigma(\Gamma(t,v))$ is decreasing in $B_r(u)$ and, in general, non-increasing;
		\item $J_\sigma(\Gamma(t,v))=v$ for $v\in S_0$ and $t\in [0,1]$, and so also for all $v\in\partial B$.
	\end{enumerate}
	From (i) we deduce that $J_\sigma(\Gamma(t,v))<c$ for every $v\in B\cap r_u$ and $t\neq 0$.
	Moreover, define the following map: $\psi:B\cap r_u\rightarrow\R$ such that
	$$\psi(v):=J_\sigma'(\Gamma(1,v))[\Gamma(1,v)]$$
	and consider $v\in\partial B\cap r_u$, so there exists $\alpha\neq 1$ such that $v=\alpha u$: we know from (ii) that $\Gamma(1,v)=v$ and, by Lemma \ref{t*lemma}, $J_\sigma'(\alpha u)[\alpha u]>0$ if $\alpha\in(0,1)$ and $J_\sigma'(\alpha u)[\alpha u]<0$ if $\alpha\in(1,+\infty)$, so $\psi(v)(v-u)<0$ on $\partial B\cap r_u$. As a result, since one can think at $\psi$ as a continuous map from $[x_1,x_2]\rightarrow\R$, where $x_1$ and $x_2$ correspond to the intersections between the half line $r_u$ and the ball $B$, and since $\psi(x_1)>0$ and $\psi(x_2)<0$, there exists a zero of $\psi$ in $(x_1,x_2)$, i.e. there exists $\bar{v}\in B\cap r_u$ such that $J_\sigma'(\Gamma(1,\bar{v}))[\Gamma(1,\bar{v})]=0$.\\
	Setting $w:=\Gamma(1,\bar{v})$, we have $w\in\Ne_\sigma$ and $J_\sigma(w)=J_\sigma(\Gamma(1,\bar{v}))<c=\displaystyle\inf_{v\in\Ne_\sigma}J_\sigma(v)$, a contradiction.
\end{proof}

	So far, we proved existence of a ground state for $J_\sigma$. Actually, one can say more about existence of general critical points by means of the Krasnoselski genus theory (see \cite[Section 10.2]{AM}). In fact, since our framework is subcritical, it is quite standard to prove the Palais-Smale condition for $J_\sigma$ by compact embedding of $H^2(\Omega)$ in every Lebesgue space. Moreover, our functional is $C^1$, even and bounded from below on the unit sphere of $H^2(\Omega)\cap\Ho$: indeed, 
	if $\|u\|_{H_\sigma(\Omega)}=1$, then $\|u\|_\infty<C$ for some $C>0$, so
	\begin{equation*}
	J_\sigma(u)=\dfrac{1}{2}-\int_\Omega\dfrac{g(x)|u|^{p+1}}{p+1}\geq\dfrac{1}{2}-\dfrac{C^{p+1}\|g\|_1}{p+1}>-\infty.
	\end{equation*}
	Hence, by \cite[Proposition 10.8]{AM}, one can ensure the existence of an infinite number of couples of critical points. The same argument may also be applied for the general sublinear case, provided $F(x,s)=F(x,-s)$ for every $s\in\R$.

\section{An identity and the positivity of ground states in convex domains}

The aim of this section is to prove positivity for the ground states found in the previous section. Notice that the problematic term in $J_\sigma$ is the one involving the determinant of the Hessian matrix. In order to overcome this difficulty, we need to rewrite it in an equivalent way, transforming it into a boundary term which can be handled in order to prove the desired positivity. Nevertheless, since the signed curvature of the boundary will be involved, we need to impose some regularity on $\dOmega$. We will basically deduce the same statement as Lemma 2.5 ($ii$) of \cite{PS}, but extending it to a larger class of domains.

\subsection{A crucial identity}

Our goal is to generalize the following result by Parini and Stylianou:
\begin{thm}[\cite{PS}, Lemma 2.5]\label{PS}
	Let $\Omega$ be a bounded domain in $\R^2$ with $C^{2,1}$ boundary, and let $\kappa$ be its signed curvature. Then for all $u\in H^2(\Omega)$, for every $\varphi\in H^3(\Omega)$, defining $K(u):=\int_\Omega det(\nabla^2u)dx$, we have:
	\begin{equation}\tag{F$_{PS}$}
	<K'(u),\varphi>=\int_{\partial\Omega}(\kappa \varphi_nu_n+\varphi_{\tau\tau}u_n-\varphi_{\tau n}u_\tau).
	\end{equation}
	Hence, for all $u\in H^2(\Omega)\cap\Ho$,
	\begin{equation}\tag{F}
	K(u)=\dfrac{1}{2}\int_{\partial\Omega}\kappa u_n^2.
	\end{equation} 
\end{thm}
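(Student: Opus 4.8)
The heart of the matter is the first identity $(\mathrm{F}_{PS})$, from which $(\mathrm{F})$ follows by a short specialization argument. Since the paper wants to weaken the hypothesis on $\partial\Omega$ from $C^{2,1}$ (as in \cite{PS}) to $C^{1,1}$, I would first prove the formula for a smooth domain and a smooth function, and then pass to the limit by density and by approximating the domain. Concretely, for $u,\varphi\in C^\infty(\overline\Omega)$, I would compute $\langle K'(u),\varphi\rangle$ directly. Since $K(u)=\int_\Omega \det(\nabla^2 u)=\int_\Omega (u_{xx}u_{yy}-u_{xy}^2)$ is a quadratic form in the second derivatives, its derivative in the direction $\varphi$ is
\begin{equation*}
\langle K'(u),\varphi\rangle=\int_\Omega\bigl(u_{xx}\varphi_{yy}+u_{yy}\varphi_{xx}-2u_{xy}\varphi_{xy}\bigr).
\end{equation*}
The key algebraic observation is that the integrand is a \emph{null Lagrangian}: it can be written as a pure divergence. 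Indeed,
\begin{equation*}
u_{xx}\varphi_{yy}+u_{yy}\varphi_{xx}-2u_{xy}\varphi_{xy}=\operatorname{div}\bigl(u_{xx}\varphi_y-u_{xy}\varphi_x,\;u_{yy}\varphi_x-u_{xy}\varphi_y\bigr),
\end{equation*}
as one checks by expanding (the second-order terms cancel pairwise, e.g. $u_{xx}\varphi_{yy}$ appears once with a plus and once with a minus is \emph{not} quite it — rather the mixed $u_{xxy}\varphi_x$-type terms cancel and one is left with exactly the desired expression).

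**Next**, applying the divergence theorem turns $\langle K'(u),\varphi\rangle$ into a boundary integral
\begin{equation*}
\int_{\partial\Omega}\bigl[(u_{xx}\varphi_y-u_{xy}\varphi_x)n_1+(u_{yy}\varphi_x-u_{xy}\varphi_y)n_2\bigr].
\end{equation*}
The remaining task is to rewrite this in the intrinsic frame $(n,\tau)$ on $\partial\Omega$. I would decompose $\nabla\varphi=\varphi_n\,n+\varphi_\tau\,\tau$ and likewise for $\nabla u$, substitute, and use the Frenet relations on the curve $\partial\Omega$ — in particular $\partial_\tau n=\kappa\tau$ and $\partial_\tau\tau=-\kappa n$, which is exactly where the signed curvature $\kappa$ enters and where $C^{1,1}$ regularity of $\partial\Omega$ (so that $\kappa\in L^\infty$) is the minimal requirement. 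Tangential derivatives of boundary quantities should be handled by integration by parts \emph{along} the closed curve $\partial\Omega$ (no boundary terms), which is how the term $\int_{\partial\Omega}\varphi_{\tau\tau}u_n$ arises from a term like $\int_{\partial\Omega}\varphi_\tau(\dots)$. After collecting terms the result should be exactly
\begin{equation*}
\langle K'(u),\varphi\rangle=\int_{\partial\Omega}\bigl(\kappa\,\varphi_n u_n+\varphi_{\tau\tau}u_n-\varphi_{\tau n}u_\tau\bigr),
\end{equation*}
which is $(\mathrm{F}_{PS})$. For the density/approximation step I would note that both sides are continuous in $u\in H^2(\Omega)$ and $\varphi\in H^3(\Omega)$ (using the trace theorem: $u\mapsto(u_n,u_\tau)\in H^{1/2}(\partial\Omega)$, and $\varphi\mapsto(\varphi_n,\varphi_{\tau\tau},\varphi_{\tau n})$ into the appropriate negative-order space, paired against $H^{1/2}$), and that $C^{1,1}$ domains can be exhausted by smooth ones with curvatures bounded in $L^\infty$ and converging a.e.; alternatively, just approximate $u$ and $\varphi$ by smooth functions on the fixed $C^{1,1}$ domain, since the divergence theorem is valid on Lipschitz domains.

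**Finally**, to get $(\mathrm{F})$: if $u\in H^2(\Omega)\cap H_0^1(\Omega)$ then $u=0$ on $\partial\Omega$, hence its tangential derivative vanishes, $u_\tau\equiv 0$ on $\partial\Omega$, and moreover the tangential derivative of $u_n$ can be related to $u_{\tau n}$; the point is that on the boundary the full gradient is $\nabla u=u_n\,n$. Choosing $\varphi=u$ in $(\mathrm{F}_{PS})$ — legitimate after a further density argument since $(\mathrm{F}_{PS})$ extends to $\varphi\in H^2(\Omega)$ when $u$ is such that all pairings make sense, or more safely by first taking $\varphi_k\to u$ in $H^2$ with $\varphi_k$ smooth and using $u_\tau=0$ — the last two boundary terms drop: $\varphi_{\tau\tau}u_n$ becomes $u_{\tau\tau}u_n$ which integrates (by parts along $\partial\Omega$) to $-\int_{\partial\Omega}u_{\tau}u_{n\tau}=0$ since $u_\tau=0$, and $\varphi_{\tau n}u_\tau=u_{\tau n}u_\tau=0$. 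We are left with $\langle K'(u),u\rangle=\int_{\partial\Omega}\kappa u_n^2$; since $K$ is a homogeneous quadratic functional, $\langle K'(u),u\rangle=2K(u)$ (Euler's identity), giving $K(u)=\tfrac12\int_{\partial\Omega}\kappa u_n^2$.

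**The main obstacle** I anticipate is not the null-Lagrangian/divergence computation (routine) but the careful bookkeeping of the boundary terms in the moving frame together with the reduction of the regularity hypothesis: one must make sure every tangential integration by parts along $\partial\Omega$ is justified when $\kappa$ is merely $L^\infty$ and the traces live only in fractional Sobolev spaces, and that the quantities $\varphi_{\tau\tau}$ and $\varphi_{\tau n}$ are interpreted in the correct (possibly distributional) sense on $\partial\Omega$ so that the pairing with $u_n$, $u_\tau\in H^{1/2}(\partial\Omega)$ is well-defined. Getting these functional-analytic details right for $C^{1,1}$ rather than $C^{2,1}$ boundaries is the real content beyond \cite{PS}.
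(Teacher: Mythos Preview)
The paper does not give its own proof of this statement: Theorem~\ref{PS} is quoted verbatim from \cite{PS} as a starting point, and no proof is supplied. Your sketch---writing the integrand as a null Lagrangian, applying the divergence theorem, rewriting in the $(n,\tau)$ frame via the Frenet relations, and then using the homogeneity $\langle K'(u),u\rangle=2K(u)$ together with $u_\tau=0$ and a density step to go from $(\mathrm{F}_{PS})$ to $(\mathrm{F})$---is essentially the argument of \cite{PS}. In particular, the density step you mention (approximating $u\in H^2\cap H^1_0$ by $\varphi_k\in H^3\cap H^1_0$ so that one may set $\varphi=u$) is precisely the place where \cite{PS} uses the $C^{2,1}$ hypothesis, as the paper itself remarks just after the statement.

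Where you drift off is in mixing this cited result with the paper's own program of relaxing the boundary regularity to $C^{1,1}$. That extension is \emph{not} the content of Theorem~\ref{PS}; it is carried out separately in Corollary~\ref{startingpoint} through Theorem~\ref{detbordo2}, and by a different route than the one you outline. The paper does \emph{not} try to extend $(\mathrm{F}_{PS})$ to $\varphi\in H^2$ via trace/duality arguments on a $C^{1,1}$ boundary as you suggest. Instead it first specializes $(\mathrm{F}_{PS})$ to $\varphi=v\in C^\infty(\overline\Omega)$ to obtain $(\mathrm{F}_{PS}2)$; then pushes $(\mathrm{F})$ to $v\in C^{1,1}_0(\overline\Omega)$ by approximating with smooth functions whose $W^{2,\infty}$ norms stay controlled (Lemma~\ref{paynerevisited}, Proposition~\ref{detbordo1}), so that the troublesome terms $\int_{\partial\Omega}((v_k)_{n\tau}+(v_k)_{\tau n})(v_k)_\tau$ vanish in the limit because $(v_k)_\tau\to 0$ in $L^2(\partial\Omega)$ while the mixed second traces remain bounded; and finally passes to $H^2\cap H^1_0$ using the density of $C^{1,1}_0(\overline\Omega)$ (Lemma~\ref{StylianouRevisited}). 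Your proposed functional-analytic route---interpreting $\varphi_{\tau\tau},\varphi_{\tau n}$ distributionally on $\partial\Omega$ and pairing against $H^{1/2}$---is plausible but is exactly the delicate point the paper avoids by working through $C^{1,1}_0$ with uniform $W^{2,\infty}$ control.
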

\noindent Going into the details of its proof, one can actually realize that the strong regularity assumption on the boundary was needed only to derive (F) from (F$_{PS}$) because the authors used the density of $H^3(\Omega)\cap\Ho$ in $H^2(\Omega)\cap\Ho$, which strongly relied on the fact that $\partial\Omega\in C^{2,1}$ (see \cite[Lemma 2.3]{PS}). Nevertheless, (F$_{PS}$) requires only that all the elements therein are well defined. Hence, our starting point is the following:
\begin{cor}\label{startingpoint}
	Let $\Omega\subset\R^2$ be a bounded domain of class $C^{1,1}$.
	Then for every $v\in C^{\infty}(\Omegabar)$:
	\begin{equation}\tag{F$_{PS}$2}
	K(v)=\dfrac{1}{2}<K'(v),v>=\dfrac{1}{2}\int_{\partial\Omega}(\kappa v_n^2-(v_{n\tau}+v_{\tau n})v_\tau)
	\end{equation}
\end{cor}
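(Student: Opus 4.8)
The plan is to obtain Corollary \ref{startingpoint} from (F$_{PS}$), combined with two elementary facts: the quadratic homogeneity of $K$, which yields the first equality, and a single integration by parts along the closed curve $\dOmega$, which converts the boundary integrand of (F$_{PS}$) into the claimed form. For the first equality, note that $K$ is the quadratic form attached to the symmetric bilinear form $B(u,w):=\int_\Omega(u_{xx}w_{yy}+u_{yy}w_{xx}-2u_{xy}w_{xy})$, i.e. $K(v)=\tfrac12B(v,v)$; hence $\langle K'(v),\varphi\rangle=B(v,\varphi)$ for all $\varphi$, and in particular $\langle K'(v),v\rangle=B(v,v)=2K(v)$, which is exactly $K(v)=\tfrac12\langle K'(v),v\rangle$ (equivalently, Euler's identity for the degree-two homogeneous functional $K$).

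For the second equality I would apply (F$_{PS}$) with the choice $u=\varphi=v\in C^{\infty}(\Omegabar)$. The only point to be checked is that (F$_{PS}$) remains valid once the boundary regularity is weakened from $C^{2,1}$ to $C^{1,1}$: as observed right after Theorem \ref{PS}, when $\dOmega\in C^{1,1}$ and $v$ is smooth up to the boundary every term appearing in (F$_{PS}$) is meaningful (the signed curvature $\kappa$ belongs to $L^\infty(\dOmega)$ and the frame $(n,\tau)$ is Lipschitz along $\dOmega$), while the derivation of (F$_{PS}$) in \cite{PS} uses only Green's identity on $\Omega$ and integration by parts on $\dOmega$, both available in this setting. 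This gives
\[
\langle K'(v),v\rangle=\int_{\dOmega}\bigl(\kappa\,v_n^2+v_{\tau\tau}\,v_n-v_{\tau n}\,v_\tau\bigr).
\]
It then remains to rewrite $\int_{\dOmega}v_{\tau\tau}\,v_n$. Since $v\in C^\infty(\Omegabar)$ and $\dOmega\in C^{1,1}$, both $v_n$ and $v_\tau$ are Lipschitz functions on the closed curve $\dOmega$; recalling that $v_{\tau\tau}=\partial_\tau(v_\tau)$ and $v_{n\tau}=\partial_\tau(v_n)$, integration by parts along $\dOmega$ (which has no endpoints, hence no boundary contribution) yields $\int_{\dOmega}v_{\tau\tau}\,v_n=-\int_{\dOmega}v_\tau\,\partial_\tau(v_n)=-\int_{\dOmega}v_{n\tau}\,v_\tau$. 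Substituting into the previous display turns the right-hand side into $\int_{\dOmega}\bigl(\kappa\,v_n^2-(v_{n\tau}+v_{\tau n})\,v_\tau\bigr)$, and combining with the first equality we obtain the assertion.

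The step I expect to be the main obstacle is the middle one: certifying that the passage from $C^{2,1}$ to $C^{1,1}$ does not break (F$_{PS}$). Concretely this amounts to rereading the proof of \cite[Lemma 2.5]{PS} and confirming that nowhere is a third derivative of the local boundary chart used — the arc-length parametrization of $\dOmega$ is only $C^{1,1}$, so $\kappa$ is an a.e.-defined bounded function and every boundary integration by parts must be a product of Lipschitz (or $W^{1,1}$) functions. Once this bookkeeping is carried out, the rest is routine. If one instead prefers an entirely self-contained argument, the same identity can be derived directly by writing $\det(\nabla^2 v)=\tfrac12\,\ddiv\bigl(v_xv_{yy}-v_yv_{xy}\,,\;v_yv_{xx}-v_xv_{xy}\bigr)$, applying the divergence theorem on the Lipschitz domain $\Omega$, and expressing the resulting boundary integrand in the $(n,\tau)$ frame by means of the a.e.-valid Frenet relations $\partial_\tau\tau=-\kappa\,n$ and $\partial_\tau n=\kappa\,\tau$.
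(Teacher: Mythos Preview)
Your proof is correct and follows essentially the same route as the paper: the first equality comes from the quadratic homogeneity of $K$, and the second from applying (F$_{PS}$) with $u=\varphi=v$ together with the identity $\int_{\dOmega}v_{\tau\tau}v_n=-\int_{\dOmega}v_{n\tau}v_\tau$, which the paper phrases equivalently as $\int_{\dOmega}(v_nv_\tau)_\tau=0$ over the closed curve. Your discussion of why (F$_{PS}$) survives the weakening to $C^{1,1}$ boundaries matches the paper's justification that the $C^{2,1}$ assumption was only needed for the density step, not for (F$_{PS}$) itself.
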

\begin{proof}
	One only has to notice that if $\dOmega\in C^{1,1}$, $\kappa$ is well-defined in $L^\infty(\dOmega)$ and $$\int_\dOmega(v_{n\tau}v_\tau+v_nv_{\tau\tau})=\int_\dOmega(v_nv_\tau)_\tau=0$$
	as $\dOmega$ is a closed curve and by the definition of the tangential derivative (i.e. as $\frac{d}{ds}u(\gamma(s))$, where $\gamma$ is the parametrization of the curve $\dOmega$ in the arch parameter $s$).
\end{proof}

\noindent Our strategy consists of two steps: using (F$_{PS}$2), we will firstly prove that (F) holds also for every $v\in C^{1,1}_0(\Omegabar):=\{u\in C^{1,1}(\Omegabar)\,\,|\,\,u_{|\dOmega}=0\}$;
then, by a density result, we will transfer (F) from $C^{1,1}_0(\Omegabar)$ to $H^2(\Omega)\cap\Ho$. We will make use of the following Lemma, which makes a well-known result more precise:

\begin{lem}\label{paynerevisited}
	Let $\Omega\subset\R^N$ be a bounded domain of class $C^1$ and $u\in C^{1,1}(\Omegabar)$. Then there exists a sequence $(u_k)_{k\in\N}\in C^\infty(\Omegabar)$ such that $u_k\rightarrow u$ in $H^2(\Omega)$ and $\|u_k\|_{W^{2,\infty}(\Omega)}\leq C\|u\|_{W^{2,\infty}(\Omega)}$ for some positive constant $C$.
\end{lem}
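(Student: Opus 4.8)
The plan is to produce the approximating sequence by a standard two-stage regularization: first extend $u$ slightly and mollify, then pull back along a family of diffeomorphisms that shrink $\Omega$ inside itself, so as to keep uniform control of the $W^{2,\infty}$ norm while gaining smoothness up to the closure. Concretely, since $\dOmega \in C^1$, I would fix a finite atlas of boundary charts in which $\Omega$ is locally the subgraph of a $C^1$ function; using a partition of unity one reduces to the model half-space situation. In each chart one translates the boundary portion inward by a small parameter $\varepsilon$ (a $C^1$ perturbation, hence bounded in operator norm on $W^{2,\infty}$ uniformly in $\varepsilon$), which places $\overline{\Omega}$ into the interior of the translated domain, and then convolves with a mollifier $\rho_\delta$ with $\delta \ll \varepsilon$. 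Since $u \in C^{1,1}(\overline\Omega) \subset W^{2,\infty}(\Omega)$ has a Lipschitz extension $\tilde u$ to a neighbourhood of $\overline\Omega$ with $\|\tilde u\|_{W^{2,\infty}} \le C\|u\|_{W^{2,\infty}(\Omega)}$ (Whitney-type or McShane extension componentwise on $\nabla u$), the mollified functions $\rho_\delta * \tilde u$ are smooth, satisfy $\|\rho_\delta * \tilde u\|_{W^{2,\infty}} \le \|\tilde u\|_{W^{2,\infty}}$, and converge to $u$ in $W^{1,\infty}$ and in $H^2(\Omega)$ as $\delta \to 0$ (strong $H^2$ convergence because second derivatives are in $L^\infty \subset L^2_{loc}$ and mollification converges in every $L^q_{loc}$, $q<\infty$). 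Gluing the local pieces with the partition of unity preserves both the norm bound (up to a constant depending only on the atlas) and the convergence, giving the desired $(u_k) \subset C^\infty(\overline\Omega)$.

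An alternative, perhaps cleaner, route I would keep in reserve: invoke the classical Adams–Hedberg / Meyers–Serrin type density theorem that $C^\infty(\overline\Omega)$ is dense in $W^{2,\infty}(\Omega)$ \emph{for the weak-* topology} when $\dOmega$ is $C^1$ — i.e. one can approximate in $H^2$ (equivalently, in $W^{2,q}$ for every finite $q$) while staying bounded in $W^{2,\infty}$. The point of Lemma~\ref{paynerevisited} over the bare statement "$C^\infty(\overline\Omega)$ dense in $H^2(\Omega)$" is exactly this simultaneous uniform $W^{2,\infty}$ bound, and that is what the mollification-plus-inward-shift construction delivers automatically, since mollification never increases the $L^\infty$ norm of any derivative. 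The reference to Payne in the lemma's name suggests the bound $\|u_k\|_{W^{2,\infty}} \le C\|u\|_{W^{2,\infty}}$ is the quantitative refinement the authors want for later use (presumably to pass to the limit in (F$_{PS}$2) via dominated convergence on $\dOmega$), so I would be careful to track the constant $C$ and note it depends only on $N$ and the $C^1$ character of $\dOmega$, not on $u$.

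The main obstacle is the interplay between the inward shift and the mollification near the boundary: one must choose $\delta$ small compared to $\varepsilon$ so that $\rho_\delta * \tilde u$ is defined and smooth on all of $\overline\Omega$ (the support condition $x + \operatorname{supp}\rho_\delta \subset \{\tilde u \text{ defined}\}$), yet one must then also let $\varepsilon \to 0$ to recover convergence \emph{in $\Omega$} rather than in a shrunken subdomain — so the argument is really a diagonal one, $u_k := \rho_{\delta_k} * \tilde u$ with $\delta_k \to 0$, and the inward shift is only needed to guarantee smoothness up to $\dOmega$, which for a convolution of a function already defined on a full neighbourhood of $\overline\Omega$ is immediate and requires no shift at all. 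Thus the truly delicate case is when no such neighbourhood extension is available with uniform bounds; here the $C^1$ regularity of $\dOmega$ is exactly what licenses the extension (flatten the boundary by a $C^1$ change of variables, reflect, and transform back, each step bounded on $W^{2,\infty}$ because a $C^1$ diffeomorphism acts boundedly — with a loss only in the constant — on $W^{2,\infty}$). Once the extension with a uniform bound is in hand, the rest is the routine mollification estimate, so I would present the extension step in detail and treat the convergence as standard.
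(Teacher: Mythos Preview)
Your proposal is correct and follows essentially the same route as the paper: the paper simply cites the standard density construction in Evans (local charts where $\Omega$ is a subgraph, rigid translation along $e_n$, mollification, partition of unity) and observes that the only extra ingredient is the elementary bound $\|\eta_\varepsilon * v\|_{L^\infty(\Omega_\varepsilon)} \le \|v\|_{L^\infty(\Omega)}$, which passes to all derivatives via $D^\alpha(\eta_\varepsilon * v) = \eta_\varepsilon * D^\alpha v$. One caution: your reserve claim that ``a $C^1$ diffeomorphism acts boundedly on $W^{2,\infty}$'' is false in general (the chain rule for second derivatives brings in $D^2\Phi$, which need not be bounded for $\Phi\in C^1$); fortunately neither Evans's construction nor your main argument actually needs this, since the inward shift is a rigid translation in the original coordinates, not a boundary-flattening change of variables.
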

\begin{proof}
	First of all notice that $C^{1,1}(\Omegabar)$ can be equivalently seen as $W^{2,\infty}(\Omega)$, which is a subset of $H^2(\Omega)$ since $\Omega$ is a bounded domain; moreover the fact that $C^\infty(\Omegabar)$ is dense in $H^2(\Omega)$ in $H^2(\Omega)$ norm if $\dOmega$ is of class $C^1$ is a standard fact (see \cite[section 5.3.3, Theorem 3]{E}), so the only statement to be verified is the $W^{2,\infty}(\Omega)$ estimate. Since the main tool in the proof of the $H^2(\Omega)$ convergence is the local approximation, which is achieved by mollification, we only have to prove that the same inequality holds there. So, let $v\in L^\infty(\Omega)$, $\varepsilon>0$ and consider
	$$v_\varepsilon(x):=(\eta_\varepsilon\ast v)(x)=\int_{B_\varepsilon(0)}\eta_\varepsilon(y)v(x-y)dy,$$
	where $\eta_\varepsilon$ is the standard mollifier in $\R^N$, that is $\eta_\varepsilon:=\varepsilon^{-n}\eta(\frac{x}{\varepsilon})$ and
	$$\eta(x)=\tilde{C}e^{\frac{1}{|x|^2-1}}\chi_{B_1(0)}(x),$$
	where $\tilde{C}>0$ such that $\int_{B_1(0)}\eta(z)dz=1.$
	So $v_\varepsilon$ is well-defined in $\Omega_\varepsilon:=\{x\in\Omega\,\,|\,\,d(x,\dOmega)>\varepsilon\}$ and we know that $v_\varepsilon\in C^\infty(\Omega_\varepsilon)$ and $\eta_\varepsilon$ is such that $\int_{B_\varepsilon(0)}\eta_\varepsilon(z)dz=1$.\\
	\noindent We claim that $\|v_\varepsilon\|_{L^\infty(\Omega_\varepsilon)}\leq \|v\|_{L^\infty(\Omega)}$. In fact,
	\begin{equation*}
	\|v_\varepsilon\|_{L^\infty(\Omega_\varepsilon)}\leq\sup_{x\in\Omega_\varepsilon}\int_{B_\varepsilon(0)}|\eta_\varepsilon(z)||v(x-z)|dz\leq\|v\|_{L^\infty(\Omega)}\int_{B_\varepsilon(0)}|\eta_\varepsilon(z)|dz=\|v\|_{L^\infty(\Omega)}.
	\end{equation*}
	Also for derivatives of $v$ the same inequality holds, because for any admissible multiindex $\alpha$ we know that $D^\alpha(v_\varepsilon)=(D^\alpha(v))_\varepsilon$ (see \cite[Lemma 7.3]{GT}).\\
	At this point, following the aforementioned proof of \cite{E}, it is easy to derive the desired result.	
\end{proof}

\begin{prop}\label{detbordo1}
	Let $\Omega\subset\R^2$ be a bounded domain of class $C^{1,1}$. Then, for all $u\in C^{1,1}_0(\Omegabar)$:
	$$\int_\Omega det(\nabla^2u)=\dfrac{1}{2}\int_\dOmega \kappa u_n^2.$$
\end{prop}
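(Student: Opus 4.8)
The plan is to deduce the identity for $C^{1,1}_0(\Omegabar)$ functions from the smooth identity (F$_{PS}$2) of Corollary \ref{startingpoint} by approximation, using the uniform $W^{2,\infty}$ bound provided by Lemma \ref{paynerevisited} to control the terms that are not continuous under $H^2$-convergence. First I would record two elementary facts. Since $u\in C^{1,1}_0(\Omegabar)$ vanishes on $\dOmega$, its tangential derivative $u_\tau=\frac{d}{ds}u(\gamma(s))$ vanishes identically on $\dOmega$. Moreover, as $\dOmega$ is of class $C^{1,1}$ we have $\kappa\in L^\infty(\dOmega)$, and $u_n\in L^\infty(\dOmega)$ because $u\in C^1(\Omegabar)$, so the right-hand side $\frac12\int_\dOmega\kappa u_n^2$ is well defined.

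Now apply Lemma \ref{paynerevisited}: there is a sequence $(u_k)_{k\in\N}\subset C^\infty(\Omegabar)$ with $u_k\to u$ in $H^2(\Omega)$ and $\|u_k\|_{W^{2,\infty}(\Omega)}\le C\|u\|_{W^{2,\infty}(\Omega)}$. Applying (F$_{PS}$2) to each $u_k$ gives
\[
\int_\Omega det(\nabla^2u_k)=\frac12\int_\dOmega\big(\kappa (u_k)_n^2-((u_k)_{n\tau}+(u_k)_{\tau n})(u_k)_\tau\big).
\]
The left-hand side converges: $det(\nabla^2u_k)=(u_k)_{xx}(u_k)_{yy}-(u_k)_{xy}^2$ is quadratic in the second derivatives, which converge strongly in $L^2(\Omega)$, so $\int_\Omega det(\nabla^2u_k)\to\int_\Omega det(\nabla^2u)$. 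For the boundary terms, the trace operator $H^1(\Omega)\to L^2(\dOmega)$ is continuous; applied componentwise to $\nabla u_k\in H^1(\Omega;\R^2)$ it yields $\nabla u_k|_\dOmega\to\nabla u|_\dOmega$ in $L^2(\dOmega;\R^2)$, hence $(u_k)_n\to u_n$ and $(u_k)_\tau\to u_\tau=0$ in $L^2(\dOmega)$, and along a subsequence a.e. on $\dOmega$. Since in addition $\|(u_k)_n\|_{L^\infty(\dOmega)}\le\|u_k\|_{W^{2,\infty}(\Omega)}\le C$ and $\kappa\in L^\infty(\dOmega)$, dominated convergence gives $\int_\dOmega\kappa(u_k)_n^2\to\int_\dOmega\kappa u_n^2$.

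The delicate point, and the main obstacle, is the term $\int_\dOmega((u_k)_{n\tau}+(u_k)_{\tau n})(u_k)_\tau$: the second-order boundary factors $(u_k)_{n\tau},(u_k)_{\tau n}$ involve tangential/mixed second derivatives on $\dOmega$, which are not controlled by the $H^2(\Omega)$-norm and need not converge. This is exactly where the $W^{2,\infty}$-estimate of Lemma \ref{paynerevisited} is used: it bounds these factors uniformly, $\|(u_k)_{n\tau}+(u_k)_{\tau n}\|_{L^\infty(\dOmega)}\le C$, so that
\[
\Big|\int_\dOmega((u_k)_{n\tau}+(u_k)_{\tau n})(u_k)_\tau\Big|\le C\int_\dOmega|(u_k)_\tau|\longrightarrow 0,
\]
because $(u_k)_\tau\to 0$ in $L^2(\dOmega)$ and hence in $L^1(\dOmega)$, as $\dOmega$ has finite length. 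Passing to the limit in the displayed identity for $u_k$ then yields $\int_\Omega det(\nabla^2u)=\frac12\int_\dOmega\kappa u_n^2$, which is the claim. (Note that convexity of $\Omega$ plays no role here; only $\dOmega\in C^{1,1}$ is needed.)
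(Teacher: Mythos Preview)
Your proof is correct and follows essentially the same approach as the paper's: approximate by smooth functions via Lemma \ref{paynerevisited}, apply (F$_{PS}$2), pass to the limit in the left-hand side and in the $\kappa u_n^2$ term by $H^2$-convergence and the trace theorem, and kill the mixed boundary term by combining the uniform $W^{2,\infty}$ bound on $(u_k)_{n\tau}+(u_k)_{\tau n}$ with $(u_k)_\tau\to 0$ in $L^2(\dOmega)$. The only cosmetic differences are that the paper bounds the second-order boundary factors in $L^2(\dOmega)$ rather than $L^\infty(\dOmega)$ and handles $\int_\dOmega\kappa(u_k)_n^2$ directly via the trace theorem rather than dominated convergence.
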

\begin{proof}
	Applying Lemma \ref{paynerevisited}, let $(u_k)_{k\in\N}\subset C^{\infty}(\Omegabar)$ be a sequence converging to $u$ in $H^2(\Omega)$, whose norms in $W^{2,\infty}$ are controlled by the $W^{2,\infty}$ norm of $u$. By Corollary \ref{startingpoint}, the following identity holds:
	\begin{equation}\label{FCk}
	K(u_k)=\dfrac{1}{2}\int_{\partial\Omega}[\kappa(u_k)_n^2-((u_k)_{n\tau}+(u_k)_{\tau n})(u_k)_\tau].
	\end{equation}
	By the convergence in $H^2(\Omega)$ one clearly has $K(u_k)\rightarrow K(u)$; moreover, since $\kappa\in L^\infty(\dOmega)$ and using Trace Theorem, one can deduce also that 
	$$\int_{\partial\Omega}\kappa(u_k)_n^2\rightarrow\int_\dOmega \kappa u_n^2.$$
	Finally we have to consider the terms in which tangential derivatives are involved. Similarly to the normal derivative, one has $(u_k)_\tau\rightarrow u_\tau$ in $L^2(\dOmega)$, so  $(u_k)_\tau\rightarrow 0$ in $L^2(\dOmega)$, since $u_{|\dOmega}=0$. Furthermore,
	$$(u_k)_{n\tau}=\nabla(u_k)_n\cdot\tau=\nabla(\nabla u_k\cdot n)\cdot\tau=(\nabla^2u_k\cdot n+\nabla u_k\cdot\nabla n)\cdot\tau$$
	and (see \cite[Chapter 4]{Sperb})
	$$(u_k)_{\tau n}=\sum_{i,j=1}^2\dfrac{\partial^2 u_k}{\partial x_i\partial x_j}\tau_i n_j$$
	and one can infer that $(u_k)_{n\tau}$ and $(u_k)_{\tau n}$ are uniformly bounded in $L^2(\dOmega)$. In fact, since $u_k$ are $C^\infty$ functions and using Lemma \ref{paynerevisited}:
	\begin{equation*}
	\begin{split}
	\|(u_k)_{n\tau}\|_{L^2(\dOmega)}&\leq|\dOmega|^{1/2}\|(u_k)_{n\tau}\|_{L^\infty(\dOmega)}\leq|\dOmega|^{1/2}(\||\nabla^2u_k\cdot n|\|_{L^\infty(\dOmega)}+\||\nabla u_k\cdot\nabla n|\|_{L^\infty(\dOmega)})\\
	&\leq2|\dOmega|^{1/2}\|n\|_{W^{1,\infty}(\dOmega)}\|u_k\|_{W^{2,\infty}(\Omega)}\leq C(\Omega)\|u\|_{W^{2,\infty}(\Omega)}
	\end{split}
	\end{equation*}
	and similarly for $(u_k)_{\tau n}$. Consequently,
	\begin{equation*}
	\int_{\dOmega}\bigg((u_k)_{n\tau}+(u_k)_{\tau n}\bigg)(u_k)_\tau\rightarrow 0.
	\end{equation*}
\end{proof}

\noindent In order to extend (F) to the space $H^2(\Omega)\cap\Ho$, we need a density result (Lemma \ref{StylianouRevisited} below) which is taken from \cite[Theorem 2.2.4]{Sty} and that can be adapted to our context: in fact, it concerns $C^2$ functions and diffeomorphisms but, with a little care, one can obtain the same result also in the class $C^{1,1}$.
\begin{defn}(\cite{A}, 3.40, p.77)
	Let $\varPhi$ be a one-to-one transformation of a domain $\Omega\subset\R^N$ onto a domain $G\subset\R^N$ having inverse $\varPsi:=\varPhi^{-1}$. We say that $\varPhi$ is a $C^{1,1}$ diffeomorphism if, writing $\varPhi=(\varPhi_1,...,\varPhi_N)$ and $\varPsi=(\varPsi_1,...,\varPsi_N)$, then $\varPhi_i\in C^{1,1}(\Omegabar)$ and $\varPsi_i\in C^{1,1}(\overline{G})$ for every $i\in\{1,...,N\}$.
\end{defn}

\begin{lem}\label{StylianouRevisited}
	Let $\Omega\subset\R^N$ be bounded and open such that for every $x\in\dOmega$ there exists a $j\in\{0,...,N-1\}$, $\varepsilon>0$ and a $C^{1,1}$-diffeomorphism $\varPhi:\R^N\rightarrow\R^N$, such that the following hold:
	\begin{itemize}
		\item $\varPhi(x)=0$;
		\item $\varPhi(B_\varepsilon(x)\cap\Omega)\subset S_j:=\{x=(x_1,...,x_N)\in\Omega\,|\,x_i>0\,,\forall i>j\}$;
		\item $\varPhi(B_\varepsilon(x)\cap\dOmega)\subset\partial S_j$.
	\end{itemize}
	Then:
	$$\overline{C_0^{1,1}(\Omegabar)}^{\|\cdot\|_{H^2(\Omega)}}=H^2(\Omega)\cap\Ho.$$
\end{lem}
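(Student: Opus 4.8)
The plan is to follow the classical density strategy for Sobolev-type spaces: localize near the boundary via a partition of unity, flatten the boundary with the given $C^{1,1}$-diffeomorphisms, and approximate in the flattened half-space model, being careful that all the operations involved preserve the $H^2$-convergence and the vanishing of the trace. First I would note the easy inclusion $\overline{C_0^{1,1}(\Omegabar)}^{\|\cdot\|_{H^2}}\subseteq H^2(\Omega)\cap\Ho$: every $u\in C_0^{1,1}(\Omegabar)\subset H^2(\Omega)$ has zero trace, and since the trace operator $H^2(\Omega)\to H^{3/2}(\dOmega)$ is continuous (the boundary being $C^{1,1}$, hence Lipschitz), the limit of such functions in $H^2$ still has zero trace, i.e.\ lies in $\Ho$. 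The substance is the reverse inclusion: given $u\in H^2(\Omega)\cap\Ho$, produce a sequence in $C_0^{1,1}(\Omegabar)$ converging to it in $H^2$.

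For the reverse inclusion I would argue as follows. Fix $u\in H^2(\Omega)\cap\Ho$ and $\delta>0$. By compactness of $\dOmega$, cover it by finitely many balls $B_{\varepsilon_i}(x_i)$ on each of which the hypothesis provides a $C^{1,1}$-diffeomorphism $\varPhi_i$ flattening the boundary to a piece of $\partial S_{j_i}$; add one more open set $\Omega_0\subset\subset\Omega$ so that $\{\Omega_0\}\cup\{B_{\varepsilon_i}(x_i)\}$ covers $\Omegabar$, and take a subordinate partition of unity $\{\theta_i\}$. Write $u=\sum_i\theta_i u$. On $\Omega_0$ the function $\theta_0 u$ is compactly supported inside $\Omega$, so a standard mollification yields $C_0^\infty$ approximants converging in $H^2$. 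On each boundary patch, transport $\theta_i u$ by $\varPhi_i$ to a function $v_i:=(\theta_i u)\circ\varPhi_i^{-1}$ defined on a neighborhood of the flat piece $\partial S_{j_i}$ in $S_{j_i}$, vanishing on that flat piece; the chain rule shows $v_i\in H^2$ of the model domain, because $\varPhi_i\in C^{1,1}$ means its first derivatives are Lipschitz and hence bounded, so second derivatives of the composition are controlled. In the half-space model one approximates $v_i$ by translating it slightly into the interior (shifting in the $x_{j_i+1}$-direction by $t$) and then mollifying at scale $\ll t$: this produces smooth functions, still vanishing on $\partial S_{j_i}$ after a further multiplication cutting off near that face, converging to $v_i$ in $H^2$. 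Pulling back by $\varPhi_i$ (again using that $\varPsi_i=\varPhi_i^{-1}\in C^{1,1}$, so the pullback is bounded on $H^2$) gives $C_0^{1,1}(\Omegabar)$ approximants of $\theta_i u$. Summing over $i$ yields the desired sequence; since the smooth approximants are in particular $C^{1,1}$, and the trace-flattening argument keeps them vanishing on $\dOmega$, they belong to $C_0^{1,1}(\Omegabar)$.

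The main obstacle, and the reason this is not entirely routine, is the $C^{1,1}$ (rather than $C^2$) regularity of the diffeomorphisms: one must verify that composition with a $C^{1,1}$-diffeomorphism maps $H^2$ to $H^2$ boundedly in both directions. This is where the second derivatives of $\varPhi_i$ enter only through the chain-rule term $\nabla u\cdot\nabla^2\varPhi_i$, and $\nabla^2\varPhi_i\in L^\infty$ exactly because $\varPhi_i\in C^{1,1}$; the same remark applied to $\varPsi_i$ handles the pullback. A secondary technical point is ensuring the interior-shift-then-mollify step genuinely lands in a space of functions vanishing on the flat face — this is why the extra cutoff near $\partial S_{j_i}$ is inserted, and one checks its $H^2$-cost is $O(\delta)$ by the standard estimate on cutoffs at distance $\sim t$ from the face combined with the fact that $v_i$ and its gradient vanish there. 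With these two points settled, the partition-of-unity assembly and the passage to the limit $\delta\to0$ are standard, essentially the argument of \cite[Theorem 2.2.4]{Sty} with $C^2$ replaced by $C^{1,1}$ throughout and each appeal to boundedness of second derivatives of the transition maps justified by the $C^{1,1}$ hypothesis as above.
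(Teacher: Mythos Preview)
Your proposal is correct and matches the paper's approach: the paper does not supply an independent proof but simply cites \cite[Theorem 2.2.4]{Sty} and remarks that the argument there, originally stated for $C^2$ diffeomorphisms, goes through verbatim in the $C^{1,1}$ class. The localization/flattening/shift-and-mollify scheme you describe is precisely that argument, and your identification of the one nontrivial adaptation---that composition with a $C^{1,1}$-diffeomorphism is bounded on $H^2$ because $\nabla^2\varPhi\in L^\infty$---is exactly the ``little care'' the paper alludes to.
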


\begin{thm}\label{detbordo2}
	Let $\Omega\subset\R^2$ be a bounded domain of class $C^{1,1}$. Then, for all $u\in H^2(\Omega)\cap\Ho$:
	\begin{equation}\tag{F}
	\int_\Omega det(\nabla^2u)=\dfrac{1}{2}\int_\dOmega \kappa u_n^2.
	\end{equation}
\end{thm}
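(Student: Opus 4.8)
The plan is to combine Proposition~\ref{detbordo1} with the density result Lemma~\ref{StylianouRevisited} by a continuity argument. Since $\Omega\subset\R^2$ is a bounded domain of class $C^{1,1}$, the local chart hypotheses of Lemma~\ref{StylianouRevisited} are satisfied (with $j=N-1=1$), so $C_0^{1,1}(\Omegabar)$ is dense in $H^2(\Omega)\cap\Ho$ with respect to the $\|\cdot\|_{H^2(\Omega)}$ norm. Fix $u\in H^2(\Omega)\cap\Ho$ and pick a sequence $(v_k)_{k\in\N}\subset C_0^{1,1}(\Omegabar)$ with $v_k\to u$ in $H^2(\Omega)$. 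By Proposition~\ref{detbordo1}, for each $k$ one has the identity $\int_\Omega \det(\nabla^2 v_k)=\tfrac12\int_\dOmega\kappa\,(v_k)_n^2$, and the goal is to pass to the limit on both sides.

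First I would handle the left-hand side. The map $w\mapsto\int_\Omega\det(\nabla^2 w)=\int_\Omega(w_{xx}w_{yy}-w_{xy}^2)$ is continuous on $H^2(\Omega)$: it is quadratic in the second derivatives, and $\|v_k-u\|_{H^2(\Omega)}\to0$ controls the $L^2$-norms of all second-order derivatives, so by bilinearity and the Cauchy--Schwarz inequality $K(v_k)\to K(u)$. For the right-hand side I would use the trace theorem: the normal derivative map $w\mapsto w_n|_{\dOmega}$ is continuous from $H^2(\Omega)$ into $H^{1/2}(\dOmega)\hookrightarrow L^2(\dOmega)$ (this uses only the Lipschitz, hence a fortiori $C^{1,1}$, regularity of $\dOmega$), so $(v_k)_n\to u_n$ in $L^2(\dOmega)$. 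Since $\kappa\in L^\infty(\dOmega)$ because $\dOmega\in C^{1,1}$, the quadratic form $w_n\mapsto\int_\dOmega\kappa\,w_n^2$ is continuous on $L^2(\dOmega)$ (again Cauchy--Schwarz plus the $L^\infty$ bound on $\kappa$), whence $\int_\dOmega\kappa\,(v_k)_n^2\to\int_\dOmega\kappa\,u_n^2$.

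Passing to the limit in \eqref{FCk}-type identity of Proposition~\ref{detbordo1} then yields $\int_\Omega\det(\nabla^2 u)=\tfrac12\int_\dOmega\kappa\,u_n^2$, which is exactly (F). The main obstacle, and the reason the previous steps were needed, is precisely the availability of the density Lemma~\ref{StylianouRevisited} in the $C^{1,1}$ class rather than the $C^{2,1}$ class used by Parini and Stylianou; once that density is granted, the remaining work is the routine continuity of the two quadratic functionals under $H^2$-convergence, which is where the $C^{1,1}$ regularity of $\dOmega$ enters (to guarantee both $\kappa\in L^\infty(\dOmega)$ and the boundedness of the trace operator). I would also remark that no convexity of $\Omega$ is needed here — the sign of $\kappa$ plays no role in the identity itself — and that the identity immediately gives the rewriting $J_\sigma(u)=\tfrac12\int_\Omega(\Delta u)^2-\tfrac{1-\sigma}{2}\int_\dOmega\kappa\,u_n^2-\int_\Omega F(x,u)$ which is the form used in the positivity arguments of the next subsections.
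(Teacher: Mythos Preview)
Your proof is correct and follows exactly the same approach as the paper: invoke Lemma~\ref{StylianouRevisited} to approximate $u$ in $H^2(\Omega)$ by a sequence in $C^{1,1}_0(\Omegabar)$, apply Proposition~\ref{detbordo1} to each term, and pass to the limit on both sides using the $H^2$-continuity of $K$ and the trace theorem together with $\kappa\in L^\infty(\dOmega)$. Your write-up is in fact slightly more detailed than the paper's, which simply refers back to the analogous limit arguments in the proof of Proposition~\ref{detbordo1}.
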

\begin{proof}
Let $u\in H^2(\Omega)\cap\Ho$; since the assumptions on the boundary are clearly fulfilled if $\dOmega$ is of class $C^{1,1}$, applying Lemma \ref{StylianouRevisited} we get an approximating sequence $(u_k)_{k\in\N}\subset C^{1,1}_0(\Omegabar)$ converging in $H^2(\Omega)$ to $u$. With the same steps as in the proof of Proposition \ref{detbordo1}, by the $H^2(\Omega)$ convergence, we have both $K(u_k)\rightarrow K(u)$ and $\int_{\partial\Omega}\kappa(u_k)_n^2\rightarrow\int_\dOmega \kappa u_n^2$ and one concludes by the uniqueness of the limit.
\end{proof}

\subsection{From the functional to the PDE}
As already briefly mentioned in the introduction, if the boundary is smooth enough ($\dOmega$ of class $C^{4,\alpha}$ for $\alpha>0$), standard elliptic regularity results apply and one can integrate by parts the Euler-Lagrange equation from $J_\sigma$ to see that critical points satisfy
\eqref{PDEpSteklovg}.
On the other hand, assuming only that the boundary is of class $C^{1,1}$, the signed curvature is well-defined in $L^\infty(\Omega)$ and we can have a weak formulation of problem \eqref{PDEpSteklovg}. More precisely, in this case, by \textit{weak solution} of \eqref{PDEpSteklovg} here we mean a function $u\in H^2(\Omega)\cap\Ho$ which satisfies
\begin{equation}\label{defsol}
\int_\Omega\Delta u\Delta\varphi-(1-\sigma)\int_\dOmega \kappa u_n\varphi_n = \int_\Omega g(x)|u|^{p-1}u\varphi\quad\quad\forall\varphi\in H^2(\Omega)\cap\Ho.
\end{equation}
Consequently, we can equivalently say "ground states of $J_\sigma$" or "ground state solutions for \eqref{PDEpSteklovg}". For a proof of the equivalence of the two problems, we refer to \cite{GS}.

\subsection{Positivity of ground states in convex domains}
Assuming that $\dOmega$ is of class $C^{1,1}$, Theorem \ref{detbordo2} enables us to rewrite the functional $J_\sigma$ in a more convenient way: in fact, we deduce that for every $u\in H^2(\Omega)\cap\Ho$, 
\begin{equation}\label{Jnew}
J_\sigma(u)=\int_\Omega\dfrac{(\Delta u)^2}{2} - \dfrac{1-\sigma}{2}\int_\dOmega\kappa u_n^2-\int_\Omega F(x,u),
\end{equation}
where we recall that $F(x,s)=\int_0^sf(x,t)dt$.\\
With this formulation, now we are able to establish the positivity of ground states of the functional $J_\sigma$ in convex domains with boundary of class $C^{1,1}$ if the density function $f(x,u)$ is nonnegative, both in sublinear and superlinear case. We will make use of the method of the superharmonic function, which is quite a standard tool when dealing with fourth order problems and which has already been successfully used, for instance, in \cite{BGM}, \cite{GS} or \cite{NStyS} and whose core is contained in the following lemma:

\begin{lem}\label{StandardArgumentPPP}
	Let $\Omega\subset\R^N$ be a bounded convex domain; fix $u\in H^2(\Omega)\cap\Ho$ and define $\tilde u$ as the unique solution in $\Ho$ of the following Poisson problem:
	\begin{equation}\label{tildePb}
	\begin{cases}
	-\Delta\tilde{u}=|\Delta u|\quad&\mbox{in }\Omega\\
	\tilde{u}=0\quad&\mbox{on }\dOmega.
	\end{cases}
	\end{equation}
	Then $\tilde u\in H^2(\Omega)\cap\Ho$ and either $\tilde u>|u|$ in $\Omega$ and ${\tilde u}_n^2\geq u_n^2$ on $\dOmega$ or $\tilde u=u$ in $\Omega$.
\end{lem}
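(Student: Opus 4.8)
The plan is to deduce everything from elliptic regularity on convex domains together with the weak and strong maximum principles and an elementary sign property of normal derivatives, applied to the two auxiliary functions $\tilde u-u$ and $\tilde u+u$.

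First I would settle the regularity. Since $u\in H^2(\Omega)$ we have $|\Delta u|\in L^2(\Omega)$, so \eqref{tildePb} admits a unique weak solution $\tilde u\in\Ho$; and because $\Omega$ is bounded and convex, it is classical that the Poisson problem with $L^2$ datum has its solution in $H^2(\Omega)$, so in fact $\tilde u\in H^2(\Omega)\cap\Ho$ with $\|\tilde u\|_{H^2(\Omega)}\leq C\|\Delta u\|_2$. This is the only place where convexity enters, and it is precisely what makes $u_n$ and $\tilde u_n$ well defined, as $L^2(\dOmega)$-traces of $\nabla u,\nabla\tilde u\in H^1(\Omega)^N$, so that the last assertion of the lemma is even meaningful (see e.g. \cite{GS} and the references therein).

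Next I would run the comparison. Put $w^\pm:=\tilde u\pm u\in H^2(\Omega)\cap\Ho$. From $-\Delta\tilde u=|\Delta u|$ one gets, weakly, $-\Delta w^-=|\Delta u|+\Delta u\geq0$ and $-\Delta w^+=|\Delta u|-\Delta u\geq0$ in $\Omega$, using that $|t|\pm t\geq0$ for every $t\in\R$. Testing each equation with the negative part $(w^\pm)^-\in\Ho$ yields the weak maximum principle, hence $w^\pm\geq0$, i.e. $\tilde u\geq|u|$ in $\Omega$. Since $\Omega$ is connected and $w^\pm$ are nonnegative weak supersolutions of $-\Delta$, the strong maximum principle (\cite[Theorem 8.19]{GT}) forces each $w^\pm$ to be either $\equiv0$ or $>0$ in $\Omega$: if $w^-\equiv0$ then $\tilde u=u$ (and automatically $-\Delta u=|\Delta u|\geq0$), the second alternative; the degenerate subcase $w^+\equiv0$ gives $\tilde u=-u$, which is symmetric to $\tilde u=u$ under $u\mapsto-u$; and in the remaining case $w^->0$ and $w^+>0$, so $\tilde u>\max\{u,-u\}=|u|$ in $\Omega$.

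Finally, for the boundary estimate I would invoke the standard fact that a function $w\in H^2(\Omega)\cap\Ho$ with $w\geq0$ in $\Omega$ satisfies $w_n\leq0$ a.e. on $\dOmega$ (again relying on the regularity of $\dOmega$ secured in the first step; cf. \cite{GS,NStyS}). Applied to $w^-$ and $w^+$ this gives $\tilde u_n\leq u_n$ and $\tilde u_n\leq-u_n$, hence $\tilde u_n\leq-|u_n|$ on $\dOmega$; since $\tilde u\geq0$ in $\Omega$ also gives $\tilde u_n\leq0$, we obtain $-\tilde u_n\geq|u_n|\geq0$ and therefore $\tilde u_n^2\geq u_n^2$. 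The part I expect to be the real obstacle is exactly this boundary input: upgrading to $H^2$-regularity up to $\dOmega$ (which needs convexity) and then justifying the sign of the normal derivative for a merely $H^2$ function rather than a classical solution. The interior comparison itself (weak and strong maximum principles) is routine.
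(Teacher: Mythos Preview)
Your proposal is correct and follows essentially the same route as the paper: use convexity to gain $H^2$-regularity of $\tilde u$ (the paper cites Adolfsson, you phrase it as ``Poisson with $L^2$ datum on convex domains''), then apply the maximum principle to $\tilde u\pm u$ and read off the sign of the normal derivatives. The only cosmetic difference is that the paper invokes the strong-solution maximum principle \cite[Theorem~9.6]{GT} directly, while you go through the weak formulation and \cite[Theorem~8.19]{GT}; both are legitimate since $\tilde u\pm u\in H^2(\Omega)=W^{2,N}_{\mathrm{loc}}$ here. Your explicit treatment of the degenerate subcase $w^+\equiv0$ (i.e.\ $\tilde u=-u$) is a nice touch that the paper glosses over; note, however, that this case does not literally fit the dichotomy as stated in the lemma, so in a write-up you should either absorb it into the ``$\tilde u=u$'' alternative by symmetry (as you suggest) or observe that it never arises in the applications, where one only needs that $\tilde u\not\equiv u$ forces a strict inequality somewhere.
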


\begin{proof}
	Since $\Omega$ is convex by assumption, it satisfies in particular a uniform external ball condition and thus, by \cite{Adolfsson}, we infer $\tilde u\in H^2(\Omega)$. Suppose $\tilde u\not\equiv u$. Since in particular $-\Delta \tilde u\geq\Delta u$ holds, by the maximum principle for strong solutions (see \cite[Theorem 9.6]{GT}), one has $\tilde u> -u$ in $\Omega$ and so $\tilde{u}_n \leq -u_n$. Similarly, $-\Delta \tilde{u}\geq-\Delta u$, implies also $\tilde u> u$ and $\tilde{u}_n \leq u_n$ and so, combining them, we have the result.
\end{proof}

\begin{prop}\label{positivitysublinear}(Sublinear Case)
	Let $\Omega\subset\R^2$ be a bounded convex domain with $\dOmega$ of class $C^{1,1}$ and $\sigma\in(-1,1]$. In addition to the assumption (H), suppose also that $f\geq 0$ and positive for a subset of positive measure. If $u\in H^2(\Omega)\cap\Ho$ is a nontrivial minimizer of $J_\sigma$, then $u$ is strictly superharmonic in $\Omega$, thus positive.
\end{prop}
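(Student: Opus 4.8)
The plan is to combine the boundary representation \eqref{Jnew} of $J_\sigma$ (legitimate since $\dOmega\in C^{1,1}$, by Theorem \ref{detbordo2}) with the superharmonic substitution of Lemma \ref{StandardArgumentPPP}. Let $u$ be a nontrivial minimizer of $J_\sigma$ and let $\tilde u\in H^2(\Omega)\cap\Ho$ solve \eqref{tildePb}. By Lemma \ref{StandardArgumentPPP} exactly one of the following holds: $\tilde u\equiv u$, or $\tilde u>|u|$ in $\Omega$ and $\tilde u_n^2\geq u_n^2$ on $\dOmega$. I will rule out the second alternative and show that the first yields the claim.

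Assume the second alternative and estimate $J_\sigma(\tilde u)$ against $J_\sigma(u)$ via \eqref{Jnew}. The leading term is unchanged because $(\Delta\tilde u)^2=|\Delta u|^2=(\Delta u)^2$ a.e.; the boundary term does not increase because convexity of $\Omega$ gives $\kappa\geq 0$ on $\dOmega$, $\sigma\leq 1$ gives $1-\sigma\geq 0$, $\kappa\in L^\infty(\dOmega)$ makes the trace terms meaningful, and $\tilde u_n^2\geq u_n^2$; and $-\int_\Omega F(x,\tilde u)\leq -\int_\Omega F(x,u)$ because $f\geq 0$ makes $F(x,\cdot)$ nondecreasing while $\tilde u>|u|\geq u$, the integrals being finite since $\tilde u\in L^\infty(\Omega)$ and \eqref{growth} holds. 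Hence $J_\sigma(\tilde u)\leq J_\sigma(u)$, so minimality forces equality, in particular $\int_\Omega F(x,\tilde u)=\int_\Omega F(x,u)$; combined with the pointwise inequality this yields $F(x,\tilde u(x))=F(x,u(x))$ a.e., so $F(x,\cdot)$ is constant on $[u(x),\tilde u(x)]$ and, $f(x,\cdot)$ being continuous, $f(x,u(x))=0$ for a.e.\ $x\in\Omega$. But a minimizer of the $C^1$ functional $J_\sigma$ on the Banach space $H^2(\Omega)\cap\Ho$ is a critical point, so $0=J_\sigma'(u)[u]=\|u\|_{H_\sigma(\Omega)}^2-\int_\Omega f(x,u)\,u=\|u\|_{H_\sigma(\Omega)}^2$, whence $u\equiv 0$ by Lemma \ref{eqnorm}, contradicting nontriviality.

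Therefore $\tilde u\equiv u$, i.e.\ $-\Delta u=|\Delta u|\geq 0$ in $\Omega$ with $u=0$ on $\dOmega$, so $u$ is superharmonic; moreover $-\Delta u\equiv 0$ would force $u\equiv 0$, so $-\Delta u\not\equiv 0$ and $u$ is strictly superharmonic, hence positive in $\Omega$ by the strong maximum principle. The delicate step is the equality-case analysis in the second paragraph: passing from the energy identity $\int_\Omega F(x,\tilde u)=\int_\Omega F(x,u)$ to the pointwise vanishing of $f(x,u)$ and then closing via the Euler--Lagrange identity; the energy comparison itself and the concluding superharmonicity argument are routine once Theorem \ref{detbordo2}, Lemma \ref{StandardArgumentPPP} and Lemma \ref{eqnorm} are available.
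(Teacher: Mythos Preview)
Your proof is correct and follows the same superharmonic-replacement strategy as the paper, but you handle the decisive inequality more carefully. The paper simply asserts that $F(x,u)<F(x,\tilde u)$ and hence $J_\sigma(\tilde u)<J_\sigma(u)$, which is not fully justified from $f\geq 0$ alone: if $f(x,\cdot)$ happens to vanish on the interval $[u(x),\tilde u(x)]$ for a.e.\ $x$, the strict inequality fails. You instead establish only $J_\sigma(\tilde u)\leq J_\sigma(u)$, use minimality to force equality term by term, and then close the argument through the Euler--Lagrange identity $\|u\|_{H_\sigma}^2=\int_\Omega f(x,u)u=0$. This equality-case analysis is the genuine addition; it makes the proof watertight and, incidentally, shows that the hypothesis ``$f$ positive on a subset of positive measure'' is not actually needed for this step (it is used elsewhere to guarantee that a nontrivial minimizer exists). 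Everything else---the boundary rewriting via Theorem~\ref{detbordo2}, the comparison of each of the three terms, and the concluding strict superharmonicity---matches the paper's argument.
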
	
\begin{proof}
	Firstly notice that $\kappa\geq 0$ a.e. on $\dOmega$ by the convexity of $\Omega$. From $u$, define its superharmonic function $\tilde u$ as in Lemma \ref{StandardArgumentPPP}. Supposing $\tilde u\not\equiv u$, by that result we can infer
	\begin{equation}\label{positivitysublinearproof}
	\begin{split}
	J_\sigma(\tilde{u})&=\int_\Omega\dfrac{(\Delta \tilde{u})^2}{2} - \dfrac{1-\sigma}{2}\int_\dOmega\kappa \tilde{u}_n^2-\int_\Omega F(x,\tilde{u})\\
	&\leq\int_\Omega\dfrac{(\Delta u)^2}{2}- \dfrac{1-\sigma}{2}\int_\dOmega\kappa u_n^2-\int_\Omega F(x,\tilde{u}).
	\end{split}
	\end{equation}
	Nevertheless, since $\frac{\partial F}{\partial s}=f\geq 0$, we have also that $F(x,u)<F(x,\tilde{u})$, and thus $J_\sigma(\tilde{u})< J_\sigma(u)$, which leads to a contradiction. Hence necessarily $\tilde u$ coincides with $u$, so $-\Delta u=-\Delta\tilde u=|\Delta u|\geq 0$. As $u=0$ on $\dOmega$ and $u\not\equiv0$, we deduce $u>0$ in $\Omega$.
\end{proof}

	It is clear that, when $f(x,0)\not=0$, by Proposition \ref{coercKLg}, we always find a \textit{nontrivial} global minimizer, which is positive by Proposition \ref{positivitysublinear}. For homogeneous nonlinearities this is not true in general. Anyway, for our model $f(x,s)=g(x)|s|^{p-1}s$, if we restrict our attention to the Nehari set, we easily see $J_\sigma(u)=(\frac{1}{2}-\frac{1}{p+1})\|u\|_{H_\sigma}<0$ for every $u\not=0$. So it is clear that in the minimization process we do not fall on 0. The same argument holds for more general nonlinearities $f(x,u)$, provided
	$$\dfrac{f(x,u)u}{2}-F(x,u)<0\qquad\mbox{for all }u\in H^2(\Omega)\cap\Ho.$$
	For instance this holds when $f(x,s)=g(x)|s|^{p-1}s+h(x)|s|^{q-1}s$, for $g,h>0$, $p,q\in(0,1)$.

\begin{remark}
	We stress here that, as a direct consequence of Proposition \ref{positivitysublinear}, we have obtained the positivity preserving property also in the case of $f$ not depending on $u$, i.e. for the linear Kirchhoff-Love functional $I_\sigma$ (cf. also Remark \ref{modelcase}). This generalizes the corresponding result by Parini and Stylianou \cite[Theorem 3.1]{PS} for bounded convex domains assuming only $C^{1,1}$ regularity on the boundary.
\end{remark}

In our sublinear model case $f(x,s)=g(x)|s|^{p-1}s$, $p\in(0,1)$, something more may be deduced: in fact, Lemma \ref{t*lemma} still applies and, with the same steps as in the proof of Lemma \ref{Neclosed}, (reversing the inequalities since now $p-1<0$), one ends up with
$$\|u\|_{H^2(\Omega)}\leq\bigg(\dfrac{\|g\|_1C(\Omega)}{(1-|\sigma|)C_0^{-1}}\bigg)^{\frac{1}{1-p}}\quad\quad\mbox{for all}\,\,u\in\Ne_\sigma.$$
As a result, we can state the following:
\begin{prop}\label{upperboundH^2}
	Let $\Omega$ be a bounded Lipschitz domain in $\R^2$ and let $g\in L^1(\Omega)$ be positive a.e. in $\Omega$. For every $\sigma\in(-1,1)$ fixed, all critical points of $J_\sigma$ with $f(x,s)=g(x)|s|^{p-1}s$ and $p\in(0,1)$ are uniformly bounded in $H^2(\Omega)$.
\end{prop}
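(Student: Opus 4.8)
The plan is to observe that the statement is an immediate consequence of the Nehari identity together with the Sobolev embedding, exactly as in the computation sketched just before the proposition, so no mountain‑pass or deformation machinery is needed. First I would note that if $u=0$ there is nothing to prove, so let $u\in H^2(\Omega)\cap\Ho\setminus\{0\}$ be an arbitrary critical point of $J_\sigma$. Then in particular $J_\sigma'(u)[u]=0$, which is precisely the condition $u\in\Ne_\sigma$; hence, without ever invoking the explicit expression \eqref{t*},
$$\|u\|_{H_\sigma(\Omega)}^2=\int_\Omega(\Delta u)^2-2(1-\sigma)\int_\Omega det(\nabla^2u)=\int_\Omega g(x)|u|^{p+1}.$$

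Next I would estimate the two sides of this identity against $\|u\|_{H^2(\Omega)}$. For the left‑hand side, since $\sigma\in(-1,1)$ the norm equivalence of Lemma \ref{eqnorm} is available under the sole hypothesis that $\dOmega$ is Lipschitz (the external ball condition is needed only in the borderline case $\sigma=1$); combined with \eqref{ineqnorm} and the definition \eqref{C_0} of $C_0$ this yields $\|u\|_{H_\sigma(\Omega)}^2\geq(1-|\sigma|)\||\nabla^2u|\|_2^2\geq(1-|\sigma|)C_0^{-1}\|u\|_{H^2(\Omega)}^2$. For the right‑hand side, H\"older's inequality together with the continuous embedding $H^2(\Omega)\hookrightarrow L^\infty(\Omega)$ (valid for bounded Lipschitz domains in $\R^2$, with a constant $C=C(\Omega)$) gives $\int_\Omega g(x)|u|^{p+1}\leq\|g\|_1\|u\|_\infty^{p+1}\leq\|g\|_1 C^{p+1}\|u\|_{H^2(\Omega)}^{p+1}$.

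Putting these together produces
$$(1-|\sigma|)C_0^{-1}\,\|u\|_{H^2(\Omega)}^2\leq\|g\|_1\,C^{p+1}\,\|u\|_{H^2(\Omega)}^{p+1},$$
and since $u\neq0$ one may divide by $\|u\|_{H^2(\Omega)}^{p+1}$; because $p<1$ the leftover exponent $1-p$ on the left is \emph{positive}, so this rearranges to the explicit a priori bound
$$\|u\|_{H^2(\Omega)}\leq\bigg(\dfrac{\|g\|_1\,C(\Omega)^{p+1}}{(1-|\sigma|)C_0^{-1}}\bigg)^{\frac{1}{1-p}},$$
whose right‑hand side depends only on $\Omega$, $g$, $p$ and $\sigma$, not on the particular critical point. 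Together with the trivial case $u=0$ this gives the claimed uniform bound. I do not expect any real obstacle here: the only two points deserving a line of care are that a critical point automatically belongs to $\Ne_\sigma$ (so this is the same chain of inequalities as in Lemma \ref{Neclosed}, with the inequalities going the other way because $p-1<0$), and that Lemma \ref{eqnorm} may be used with mere Lipschitz regularity once $\sigma<1$. The structural reason the bound exists is exactly that sublinearity $p<1$ makes the power of $\|u\|_{H^2(\Omega)}$ on the nonlinear side strictly below the quadratic power coming from the norm, which is what turns the Nehari identity into an a priori estimate rather than merely a statement that $\Ne_\sigma$ is bounded away from $0$.
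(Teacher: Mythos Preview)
Your proposal is correct and follows essentially the same approach as the paper, which simply remarks that the chain of inequalities from Lemma \ref{Neclosed} applies with the inequalities reversed since $p-1<0$, yielding the displayed bound $\|u\|_{H^2(\Omega)}\leq\big(\|g\|_1C(\Omega)/((1-|\sigma|)C_0^{-1})\big)^{\frac{1}{1-p}}$ for all $u\in\Ne_\sigma$. Your version is slightly more explicit in observing directly that any nontrivial critical point satisfies the Nehari identity (rather than going through Lemma \ref{t*lemma}), but the substance is identical.
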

\noindent Notice that by continuous embedding $H^2(\Omega)\hookrightarrow L^\infty(\Omega)$, one may also infer an a-priori $L^\infty$ bound for all critical points of $J_\sigma$. The estimate becomes also uniform with respect to $\sigma$ if we restrict $\sigma\in I\Subset(-1,1)$.
\vskip0.2truecm
Concerning the superlinear case with the nonlinearity \eqref{FNL}, we obtain the same positivity result with the same assumptions on $\Omega$ and $\sigma$:

\begin{prop}\label{positivitysuperlinear}(Superlinear Case)
	Let $\Omega\subset\R^2$ be a bounded convex domain with $\dOmega$ of class $C^{1,1}$ and $\sigma\in(-1,1]$. Moreover suppose $f(x,u)=g(x)|u|^{p-1}u$, where $p>1$ and $g\in L^1(\Omega)$ positive a.e. in $\Omega$. Then the ground states of the functional $J_\sigma$ are positive in $\Omega$.
\end{prop}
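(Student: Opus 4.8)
The plan is to adapt the superharmonic replacement argument used in Proposition \ref{positivitysublinear}, the only genuinely new difficulty being that a ground state $u$ of \eqref{Jsuper} is a minimizer of $J_\sigma$ on the Nehari manifold $\Ne_\sigma$ (by Proposition \ref{infattained} and Theorem \ref{criticalpoint}) rather than a global minimizer, so a direct comparison $J_\sigma(\tilde u)<J_\sigma(u)$ is not available and must be replaced by an argument on $\Ne_\sigma$. First I would fix a ground state $u\in\Ne_\sigma$, set $c:=J_\sigma(u)=\inf_{\Ne_\sigma}J_\sigma$, rewrite $J_\sigma$ by means of (F) (Theorem \ref{detbordo2}) in the form \eqref{Jnew}, and recall that $\kappa\geq 0$ a.e. on $\dOmega$ by convexity of $\Omega$. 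To $u$ I associate its superharmonic function $\tilde u\in H^2(\Omega)\cap\Ho$ solving \eqref{tildePb}, provided by Lemma \ref{StandardArgumentPPP}. I would also note at once that $\Delta u\not\equiv 0$ — otherwise $u$ would be harmonic with zero trace, hence $u\equiv 0$, contradicting $u\in\Ne_\sigma$ — so that $\tilde u\not\equiv 0$ and in fact $\tilde u>0$ in $\Omega$.

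Arguing by contradiction, suppose $\tilde u\not\equiv u$, so that by Lemma \ref{StandardArgumentPPP} one has $\tilde u>|u|$ in $\Omega$ and $\tilde u_n^2\geq u_n^2$ on $\dOmega$. I would then extract three facts: (a) $\int_\Omega(\Delta\tilde u)^2=\int_\Omega|\Delta u|^2=\int_\Omega(\Delta u)^2$; (b) applying (F) to $\tilde u$ and $u$, and using $\kappa\geq 0$ together with $\sigma\leq 1$,
\[
\|\tilde u\|_{H_\sigma(\Omega)}^2=\int_\Omega(\Delta u)^2-(1-\sigma)\int_\dOmega\kappa\,\tilde u_n^2\ \leq\ \int_\Omega(\Delta u)^2-(1-\sigma)\int_\dOmega\kappa\,u_n^2=\|u\|_{H_\sigma(\Omega)}^2;
\]
(c) since $0\leq|u|<\tilde u$ in $\Omega$ and $g>0$ a.e., $\int_\Omega g\,\tilde u^{p+1}>\int_\Omega g\,|u|^{p+1}$. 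Looking at the fibering maps $t\mapsto J_\sigma(tv)=\frac{t^2}{2}\|v\|_{H_\sigma(\Omega)}^2-\frac{t^{p+1}}{p+1}\int_\Omega g\,|v|^{p+1}$ appearing in the proof of Lemma \ref{t*lemma}, facts (a)--(c) together yield the pointwise strict inequality $J_\sigma(t\tilde u)<J_\sigma(tu)$ for every $t>0$.

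To conclude I would project $\tilde u$ onto $\Ne_\sigma$: by Lemma \ref{t*lemma} the point $t^*(\tilde u)\tilde u$, with $t^*(\tilde u)$ as in \eqref{t*}, belongs to $\Ne_\sigma$ (this uses $\tilde u\neq 0$ and $\int_\Omega g\,\tilde u^{p+1}>0$) and satisfies $J_\sigma(t^*(\tilde u)\tilde u)=\max_{t>0}J_\sigma(t\tilde u)$; likewise $\max_{t>0}J_\sigma(tu)=J_\sigma(t^*(u)u)=J_\sigma(u)$ since $u\in\Ne_\sigma$ forces $t^*(u)=1$. Evaluating the strict inequality of the previous step at $t=t^*(\tilde u)$ gives
\[
c\ \leq\ J_\sigma\big(t^*(\tilde u)\,\tilde u\big)\ <\ J_\sigma\big(t^*(\tilde u)\,u\big)\ \leq\ \max_{t>0}J_\sigma(tu)\ =\ J_\sigma(u)\ =\ c,
\]
a contradiction. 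Hence $\tilde u\equiv u$, so $-\Delta u=-\Delta\tilde u=|\Delta u|\geq 0$; thus $u$ is superharmonic, and strictly so because $\Delta u\not\equiv 0$, whence $u>0$ in $\Omega$ by the maximum principle (as $u=0$ on $\dOmega$ and $u\not\equiv 0$). The main — and essentially only — obstacle is the one flagged above, namely that $\tilde u$ need not lie on $\Ne_\sigma$, which the rescaling to $t^*(\tilde u)\tilde u$ resolves; the decisive structural inputs are the sign condition $\sigma\leq 1$, which makes the boundary term $(1-\sigma)\int_\dOmega\kappa\,\tilde u_n^2$ work in our favour in (b), and the strict positivity of $g$ in (c). The endpoint $\sigma=1$ is covered verbatim: there the boundary term vanishes and (b) becomes the equality $\|\tilde u\|_{H_1(\Omega)}=\|u\|_{H_1(\Omega)}$, while (c) still forces $J_\sigma(t\tilde u)<J_\sigma(tu)$.
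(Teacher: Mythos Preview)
Your proof is correct and follows essentially the same strategy as the paper: replace $u$ by its superharmonic function $\tilde u$, project $\tilde u$ onto $\Ne_\sigma$ via $t^*(\tilde u)$, and derive the contradiction $c\leq J_\sigma(t^*(\tilde u)\tilde u)<J_\sigma(t^*(\tilde u)u)\leq J_\sigma(u)=c$ from Lemma \ref{StandardArgumentPPP} and Lemma \ref{t*lemma}. Your presentation is in fact slightly more explicit than the paper's (isolating facts (a)--(c) and noting why $\tilde u\neq 0$), but the argument is the same.
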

\begin{proof}
	Suppose, by contradiction, that there exists $u\in\Ne_\sigma$ such that $J_\sigma(u)=\inf\{J_\sigma(v)\,|\,v\in\Ne_\sigma\}$ and $u$ is not positive. With the same spirit of the proof of Proposition \ref{positivitysublinear}, consider the superharmonic function $\tilde u$ associated to $u$ and suppose they are not the same. This time the inequality \eqref{positivitysublinearproof} is not sufficient to have a contradiction since we do not know whether $\tilde u\in\Ne_\sigma$. Nevertheless, by Lemma \ref{t*lemma}, there exists $t^*:=t^*(\tilde u)\in\R^+$ such that $t^*\tilde u\in\Ne_\sigma$. Then,
	\begin{equation}\label{poseq2}
	\begin{split}
	J_\sigma(t^*\tilde{u})&=(t^*)^2\biggl[\int_\Omega\dfrac{(\Delta\tilde{u})^2}{2}-\dfrac{1-\sigma}{2}\int_\dOmega \kappa \tilde{u}_n^2\biggr]-(t^*)^{p+1}\int_\Omega\dfrac{g(x)|\tilde{u}|^{p+1}}{p+1}\\
	&<(t^*)^2\biggl[\int_\Omega\dfrac{(\Delta u)^2}{2}-\dfrac{1-\sigma}{2}\int_\dOmega \kappa u_n^2\biggr]-(t^*)^{p+1}\int_\Omega\dfrac{g(x)|u|^{p+1}}{p+1}\\
	&=J_\sigma(t^*u)\leq J_\sigma(u),
	\end{split}
	\end{equation}
	 which is again a contradiction. Notice that the last inequality holds since, by Lemma \ref{t*lemma}, $J_\sigma$ restricted to every half-line attains its maximum on the Nehari manifold. Thus necessarily $\tilde u$ coincides with $u$, which implies that $u$ is strictly superharmonic and thus positive.
\end{proof}

\begin{remark}\label{extensionnotobvious}
	Notice that in both proofs of Propositions \ref{positivitysublinear} and \ref{positivitysuperlinear}, if $\sigma$ lies in the interval $(-1,1]$, the assumption $\Omega$ convex was necessary to have the good inequality for the second term of $J_\sigma$; on the other side, if $\sigma>1$ we do not have anymore the right sign and we cannot conclude the argument.
\end{remark}

\section{Beyond the physical bounds: $\sigma\leq-1$}
So far, we studied the existence of critical points of the functional $J_\sigma$ with the assumption $\sigma\in(-1,1]$, we described in a variational way the geometry of the ground states and we finally established their positivity. The aim of this section is to study what happens to the ground states of $J_\sigma$ if we let the parameter to be in the whole $\R$. Again, we are especially interested in studying their positivity.\\
Since the study is rather different if $\sigma\leq-1$ or $\sigma>1$, we divide the subject into two sections. In both, we will always assume that $\Omega\subset\R^2$ is a bounded convex domain of class $C^{1,1}$ so that Theorem \ref{detbordo2} holds. Moreover, as it seems  more interesting from a mathematical point of view, we mainly focus on the superlinear case $f(x,u)=g(x)|u|^{p-1}u$ with $p>1$, pointing out, if needed, the necessary adaptation for the sublinear power $p\in(0,1)$.

\subsection{A Steklov eigenvalue problem}
Let us begin by recalling some known facts about the eigenvalue problem associated to equation \eqref{PDEpSteklovg} (see \cite{GS} or, for the case $\kappa=1$, \cite{BucurFG} or \cite{BGM}):
\begin{equation}\label{PDEeig}
\begin{cases}
\Delta^2u=0\quad&\mbox{in }\Omega,\\
u=0\quad&\mbox{on }\dOmega,\\
\Delta u=d\kappa u_n\quad&\mbox{on }\dOmega.
\end{cases}
\end{equation}
We call \textit{Steklov eigenvalue} each real value $d$ such that \eqref{PDEpSteklovg} admits a nontrivial weak solution, named \textit{Steklov eigenfunction}, i.e. $u\in H^2(\Omega)\cap\Ho$, $u\neq 0$, such that for all $\varphi\in H^2(\Omega)\cap\Ho$
\begin{equation}\label{defeig}
\int_\Omega\Delta u\Delta\varphi-d\int_\dOmega \kappa u_n\varphi_n = 0.
\end{equation}
First of all, $d$ must be positive. In fact, if $u$ is a Steklov eigenfunction, taking $u=\varphi$ in \eqref{defeig}:
$$d\int_\dOmega\kappa(u_n)^2=\int_\Omega(\Delta u)^2>0,$$
since $\|\Delta\cdot\|_2$ is a norm in $H^2(\Omega)\cap\Ho$. As $\kappa\geq0$, we have both $d>0$ and $\int_\dOmega\kappa u_n^2>0$. As a complementary result, in order to show nontrivial solutions of \eqref{PDEeig}, without loss of generality, we can restrict to the subset
$$\mathcal{H}=\bigg\{u\in H^2(\Omega)\cap\Ho\,\bigg|\,\int_\dOmega\kappa (u_n)^2\neq 0\bigg\}.$$

\begin{defn}\label{deftildedelta1}
	We denote by $\tilde{\delta}_1(\Omega)$ the first Steklov eigenvalue for problem \eqref{PDEeig}:
	$$\tilde{\delta}_1(\Omega):=\inf_{\mathcal{H}\setminus\{0\}}\dfrac{\|\Delta u\|_2^2}{\int_\dOmega\kappa u_n^2}.$$
\end{defn}
\begin{prop}\label{eigenfunction}
	The first Steklov eigenvalue is attained, positive and there exists a unique (up to a multiplicative constant) corresponding Steklov eigenfunction, which is positive in $\Omega$.
\end{prop}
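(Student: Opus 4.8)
The plan is to argue by the direct method in the calculus of variations, exploiting the compactness of the embedding of $H^2(\Omega)\cap\Ho$ into suitable trace spaces, and then to obtain positivity via the superharmonic‐function technique of Lemma \ref{StandardArgumentPPP}, exactly as in the proof of Proposition \ref{positivitysuperlinear}. First I would show that the infimum defining $\tilde\delta_1(\Omega)$ is attained. Take a minimizing sequence $(u_k)\subset\mathcal H$, normalized so that $\int_\dOmega\kappa(u_k)_n^2=1$; then $\|\Delta u_k\|_2^2\to\tilde\delta_1(\Omega)$, so $(u_k)$ is bounded in $H^2(\Omega)\cap\Ho$ by the equivalence of norms \eqref{C_A}. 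Passing to a subsequence, $u_k\rightharpoonup u$ weakly in $H^2(\Omega)$; by weak lower semicontinuity of $\|\Delta\cdot\|_2$ we get $\|\Delta u\|_2^2\le\tilde\delta_1(\Omega)$. The delicate point is to pass to the limit in the boundary term: the normal derivative $(u_k)_n$ converges to $u_n$ strongly in $L^2(\dOmega)$ because the trace operator $H^2(\Omega)\cap\Ho\to H^{1/2}(\dOmega)\hookrightarrow L^2(\dOmega)$, $v\mapsto v_n$, is compact (the embedding $H^{1/2}(\dOmega)\hookrightarrow L^2(\dOmega)$ being compact on a bounded $C^{1,1}$ boundary), and together with $\kappa\in L^\infty(\dOmega)$ this yields $\int_\dOmega\kappa(u_k)_n^2\to\int_\dOmega\kappa u_n^2=1$. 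In particular $u\in\mathcal H$, $u\neq0$, and the Rayleigh quotient of $u$ is $\le\tilde\delta_1(\Omega)$, hence equal to it; so $u$ is a minimizer. Standard Lagrange‐multiplier considerations then show $u$ is a weak Steklov eigenfunction in the sense of \eqref{defeig} with $d=\tilde\delta_1(\Omega)$, and positivity of $d$ has already been recorded above.

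Next I would establish positivity of a first eigenfunction. Given a minimizer $u$, apply Lemma \ref{StandardArgumentPPP} to produce its superharmonic companion $\tilde u\in H^2(\Omega)\cap\Ho$ solving $-\Delta\tilde u=|\Delta u|$. Since $\Omega$ is convex we have $\kappa\ge0$ a.e.\ on $\dOmega$, and Lemma \ref{StandardArgumentPPP} gives either $\tilde u=u$ or else $\tilde u>|u|$ in $\Omega$ and $\tilde u_n^2\ge u_n^2$ on $\dOmega$. In the latter case $\|\Delta\tilde u\|_2^2=\||\Delta u|\|_2^2=\|\Delta u\|_2^2$ while $\int_\dOmega\kappa\tilde u_n^2\ge\int_\dOmega\kappa u_n^2>0$, so the Rayleigh quotient of $\tilde u$ is strictly smaller than $\tilde\delta_1(\Omega)$ unless $\int_\dOmega\kappa\tilde u_n^2=\int_\dOmega\kappa u_n^2$; but even then the quotient is $\le\tilde\delta_1(\Omega)$, forcing equality everywhere, and one can still conclude $\tilde u$ is itself a minimizer. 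Either way we may assume the minimizer satisfies $-\Delta u=-\Delta\tilde u=|\Delta u|\ge0$, i.e.\ $u$ is superharmonic, and since $u=0$ on $\dOmega$, $u\not\equiv0$, the strong maximum principle gives $u>0$ in $\Omega$.

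Finally I would prove simplicity of $\tilde\delta_1(\Omega)$. The cleanest route: any eigenfunction $v$ associated with $\tilde\delta_1(\Omega)$ is also a minimizer of the Rayleigh quotient, so by the previous step it may be replaced by a superharmonic, hence sign‐definite, function; but if $\tilde\delta_1(\Omega)$ had two linearly independent eigenfunctions $v_1,v_2$, a suitable linear combination $v_1-\lambda v_2$ would also be a minimizer, hence sign‐definite, which is impossible for a nontrivial combination of two independent functions each of one sign (one can choose $\lambda$ so that the combination vanishes at an interior point where both are positive, contradicting strict positivity). More carefully, since both $v_1,v_2$ can be taken strictly superharmonic and strictly positive, the ratio argument: pick $\lambda=\inf_\Omega v_1/v_2>0$; then $w:=v_1-\lambda v_2\ge0$ is again a first eigenfunction, hence either $w\equiv0$ (giving $v_1=\lambda v_2$) or $w>0$ in $\Omega$, but the latter contradicts the definition of $\lambda$ as an infimum that is attained in the limit. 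Hence $v_1$ and $v_2$ are proportional and the eigenvalue is simple. The main obstacle I expect is the compactness of the normal‐trace map ensuring strong $L^2(\dOmega)$ convergence of $(u_k)_n$ — this is what makes the boundary functional weakly continuous and is essential both for attainment and for the simplicity argument; everything else is a routine adaptation of the superharmonic method already used in Section 4.
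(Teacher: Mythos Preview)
Your attainment argument and the existence of a positive first eigenfunction via the superharmonic companion are correct and constitute the standard route; the paper itself does not give details but defers to \cite[Lemma 4.4]{GS}, where the proof proceeds along essentially the same variational lines.

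There is, however, a genuine gap in your simplicity argument. When you form $w=v_1-\lambda v_2\ge0$ and appeal to ``the previous step'' to conclude $w>0$ or $w\equiv0$, you are invoking more than you proved: your positivity step only shows that the \emph{companion} $\tilde w$ is a positive minimizer, not that $w$ itself is strictly positive. What is missing is the statement that \emph{every} nonnegative first eigenfunction is automatically superharmonic. This you can obtain as follows: if $w\ge0$, $w\not\equiv0$ is a first eigenfunction with $\tilde w\neq w$, then $\tilde w$ is also a first eigenfunction (since $R(\tilde w)\le R(w)=\tilde\delta_1$), hence biharmonic; thus $|\Delta w|=-\Delta\tilde w$ is harmonic. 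Since $\Delta w$ is harmonic as well (because $\Delta^2w=0$), both $(\Delta w)^{\pm}=\tfrac12(|\Delta w|\pm\Delta w)$ are nonnegative harmonic functions, so each is either identically zero or strictly positive; they cannot both be positive, so $\Delta w$ has a sign. As $w\ge0$ with $w=0$ on $\partial\Omega$, the case $\Delta w\ge0$ would force $w\le0$ by the maximum principle, hence $w\equiv0$; therefore $\Delta w\le0$ and $w$ is superharmonic. Now the strong maximum principle gives $w>0$ in $\Omega$ or $w\equiv0$, and your ratio argument closes.

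Two minor points: (i) the formulation $\lambda=\inf_\Omega v_1/v_2$ is delicate near $\partial\Omega$ where both functions vanish; it is cleaner to fix an interior point $x_0$, set $\lambda=v_1(x_0)/v_2(x_0)$, and observe directly that $w(x_0)=0$ forces $w\equiv0$ by the above. (ii) Your first sketch of simplicity (``a suitable linear combination\dots would also be a minimizer, hence sign-definite'') is circular for the same reason and should be discarded in favor of the corrected argument.
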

\begin{proof}
	We refer to \cite[Lemma 4.4]{GS}, just noticing that the continuity of the curvature assumed therein was not necessary to obtain this result.
\end{proof}

\subsection{A nonexistence and an existence result}
From Proposition \ref{eigenfunction}, it is easy to deduce a nonexistence result for positive solution if $\sigma$ is negative enough:
\begin{prop}\label{nonexistence}
	If $\sigma\leq\sigma^*:=1-\tilde{\delta}_1(\Omega)$, there is no nonnegative nontrivial solution for the Steklov boundary problem \eqref{PDEpSteklovg}.
\end{prop}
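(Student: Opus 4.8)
The plan is to argue by contradiction and exploit the variational characterization of $\tilde\delta_1(\Omega)$ together with identity (F). Suppose $u\in H^2(\Omega)\cap\Ho$ is a nonnegative nontrivial weak solution of \eqref{PDEpSteklovg} for some $\sigma\leq\sigma^*=1-\tilde\delta_1(\Omega)$. Testing the weak formulation \eqref{defsol} with $\varphi=u$ itself gives
\begin{equation*}
\int_\Omega(\Delta u)^2-(1-\sigma)\int_\dOmega\kappa u_n^2=\int_\Omega g(x)|u|^{p+1}>0,
\end{equation*}
where the strict positivity on the right follows because $g>0$ a.e.\ and $u\not\equiv0$ (in the sublinear case $p\in(0,1)$ the right-hand side is $\int_\Omega g(x)|u|^{p+1}$ as well, so the argument is unchanged). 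Hence
\begin{equation*}
\int_\Omega(\Delta u)^2>(1-\sigma)\int_\dOmega\kappa u_n^2\geq\tilde\delta_1(\Omega)\int_\dOmega\kappa u_n^2,
\end{equation*}
using $1-\sigma\geq\tilde\delta_1(\Omega)$ and $\int_\dOmega\kappa u_n^2\geq0$ (which holds since $\Omega$ is convex, so $\kappa\geq0$ a.e.\ on $\dOmega$).

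Now I would distinguish two cases according to whether $u\in\mathcal H$ or not. If $\int_\dOmega\kappa u_n^2>0$, i.e.\ $u\in\mathcal H\setminus\{0\}$, then the inequality above reads $\|\Delta u\|_2^2/\int_\dOmega\kappa u_n^2>\tilde\delta_1(\Omega)$, which is not yet a contradiction with the definition of $\tilde\delta_1$ as an infimum. To close the gap I would invoke Proposition \ref{eigenfunction}: since the infimum defining $\tilde\delta_1(\Omega)$ is attained and $1-\sigma\geq\tilde\delta_1(\Omega)$, the displayed chain shows $\int_\Omega(\Delta u)^2\ge\tilde\delta_1(\Omega)\int_\dOmega\kappa u_n^2$ always, with equality only on eigenfunctions; but the solution equation forces the strict surplus $\int_\Omega g|u|^{p+1}$, so in fact $1-\sigma>\tilde\delta_1(\Omega)$ would be needed — precisely the case $\sigma<\sigma^*$ is then immediate, while for $\sigma=\sigma^*$ one uses that $u$ would have to be (a multiple of) the first Steklov eigenfunction $\phi_1$, yet $\phi_1$ satisfies $\Delta^2\phi_1=0\neq g|\phi_1|^{p-1}\phi_1$ in $\Omega$, a contradiction. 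The remaining possibility is $\int_\dOmega\kappa u_n^2=0$: then the tested identity gives $\int_\Omega(\Delta u)^2=\int_\Omega g|u|^{p+1}>0$, so $u\not\equiv0$, but plugging $\int_\dOmega\kappa u_n^2=0$ back also makes the boundary term in \eqref{defsol} vanish for every test function, forcing $\Delta^2u=g|u|^{p-1}u$ with $u=\Delta u=0$ on $\dOmega$ in the weak sense; testing against a suitable $\varphi$ and using that $\|\Delta\cdot\|_2$ is a norm, one sees this overdetermined problem has only $u\equiv0$, again a contradiction.

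The main obstacle is the borderline case $\sigma=\sigma^*$: for $\sigma$ strictly below $\sigma^*$ the inequality $(1-\sigma)\int_\dOmega\kappa u_n^2\ge\tilde\delta_1(\Omega)\int_\dOmega\kappa u_n^2$ is strict whenever $u\in\mathcal H$, so combined with the Rayleigh quotient bound $\int_\Omega(\Delta u)^2\ge\tilde\delta_1(\Omega)\int_\dOmega\kappa u_n^2$ one gets $\int_\Omega g|u|^{p+1}=\int_\Omega(\Delta u)^2-(1-\sigma)\int_\dOmega\kappa u_n^2\le(\tilde\delta_1(\Omega)-(1-\sigma))\int_\dOmega\kappa u_n^2\le0$, contradicting positivity of the nonlinear term at once. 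At $\sigma=\sigma^*$ this margin collapses to zero, so one genuinely needs the rigidity part of Proposition \ref{eigenfunction} — that equality in the Rayleigh quotient characterizes $\phi_1$ uniquely up to scaling — plus the observation that $\phi_1$ is biharmonic and hence cannot solve the semilinear equation with $g>0$. I would organize the write-up so that the generic case $\sigma<\sigma^*$ is dispatched in one line and the equality case is handled separately using the eigenfunction rigidity.
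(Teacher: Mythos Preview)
Your central inequality is reversed. From the Rayleigh characterization $\tilde\delta_1(\Omega)=\inf_{\mathcal H\setminus\{0\}}\|\Delta v\|_2^2/\int_\dOmega\kappa v_n^2$ you only get the \emph{lower} bound $\int_\Omega(\Delta u)^2\geq\tilde\delta_1(\Omega)\int_\dOmega\kappa u_n^2$. Plugging this into the tested identity yields
\[
\int_\Omega g|u|^{p+1}=\int_\Omega(\Delta u)^2-(1-\sigma)\int_\dOmega\kappa u_n^2\;\geq\;(\tilde\delta_1(\Omega)-(1-\sigma))\int_\dOmega\kappa u_n^2,
\]
i.e.\ a lower bound by something nonpositive when $\sigma\leq\sigma^*$ --- which is vacuous, not a contradiction. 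The step where you write ``$\int_\Omega(\Delta u)^2-(1-\sigma)\int_\dOmega\kappa u_n^2\le(\tilde\delta_1(\Omega)-(1-\sigma))\int_\dOmega\kappa u_n^2$'' would require $\int_\Omega(\Delta u)^2\le\tilde\delta_1(\Omega)\int_\dOmega\kappa u_n^2$, the opposite of what the infimum gives. Your handling of the case $\int_\dOmega\kappa u_n^2=0$ is also off: the boundary term in \eqref{defsol} vanishes and $u$ becomes a weak solution of the Navier problem, which is perfectly well-posed (not overdetermined) and does admit nontrivial positive solutions. Finally, notice that your argument never really uses $u\geq0$; that should be a warning sign, since the statement is specifically about nonnegative solutions.

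The paper's proof avoids all of this by testing \eqref{defsol} not with $u$ but with the first Steklov eigenfunction $\Phi_1>0$, and symmetrically testing \eqref{defeig} with $u$. Combining the two identities gives
\[
(\tilde\delta_1(\Omega)-(1-\sigma))\int_\dOmega\kappa u_n(\Phi_1)_n=\int_\Omega g(x)u^p\Phi_1>0.
\]
Nonnegativity of both $u$ and $\Phi_1$ (with zero boundary values) forces $u_n\leq0$ and $(\Phi_1)_n\leq0$, so the boundary integral is nonnegative and one reads off $\sigma>\sigma^*$ directly --- no case analysis, no borderline subtleties.
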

\begin{proof}
	Let $u$ be a nonnegative solution for \eqref{PDEpSteklovg} and $\Phi_1>0$ be the first Steklov eigenfunction; we use $\Phi_1$ as a test function in \eqref{defsol}:
	$$\int_\Omega\Delta u\Delta\Phi_1-(1-\sigma)\int_\dOmega \kappa u_n(\Phi_1)_n=\int_\Omega g(x)u^p\Phi_1$$
	and then, interpreting $u$ this time as a test function in \eqref{defeig}, we have
	$$\int_\Omega\Delta u\Delta\Phi_1=\tilde{\delta}_1(\Omega)\int_\dOmega \kappa(\Phi_1)_n u_n.$$
	Combining the two equalities,
	$$(\tilde{\delta}_1(\Omega)-(1-\sigma))\int_\dOmega\kappa(\Phi_1)_n u_n=\int_\Omega g(x)u^p\Phi_1>0.$$
	Again by positivity of $u$ and $\Phi_1$, we have $u_n\leq 0$ and $(\Phi_1)_n\leq 0$ so, as $\kappa\geq 0$, we finally end up with $\tilde{\delta}_1(\Omega)-1+\sigma>0$, which is exactly what we wanted.
\end{proof}

\begin{remark}
	We already proved that our problem \eqref{PDEpSteklovg} admits positive solutions whenever $\sigma\in(-1,1]$ with the same assumptions on $\Omega$. Hence, we infer that, $\tilde\delta_1(\Omega)\geq2$ and we have equality 
	if $\Omega=B_1(0)$ (see \cite[Proposition 12]{BGM}). This result was already proved for $C^2$ bounded convex domains of $\R^2$ by Parini and Stylianou in \cite[Remark 3.3]{PS}, using Fichera's duality principle.
\end{remark}

The next step is to investigate what happens if $\sigma\in(\sigma^*,-1]$ in case this interval is nonempty. We will show that the existence and the positivity results found for $\sigma\in(-1,1]$ can be extended for this case. In fact, the only restriction we have to overcome, is the fact that here Lemma \ref{eqnorm} is not the right way to prove that the first two terms in the functional $J_\sigma$ define indeed a norm on $H^2(\Omega)\cap\Ho$.
\begin{lem}\label{eqnormnew}
	For every $\sigma>\sigma^*$, the map
	$$u\mapsto\bigg[\int_\Omega(\Delta u)^2-(1-\sigma)\int_\dOmega \kappa(u_n)^2\bigg]^\frac{1}{2}:=\|u\|_{H_\sigma}$$
	is a norm in $H^2(\Omega)\cap\Ho$ equivalent to the standard norm.
\end{lem}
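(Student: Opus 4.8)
The goal is to show that $\|u\|_{H_\sigma}^2 := \int_\Omega(\Delta u)^2 - (1-\sigma)\int_{\dOmega}\kappa(u_n)^2$ is a norm on $H^2(\Omega)\cap\Ho$ equivalent to the standard one, for every $\sigma>\sigma^*=1-\tilde\delta_1(\Omega)$. Since $\sigma^*\le -1$, the interesting range is $\sigma\in(\sigma^*,-1]$, where $1-\sigma\ge 2>0$, so the curvature term carries the ``wrong'' sign and must be controlled by the $\|\Delta u\|_2^2$ term. The key inequality is the variational characterization of $\tilde\delta_1(\Omega)$ from Definition \ref{deftildedelta1}: for all $u\in H^2(\Omega)\cap\Ho$,
\begin{equation*}
\int_{\dOmega}\kappa(u_n)^2 \le \frac{1}{\tilde\delta_1(\Omega)}\,\|\Delta u\|_2^2 .
\end{equation*}
(This holds trivially, with both sides zero, for $u\notin\mathcal H$.)

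\textbf{Upper bound.} First I would bound $\|u\|_{H_\sigma}$ from above by the standard norm. If $1-\sigma\ge 0$ (i.e. $\sigma\le 1$, which covers the whole relevant range here), then dropping the nonnegative subtracted term gives $\|u\|_{H_\sigma}^2\le\|\Delta u\|_2^2\le C_A\|u\|_{H^2(\Omega)}^2$ by \eqref{C_A}. If instead $\sigma>1$ one uses Theorem \ref{detbordo2} to rewrite $-(1-\sigma)\int_{\dOmega}\kappa(u_n)^2 = 2(1-\sigma)\int_\Omega\det(\nabla^2 u)$ and the elementary pointwise bound $|\det(\nabla^2u)|\le\frac12|\nabla^2u|^2$, giving $\|u\|_{H_\sigma}^2\le |\sigma|\,\||\nabla^2u|\|_2^2+\||\nabla^2u|\|_2^2$ as in the proof of Lemma \ref{eqnorm}; combined with \eqref{C_0} this yields $\|u\|_{H_\sigma}\le C\|u\|_{H^2(\Omega)}$. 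In all cases one gets one side of the equivalence.

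\textbf{Lower bound (the main point).} For $\sigma\in(\sigma^*,1]$ write, using the characterization above,
\begin{equation*}
\|u\|_{H_\sigma}^2 = \|\Delta u\|_2^2 - (1-\sigma)\int_{\dOmega}\kappa(u_n)^2 \ge \|\Delta u\|_2^2 - \frac{1-\sigma}{\tilde\delta_1(\Omega)}\|\Delta u\|_2^2 = \Big(1-\frac{1-\sigma}{\tilde\delta_1(\Omega)}\Big)\|\Delta u\|_2^2 .
\end{equation*}
Since $\sigma>\sigma^*=1-\tilde\delta_1(\Omega)$ is equivalent to $1-\sigma<\tilde\delta_1(\Omega)$, the factor $\varepsilon_\sigma:=1-\frac{1-\sigma}{\tilde\delta_1(\Omega)}$ is strictly positive; hence $\|u\|_{H_\sigma}^2\ge \varepsilon_\sigma\|\Delta u\|_2^2\ge \varepsilon_\sigma C_A^{-1}\|u\|_{H^2(\Omega)}^2$ by \eqref{C_A}. (For $\sigma>1$ the term $-(1-\sigma)\int_{\dOmega}\kappa(u_n)^2\ge 0$ since $\kappa\ge0$ by convexity, so $\|u\|_{H_\sigma}^2\ge\|\Delta u\|_2^2$ directly.) This shows $\|\cdot\|_{H_\sigma}$ controls the standard norm from below, so in particular $\|u\|_{H_\sigma}=0$ forces $u=0$; positive homogeneity and the triangle inequality follow either from the bilinear form $(u,v)\mapsto\int_\Omega\Delta u\Delta v-(1-\sigma)\int_{\dOmega}\kappa u_nv_n$ being an inner product (it is symmetric and, by the lower bound just proved, positive definite) or directly. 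Together with the upper bound, $\|\cdot\|_{H_\sigma}$ is equivalent to $\|\cdot\|_{H^2(\Omega)}$.

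\textbf{Main obstacle.} The only genuine subtlety is the strict positivity of the constant $\varepsilon_\sigma$, which hinges precisely on the sharp threshold $\sigma^*=1-\tilde\delta_1(\Omega)$ in Definition \ref{deftildedelta1} and on the fact, recorded in Proposition \ref{eigenfunction}, that $\tilde\delta_1(\Omega)>0$ and is attained — so the infimum in the Rayleigh quotient is a true minimum and the inequality $\int_{\dOmega}\kappa(u_n)^2\le\tilde\delta_1(\Omega)^{-1}\|\Delta u\|_2^2$ is valid with that exact constant. Everything else (the upper bound, bilinearity, convexity giving $\kappa\ge0$) is routine. Note that at the endpoint $\sigma=\sigma^*$ one has $\varepsilon_\sigma=0$ and the ``norm'' degenerates exactly on the first eigenfunction, consistent with the nonexistence statement of Proposition \ref{nonexistence}.
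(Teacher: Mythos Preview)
Your proof is correct and follows essentially the same route as the paper: the key lower bound comes from the variational characterization of $\tilde\delta_1(\Omega)$, giving $\|u\|_{H_\sigma}^2\ge\bigl(1-\tfrac{1-\sigma}{\tilde\delta_1(\Omega)}\bigr)\|\Delta u\|_2^2$, after which the bilinear form is a genuine scalar product. One trivial slip: in your upper bound you invoke \eqref{C_A} for $\|\Delta u\|_2^2\le C_A\|u\|_{H^2(\Omega)}^2$, but \eqref{C_A} is the reverse inequality---the estimate you want is immediate from the definition of the $H^2$ norm; also, for $\sigma>1$ the paper simply reuses the eigenvalue inequality for the upper bound rather than passing through $\det(\nabla^2 u)$, but your variant is equally valid.
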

\begin{proof}
	By definition of $\tilde\delta_1(\Omega)$ as an inf, we have $\|\Delta u\|_2^2\geq\tilde\delta_1(\Omega)\int_\dOmega \kappa u_n^2$ for each $u\in H^2(\Omega)\cap\Ho$ and so, if $d>0$ (which corresponds to $\sigma<1$),
	\begin{equation}\label{ineqnorm2}
	\int_\Omega(\Delta u)^2\geq\int_\Omega(\Delta u)^2-d\int_\dOmega \kappa u_n^2\geq\int_\Omega(\Delta u)^2\bigg(1-\dfrac{d}{\tilde\delta_1(\Omega)}\bigg).
	\end{equation}
	On the other hand, if $d<0$ (so that $\sigma>1$),
	$$\int_\Omega(\Delta u)^2\leq\int_\Omega(\Delta u)^2+|d|\int_\dOmega\kappa u_n^2\leq\int_\Omega(\Delta u)^2\bigg(1+\dfrac{|d|}{\tilde\delta_1(\Omega)}\bigg).$$
	As a result, we have to impose that $d<\tilde\delta_1(\Omega)$ to have the positivity of the constant in the first estimate, while no restriction occurs in the second. The proof is completed noticing that the map
	$$(u,v)_{H_\sigma}\mapsto\int_\Omega\Delta u\Delta v-d\int_\dOmega\kappa u_nv_n$$
	defines a scalar product on $H^2(\Omega)\cap\Ho$ by inequality \eqref{ineqnorm2} for all $d<\tilde\delta_1(\Omega)$.
	
\end{proof}

\begin{prop}\label{extension1}
	Let $\Omega\subset\R^2$ be a bounded convex domain with boundary $C^{1,1}$ and suppose $\sigma\in(\sigma^*,-1]$; then the functional $J_\sigma$ admits a positive ground state.
\end{prop}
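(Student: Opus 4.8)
The plan is to reduce the case $\sigma\in(\sigma^*,-1]$ to the already-treated case $\sigma\in(-1,1]$ by observing that all the arguments of Sections 3 and 4 go through verbatim once we replace Lemma \ref{eqnorm} by Lemma \ref{eqnormnew} as the tool providing an equivalent Hilbert norm on $H^2(\Omega)\cap\Ho$. Indeed, by Theorem \ref{detbordo2} the functional can be written in the form \eqref{Jnew}, i.e.
\begin{equation*}
J_\sigma(u)=\frac{1}{2}\|u\|_{H_\sigma}^2-\int_\Omega F(x,u),
\end{equation*}
where now $\|\cdot\|_{H_\sigma}$ is the norm of Lemma \ref{eqnormnew}, which is well-defined and equivalent to $\|\cdot\|_{H^2(\Omega)}$ precisely because $\sigma>\sigma^*$. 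From here the existence proof is essentially a copy of what was done before, distinguishing the sublinear and superlinear subcases.

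First I would treat the superlinear case $f(x,s)=g(x)|s|^{p-1}s$ with $p>1$. One defines the Nehari manifold $\Ne_\sigma$ exactly as in Section 3.2; Lemma \ref{t*lemma} (uniqueness of the point $t^*(u)u$ on each half-line, and that it is the max of $J_\sigma$ along $r_u$) holds without change since its proof only uses the quadratic--plus--$(p+1)$-homogeneous structure of $J_\sigma$ and the fact that $\int_\Omega(\Delta u)^2-2(1-\sigma)\int_\Omega\det(\nabla^2u)=\|u\|_{H_\sigma}^2>0$. Lemma \ref{Neclosed} (that $0\notin\overline{\Ne_\sigma}$) follows the same chain of inequalities, using the equivalence constants from Lemma \ref{eqnormnew} in place of those from Lemma \ref{eqnorm} together with $H^2(\Omega)\hookrightarrow L^\infty(\Omega)$ and $g\in L^1(\Omega)$. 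Proposition \ref{infattained} (the infimum $c:=\inf_{\Ne_\sigma}J_\sigma$ is attained) goes through: a minimizing sequence is bounded in $\|\cdot\|_{H_\sigma}$ hence in $H^2(\Omega)$, one extracts a weakly convergent subsequence with strong $L^\infty$ convergence by compact embedding, and the weak lower semicontinuity of $\|\cdot\|_{H_\sigma}$ (it is a Hilbert norm) combined with Lemma \ref{t*lemma} gives the estimate \eqref{infattaineddim}; the remark after that proposition shows $t^*=1$, so the minimizer lies in $\Ne_\sigma$. Finally Theorem \ref{criticalpoint} (the Nehari minimizer is a genuine critical point of $J_\sigma$ on $H^2(\Omega)\cap\Ho$) uses only the $C^1$ regularity of $J_\sigma$, the deformation lemma \cite[Proposition 5.1.25]{GP}, and Lemma \ref{t*lemma}; none of this depends on $\sigma$ being in $(-1,1]$. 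This yields a ground state for $J_\sigma$ when $\sigma\in(\sigma^*,-1]$. For the sublinear case $p\in(0,1)$ one instead invokes the analogue of Proposition \ref{coercKLg}: since $\|u\|_{H_\sigma}^2$ is equivalent to $\|u\|_{H^2(\Omega)}^2$ and dominates a multiple of $\|u\|_{H^2(\Omega)}^2$, the functional is weakly lower semicontinuous and coercive (the subquadratic nonlinearity is controlled exactly as there), hence a global minimizer exists.

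For positivity I would reread the proofs of Propositions \ref{positivitysublinear} and \ref{positivitysuperlinear}. The essential ingredient there is the representation \eqref{Jnew} together with the sign of the boundary term: since $\Omega$ is convex, $\kappa\geq0$ a.e. on $\dOmega$, and since $\sigma\leq-1<1$ we have $1-\sigma>0$, so $-\frac{1-\sigma}{2}\int_\dOmega\kappa u_n^2\leq0$ and, for the superharmonic replacement $\tilde u$ of Lemma \ref{StandardArgumentPPP} (which applies because a convex domain satisfies the uniform external ball condition, so $\tilde u\in H^2(\Omega)$), one has $\tilde u_n^2\geq u_n^2$ on $\dOmega$, hence
\begin{equation*}
-\frac{1-\sigma}{2}\int_\dOmega\kappa\tilde u_n^2\le-\frac{1-\sigma}{2}\int_\dOmega\kappa u_n^2.
\end{equation*}
This is precisely the inequality used in \eqref{positivitysublinearproof} and in the first line of \eqref{poseq2}, and it remains valid for every $\sigma<1$, in particular for $\sigma\in(\sigma^*,-1]$. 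Thus in the sublinear case the argument of Proposition \ref{positivitysublinear} gives $\tilde u\equiv u$, hence $-\Delta u=|\Delta u|\geq0$ and $u>0$ by the strong maximum principle; in the superlinear case one rescales $\tilde u$ to $t^*(\tilde u)\tilde u\in\Ne_\sigma$ exactly as in \eqref{poseq2}, uses that $J_\sigma$ attains its maximum along each half-line on $\Ne_\sigma$ (Lemma \ref{t*lemma}), and derives the same contradiction, forcing $\tilde u\equiv u$ and $u>0$. I do not expect a genuine obstacle here: the only point requiring a sentence of care is that Lemma \ref{eqnormnew}, rather than Lemma \ref{eqnorm}, is what makes $J_\sigma$ into half a squared Hilbert norm minus a lower-order term, and that every step of Sections 3--4 used only that structural fact plus the convexity sign $\kappa\geq0$ and $1-\sigma>0$; once this is pointed out, the proof is complete.
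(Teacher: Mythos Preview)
Your proposal is correct and follows exactly the same approach as the paper: the paper's proof simply states that Lemma \ref{Neclosed} holds for $\sigma\in(\sigma^*,-1]$ once one replaces Lemma \ref{eqnorm} by Lemma \ref{eqnormnew}, and that all the other propositions leading to existence and positivity of ground states are unaffected by this change. Your write-up is a more detailed unpacking of precisely this observation, including the check that the positivity argument (Propositions \ref{positivitysublinear} and \ref{positivitysuperlinear}) only requires $1-\sigma>0$ and $\kappa\geq0$.
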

\begin{proof}
	It is sufficient to notice that Lemma \ref{Neclosed} holds for these values of $\sigma$ if we replace Proposition \ref{eqnorm} by Lemma \ref{eqnormnew}, while all the other propositions that led to the existence and the positivity of ground states are not affected by this change.
\end{proof}

\begin{remark}(Sublinear Case) Both Propositions \ref{nonexistence} and \ref{extension1} hold in the case of a function $f(x,u)$ which verifies the assumption (H) (modifying in a suitable way the constant in front of the quadratic term) and $f\geq 0$, $f\not\equiv 0$.	
\end{remark}

\subsection{Approaching $\sigma^*$}
As we know now the existence of positive ground state solutions for $\sigma\in(\sigma^*,1]$ and that there are no positive solutions if $\sigma\leq\sigma^*$, a natural question that arises is what is the behaviour of a sequence $(u_k)_{k\in\N}$, each of them being a ground state for the respective functional $J_{\sigma_k}$, as $\sigma_k\searrow\sigma^*$. We will find an antipodal result for $f(x,u)=g(x)|u|^{p-1}u$ as $p\in(1,+\infty)$ or $p\in(0,1)$.\\
\noindent The following proof is an adaptation of \cite[Theorem 1]{BGW}, which covers the critical case $f(x,u)=|u|^{2^*-2}u$, when the dimension $N\geq 5$. Moreover, the authors considered a slightly different notion of solution, that is, the minimizers of the Rayleigh quotient associated to the boundary value problem:
\begin{equation}\label{Rayleigh}
R_\sigma (u):=\dfrac{\|\Delta u\|_2^2-(1-\sigma)\int_\dOmega \kappa u_n^2}{\bigg(\int_\Omega g(x)|u|^{p+1}\bigg)^\frac{2}{p+1}}
\end{equation}
Anyway, it is a standard fact to prove that every ground state of $J_\sigma$ is also a minimizer of $R_\sigma$, while the converse is also true, up to a multiplication by a constant.
\begin{thm}\label{approachingsigma*}
	Let $\Omega$ as in Proposition \ref{extension1} and $\sigma_k\searrow\sigma^*$ as $k\rightarrow+\infty$. If $p\in(0,1)$, then $\|u_k\|_\infty\rightarrow+\infty$, while, if $p>1$, then $\|u_k\|_{H^2(\Omega)}\rightarrow 0$.
\end{thm}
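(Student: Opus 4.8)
The plan is to analyze the asymptotic behavior of the ground states $u_k$ via the equivalent formulation on the Nehari manifold, exploiting the key identity \eqref{equivonNehari} together with the characterization of the minimal energy level in terms of the Rayleigh quotient $R_{\sigma_k}$. Write $c_k := J_{\sigma_k}(u_k) = \inf_{\Ne_{\sigma_k}} J_{\sigma_k}$. By \eqref{equivonNehari} we have $c_k = \left(\tfrac12 - \tfrac{1}{p+1}\right)\|u_k\|_{H_{\sigma_k}}^2$, and since every ground state minimizes the Rayleigh quotient, one computes that $\inf R_{\sigma_k}$ is comparable to a power of $c_k$; concretely, if $m_k := \inf_{\mathcal H\setminus\{0\}} R_{\sigma_k}(u)$, then a direct scaling argument (optimizing over dilations $tu$, exactly as in Lemma \ref{t*lemma}) gives $c_k = \left(\tfrac12-\tfrac{1}{p+1}\right) m_k^{\frac{p+1}{p-1}}$ when $p>1$, and the analogous formula with the reciprocal exponent when $p<1$. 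So the whole matter reduces to understanding $\lim_{k\to\infty} m_k$.

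The second step is to show $m_k \to 0$ as $\sigma_k \searrow \sigma^* = 1 - \tilde\delta_1(\Omega)$. For the upper bound, test $R_{\sigma_k}$ with the first Steklov eigenfunction $\Phi_1$ from Proposition \ref{eigenfunction}: since $\|\Delta \Phi_1\|_2^2 = \tilde\delta_1(\Omega)\int_\dOmega \kappa (\Phi_1)_n^2$, the numerator of $R_{\sigma_k}(\Phi_1)$ equals $(\tilde\delta_1(\Omega) - (1-\sigma_k))\int_\dOmega\kappa(\Phi_1)_n^2 = (\sigma_k - \sigma^*)\int_\dOmega\kappa(\Phi_1)_n^2$, while the denominator is a fixed positive constant; hence $m_k \le R_{\sigma_k}(\Phi_1) = C(\sigma_k - \sigma^*) \to 0$. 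For the lower bound $m_k \geq 0$ one uses that for $\sigma_k > \sigma^*$ the numerator of $R_{\sigma_k}$ is the square of the norm $\|\cdot\|_{H_{\sigma_k}}$ from Lemma \ref{eqnormnew}, hence nonnegative. Therefore $m_k \to 0^+$.

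Finally I would translate $m_k \to 0$ into the two claimed conclusions. When $p > 1$, the exponent $\frac{p+1}{p-1}$ is positive, so $c_k = \left(\tfrac12 - \tfrac{1}{p+1}\right)m_k^{\frac{p+1}{p-1}} \to 0$; but $c_k = \left(\tfrac12-\tfrac{1}{p+1}\right)\|u_k\|_{H_{\sigma_k}}^2$, and since $\sigma_k$ stays bounded away from $1$ (it decreases to $\sigma^* \le -1$) one has, by \eqref{ineqnorm2} in the proof of Lemma \ref{eqnormnew}, a uniform lower control $\|u_k\|_{H_{\sigma_k}}^2 \geq \left(1 - \tfrac{1-\sigma_k}{\tilde\delta_1(\Omega)}\right)\|\Delta u_k\|_2^2$ with the prefactor bounded below by a positive constant for $\sigma_k$ near $\sigma^*$; combined with \eqref{C_A} this yields $\|u_k\|_{H^2(\Omega)} \to 0$. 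When $p \in (0,1)$, on the contrary, the Nehari formulation still applies (as noted before Proposition \ref{upperboundH^2}, with inequalities reversed), and here the relevant scaling gives $\|u_k\|_{H_{\sigma_k}}^2 = t^*(u_k)^{p+1}\int_\Omega g|u_k|^{p+1}$ forcing $t^*(u_k)^{p-1} = \|u_k\|_{H_{\sigma_k}}^2 / \int_\Omega g|u_k|^{p+1} \to 0$ as $\|u_k\|_{H_{\sigma_k}} \to 0$; since $p-1 < 0$ this means $t^*(u_k) \to +\infty$, and as the relevant quantity controlling $\|u_k\|_\infty$ from below degenerates — more precisely, plugging the eigenfunction test gives an upper bound on $\inf J_{\sigma_k}$ of the form $-C(\sigma_k-\sigma^*)^{-\frac{p+1}{1-p}} \to -\infty$, which via the embedding $H^2(\Omega)\hookrightarrow L^\infty(\Omega)$ and $J_{\sigma_k}(u_k) \ge -\tfrac{1}{p+1}\|g\|_1\|u_k\|_\infty^{p+1}$ forces $\|u_k\|_\infty \to +\infty$. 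The main obstacle is bookkeeping the correct direction of all the inequalities in the sublinear case and making sure the scaling exponents are handled consistently; the superlinear case is comparatively transparent once the identity $c_k \sim m_k^{(p+1)/(p-1)}$ is in place.
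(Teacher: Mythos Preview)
Your reduction to $m_k = \inf R_{\sigma_k} \to 0$ via testing with $\Phi_1$ is correct and matches the paper. The sublinear conclusion also goes through (your ``more precisely'' argument with $\inf J_{\sigma_k}\to -\infty$ is fine, even though the sentence before it about $t^*(u_k)\to+\infty$ is garbled: $u_k\in\Ne_{\sigma_k}$ forces $t^*(u_k)=1$).

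The genuine gap is in the superlinear case, in the step where you pass from $\|u_k\|_{H_{\sigma_k}}\to 0$ to $\|u_k\|_{H^2(\Omega)}\to 0$. You invoke the inequality \eqref{ineqnorm2},
\[
\|u_k\|_{H_{\sigma_k}}^2 \;\ge\; \Bigl(1-\tfrac{1-\sigma_k}{\tilde\delta_1(\Omega)}\Bigr)\|\Delta u_k\|_2^2,
\]
and assert that the prefactor is bounded below by a positive constant near $\sigma^*$. This is false: since $\sigma^*=1-\tilde\delta_1(\Omega)$, one has $1-\tfrac{1-\sigma_k}{\tilde\delta_1(\Omega)}=\tfrac{\sigma_k-\sigma^*}{\tilde\delta_1(\Omega)}\to 0$ as $\sigma_k\searrow\sigma^*$. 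The norm equivalence between $\|\cdot\|_{H_{\sigma_k}}$ and $\|\Delta\cdot\|_2$ degenerates precisely at the limit you are approaching, so from $\|u_k\|_{H_{\sigma_k}}\to 0$ alone you cannot conclude anything about $\|\Delta u_k\|_2$.

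The paper's remedy is a spectral decomposition: write $u_k=\alpha_k\Phi_1+\psi_k$ with $\psi_k$ orthogonal (in the $H_{\sigma_k}$ scalar product) to $\Phi_1$. On the orthogonal complement the relevant constant is governed by the \emph{second} Steklov eigenvalue $\tilde\delta_2(\Omega)>\tilde\delta_1(\Omega)$, giving $(\psi_k,\psi_k)_{H_{\sigma_k}}\ge \tfrac{\sigma_k-\sigma^{**}}{1-\sigma^{**}}\|\Delta\psi_k\|_2^2$ with $\sigma^{**}=1-\tilde\delta_2(\Omega)<\sigma^*$, and this coefficient \emph{does} stay bounded below. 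Hence $\|\Delta\psi_k\|_2\to 0$; a separate short argument then forces $\alpha_k\to 0$. Without isolating the $\Phi_1$-direction your argument cannot close.
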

\begin{proof}
	Let $p>0$, $p\neq 1$; by the remark above, each ground state $u_k$ is such that
	$$R_{\sigma_k}(u_k)=\inf_{0\neq u\in H^2(\Omega)\cap\Ho}R_{\sigma_k}(u):=\Sigma_{\sigma_k}\geq 0.$$
	By Proposition \ref{eigenfunction}, there exists a positive Steklov first eigenfunction $\Phi_1$; since we have $\|\Delta\Phi_1\|_2^2=(1-\sigma^*)\int_\dOmega \kappa(\Phi_1)_n^2$, then
	$$0\leq\Sigma_{\sigma_k}\leq R_{\sigma_k}(\Phi_1)=(\sigma_k-\sigma^*)\dfrac{\int_\dOmega \kappa(\Phi_1)_n^2}{\bigg(\int_\Omega g(x)|\Phi_1|^{p+1}\bigg)^\frac{2}{p+1}}\rightarrow 0$$
	as $k\rightarrow+\infty$. Moreover, since $u_k$ is a ground state for $J_{\sigma_k}$, $\|\Delta u_k\|_2^2-(1-\sigma_k)\int_\dOmega \kappa(u_k)_n^2=\int_\Omega g(x)|u_k|^{p+1}$ and, since $R_{\sigma_k}(u_k)=\Sigma_{\sigma_k}$, we deduce
	$$\bigg(\int_\Omega g(x)|u_k|^{p+1}\bigg)^\frac{p-1}{p+1}=\Sigma_{\sigma_k}\rightarrow 0.$$
	Hence, if $p>1$, $\int_\Omega g(x)|u_k|^{p+1}\rightarrow0$; otherwise, if $p\in(0,1)$, then $\int_\Omega g(x)|u_k|^{p+1}\rightarrow+\infty$, which implies, by $\Holder$ inequality as $g\in L^1(\Omega)$, that $\|u_k\|_\infty\rightarrow+\infty$.\\
	\noindent We have now to prove that, if $p>1$, this convergence to 0 is actually in the natural norm $H^2(\Omega)$. By Lemma \ref{eqnormnew}, $\|\cdot\|_{H_{\sigma_k}}$ is a norm in $H^2(\Omega)\cap\Ho$ for every $k$, so we are able to decompose in that norm the Hilbert space as $H^2(\Omega)\cap\Ho=span(\Phi_1)\oplus[span(\Phi_1)]^\perp$. Thus, for every $k$ there exist a unique $\alpha_k\in\R$ and $\psi_k\in[span(\Phi_1)]^\perp$ such that $u_k=\alpha_k\Phi_1+\psi_k$.\\
	\noindent Hence, for $k$ large enough,
	\begin{equation}\label{prodscalHsigma}
	\begin{split}
	o(1)&\geq\int_\Omega g(x)|u_k|^{p+1}=\|\Delta u_k\|_2^2-(1-\sigma_k)\int_\dOmega \kappa(u_k)_n^2=(u_k,u_k)_{H_{\sigma_k}}\\
	&=\alpha_k^2(\Phi_1,\Phi_1)_{H_{\sigma_k}}+(\psi_k,\psi_k)_{H_{\sigma_k}}.
	\end{split}
	\end{equation}
	First of all, 
	\begin{equation}\label{Phi1}
	(\Phi_1,\Phi_1)_{H_{\sigma_k}}=\|\Delta\Phi_1\|_2^2-(1-\sigma_k)\int_\dOmega \kappa(\Phi_1)_n^2=(\sigma_k-\sigma^*)\int_\dOmega \kappa(\Phi_1)_n^2.
	\end{equation}
	Moreover, denoting by $\tilde{\delta_2}(\Omega)$ the second eigenvalue of the Steklov problem, i.e.
	$$\tilde{\delta_2}(\Omega)=\inf_{span(\Phi_1)^\perp\setminus\{0\}}\dfrac{\|\Delta v\|_2^2}{\int_\dOmega \kappa v_n^2},$$
	and defining $\sigma^{**}:=1-\tilde{\delta_2}(\Omega)$, we get
	$$\|\Delta\psi_k\|_2^2\geq(1-\sigma^{**})\int_\dOmega \kappa(\psi_k)_n^2,$$
	from which
	\begin{equation}\label{psik}
	(\psi_k,\psi_k)_{H_{\sigma_k}}=\|\Delta\psi_k\|_2^2-(1-\sigma_k)\int_\dOmega \kappa(\psi_k)_n^2\geq \|\Delta\psi_k\|_2^2-\dfrac{1-\sigma_k}{1-\sigma^{**}}\|\Delta\psi_k\|_2^2=\dfrac{\sigma_k-\sigma^{**}}{1-\sigma^{**}}\|\Delta\psi_k\|_2^2.
	\end{equation}
	As a result, combining \eqref{prodscalHsigma} with \eqref{Phi1} and \eqref{psik}, we get:
	\begin{equation*}
	o(1)\geq\int_\Omega g(x)|u_k|^{p+1}=\alpha_k^2(\sigma_k-\sigma^*)\int_\dOmega \kappa(\Phi_1)_n^2+\dfrac{\sigma_k-\sigma^{**}}{1-\sigma^{**}}\|\Delta\psi_k\|_2^2.
	\end{equation*}
	Since we proved in Proposition \ref{eigenfunction} that the first Steklov eigenfunction is simple, we have $\sigma^{**}<\sigma^*$ and, recalling that $\sigma_k>\sigma^*$ by assumption, necessarily $\|\Delta\psi_k\|_2\rightarrow 0$. Hence,
	\begin{equation*}
	\begin{split}
	\int_\Omega g(x)|\alpha_k\Phi_1|^{p+1}&\leq\int_\Omega g(x)[|u_k|+|\psi_k|]^{p+1}\leq 2^p\int_\Omega g(x)[|u_k|^{p+1}+|\psi_k|^{p+1}]\\
	&\leq2^p\int_\Omega g(x)|u_k|^{p+1}+C^{p+1}(\Omega)\|g\|_1\|\psi_k\|_{H^2(\Omega)}\rightarrow0.
	\end{split}
	\end{equation*}
	As a result, $\alpha_k\rightarrow 0$ and we finally obtain
	$$\|u_k\|_{H^2(\Omega)}\leq|\alpha_k|\|\Phi_1\|_{H^2(\Omega)}+\|\psi_k\|_{H^2(\Omega)}\rightarrow 0.$$
\end{proof}

	If we read carefully the proof of Theorem \ref{approachingsigma*}, we notice that the fact that each $u_k$ is a ground state for $J_\sigma$ was necessary only to deduce that $\int_\Omega g(x)|u_k|^{p+1}\rightarrow0$, while to prove the convergence to 0 in $H^2(\Omega)$ norm it was only sufficient that each $u_k$ is a critical point (actually, an element of the Nehari manifold $\Ne_{\sigma_k}$, since the only step of the proof involved is \eqref{prodscalHsigma}). Consequently, we can directly state the following lemma, which will be useful when we will look at the radial case in Section 7:
\begin{lem}\label{approachingsigmastar}
	Let $(u_k)_{k\in\N}$ be a sequence of critical points of $J_{\sigma_k}$ in the superlinear case, such that $\int_\Omega g(x)|u_k|^{p+1}\rightarrow0$ as $\sigma_k\searrow\sigma^*$. Then $\|u_k\|_{H^2(\Omega)}\rightarrow0$.
\end{lem}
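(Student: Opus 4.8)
The plan is to extract from the proof of Theorem \ref{approachingsigma*} precisely those steps that use only the membership of $u_k$ in the Nehari manifold $\Ne_{\sigma_k}$, discarding the part that genuinely needed $u_k$ to be a ground state (which in the original argument served only to obtain $\int_\Omega g(x)|u_k|^{p+1}\to0$, now an assumption). First I would record that, since $u_k$ is a critical point of $J_{\sigma_k}$, it lies in $\Ne_{\sigma_k}$, so that
$$\int_\Omega g(x)|u_k|^{p+1}=\|\Delta u_k\|_2^2-(1-\sigma_k)\int_\dOmega\kappa(u_k)_n^2=(u_k,u_k)_{H_{\sigma_k}},$$
where by Lemma \ref{eqnormnew} the bilinear form $(\cdot,\cdot)_{H_{\sigma_k}}$ is a genuine scalar product on $H^2(\Omega)\cap\Ho$ for every $\sigma_k>\sigma^*$; by hypothesis the left-hand side is $o(1)$.

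Next I would split the Hilbert space orthogonally with respect to this scalar product, $H^2(\Omega)\cap\Ho=\mathrm{span}(\Phi_1)\oplus[\mathrm{span}(\Phi_1)]^\perp$, write $u_k=\alpha_k\Phi_1+\psi_k$ accordingly, and expand
$$o(1)=\int_\Omega g(x)|u_k|^{p+1}=\alpha_k^2(\Phi_1,\Phi_1)_{H_{\sigma_k}}+(\psi_k,\psi_k)_{H_{\sigma_k}}.$$
The two terms are estimated separately exactly as in Theorem \ref{approachingsigma*}: using that $\Phi_1$ attains $\tilde\delta_1(\Omega)$ one has $(\Phi_1,\Phi_1)_{H_{\sigma_k}}=(\sigma_k-\sigma^*)\int_\dOmega\kappa(\Phi_1)_n^2\geq0$, while the variational characterisation of the second Steklov eigenvalue $\tilde\delta_2(\Omega)$, with $\sigma^{**}:=1-\tilde\delta_2(\Omega)$, gives $(\psi_k,\psi_k)_{H_{\sigma_k}}\geq\frac{\sigma_k-\sigma^{**}}{1-\sigma^{**}}\|\Delta\psi_k\|_2^2$. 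By the simplicity of $\Phi_1$ (Proposition \ref{eigenfunction}) we have $\sigma^{**}<\sigma^*<\sigma_k$, so the coefficient $\frac{\sigma_k-\sigma^{**}}{1-\sigma^{**}}$ is bounded away from $0$; the displayed identity then forces $\|\Delta\psi_k\|_2\to0$ and, by \eqref{C_A}, $\|\psi_k\|_{H^2(\Omega)}\to0$.

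Finally I would recover $\alpha_k\to0$: from $|\alpha_k\Phi_1|\leq|u_k|+|\psi_k|$, the elementary bound $(a+b)^{p+1}\leq2^p(a^{p+1}+b^{p+1})$, the embedding $H^2(\Omega)\hookrightarrow L^\infty(\Omega)$ and $g\in L^1(\Omega)$, one gets $\int_\Omega g(x)|\alpha_k\Phi_1|^{p+1}\leq2^p\int_\Omega g(x)|u_k|^{p+1}+C\|g\|_1\|\psi_k\|_{H^2(\Omega)}^{p+1}\to0$, whence $\alpha_k\to0$, and then $\|u_k\|_{H^2(\Omega)}\leq|\alpha_k|\|\Phi_1\|_{H^2(\Omega)}+\|\psi_k\|_{H^2(\Omega)}\to0$. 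The only genuinely delicate point — and the sole reason one must keep track of $\sigma^{**}$ — is the uniform coercivity of $(\cdot,\cdot)_{H_{\sigma_k}}$ on $[\mathrm{span}(\Phi_1)]^\perp$ as $\sigma_k\searrow\sigma^*$: this rests on the strict inequality $\sigma^{**}<\sigma^*$, i.e. on the simplicity of the first Steklov eigenfunction, which prevents the coercivity constant from degenerating; everything else is a direct transcription of estimates already established in the proof of Theorem \ref{approachingsigma*}.
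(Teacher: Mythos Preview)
Your proposal is correct and follows exactly the approach indicated by the paper: the paper does not give a separate proof of this lemma but simply observes, in the paragraph immediately preceding it, that in the proof of Theorem \ref{approachingsigma*} the ground-state property was used only to obtain $\int_\Omega g(x)|u_k|^{p+1}\to0$, while the remaining steps (from \eqref{prodscalHsigma} onward) require only $u_k\in\Ne_{\sigma_k}$. You have faithfully reproduced those steps; if anything, your bound with $\|\psi_k\|_{H^2(\Omega)}^{p+1}$ is cleaner than the paper's (where the exponent appears to have been dropped by a typo).
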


\section{Beyond the physical bounds: $\sigma>1$}
As briefly announced at the beginning of the previous section, here we want to investigate the behaviour of the ground states of $J_\sigma$ when $\sigma>1$. We assume again hereafter that $\Omega\subset\R^2$ is a bounded convex domain with $C^{1,1}$ boundary and \eqref{FNL} concerning the nonlinearity. As a consequence, the extension of the existence result is straightforward: in fact, in this case, by Lemma \ref{eqnormnew}, $\|\cdot\|_{H_\sigma(\Omega)}$ is still a norm on $H^2(\Omega)\cap\Ho$ and we can repeat the usual steps. Notice also that it is equivalent by these assumptions on $\Omega$ to consider critical points of $J_\sigma$ as far as weak solutions of the semilinear problem \eqref{PDEpSteklovg}.\\
\noindent The extension of positivity in this case seems not to be obvious, as already noticed in Remark \ref{extensionnotobvious}. We will provide here two different proofs (which will produce two slightly different results), the first one relying on the study of the convergence of ground states as $\sigma\rightarrow1$, which in the limit yields the Navier case, while the second is based on the method of dual cones by Moreau, connecting our semilinear problem with the linear one. We point out that the convergence result might be also of independent interest.\\
In the following, we will always consider the exponent of the nonlinearity \eqref{FNL} to be $p>1$, but similar results can be proved also in the sublinear framework (see Remarks \ref{rmksub} and \ref{rmksub2}).

\subsection{Convergence of ground states of $J_\sigma$ to ground states of $J_{NAV}$ as $\sigma\rightarrow1$}
In this section, $(u_k)_{k\in\N}$ will always denote a sequence of  ground states solutions of the Steklov problems
\begin{equation}\label{PDEpSteklov_k}
\begin{cases}
\Delta^2u=g(x)|u|^{p-1}u\quad&\mbox{in }\Omega,\\
u=\Delta u-(1-\sigma_k)\kappa u_n=0\quad&\mbox{on }\dOmega,
\end{cases}
\end{equation}
for a sequence $(\sigma_k)_{k\in\N}$ converging to $1$.
Moreover, in order to underline the peculiarity of the problem when $\sigma=1$, we indicate $J_{NAV}:=J_1$, whose critical points are the weak solution of the following Navier problem:
\begin{equation}\label{PDEpNav}
\begin{cases}
\Delta^2u=g(x)|u|^{p-1}u\quad&\mbox{in }\Omega,\\
u=\Delta u=0\quad&\mbox{on }\dOmega,
\end{cases}
\end{equation}
Finally, $\overline{u}$ will always denote a ground state of $J_{NAV}$. Our main result is to prove the convergence $u_k\rightarrow \ubar$ in the natural norm, i.e. in $H^2(\Omega)$, as $\sigma_k\rightarrow1$, no matter if $\sigma_k$ is less or greater than 1. First of all, a weaker result is enough:

\begin{lem}\label{fromweaktostrong}
	Let $(u_k)_{k\in\N}$ and $\ubar$ be as specified above. If $u_k\rightharpoonup\overline{u}$ weakly in $H^2(\Omega)$, then (up to a subsequence) $u_k\rightarrow\overline{u}$ strongly in $H^2(\Omega)$ as $\sigma_k\rightarrow1$.
\end{lem}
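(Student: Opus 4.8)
The plan is to upgrade the weak convergence $u_k\rightharpoonup\overline u$ to strong convergence in $H^2(\Omega)$ by showing that the $H^2$-norms (equivalently, the energy norms) converge, which together with weak convergence in a Hilbert space yields strong convergence.

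First I would exploit the variational characterisation of each $u_k$. Since $u_k\in\Ne_{\sigma_k}$, we have the identity $\|u_k\|_{H_{\sigma_k}}^2=\int_\Omega g(x)|u_k|^{p+1}$, and similarly $\|\overline u\|_{H_1}^2=\int_\Omega g(x)|\overline u|^{p+1}$. By the compact embedding $H^2(\Omega)\hookrightarrow L^{p+1}(\Omega)$ (in fact into $L^\infty$), the weak convergence $u_k\rightharpoonup\overline u$ gives $\int_\Omega g(x)|u_k|^{p+1}\to\int_\Omega g(x)|\overline u|^{p+1}$; hence $\|u_k\|_{H_{\sigma_k}}^2\to\|\overline u\|_{H_1}^2$. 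Next I would compare $\|u_k\|_{H_{\sigma_k}}^2$ with $\|u_k\|_{H_1}^2=\|\Delta u_k\|_2^2$: by Theorem~\ref{detbordo2} the two differ by $(1-\sigma_k)\int_\dOmega\kappa (u_k)_n^2$, and since $(u_k)$ is bounded in $H^2(\Omega)$ (so $\int_\dOmega\kappa(u_k)_n^2$ is bounded, by the trace theorem) and $\sigma_k\to1$, this correction term tends to $0$. Therefore $\|\Delta u_k\|_2^2\to\|\overline u\|_{H_1}^2=\|\Delta\overline u\|_2^2$.

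It then remains to identify the limit value $\|\Delta\overline u\|_2^2$ with the right quantity and, crucially, to rule out that $u_k\to0$: one must know that $\overline u$ is a genuine ground state of $J_{NAV}$ and not the trivial solution. This is where the ground-state property (rather than mere criticality) of the $u_k$ enters. I would argue that $\overline u\ne0$ because $\Ne_{\sigma_k}$ is uniformly bounded away from $0$: by Lemma~\ref{Neclosed} (whose proof, via Lemma~\ref{eqnormnew}, applies uniformly for $\sigma_k$ near $1$) there is a constant $c_0>0$ with $\|u_k\|_{H^2(\Omega)}\ge c_0$ for all $k$, and since $\int_\Omega g(x)|u_k|^{p+1}\ge C_0^{-1}$-type lower bounds hold, passing to the limit gives $\int_\Omega g(x)|\overline u|^{p+1}>0$, so $\overline u\ne0$. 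With $\|\Delta\cdot\|_2$ a norm on $H^2(\Omega)\cap\Ho$ equivalent to the standard one (by \eqref{C_A}), the convergence $\|\Delta u_k\|_2\to\|\Delta\overline u\|_2$ combined with $u_k\rightharpoonup\overline u$ weakly yields $\Delta u_k\to\Delta\overline u$ strongly in $L^2(\Omega)$, hence $u_k\to\overline u$ strongly in $H^2(\Omega)$.

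The main obstacle I anticipate is not any single estimate but the bookkeeping around the two $\sigma$-dependent norms: one has to be careful that the $L^{p+1}$ convergence of the nonlinear term, the vanishing of the boundary correction $(1-\sigma_k)\int_\dOmega\kappa(u_k)_n^2$, and the uniform boundedness of $(u_k)$ in $H^2(\Omega)$ all hold simultaneously and are consistent — in particular that the uniform $H^2$-bound does not itself require knowing the limit. This is handled by first deriving the uniform bound from the ground-state energy estimate $J_{\sigma_k}(u_k)=(\tfrac12-\tfrac1{p+1})\|u_k\|_{H_{\sigma_k}}^2$ being controlled (e.g. by comparison with a fixed test function), and only afterwards extracting the weakly convergent subsequence that the hypothesis provides. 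Once boundedness is in hand the rest is a routine Hilbert-space argument.
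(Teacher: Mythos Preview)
Your approach is correct but takes a different route from the paper. You argue via the Hilbert-space principle ``weak convergence plus norm convergence implies strong convergence'': from the Nehari identities $\|u_k\|_{H_{\sigma_k}}^2=\int_\Omega g|u_k|^{p+1}$ and $\|\Delta\overline u\|_2^2=\int_\Omega g|\overline u|^{p+1}$, the compact embedding $H^2(\Omega)\hookrightarrow L^\infty(\Omega)$, and the vanishing of the boundary correction $(1-\sigma_k)\int_{\partial\Omega}\kappa(u_k)_n^2$, you obtain $\|\Delta u_k\|_2\to\|\Delta\overline u\|_2$, and conclude since $\Delta u_k\rightharpoonup\Delta\overline u$ in $L^2(\Omega)$. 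The paper instead estimates $\|\Delta(u_k-\overline u)\|_2^2$ directly: testing the weak formulations of the Steklov and Navier problems against $\varphi=u_k-\overline u$ and subtracting gives
\[
\|\Delta(u_k-\overline u)\|_2^2=(1-\sigma_k)\int_{\partial\Omega}\kappa(u_k)_n(u_k-\overline u)_n+\int_\Omega g\bigl(|u_k|^{p-1}u_k-|\overline u|^{p-1}\overline u\bigr)(u_k-\overline u),
\]
and both terms tend to $0$. The paper's argument uses only that $u_k$ and $\overline u$ are weak solutions (cf.\ Remark~\ref{fromweaktostrongremark}), whereas yours uses the Nehari relation for $\overline u$, which is available here since $\overline u$ is assumed to be a Navier ground state; in practice both are equally applicable.

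Two of your concerns are unnecessary. First, you need not rule out $\overline u=0$: by hypothesis $\overline u$ is a ground state of $J_{NAV}$, hence nontrivial; and in any case your norm-convergence argument would still yield $u_k\to\overline u$ strongly even if the limit were zero. Second, the uniform $H^2$-bound you discuss in the last paragraph is automatic from the hypothesis, since weakly convergent sequences are bounded; no separate energy comparison is required for this lemma.
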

\begin{proof}
	As $u_k\rightharpoonup\overline{u}$ weakly in $H^2(\Omega)$, there exists $M>0$ such that $\|u_k\|_{H^2(\Omega)}^2\leq M$. Moreover, for each $k\in\N$, $u_k$ is a solution of \eqref{PDEpSteklov_k} and $\overline{u}$ of the Navier problem \eqref{PDEpNav}, thus, for every test function $\varphi\in H^2(\Omega)\cap\Ho$:
	\begin{equation}\label{soldebStek}
	\int_\Omega \Delta u_k\Delta\varphi-(1-\sigma_k)\int_\dOmega \kappa(u_k)_n\varphi_n=\int_\Omega g(x)|u_k|^{p-1}u_k\varphi,
	\end{equation}
	\begin{equation*}
	\int_\Omega \Delta\overline{u}\Delta\varphi=\int_\Omega g(x)|\overline u|^{p-1}\overline u\varphi.
	\end{equation*}
	Hence
	\begin{equation*}
	\begin{split}
	&C_A^{-1}\|u_k-\overline{u}\|_{H^2(\Omega)}^2\leq\|\Delta u_k-\Delta\overline{u}\|_2^2=\int_\Omega\Delta u_k\Delta(u_k-\overline{u})-\int_\Omega\Delta\overline{u}\Delta(u_k-\overline{u})\\
	&=(1-\sigma_k)\int_\dOmega \kappa (u_k)_n(u_k-\overline{u})_n+\bigg[\int_\Omega g(x)|u_k|^{p-1}u_k(u_k-\overline{u})-\int_\Omega g(x)|\overline u|^{p-1}\overline u(u_k-\overline{u})\bigg].
	\end{split}
	\end{equation*}
	For the first term:
	\begin{equation*}
	\begin{split}
	\bigg|(1-\sigma_k)\int_\dOmega \kappa (u_k)_n(u_k-\overline{u})_n\bigg|&\leq|1-\sigma_k|C_T^2\|\kappa\|_{L^\infty(\dOmega)}\|u_k\|_{H^2(\Omega)}\|u_k-\overline{u}\|_{H^2(\Omega)}\\
	&\leq|1-\sigma_k|C_T^2\|\kappa\|_{L^\infty(\dOmega)}M(M+\|\overline{u}\|_{H^2(\Omega)})\rightarrow 0,
	\end{split}
	\end{equation*}
	where $C_T$ is the constant in the Trace Theorem. Concerning the second, it is enough to invoke the Dominated Convergence Theorem as we have pointwise convergence and since
	\begin{equation*}
	\bigg|g(x)(|u_k|^{p-1}u_k-|\ubar|^{p-1}\ubar)(u_k-\ubar)\bigg|\leq |g(x)|[C(\Omega)^pM^p+|\ubar|^p][C(\Omega)M+\ubar]\in L^1(\Omega),
	\end{equation*}
	where $C(\Omega)$ is the constant in the embedding $H^2(\Omega)\hookrightarrow L^\infty(\Omega)$.
\end{proof}
\begin{remark}\label{fromweaktostrongremark}
	This result holds not only for ground states, but for generic solutions, i.e. if $(u_k)_{k\in\N}$ is a sequence of weak solutions of the Steklov problem \eqref{PDEpSteklov_k} and $\overline u$ a weak solution of the Navier problem \eqref{PDEpNav} and we know that $u_k\rightharpoonup\overline u$ weakly in $H^2(\Omega)$, then, up to a subsequence, it converges strongly too.
\end{remark}

\noindent A crucial observation is that the Nehari manifolds are nested with respect to the parameter $\sigma$:

\begin{lem}\label{t*disugrmk}
	If $\sigma_1<\sigma_2$ and fixing $u\in H^2(\Omega)\cap\Ho\setminus\{0\}$, then
	\begin{equation*}\label{t*disug}
	t_{\sigma_1}^*(u)\leq t_{\sigma_2}^*(u).
	\end{equation*}
\end{lem}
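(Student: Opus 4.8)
The plan is to read off the claim directly from the explicit formula \eqref{t*} for $t^*_\sigma(u)$ established in Lemma \ref{t*lemma}. Indeed, for a fixed $u\in H^2(\Omega)\cap\Ho\setminus\{0\}$, using Theorem \ref{detbordo2} to rewrite the determinant term as a boundary integral, one has
\begin{equation*}
t^*_\sigma(u)=\left(\frac{\displaystyle\int_\Omega(\Delta u)^2-(1-\sigma)\int_\dOmega\kappa u_n^2}{\displaystyle\int_\Omega g(x)|u|^{p+1}}\right)^{\frac{1}{p-1}}.
\end{equation*}
The denominator is a fixed positive number (since $g>0$ a.e.\ and $u\not\equiv0$), and the exponent $\frac{1}{p-1}$ is positive because $p>1$; hence $t^*_\sigma(u)$ is an increasing function of the numerator. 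So it suffices to show that
\begin{equation*}
\sigma\mapsto\int_\Omega(\Delta u)^2-(1-\sigma)\int_\dOmega\kappa u_n^2
\end{equation*}
is nondecreasing in $\sigma$.

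The key step is then simply to observe that $\int_\dOmega\kappa u_n^2\geq0$: this holds because $\Omega$ is convex, so $\kappa\geq0$ a.e.\ on $\dOmega$ (the standing assumption of this section). Writing $\sigma_1<\sigma_2$, the difference of the two numerators is
\begin{equation*}
\left[\int_\Omega(\Delta u)^2-(1-\sigma_2)\int_\dOmega\kappa u_n^2\right]-\left[\int_\Omega(\Delta u)^2-(1-\sigma_1)\int_\dOmega\kappa u_n^2\right]=(\sigma_2-\sigma_1)\int_\dOmega\kappa u_n^2\geq0,
\end{equation*}
so the numerator at $\sigma_2$ is at least the numerator at $\sigma_1$. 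Raising to the positive power $\frac{1}{p-1}$ and dividing by the common positive denominator preserves the inequality, yielding $t^*_{\sigma_1}(u)\leq t^*_{\sigma_2}(u)$.

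I do not anticipate any real obstacle here: the only things to be careful about are that $t^*_\sigma$ is well-defined (which needs $\int_\Omega(\Delta u)^2-(1-\sigma)\int_\dOmega\kappa u_n^2>0$, guaranteed for $\sigma>\sigma^*$ by Lemma \ref{eqnormnew}, so that we are in the regime where $\Ne_\sigma$ meets the half-line $r_u$), and that the monotonicity of $s\mapsto s^{1/(p-1)}$ on $[0,\infty)$ requires $p>1$, which is exactly the standing hypothesis. One may also note the borderline reading of the statement: if $\int_\dOmega\kappa u_n^2=0$ then $t^*_{\sigma_1}(u)=t^*_{\sigma_2}(u)$, consistent with the non-strict inequality claimed. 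This geometric fact — that the Nehari manifolds $\Ne_\sigma$ are nested, in the sense that each half-line crosses them in order of $\sigma$ — is precisely what will later allow comparison of the ground-state energies across different values of $\sigma$ as $\sigma_k\to1$.
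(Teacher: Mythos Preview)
Your proof is correct and follows exactly the same approach as the paper: both read the inequality directly from the explicit formula \eqref{t*} for $t^*_\sigma(u)$, using that the numerator is nondecreasing in $\sigma$ because $\int_\dOmega\kappa u_n^2\geq0$ by convexity. The paper's version is a one-line computation; your additional remarks on well-definedness and the equality case are accurate and consistent with the paper's subsequent observation.
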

\begin{proof}
	In fact, $-(1-\sigma_1)<-(1-\sigma_2)$ and so
	\begin{equation*}
	t_{\sigma_1}^*(u)=\bigg(\dfrac{\int_\Omega(\Delta u)^2-(1-{\sigma_1})\int_\dOmega \kappa u_n^2}{\int_\Omega g(x)|u|^{p+1}}\bigg)^{\frac{1}{p-1}}\leq \bigg(\dfrac{\int_\Omega(\Delta u)^2-(1-{\sigma_2})\int_\dOmega \kappa u_n^2}{\int_\Omega g(x)|u|^{p+1}}\bigg)^{\frac{1}{p-1}}=t_{\sigma_2}^*(u).
	\end{equation*}
\end{proof}
	\noindent Notice that if $u\in H_0^2(\Omega)$ then one has the equality; if we suppose moreover that $\kappa>0$ a.e., we deduce also the converse.
	
\begin{prop}\label{ukH2bounded}
	The sequence of ground states $(u_k)_{k\in\N}$ is bounded in $H^2(\Omega)$.
\end{prop}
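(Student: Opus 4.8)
The plan is to bound the ground state energy levels $c_k:=J_{\sigma_k}(u_k)$ uniformly in $k$, and then to convert this into an $H^2(\Omega)$-bound via the norm equivalences of Lemma~\ref{eqnormnew}. Since $\sigma_k\to 1>\sigma^*$, for $k$ large we may rewrite $J_{\sigma_k}$ as in \eqref{Jnew} (Theorem~\ref{detbordo2}) and $\|\cdot\|_{H_{\sigma_k}}$ is a genuine norm on $H^2(\Omega)\cap\Ho$ by Lemma~\ref{eqnormnew}. Exactly as in \eqref{equivonNehari}, a ground state $u_k\in\Ne_{\sigma_k}$ satisfies
$$c_k=J_{\sigma_k}(u_k)=\Big(\frac12-\frac1{p+1}\Big)\|u_k\|_{H_{\sigma_k}}^2,$$
with $\frac12-\frac1{p+1}>0$ because $p>1$. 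Hence everything reduces to producing a constant $C$, independent of $k$, with $c_k\le C$ for all large $k$.

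For the upper bound on $c_k$ I would test the infimum defining $c_k$ against the fixed Navier ground state $\ubar$ (which exists by the Nehari argument of Section~3 applied with $\sigma=1$; in fact any fixed nonzero element of $H^2(\Omega)\cap\Ho$ would do). By Lemma~\ref{t*lemma}, $t^*_{\sigma_k}(\ubar)\,\ubar\in\Ne_{\sigma_k}$ and $J_{\sigma_k}(t^*_{\sigma_k}(\ubar)\,\ubar)=\max_{t>0}J_{\sigma_k}(t\ubar)$, so
$$c_k=\inf_{v\in\Ne_{\sigma_k}}J_{\sigma_k}(v)\le J_{\sigma_k}\big(t^*_{\sigma_k}(\ubar)\,\ubar\big).$$
Now $t^*_\sigma(\ubar)=\big(\|\ubar\|_{H_\sigma}^2/\int_\Omega g(x)|\ubar|^{p+1}\big)^{1/(p-1)}$ depends continuously on $\sigma$ on $(\sigma^*,+\infty)$, since there the numerator $\|\ubar\|_{H_\sigma}^2=\|\Delta\ubar\|_2^2-(1-\sigma)\int_\dOmega\kappa\ubar_n^2$ is strictly positive (Lemma~\ref{eqnormnew}); moreover $\ubar\in\Ne_1$ forces $t^*_1(\ubar)=1$. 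Therefore $\sigma\mapsto\max_{t>0}J_\sigma(t\ubar)$ is continuous near $\sigma=1$, with value $J_{NAV}(\ubar)$ at $\sigma=1$, and we conclude $c_k\le J_{NAV}(\ubar)+1=:C$ for all $k$ large.

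Finally I would translate the energy bound into an $H^2(\Omega)$-bound. From the two displays above, $\|u_k\|_{H_{\sigma_k}}^2\le\frac{2(p+1)}{p-1}\,C$ for $k$ large. Using the inequalities in the proof of Lemma~\ref{eqnormnew}: if $\sigma_k\ge 1$ then $\|u_k\|_{H_{\sigma_k}}^2\ge\|\Delta u_k\|_2^2$, while if $\sigma_k<1$ then (cf.\ \eqref{ineqnorm2}) $\|u_k\|_{H_{\sigma_k}}^2\ge\big(1-\tfrac{1-\sigma_k}{\tilde\delta_1(\Omega)}\big)\|\Delta u_k\|_2^2\ge\tfrac12\|\Delta u_k\|_2^2$ once $1-\sigma_k<\tilde\delta_1(\Omega)/2$; in both cases $\|\Delta u_k\|_2^2\le 2\,\frac{2(p+1)}{p-1}\,C$ for $k$ large, and then \eqref{C_A} yields $\|u_k\|_{H^2(\Omega)}^2\le C_A\|\Delta u_k\|_2^2\le C'$. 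Since the finitely many remaining $u_k$ lie individually in $H^2(\Omega)$, the whole sequence is bounded. The only mildly delicate point is the dependence of the norm $\|\cdot\|_{H_\sigma}$ on $\sigma$ — both in the variational comparison giving the uniform energy bound and in the last step — but this is harmless precisely because $\sigma_k\to 1$, which keeps the relevant equivalence constants bounded away from degeneracy.
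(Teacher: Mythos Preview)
Your proof is correct. The overall architecture matches the paper's: bound the ground-state level $c_k$ by testing against the Nehari projection of a fixed comparison function, then convert to an $H^2$-bound via the equivalence of $\|\cdot\|_{H_\sigma}$ with $\|\Delta\cdot\|_2$. The difference lies in the tool used for the first step. The paper invokes the monotonicity of $\sigma\mapsto t^*_\sigma(u)$ (Lemma~\ref{t*disugrmk}): it picks the index $k_{\max}$ with largest $\sigma_k$ (or the Navier ground state if all $\sigma_k<1$) and uses $t^*_{\sigma_k}\le t^*_{\sigma_{k_{\max}}}$ to obtain the explicit bound $\int_\Omega g|u_k|^{p+1}\le\int_\Omega g|u_{k_{\max}}|^{p+1}$. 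You instead use the \emph{continuity} of $\sigma\mapsto t^*_\sigma(\ubar)$ near $\sigma=1$, which yields a uniform bound on $c_k$ without ever appealing to Lemma~\ref{t*disugrmk}. Your route is slightly softer (a limit argument rather than an explicit inequality) but also more self-contained, since it does not require the nested-Nehari-manifold observation. For the norm conversion, the paper derives the ad hoc estimate \eqref{PariniAdolfsson}, whereas you go directly through \eqref{ineqnorm2} and \eqref{C_A}; both are fine since $\sigma_k\to 1$ keeps the constants nondegenerate.
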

\begin{proof}
	Set $k_{max}$ such that $\sigma_{k_{max}}=\max\{(\sigma_k)_{k\in\N},1\}$ and so $u_{k_{max}}$ is a ground state for $J_{\sigma_{k_{max}}}$ (with the convention that if $\sigma_{k_{max}}$=1, then $u_{k_{max}}$ is a ground state for $J_{NAV}$).

	Defining $w_{k}:=t^*_{\sigma_k}(u_{k_{max}})u_{k_{max}}\in\Ne_{\sigma_k}$, that is, the "projection" of $u_{k_{max}}$ on the Nehari manifold $\Ne_{\sigma_k}$ along its half-line, one has 
	\begin{equation}\label{t^*disugcor}
	\int_\Omega g(x)|u_k|^{p+1}\leq \int_\Omega g(x)|w_{k}|^{p+1} \leq \int_\Omega g(x)|u_{k_{max}}|^{p+1}.
	\end{equation}
	Indeed, the first inequality comes from the fact that $u_k$ is a ground state of $J_{\sigma_k}$, which has the equivalent formulation \eqref{equivonNehari}; the second is obtained by Lemma \ref{t*disug} since
	\begin{equation*}
	\begin{split}
	\int_\Omega g(x)|w_k|^{p+1} &=(t^*_{\sigma_k}(u_{k_{max}}))^{p+1}\int_\Omega g(x)|u_{k_{max}}|^{p+1}\\
	&\leq(t^*_{\sigma_{k_{max}}}(u_{k_{max}}))^{p+1}\int_\Omega g(x)|u_{k_{max}}|^{p+1}=\int_\Omega g(x)|u_{k_{max}}|^{p+1}.
	\end{split}
	\end{equation*}
	Furthermore, for a generic  $\sigma>0$ (and here we can assume it without loss of generality),
	\begin{equation}\label{PariniAdolfsson}
	\int_\Omega(\Delta u)^2 - (1-\sigma)\int_\dOmega \kappa u_n^2 \geq \min\{\sigma,1\} C_A(\Omega)\|u\|_{H^2(\Omega)}^2.
	\end{equation}
	In fact, if $\sigma\in[1,+\infty)$ the proof is straightforward since $-(1-\sigma)\geq 0$, otherwise, if $\sigma\in(0,1)$:
	\begin{equation*}
	\begin{split}
	&\int_\Omega(\Delta u)^2 -(1-\sigma)\int_\dOmega\kappa u_n^2=\int_\Omega(\Delta u)^2+2(1-\sigma)\int_\Omega(-det(\nabla^2u))\\
	&=\int_\Omega \bigg[u_{xx}^2+u_{yy}^2+2\sigma u_{xx}u_{yy}+2(1-\sigma)u_{xy}^2\bigg]\geq\sigma\int_\Omega(\Delta u)^2+2(1-\sigma)\int_\Omega u_{xy}^2\\
	&\geq\sigma\int_\Omega(\Delta u)^2\geq\sigma C_A^{-1}(\Omega)\|u\|_{H^2(\Omega)}^2
	\end{split}
	\end{equation*}
	As a result, combining \eqref{t^*disugcor} with \eqref{PariniAdolfsson}, we get:
	$$\|u_k\|_{H^2(\Omega)}^2\leq\dfrac{C_A(\Omega)}{\min\{\sigma_k,1\}} \int_\Omega g(x)|u_k|^{p+1} \leq\dfrac{C_A(\Omega)}{\min\{\sigma_k,1\}}\int_\Omega g(x)|u_{k_{max}}|^{p+1},$$
	which is the estimate we needed.
\end{proof}

\noindent As a direct consequence of Proposition \ref{ukH2bounded}, the sequence $(u_k)_{k\in\N}$, up to a subsequence, is weakly convergent to some $u_\infty\in H^2(\Omega)\cap\Ho$ with strong convergence in $L^\infty(\Omega)$.
It is also easy to see that $u_\infty$ is a weak solution of the Navier problem \eqref{PDEpNav}: it is enough to	apply to \eqref{soldebStek} the weak convergence in $H^2(\Omega)$, the strong convergence in $L^2(\dOmega)$ of the normal derivatives and the Dominated Convergence Theorem. As a consequence, by Proposition \ref{fromweaktostrong}, the convergence $u_k\rightarrow u_\infty$ is strong in $H^2(\Omega)$.

\begin{thm}\label{convergence}
		Let	$\sigma_k\rightarrow1$ and $\Omega$ be a bounded convex domain in $\R^2$ with boundary of class $C^{1,1}$.
		Then the sequence $(u_k)_{k\in\N}$ of ground state solutions for the Steklov problems \eqref{PDEpSteklov_k} admits a subsequence $(u_{k_j})_{j\in\N}$ which converges in $H^2(\Omega)$ to $u_\infty$, which is a ground state for the Navier problem \eqref{PDEpNav}, thus strictly superharmonic.
\end{thm}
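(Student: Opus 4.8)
The plan is to leverage the convergence facts gathered just before the statement and then run an energy comparison that pins down $u_\infty$ as a minimizer of $J_{NAV}$ on its Nehari manifold. By Proposition \ref{ukH2bounded} and the discussion following it, after passing to a subsequence we may assume $u_k\rightharpoonup u_\infty$ weakly in $H^2(\Omega)$ and strongly in $L^\infty(\Omega)$, that $u_\infty\in H^2(\Omega)\cap\Ho$ is a weak solution of the Navier problem \eqref{PDEpNav}, and — by Lemma \ref{fromweaktostrong} — that the convergence $u_k\to u_\infty$ is strong in $H^2(\Omega)$. It then remains to check two things: that $u_\infty$ is nontrivial, and that it attains the least energy $c_1:=\inf_{\Ne_1}J_{NAV}$.

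For nontriviality I would redo the argument of Lemma \ref{Neclosed}, but measuring with the uniform coercivity \eqref{PariniAdolfsson} instead of the $(1-|\sigma|)$–constant, which degenerates as $\sigma\to1$. Since $u_k\in\Ne_{\sigma_k}$, combining \eqref{equivonNehari}, the embedding $H^2(\Omega)\hookrightarrow L^\infty(\Omega)$ and \eqref{PariniAdolfsson} gives, for a suitable constant $C=C(\Omega,p,\|g\|_1)>0$,
\begin{equation*}
\min\{\sigma_k,1\}\,\|u_k\|_{H^2(\Omega)}^2\le C\,\|u_k\|_{H_{\sigma_k}}^2=C\int_\Omega g(x)|u_k|^{p+1}\le C^2\|u_k\|_{H^2(\Omega)}^{p+1},
\end{equation*}
hence $\|u_k\|_{H^2(\Omega)}^{p-1}\ge \min\{\sigma_k,1\}/C^2$. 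Since $\sigma_k\to1$, the right–hand side stays bounded away from $0$, so by the strong $H^2$–convergence $\|u_\infty\|_{H^2(\Omega)}>0$; being a nontrivial weak solution of \eqref{PDEpNav}, $u_\infty\in\Ne_1$, and therefore $J_{NAV}(u_\infty)\ge c_1$.

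For the matching upper bound, I would first note that the strong $L^\infty$–convergence together with \eqref{equivonNehari} yields $J_{\sigma_k}(u_k)=\Big(\tfrac12-\tfrac1{p+1}\Big)\int_\Omega g(x)|u_k|^{p+1}\to\Big(\tfrac12-\tfrac1{p+1}\Big)\int_\Omega g(x)|u_\infty|^{p+1}=J_{NAV}(u_\infty)$. Then I pick a ground state $\ubar$ of $J_{NAV}$ and project it onto $\Ne_{\sigma_k}$ along its half-line, setting $v_k:=t^*_{\sigma_k}(\ubar)\,\ubar\in\Ne_{\sigma_k}$. Because $\ubar\in\Ne_1$, Lemma \ref{t*lemma} gives $t^*_1(\ubar)=1$, and the explicit formula for $t^*_\sigma$ (cf. Lemma \ref{t*disugrmk}) shows $\sigma\mapsto t^*_\sigma(\ubar)$ is continuous near $\sigma=1$, so $t^*_{\sigma_k}(\ubar)\to1$ and $\int_\Omega g(x)|v_k|^{p+1}=(t^*_{\sigma_k}(\ubar))^{p+1}\int_\Omega g(x)|\ubar|^{p+1}\to\int_\Omega g(x)|\ubar|^{p+1}$. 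Since $u_k$ minimizes $J_{\sigma_k}$ on $\Ne_{\sigma_k}$ and $v_k\in\Ne_{\sigma_k}$, using \eqref{equivonNehari} once more,
\begin{equation*}
J_{\sigma_k}(u_k)\le J_{\sigma_k}(v_k)=\Big(\tfrac12-\tfrac1{p+1}\Big)\int_\Omega g(x)|v_k|^{p+1},
\end{equation*}
and letting $k\to\infty$ gives $J_{NAV}(u_\infty)\le\Big(\tfrac12-\tfrac1{p+1}\Big)\int_\Omega g(x)|\ubar|^{p+1}=J_{NAV}(\ubar)=c_1$. Hence $J_{NAV}(u_\infty)=c_1$, so $u_\infty$ is a ground state of the Navier problem, and Proposition \ref{positivitysuperlinear} applied with $\sigma=1$ shows it is strictly superharmonic, completing the proof.

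The step I expect to be the main obstacle is the nontriviality of the limit: the lower bound of Lemma \ref{Neclosed} in the form stated there is not uniform up to $\sigma=1$, and one genuinely has to replace it by the coercivity estimate \eqref{PariniAdolfsson}, which remains uniform for $\sigma$ near $1$; once this is in place, the remaining steps are routine bookkeeping with the strong $H^2$–convergence and with the continuity of $\sigma\mapsto t^*_\sigma(\ubar)$.
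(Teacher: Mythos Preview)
Your argument is correct and follows essentially the same route as the paper: Proposition~\ref{ukH2bounded} for boundedness, Lemma~\ref{fromweaktostrong} to upgrade to strong $H^2$-convergence, and the projection $v_k=t^*_{\sigma_k}(\ubar)\,\ubar$ of a Navier ground state onto $\Ne_{\sigma_k}$ to pin down the energy level. Two points of comparison are worth noting. First, you explicitly verify that $u_\infty\neq0$ via the uniform-in-$\sigma$ coercivity \eqref{PariniAdolfsson}; the paper does not address this directly, so your version actually closes a small gap. Second, the paper splits into the cases $\sigma_k<1$ (handled by the monotonicity of Lemma~\ref{t*disugrmk}) and $\sigma_k>1$ (handled, as you do, by projecting $\ubar$ and using continuity of $t^*_\sigma$); your approach treats both sides of $1$ uniformly through the continuity of $\sigma\mapsto t^*_\sigma(\ubar)$, which is a mild but genuine simplification.
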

\begin{proof}
	Clearly, as $u_\infty$ is weak solution of \eqref{PDEpNav}, we have $J_{NAV}(u_\infty)\geq \inf_{\Ne_{NAV}}J_{NAV}$. Now we have to prove the converse inequality. Firstly, we have $J_{NAV}(u_\infty)\leq\liminf_{k\rightarrow+\infty}J_{\sigma_k}(u_k)$. Indeed,
	\begin{equation*}
	\begin{split}
	\liminf_{k\rightarrow+\infty}J_{\sigma_k}(u_k)&=\liminf_{k\rightarrow+\infty}\int_\Omega\dfrac{(\Delta u_k)^2}{2}-\lim_{k\rightarrow+\infty}\dfrac{1-\sigma_k}{2}\int_\dOmega \kappa (u_k)_n^2-\lim_{k\rightarrow+\infty}\int_\Omega\dfrac{g(x)|u_k|^{p+1}}{p+1}\\
	&\geq\int_\Omega\dfrac{(\Delta u_\infty)^2}{2}-\int_\Omega\dfrac{g(x)|u_\infty|^{p+1}}{p+1}=J_{NAV}(u_\infty),
	\end{split}
	\end{equation*}
	having used the compactness of the map  $\partial_n: H^1(\Omega)^2\rightarrow L^2(\Omega)$ and the Dominated Convergence Theorem. Moreover, if we suppose $\sigma_k<1$ for $k$ large enough, by Lemma \ref{t*disug} (with a similar argument than in \eqref{t^*disugcor}), for all $k\in \N$ we have 
	\begin{equation}\label{sigmapiccolo}
	J_{\sigma_k}(u_k)=\bigg(\dfrac{1}{2}-\dfrac{1}{p+1}\bigg)\int_\Omega g(x)|u_k|^{p+1}\leq\bigg(\dfrac{1}{2}-\dfrac{1}{p+1}\bigg)\int_\Omega g(x)|u_\infty|^{p+1}=J_{NAV}(u_\infty),
	\end{equation}
	so in this case we are done. If otherwise $\sigma_k>1$ for a infinite number of indexes,\eqref{sigmapiccolo} does not hold. In this case, without loss of generality, we can assume that $\sigma_k\searrow 1$. By the existence theorems in Section 2, we know that there exists $\ubar\in H^2(\Omega)\cap\Ho$ ground state for $J_{NAV}$ and define $\ubar_k:=t^*_{\sigma_k}(\ubar)\ubar$ the "projection" on the Nehari manifold $\Ne_{\sigma_k}$. Then $\|\ubar_k-\ubar\|_{H^2(\Omega)}=|1-t^*_{\sigma_k}(\ubar)|\|\ubar\|_{H^2(\Omega)}$ with
	\begin{equation*}
	1-(t^*_{\sigma_k}(\ubar))^{p-1}\stackrel{[\ubar\in\Ne_{NAV}]}{=}(t^*_{NAV}(\ubar))^{p-1}-(t^*_{\sigma_k}(\ubar))^{p-1}=2(1-\sigma_k)\dfrac{\int_\Omega det(\nabla^2\ubar)}{\int_\Omega g(x)|\ubar|^{p+1}}\rightarrow0,
	\end{equation*} 
	so $\ubar_k\rightarrow\ubar$ in $H^2(\Omega)$, which implies
	\begin{equation}\label{convergenceproof1}
	\int_\Omega g(x)|\ubar_k|^{p+1}\rightarrow\int_\Omega g(x)|\ubar|^{p+1}.
	\end{equation}
	Nevertheless, since $u_k$ is a ground state of $J_{\sigma_k}$,
	\begin{equation}\label{convergenceproof2}
	\int_\Omega g(x)|\ubar_k|^{p+1}\stackrel{[\ubar_k\in\Ne_{\sigma_k}]}{=}\bigg(\dfrac{1}{2}-\dfrac{1}{p+1}\bigg)J_{\sigma_k}(\ubar_k)\geq\bigg(\dfrac{1}{2}-\dfrac{1}{p+1}\bigg)J_{\sigma_k}(u_k)\stackrel{[u_k\in\Ne_{\sigma_k}]}{=}\int_\Omega g(x)|u_k|^{p+1};
	\end{equation}
	furthermore, since we assumed $\sigma_k>1$ and by Lemma \ref{t*disug},
	\begin{equation}\label{convergenceproof3}
	\begin{split}
	\int_\Omega g(x)|u_k|^{p+1}&\geq\int_\Omega g(x)|t^*_{NAV}(u_k)u_k|^{p+1}=\bigg(\dfrac{1}{2}-\dfrac{1}{p+1}\bigg)J_{NAV}(t^*_{NAV}(u_k)u_k)\\
	&\geq\bigg(\dfrac{1}{2}-\dfrac{1}{p+1}\bigg)J_{NAV}(\ubar)=\int_\Omega g(x)|\ubar|^{p+1}.
	\end{split}
	\end{equation}
	Combining \eqref{convergenceproof1}, \eqref{convergenceproof2} and \eqref{convergenceproof3}, we find that
	\begin{equation}\label{convergenceproof4}
	\int_\Omega g(x)|u_k|^{p+1}\rightarrow\int_\Omega g(x)|\ubar|^{p+1},
	\end{equation}
	from which $J_{\sigma_k}(u_k)\rightarrow J_{NAV}(\ubar)$, which completes our equality.\\
	To conclude, notice that we have already obtained in the proof of Proposition \ref{positivitysuperlinear} that ground states of the Navier problem \ref{PDEpNav} are strictly superharmonic.
\end{proof}

\begin{remark}\label{rmksub}
	The same analysis may be adapted also for the sublinear case $p\in(0,1)$, paying attention to some minor changes: for instance, Lemma \ref{t*disug} holds with the reverse inequality, but this compensates with the fact that this time the coefficient $\frac{1}{2}-\frac{1}{p}$ in the equivalent formulation of $J_{\sigma}$ is negative.
\end{remark}

\subsection{Regularity of solutions and $W^{2.q}$ convergence of ground states}
The convergence result of the previous section will be used to derive positivity of ground states when $\sigma$ lies in a right neighborhood of 1. Nevertheless, we will need a $C^{0,1}$ convergence to be able to control the normal derivatives on the boundary, thus we have to upgrade our convergence to a stronger norm. The first step will be to investigate, for a fixed $\sigma>\sigma^*$, the regularity of solutions of \eqref{PDEpSteklovg} and \eqref{PDEpNav} with just a slightly more regular boundary (actually, we will have to impose that $\dOmega$ is of class $C^2$). This will be obtained by means of the following lemma by Gazzola, Grunau and Sweers, which follows from a result by Agmon, Douglis and Nirenberg \cite[Theorem 15.3', p.707]{ADN}:

\begin{lem}[\cite{GGS}, Corollary 2.23]\label{GGSestimate}
	Let $q>1$ and take an integer $m\geq 4$. Assume that $\dOmega\in C^m$ and $a\in C^{m-2}$, then there exists $C=C(m,q,a,\Omega)>0$ such that
	$$\|u\|_{W^{m,q}(\Omega)}\leq C\bigg(\|u\|_q+\|\Delta^2u\|_{W^{m-4,q}(\Omega)}+\|u\|_{W^{m-\frac{1}{q},q}(\dOmega)}+\|\Delta u-au_n\|_{W^{m-2-\frac{1}{q},q}(\dOmega)}\bigg),$$
	for every $u\in W^{m,q}(\Omega)$. The same statement holds for any $m\geq 2$ provided the norms in the right hand side are suitably interpreted.
\end{lem}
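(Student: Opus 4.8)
This is an a priori elliptic regularity estimate of Agmon--Douglis--Nirenberg type, and the plan is to read it off from the general $L^q$-theory of \cite{ADN} (equivalently, from the form already assembled in \cite{GGS}) and then merely transcribe the norms. The relevant boundary value problem has principal operator $\Delta^2$ and the two boundary operators $B_1u=u$ and $B_2u=\Delta u-au_n$, of orders $0$ and $2$ respectively; the only conceptual point is that $(\Delta^2;B_1,B_2)$ is an elliptic boundary value problem in the sense of \cite{ADN}.

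First I would record that $\Delta^2$ is properly elliptic: freezing tangential frequencies $\xi'$ at a boundary point and writing $\Omega$ locally as $\{t>0\}$, the symbol becomes the ODE operator $\big(\tfrac{d^2}{dt^2}-|\xi'|^2\big)^2$, whose space of solutions on $t>0$ decaying at $+\infty$ is two-dimensional, spanned by $e^{-|\xi'|t}$ and $te^{-|\xi'|t}$ for $\xi'\neq 0$. Next I would verify the complementing (Lopatinski--Shapiro) condition for $(B_1,B_2)$. Since that condition only involves the \emph{principal} parts of the boundary operators frozen at a boundary point, and the principal part of $B_2$ is simply $\Delta u$ (the term $-au_n$ being of first order, hence subprincipal), the verification is exactly the one for the Navier pair $\{u,\Delta u\}$: for $v(t)=(a_0+b_0t)e^{-|\xi'|t}$ with $|\xi'|\neq0$ one computes $v(0)=a_0$ and $\big(\tfrac{d^2}{dt^2}-|\xi'|^2\big)v(0)=-2|\xi'|b_0$, so $B_1v=0$ forces $a_0=0$ and vanishing of the principal part of $B_2v$ forces $b_0=0$; hence $v\equiv 0$ and the complementing condition holds. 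In particular the Steklov term introduces nothing new at this level.

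With ellipticity and the complementing condition in hand, Theorem~15.3$'$ of \cite{ADN}, applied to $\Delta^2$ as a single fourth-order elliptic equation with right-hand side $\Delta^2u\in W^{m-4,q}(\Omega)$ and boundary data $u|_{\dOmega}$ and $(\Delta u-au_n)|_{\dOmega}$, yields under $\dOmega\in C^m$ and $a\in C^{m-2}$ (the regularity needed so that the chart maps and the coefficient $a$ carry enough derivatives) precisely the asserted estimate; the term $\|u\|_q$ on the right is the standard device absorbing, via a compactness/interpolation argument, the contribution of all lower-order terms, here only $-au_n$. For $m\ge 4$ this is the statement verbatim; for $2\le m<4$ one reads $W^{m-4,q}(\Omega)$ and $W^{m-2-\frac1q,q}(\dOmega)$ as negative-order Sobolev/trace spaces (or simply drops the first when $\Delta^2u$ is only known as a distribution), which is the meaning of the ``suitably interpreted'' clause.

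The substance is entirely in the complementing-condition check of the second paragraph, and that is immediate precisely because $B_2$ differs from the Navier boundary operator only by a subprincipal term; the rest --- bookkeeping the orders of $B_1,B_2$, the fractional trace exponents $m-\tfrac1q$ and $m-2-\tfrac1q$, the regularity thresholds on $\dOmega$ and $a$, and localizing the half-space estimate by a finite $C^m$ atlas --- is routine. I therefore do not expect a genuine obstacle; the only nontrivial ingredient one would have to reproduce, were one to bypass \cite{ADN} and \cite{GGS}, is the half-space $L^q$ a priori estimate for $\Delta^2$ with the two displayed boundary conditions, which is itself a classical ADN model problem.
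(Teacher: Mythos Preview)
Your proposal is correct in substance and sketches the standard ADN machinery behind the estimate. Note, however, that the paper does not prove this lemma at all: it is simply quoted from \cite[Corollary 2.23]{GGS} (which in turn rests on \cite[Theorem 15.3$'$]{ADN}) and used as a black box. So there is no ``paper's proof'' to compare against --- you have supplied more than the paper does, namely the verification that the Steklov pair $(u,\Delta u - a u_n)$ satisfies the complementing condition because the lower-order term $-a u_n$ is subprincipal, reducing the check to the Navier case. That observation is exactly the reason the cited corollary applies here, and your outline of the half-space model, the Lopatinski--Shapiro computation, and the handling of negative-order norms for $2\le m<4$ is the right way to unpack the citation.
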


\noindent Hence we have to define $\Delta^2u$ as a distribution in $W^{-2,q}(\Omega)$, i.e. acting on functions in $W^{2,q'}_0(\Omega)$. Let $u\in H^2(\Omega)\cap\Ho$ be a weak solution of \eqref{PDEpSteklovg}; we define the following linear functional over $H^2(\Omega)$:
\begin{equation*}\label{Delta2def}
\Delta^2u:H^2(\Omega)\ni\varphi\mapsto<\Delta^2u,\varphi>:=\int_\Omega\Delta u\Delta\varphi
\end{equation*}
which is well-defined and continuous. If we let 
\begin{equation*}\label{u^pdef}
u^p_g:\varphi\mapsto<u^p_g,\varphi>:=\int_\Omega g(x)|u|^{p-1}u\varphi,
\end{equation*}
it is clearly well-defined and continuous on $W^{2,q'}_0(\Omega)$ and, by the weak formulation of the PDE, on the subset $H^2_0(\Omega)$ it acts identically as $\Delta^2u$. As a result, we define 
\begin{equation}\label{Delta2def2}
\Delta^2u:W^{2,q'}_0(\Omega)\ni\varphi\mapsto<\Delta^2u,\varphi>:=\int_\Omega g(x)|u|^{p-1}u\varphi.
\end{equation}

\begin{prop}\label{regularity}
	If $\dOmega\in C^2$, for every $\sigma>\sigma^*$ the weak solutions of Steklov and Navier problems \eqref{PDEpSteklov_k}, \eqref{PDEpNav} lie in $W^{2,q}(\Omega)$ for every $q>2$.
\end{prop}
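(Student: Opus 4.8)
The plan is to read off the regularity from linear elliptic theory, treating a weak solution as a weak solution of the associated \emph{linear} Steklov (resp. Navier) problem with a frozen right-hand side, and then to feed this into Lemma~\ref{GGSestimate} in its borderline case $m=2$. Fix $q>2$ throughout.

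First I would check that the datum is admissible. Since $\Omega\subset\R^2$ is bounded with Lipschitz boundary, $H^2(\Omega)\hookrightarrow L^\infty(\Omega)$, so $|u|^{p-1}u\in L^\infty(\Omega)$ for $u\in H^2(\Omega)\cap\Ho$, both for $p>1$ and for $p\in(0,1)$; combining this with $g\in L^1(\Omega)$ and with the embedding $W^{2,q'}(\Omega)\hookrightarrow C^0(\Omegabar)$, which in dimension two holds for every $q'>1$, one gets, for all $\varphi\in W^{2,q'}_0(\Omega)$,
\begin{equation*}
\bigl|\langle\Delta^2u,\varphi\rangle\bigr|=\biggl|\int_\Omega g(x)|u|^{p-1}u\,\varphi\biggr|\leq\|g\|_1\,\|u\|_\infty^{p}\,\|\varphi\|_\infty\leq C\,\|\varphi\|_{W^{2,q'}(\Omega)}.
\end{equation*}
Hence the distribution $\Delta^2u$ introduced in \eqref{Delta2def2} belongs to $W^{-2,q}(\Omega)$.

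Next comes the point where the extra regularity of $\dOmega$ is used: if $\dOmega\in C^2$, the signed curvature $\kappa$ is continuous on $\dOmega$, so the coefficient $a:=(1-\sigma)\kappa$ of the second Steklov boundary condition lies in $C^0(\dOmega)$ (for the Navier problem $a\equiv0$), which is exactly the requirement $a\in C^{m-2}$ of Lemma~\ref{GGSestimate} with $m=2$. Applying that estimate to $u$: the boundary terms $\|u\|_{W^{2-1/q,q}(\dOmega)}$ and $\|\Delta u-au_n\|_{W^{-1/q,q}(\dOmega)}$ both vanish because $u=0$ and $\Delta u-(1-\sigma)\kappa u_n=0$ on $\dOmega$, while $\|u\|_q\leq C\|u\|_\infty<\infty$; together with the bound on $\Delta^2u$ just obtained this yields $\|u\|_{W^{2,q}(\Omega)}<\infty$, i.e. $u\in W^{2,q}(\Omega)$. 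Since $q>2$ was arbitrary, the claim follows.

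The delicate part is not the computation but the logical status of Lemma~\ref{GGSestimate}, whose standard formulation is an a priori estimate for functions already known to be in $W^{2,q}(\Omega)$ and is stated for $m\geq4$. To make the last step rigorous one has to invoke well-posedness of the underlying linear boundary value problem: for $\sigma>\sigma^*$ the bilinear form $(\cdot,\cdot)_{H_\sigma}$ is coercive on $H^2(\Omega)\cap\Ho$ by Lemma~\ref{eqnormnew}, so the linear Steklov (resp. Navier) problem with right-hand side $\Delta^2u\in W^{-2,q}(\Omega)$ has a unique weak solution, which must be $u$; the Agmon--Douglis--Nirenberg theory behind Lemma~\ref{GGSestimate} provides a solution of that same linear problem in $W^{2,q}(\Omega)$, and since $q>2$ gives $W^{2,q}(\Omega)\hookrightarrow H^2(\Omega)$ this solution is again a weak solution, hence coincides with $u$. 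This is precisely where the hypothesis $\sigma>\sigma^*$ is essential: below $\sigma^*$ the linear problem degenerates and neither the identification nor the estimate survive.
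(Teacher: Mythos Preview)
Your proof follows the same route as the paper's: bound $\Delta^2 u$ in $W^{-2,q}(\Omega)$ via the weak formulation and the embedding $H^2(\Omega)\hookrightarrow L^\infty(\Omega)$, then apply Lemma~\ref{GGSestimate} with $m=2$ and $a=(1-\sigma)\kappa\in C^0(\dOmega)$ (resp.\ $a=0$). Your final paragraph on well-posedness and uniqueness makes explicit a point the paper glosses over---namely that Lemma~\ref{GGSestimate} is stated as an a priori estimate for functions already in $W^{2,q}(\Omega)$, so one should properly invoke existence in $W^{2,q}$ for the linear problem together with uniqueness of the weak solution in $H^2(\Omega)\cap\Ho$ (which indeed uses $\sigma>\sigma^*$ via Lemma~\ref{eqnormnew}) to identify the regular solution with $u$.
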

\begin{proof}
	Let $u\in H^2(\Omega)\cap\Ho$ be a weak solution of \eqref{PDEpSteklovg}. Applying Lemma \ref{GGSestimate} with $m=2$ and $a=(1-\sigma)\kappa\in\mathcal{C}^0(\dOmega)$ ($a=0$ for the Navier case), we find:
	$$\|u\|_{W^{2,q}(\Omega)}\leq C(q,\sigma,\Omega)\bigg(\|u\|_q+\|\Delta^2u\|_{W^{-2,q}(\Omega)}\bigg),$$
	which is well-defined in view of \eqref{Delta2def2}. Since 
	\begin{equation}\label{Delta2norm}
	\|\Delta^2u\|_{W^{-2,q}(\Omega)}=\sup_{0\neq\varphi\in W^{2,q'}_0(\Omega)}\dfrac{\bigg|\int_\Omega g(x)|u|^{p-1}u\varphi\bigg|}{\|\varphi\|_{W^{2,q'}_0(\Omega)}}\leq C(p,q,\Omega)\|g\|_1\|u\|_{H^2(\Omega)}^p,
	\end{equation}	
	we finally deduce from \eqref{Delta2norm}:
	$$\|u\|_{W^{2,q}(\Omega)}\leq C(q,\sigma,\Omega)\bigg(\|u\|_q+C(p,q,\Omega)\|g\|_1\|u\|_{H^2(\Omega)}^p\bigg)<+\infty.$$
\end{proof}

	\noindent We stress that we did not use either the fact that $u$ is a ground state solution, or its positivity: the above result holds true for every weak solution of Steklov and Navier problems.\\
	\noindent Let us now recall the following interpolation result:

\begin{lem}[Interpolation of Fractional Sobolev Spaces, \cite{BM}, Corollary 2]\label{interp}
		For $0\leq s_1<s_2<+\infty$, $1<p_1,p_2<+\infty$, for every $s,p$ such that $s=\theta s_1+(1-\theta)s_2$ and $\frac{1}{p}=\frac{\theta}{p_1}+\frac{1-\theta}{p_2}$, we have
		$$\|f\|_{W^{s,p}(\R^N)}\leq C\|f\|_{W^{s_1,p_1}(\R^N)}^\theta\|f\|_{W^{s_2,p_2}(\R^N)}^{1-\theta}.$$
\end{lem}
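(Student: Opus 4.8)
This is the fractional Gagliardo--Nirenberg interpolation inequality, which for the present purposes we take from \cite{BM}; still, let me indicate how one proves it. The plan is first to write $\|f\|_{W^{\sigma,r}(\R^N)}$ as comparable to $\|f\|_{L^r(\R^N)}+[\,\cdot\,]_{\sigma,r}$, where $[\,\cdot\,]_{\sigma,r}$ is the Gagliardo seminorm when $\sigma\notin\N$ (and a suitable higher-order difference seminorm in general). Since the Lebesgue part is handled by the elementary inequality $\|f\|_{L^p}\le\|f\|_{L^{p_1}}^{\theta}\|f\|_{L^{p_2}}^{1-\theta}$ --- valid exactly because $\frac1p=\frac{\theta}{p_1}+\frac{1-\theta}{p_2}$ --- together with $\|f\|_{L^{p_i}}\le\|f\|_{W^{s_i,p_i}}$, the whole matter reduces to bounding the seminorm of order $s$.

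In the model case, namely when $s_1$, $s_2$ and the resulting $s$ are all non-integers, I would fix an integer $m$ with $m>s_2$ (so $m=1$, and the seminorm is literally the Gagliardo one, when $s_2<1$) and use the characterisation
\[
[\,\cdot\,]_{\sigma,r}^{\,r}\;\approx\;\int_{\R^N}\int_{\R^N}\frac{|\Delta_h^m f(x)|^r}{|h|^{N+\sigma r}}\,dx\,dh ,
\]
valid \emph{with the same $m$} for $\sigma\in\{s_1,s_2,s\}$, where $\Delta_h^m$ denotes the $m$-th order finite difference. Setting $r_1:=\frac{p_1}{\theta p}$ and $r_2:=\frac{p_2}{(1-\theta)p}$, the relation $\frac1p=\frac{\theta}{p_1}+\frac{1-\theta}{p_2}$ forces $\frac1{r_1}+\frac1{r_2}=1$, so $r_1,r_2$ are conjugate exponents, and $s=\theta s_1+(1-\theta)s_2$ yields the identities $p=\frac{p_1}{r_1}+\frac{p_2}{r_2}$ and $N+sp=\frac{N+s_1p_1}{r_1}+\frac{N+s_2p_2}{r_2}$. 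These let one factor
\[
\frac{|\Delta_h^m f(x)|^{p}}{|h|^{N+sp}}=\Bigl(\frac{|\Delta_h^m f(x)|^{p_1}}{|h|^{N+s_1p_1}}\Bigr)^{1/r_1}\Bigl(\frac{|\Delta_h^m f(x)|^{p_2}}{|h|^{N+s_2p_2}}\Bigr)^{1/r_2},
\]
whereupon Hölder's inequality on $\R^N\times\R^N$ with exponents $r_1,r_2$ gives $[\,\cdot\,]_{s,p}\le[\,\cdot\,]_{s_1,p_1}^{\theta}[\,\cdot\,]_{s_2,p_2}^{1-\theta}$ for $f$; combined with the Lebesgue estimate this proves the inequality. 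Applying the same computation to the derivatives $D^\alpha f$ with $|\alpha|$ the common integer part covers the case where $s_1$ and $s_2$ lie between the same two consecutive integers.

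The real obstacle is the occurrence of an integer among $s_1,s_2,s$: for $p\ne2$ the finite-difference seminorm is no longer equivalent to the $W^{k,p}$-norm at an integer $k$ --- it only controls, or is controlled by, a Besov seminorm with a different microscopic index. Here one would pass to the Littlewood--Paley description, using $W^{k,p}=F^{k}_{p,2}$ for $k\in\N$ and $W^{\sigma,p}=B^{\sigma}_{p,p}=F^{\sigma}_{p,p}$ for $\sigma\notin\N$, and invoke complex interpolation of the Triebel--Lizorkin scale; the subtlety is that the fine index produced by interpolation need not be $p$, so sharp embeddings between $F$-spaces with different fine indices must be inserted, and this is precisely the delicate point carried out in \cite{BM}. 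I would not reproduce that argument: for the uses that follow (Proposition~\ref{regularity} and the $W^{2,q}$-convergence of ground states) it suffices to invoke \cite[Corollary~2]{BM} in the form stated.
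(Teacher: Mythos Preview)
The paper does not prove this lemma at all: it is stated as a quotation from \cite[Corollary~2]{BM} and used as a black box in the proof of Proposition~\ref{convergence(p)migliorata}. Your proposal therefore goes beyond what the paper does, supplying a correct outline of the standard argument (H\"older on the finite-difference representation of the Gagliardo seminorm, plus the Lebesgue interpolation for the $L^p$ part) and honestly flagging the integer-exponent case as the place where one must invoke the Littlewood--Paley machinery of \cite{BM}. Since the paper's ``proof'' is simply a citation, your write-up is an acceptable expansion; for consistency with the paper you could equally well drop the sketch and just cite \cite{BM}.
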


\begin{prop}\label{convergence(p)migliorata}
	Let $\Omega$ of class $C^2$ and $(u_k)_{k\in\N}$ be a sequence of weak solutions for the Steklov problems \eqref{PDEpSteklov_k} converging in $H^2(\Omega)$ to $\overline u$, weak solution for the Navier problem \eqref{PDEpNav}. Then the convergence is in $W^{2,q}(\Omega)$ for every $q\geq 2$.
\end{prop}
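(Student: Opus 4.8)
The plan is to upgrade the $H^2$-convergence to $W^{2,q}$ by plugging it into the $W^{2,q}$-regularity of Proposition~\ref{regularity}, made uniform along the sequence, and then interpolating.

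The first step is to make the estimate of Proposition~\ref{regularity} uniform in $k$. Since $u_k\to\overline u$ in $H^2(\Omega)$ by hypothesis, $M:=\sup_{k\in\N}\|u_k\|_{H^2(\Omega)}<+\infty$; and since $\sigma_k\to1$, the $\sigma_k$ lie in a compact interval $I\ni1$, so the boundary coefficients $a_k:=(1-\sigma_k)\kappa$ vary only through the bounded scalar factor $1-\sigma_k$, the function $\kappa\in\mathcal{C}^0(\dOmega)$ being fixed by $\Omega$. Hence the constant furnished by Lemma~\ref{GGSestimate} with $m=2$ and $a=a_k$ can be chosen independent of $k$, and, running the chain of inequalities in the proof of Proposition~\ref{regularity} (via \eqref{Delta2def2}, \eqref{Delta2norm}, the embedding $H^2(\Omega)\hookrightarrow L^q(\Omega)$, and the fact that $u_k,\overline u$ vanish on $\dOmega$), one gets $\sup_{k\in\N}\|u_k\|_{W^{2,q}(\Omega)}<+\infty$ for every fixed $q>2$; the same holds for $\overline u$, which solves the Navier problem \eqref{PDEpNav}, i.e. the case $a\equiv 0$. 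Therefore $w_k:=u_k-\overline u$ is bounded in $W^{2,r}(\Omega)$ for every $r>2$, while $w_k\to 0$ in $W^{2,2}(\Omega)=H^2(\Omega)$.

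The second step is interpolation. Fix $q\ge 2$; for $q=2$ there is nothing to prove, so let $q>2$, choose $r>q$, and let $\theta\in(0,1)$ be defined by $\frac1q=\frac\theta2+\frac{1-\theta}{r}$. Applying the interpolation of Lebesgue norms (cf. Lemma~\ref{interp}) to $D^\alpha w_k$ for each multi-index with $|\alpha|\le 2$ and summing over $\alpha$,
$$\|w_k\|_{W^{2,q}(\Omega)}\ \le\ C\,\|w_k\|_{H^2(\Omega)}^{\theta}\,\|w_k\|_{W^{2,r}(\Omega)}^{1-\theta}\ \longrightarrow\ 0\qquad(k\to\infty),$$
since the second factor is uniformly bounded by the first step and the first tends to $0$. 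As $q\ge 2$ was arbitrary, the proof is complete. (Alternatively one may bypass the interpolation and apply Lemma~\ref{GGSestimate} directly to $w_k$ with $a\equiv 0$: then $\Delta w_k=(1-\sigma_k)\kappa(u_k)_n$ on $\dOmega$, and every term on the right-hand side tends to $0$ --- $\|w_k\|_q\to0$, and $\|\Delta^2 w_k\|_{W^{-2,q}(\Omega)}\to 0$ because $W^{2,q'}_0(\Omega)\hookrightarrow L^\infty(\Omega)$ and $|u_k|^{p-1}u_k\to|\overline u|^{p-1}\overline u$ uniformly, while the boundary term is $O(|1-\sigma_k|)$ since $(u_k)_n$ is bounded in every $L^q(\dOmega)$ by the Trace Theorem.)

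I expect the only genuinely delicate point to be the uniformity invoked in the first step: Lemma~\ref{GGSestimate} yields, for a single solution, a constant depending on the boundary coefficient $a=(1-\sigma)\kappa$, so one has to check that this dependence remains under control along the sequence --- which it does, precisely because $\kappa$ is fixed and only the bounded factor $1-\sigma_k$ varies. Everything after that is routine, the hypothesis $u_k\to\overline u$ in $H^2(\Omega)$ supplying both the a priori bound that feeds the regularity estimate and the vanishing factor in the interpolation.
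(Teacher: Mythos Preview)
Your main argument (uniform $W^{2,r}$-bounds from Proposition~\ref{regularity}, then $L^p$-interpolation of the second derivatives) is correct and genuinely different from the paper's. The paper never establishes a uniform $W^{2,r}$-bound; instead it applies Lemma~\ref{GGSestimate} with $a\equiv 0$ directly to $w_k=u_k-\overline u$ (this is precisely your parenthetical ``alternative''), so that the constant is manifestly independent of $k$ and the whole burden falls on the boundary term $\|(1-\sigma_k)\kappa(u_k)_n\|_{W^{-1/q,q}(\dOmega)}$. To bound $(u_k)_n$ in $L^q(\dOmega)$ the paper goes through the fractional embedding $H^2(\Omega)\hookrightarrow W^{1+3/(2q),q}(\Omega)$, obtained via the Brezis--Mironescu interpolation (Lemma~\ref{interp}) and Stein extension, and then the trace $W^{s,q}(\Omega)\to L^q(\dOmega)$ for $s>1+1/q$. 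Your route is shorter and the one point to watch is the uniformity of the ADN constant in $a=(1-\sigma_k)\kappa$: this is true (the constant in \cite{ADN} depends on $a$ only through a fixed $C^0$-norm bound), but it is not part of the statement of Lemma~\ref{GGSestimate} as recorded in the paper, so you should cite it explicitly. A minor quibble: the interpolation you use in Step~2 is simply H\"older's inequality applied to each $D^\alpha w_k$, not Lemma~\ref{interp} (which requires $s_1<s_2$); and in your alternative, the claim that $(u_k)_n$ is bounded in every $L^q(\dOmega)$ ``by the Trace Theorem'' is correct, but the reason is that $\partial_n:H^2(\Omega)\to H^{1/2}(\dOmega)$ combined with the critical embedding $H^{1/2}(\dOmega)\hookrightarrow L^q(\dOmega)$ on the one-dimensional curve $\dOmega$---this replaces, more cheaply, the fractional-Sobolev machinery the paper employs.
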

\begin{proof}
	Let $q\geq 2$ and apply the regularity estimate \eqref{GGSestimate} to $u_k-\overline u$ with $m=2$, $a=0$:
	\begin{equation}\label{Star}
	\|u_k-\overline u\|_{W^{2,q}(\Omega)}\leq C(q,\Omega)\bigg(\|u_k-\overline u\|_q+\|\Delta^2u_k-\Delta^2\overline u\|_{W^{-2,q}(\Omega)}+|1-\sigma_k|\|\kappa(u_k)_n\|_{W^{-\frac{1}{q},q}(\dOmega)}\bigg),
	\end{equation}
	since on $\dOmega$ we have $\Delta(u_k-\overline u)-a(u_k-\overline u)_n=\Delta u_k-\Delta\overline u=(1-\sigma_k)\kappa(u_k)_n$.\\
	By \eqref{Delta2def2} and Dominated Convergence Theorem:
	$$\|\Delta^2u_k-\Delta^2\overline u\|_{W^{-2,q}(\Omega)}=\sup_{0\neq\varphi\in W^{2,q'}_0(\Omega)}\dfrac{\bigg|\int_\Omega g(x)|u_k|^{p-1}u_k\varphi-\int_\Omega g(x)|\overline u|^{p-1}\overline u\varphi\bigg|}{\|\varphi\|_{W^{2,q'}_0(\Omega)}}\rightarrow0,$$
	similarly to \eqref{Delta2norm}. 
	We need now to prove that $(\kappa(u_k)_n)_{k\in\N}$ is bounded in $W^{-\frac{1}{q},q}(\dOmega)$. Notice that if we provide a uniform bound in $L^q(\dOmega)$, then we are done. In fact $W^{-\frac{1}{q},q}(\dOmega):=W^{\frac{1}{q},q'}(\dOmega)^*$ and $W^{\frac{1}{q},q'}(\dOmega)\hookrightarrow L^{q'}(\dOmega)$, so we directly infer $W^{-\frac{1}{q},q}(\dOmega)\hookleftarrow L^q(\dOmega)$.\\
	\noindent Moreover, it is known that, with our assumptions on $\dOmega$, the normal trace of functions in $W^{s,p}(\Omega)$ lies in $\Lp(\dOmega)$, provided $s>1+\frac{1}{p}$ (for this and some further sharper results, see \cite[Theorem 2]{Marsch}).	
	Hence,
	\begin{equation}\label{chain}
	\begin{split}
	\|\kappa(u_k)_n\|_{W^{-\frac{1}{q},q}(\dOmega)}&\leq C(q,\Omega)\|\kappa(u_k)_n\|_{\Lq(\dOmega)}\leq C(q,\Omega)\|\kappa\|_{L^\infty(\dOmega)}\|(u_k)_n\|_{\Lq(\dOmega)}\\
	&\leq C(q,\Omega,s)\|\kappa\|_{L^\infty(\dOmega)}\|u_k\|_{W^{s,q}(\Omega)},
	\end{split}
	\end{equation}
	for some $s>1+\frac{1}{q}$. Thus, we need to find an appropriate fractional Sobolev spaces in which $H^2(\Omega)$ is embedded.
	We claim that $H^2(\Omega)\hookrightarrow W^{1+3/2q,q}(\Omega)$. Actually, it is enough to prove that $H^1(\Omega):=W^{1,2}(\Omega)\hookrightarrow W^{3/2q,q}(\Omega)$ by definition of $W^{s,p}(\Omega)$ for $s>1$.
	So, let $u\in W^{1,2}(\Omega)$; by Stein total extension theorem \cite[Theorem 5.24]{A} there exists $U\in W^{1,2}(\R^2)$ such that $U_{|_\Omega}=u$ a.e. and $\|U\|_{W^{1,2}(\R^2)}\leq C\|u\|_{W^{1,2}(\Omega)}$ for some positive constant independent of $u$. Applying the interpolation result Lemma \ref{interp} to $U$ with $\theta=\frac{3}{2q}$ and the Sobolev embedding $W^{1,2}(\R^2)\hookrightarrow L^{4q-6}(\R^2)$ since $4q-6\geq2$:
	\begin{equation*}
	\|U\|_{W^{3/2q,q}(\R^2)}\leq C\|U\|_{W^{1,2}(\R^2)}^\frac{3}{2q}\|U\|_{L^{4q-6}(\R^2)}^{1-\frac{3}{2q}}\leq C_1\|U\|_{W^{1,2}(\R^2)}.
	\end{equation*}
	Hence,
	\begin{equation*}\label{interpol}
	\|u\|_{W^{3/2q,q}(\Omega)}=\|U\|_{W^{3/2q,q}(\Omega)}\leq\|U\|_{W^{3/2q,q}(\R^2)}\leq C_1\|U\|_{W^{1,2}(\R^2)}\leq C_2\|u\|_{W^{1,2}(\Omega)}.
	\end{equation*}
	As a result, noticing that $s=1+\frac{3}{2q}>1+\frac{1}{q}$, we can continue $\eqref{chain}$, obtaining:
	\begin{equation*}
	\|\kappa(u_k)_n\|_{W^{-\frac{1}{q},q}(\dOmega)}\leq C(q,\Omega)\|\kappa\|_{L^\infty(\dOmega)}\|u_k\|_{W^{1+3/2q,q}(\Omega)}\leq\tilde{C}(q,\Omega)\|\kappa\|_{L^\infty(\dOmega)}\|u_k\|_{H^2(\Omega)},
	\end{equation*}
	which is uniformly bounded in $k$. Combining estimate \eqref{Star} with the ones above for the second and the third term of \eqref{Star}, we finally end up with the strong convergence in $W^{2,q}(\Omega)$.
\end{proof}

\subsection{Extending positivity, part 1: a convergence argument}
Let us start noticing that, by Morrey's embeddings, the convergence in $W^{2,q}(\Omega)$ for every $q\geq 2$ of Proposition \ref{convergence(p)migliorata} implies the convergence in $C^{1,\alpha}(\Omegabar)$ for every $\alpha<1$ (thus in particular in $C^{1}(\Omegabar)$). This will be the main ingredient in the next proof.

\begin{prop}\label{positivity1}
	Let $\Omega\subset\R^2$ be a bounded convex domain of class $C^2$ and $(\sigma_k)_{k\in\N}$ be a sequence of parameters such that $\sigma_k\searrow 1$ and $(u_k)_{k\in\N}$ a sequence of ground states for the functional $J_{\sigma_k}$. Then there exists a subsequence $(u_{k_j})_{j\in\N}$ and $j_0\in\N$ such that $u_{k_j}>0$ in $\Omega$ for every $j\geq j_0$.
\end{prop}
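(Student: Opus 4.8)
The plan is to argue by contradiction using the convergence machinery built in Theorem~\ref{convergence} and Proposition~\ref{convergence(p)migliorata}. Suppose the conclusion fails: then for every $j_0\in\N$ there is some $k_j\geq j_0$ with $u_{k_j}$ not positive in $\Omega$, i.e.\ we can extract a subsequence (still denoted $(u_k)$) of ground states, each of which is \emph{not} strictly positive. By Theorem~\ref{convergence}, a further subsequence converges in $H^2(\Omega)$ to $u_\infty$, a ground state of the Navier problem \eqref{PDEpNav}, which by Proposition~\ref{positivitysuperlinear} is strictly superharmonic, hence $u_\infty>0$ in $\Omega$ and $(u_\infty)_n<0$ on $\dOmega$ (the latter by Hopf's lemma, since $-\Delta u_\infty\geq 0$, $-\Delta u_\infty\not\equiv 0$, $u_\infty=0$ on $\dOmega$, and $\Omega$ satisfies an interior ball condition). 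Upgrading via Proposition~\ref{convergence(p)migliorata} (note $\dOmega\in C^2$ is assumed here), this convergence holds in $W^{2,q}(\Omega)$ for all $q\geq 2$, hence in $C^{1,\alpha}(\Omegabar)$ for all $\alpha<1$ by Morrey, and in particular in $C^1(\Omegabar)$.

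The next step is to promote the strict sign of $u_\infty$ to a uniform positivity that survives under $C^1$-perturbation. Since $u_\infty\in C^1(\Omegabar)$ with $u_\infty=0$ on $\dOmega$ and $(u_\infty)_n\leq -2c_0<0$ uniformly on the compact set $\dOmega$, there is a boundary collar $\mathcal{U}=\{x\in\Omega: d(x,\dOmega)<\rho\}$ on which $u_\infty\geq c_0\, d(x,\dOmega)>0$, while on the compact core $K=\Omegabar\setminus\mathcal{U}$ one has $u_\infty\geq m>0$. The $C^1(\Omegabar)$-convergence then gives, for $k$ large, that $u_k>0$ on $K$ (since $\|u_k-u_\infty\|_\infty<m$) and that $(u_k)_n<-c_0$ on $\dOmega$; combined with $u_k=0$ on $\dOmega$ this forces $u_k>0$ in the collar $\mathcal{U}$ as well, for $k$ large — one can make this quantitative by writing $u_k(x)=\int_0^{d(x,\dOmega)} \partial_n u_k(\cdots)\,ds$ along the inward normal, controlled uniformly by the $C^1$ bound and the $C^2$ regularity of $\dOmega$ which makes the normal coordinate chart on the collar uniform. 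Hence $u_k>0$ in all of $\Omega$ for every $k$ beyond some $j_0$, contradicting our assumption that each $u_k$ fails to be positive.

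The main obstacle I expect is the boundary-collar argument: the $C^1$ convergence alone controls $u_k$ and $\nabla u_k$ uniformly, but to conclude positivity \emph{in a fixed neighborhood} of $\dOmega$ one needs the normal derivative bound to be genuinely uniform near the boundary and the tubular-neighborhood (normal-coordinate) parametrization to be well-behaved — this is precisely where $\dOmega\in C^2$ (rather than merely $C^{1,1}$) is used, guaranteeing a $C^1$ signed-distance function and a uniform collar. A cleaner way to package this, avoiding explicit collar estimates, is: since $u_\infty>0$ in $\Omega$ and $(u_\infty)_n<0$ on $\dOmega$, the function $u_\infty$ lies in the interior of the positive cone of $C^1_0(\Omegabar):=\{v\in C^1(\Omegabar): v|_{\dOmega}=0\}$ with respect to the "positive-with-inward-normal-derivative" order (equivalently, $u_\infty/\varphi_1$ extends continuously and stays bounded below by a positive constant, where $\varphi_1$ is the first Steklov or Dirichlet eigenfunction); since this interior is open in $C^1_0(\Omegabar)$ and $u_k\to u_\infty$ there, $u_k$ lies in the same open cone for $k$ large, hence $u_k>0$ in $\Omega$. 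Either formulation closes the contradiction and yields the claim.
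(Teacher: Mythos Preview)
Your proof is correct and follows essentially the same approach as the paper: extract a subsequence converging in $C^1(\Omegabar)$ to a strictly superharmonic Navier ground state $u_\infty$ via Theorem~\ref{convergence} and Proposition~\ref{convergence(p)migliorata}, then split $\Omega$ into a boundary collar (where positivity comes from the uniform negativity of the normal derivative and the mean value theorem along normals) and a compact core (where positivity comes from uniform convergence). The only cosmetic difference is that you wrap the argument in a contradiction and offer the ``interior of the positive cone in $C^1_0(\Omegabar)$'' reformulation, while the paper argues directly; both are equivalent packaging of the same estimates.
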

\begin{proof}
	By Propositions \ref{regularity} and \ref{convergence(p)migliorata} and by the previous observation, we know that, up to a subsequence, $u_k\rightarrow\overline u$ in $C^{1}(\Omegabar)$ for some $\overline{u}$, ground state for $J_{NAV}$.\\
	\noindent Since $\Omega$ has a $C^2$ boundary, the interior sphere condition holds and one can extend the outer normal vector $n$ in a small neighborhood $\omega_0\subset\Omega$ of $\dOmega$ and thus define here $\ubar_n:=\nabla\ubar\cdot n$ (see \cite[Chapter 4]{Sperb}). Moreover, since $\overline u$ is strictly superharmonic, the normal derivative $\overline u_n$ is negative on $\dOmega$ and, by compactness of $\dOmega$ and continuity of $\overline u_n$, there exists $\alpha>0$ such that
	$${\overline u_n}_{|_\dOmega}\leq-\alpha<0.$$
	Hence, again by continuity, there exists a second neighborhood $\omega\subset\omega_0$ of $\dOmega$ such that
	$${\overline u_n}_{|_{\omega}}\leq-\dfrac{2}{3}\alpha<0.$$
	Take now $\varepsilon_1=\frac{\alpha}{3}$: by the $C^1(\Omegabar)$ convergence, there exists $k_1\in\N$ such that for every $k\geq k_1$ and $x\in\omega$:
	\begin{equation*}
	\begin{split}
	|(u_k)_n(x)|&\geq
	|\overline u_n(x)|-|(u_k)_n(x)-\overline u_n(x)|\\
	&>\dfrac{2\alpha}{3}-\||n|\|_{L^\infty(\omega)}\||\nabla u_k-\nabla\overline u|\|_{L^\infty(\Omega)}>\dfrac{2\alpha}{3}-\varepsilon_1>\dfrac{\alpha}{3}.
	\end{split}
	\end{equation*}
	By the interior sphere condition, the map $\omega\ni x\mapsto x_0\in\dOmega$ such that $d(x,x_0)=\inf\{d(x,y)|y\in\dOmega\}$ is well defined and the vector $x-x_0$ has the same direction as $n(x)$ and $n(x_0)$. Hence by Lagrange Theorem and recalling that ${u_k}_{|_\dOmega}=0$, for $x\in\omega$:
	\begin{equation}\label{pos1}
	|u_k(x)|=|u_k(x)-u_k(x_0)|\geq\min_{y\in[x_0,x]}|(u_k)_n(y)||x-x_0|>\dfrac{\alpha}{3}|x-x_0|>0.
	\end{equation}
	Moreover, notice that by compactness of $\Omega_0:=\Omega\setminus\omega$, the remaining part of $\Omega$,
	$${\overline u}_{|_{\Omega_0}}\geq\min_{\Omega_0}\overline u:=m>0$$
	and so by the uniform convergence it is easy to deduce that, for $k$ large enough, $u_k(x)>\frac{m}{2}$ for every $x\in\Omega_0$.
	The results follows combining this with \eqref{pos1}.
\end{proof}

\begin{thm}\label{sigma1}
	Let $\Omega\subset\R^2$ be a bounded convex domain of class $C^2$; then there exists $\sigma_1>1$ such that for every $\sigma\in(1,\sigma_1)$ the ground states of $J_\sigma$ are positive in $\Omega$.
\end{thm}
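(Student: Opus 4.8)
The plan is to argue by contradiction, using the compactness and convergence results already assembled in this section, in particular Proposition \ref{positivity1}, which does essentially all of the analytic work. Suppose the conclusion fails. Then for every $n\in\N$ there is a parameter $\sigma_n\in(1,1+\tfrac1n)$ and a ground state $u_n$ of $J_{\sigma_n}$ that is \emph{not} positive in $\Omega$, meaning that $u_n$ is not strictly positive at every interior point (it vanishes or is negative somewhere in $\Omega$). Extracting a subsequence if needed, we may assume that $(\sigma_n)_{n\in\N}$ is strictly decreasing, so that $\sigma_n\searrow 1$ as $n\to+\infty$; recall also that such ground states do exist for $\sigma>1$, since by Lemma \ref{eqnormnew} the quantity $\|\cdot\|_{H_\sigma(\Omega)}$ is still an equivalent norm and the existence argument of Section~3 applies verbatim.

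Now I would apply Proposition \ref{positivity1} to the sequence $(u_n)_{n\in\N}$: since $\Omega$ is a bounded convex domain of class $C^2$ and each $u_n$ is a ground state of $J_{\sigma_n}$ with $\sigma_n\searrow1$, that proposition provides a subsequence $(u_{n_j})_{j\in\N}$ and an index $j_0\in\N$ such that $u_{n_j}>0$ in $\Omega$ for every $j\ge j_0$. This directly contradicts the choice of the $u_n$'s as non-positive, so such a uniform threshold $\sigma_1>1$ must exist.

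As for where the real difficulty lies: it is entirely upstream of this statement. The substantive steps are the $H^2(\Omega)$-convergence of ground states of $J_{\sigma_k}$ to a ground state $\ubar$ of $J_{NAV}$ (Theorem \ref{convergence}, resting on the Nehari-manifold nesting of Lemma \ref{t*disugrmk} and the lower semicontinuity estimates), its upgrade to $W^{2,q}(\Omega)$ and hence to $C^1(\Omegabar)$ convergence (Propositions \ref{regularity} and \ref{convergence(p)migliorata}), the strict superharmonicity and consequent strictly negative normal derivative of $\ubar$ on $\dOmega$, and the boundary-layer estimate \eqref{pos1} that transports positivity from the interior bulk up to $\dOmega$ for large indices. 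Given all that, the present theorem is only a logical repackaging; the single point that deserves a line of care is the passage from ``for each candidate threshold there is a bad parameter'' to ``a single sequence $\sigma_n\searrow1$ of bad parameters'', handled by the monotone subsequence extraction above, together with the observation that the failure of positivity is exactly the situation excluded by Proposition \ref{positivity1} for all sufficiently large indices.
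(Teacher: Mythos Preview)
Your proposal is correct and follows exactly the same approach as the paper: a contradiction argument producing a sequence $\sigma_k\searrow1$ with non-positive ground states $u_k$, which is then ruled out by Proposition \ref{positivity1}. The paper's own proof is a three-line version of what you wrote.
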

\begin{proof}
	By contradiction, suppose that such $\sigma_1$ does not exist. Hence we would be able to find a sequence $(\sigma_k)\searrow 1$ such that for each of them there exists a ground state $u_k$ for $J_{\sigma_k}$ which is not positive. This would contradict the Proposition \ref{positivity1}.
\end{proof}
\begin{remark}\label{remarkstrictpositivity}
	As we are dealing with continuous functions, since $H^2(\Omega)\hookrightarrow C^0(\Omegabar)$, we are interested in the strict positivity \textit{everywhere in $\Omega$} and not only a.e. in $\Omega$. Theorem \ref{sigma1} gives a positive answer for this question: in fact, as $\ubar\in H^2(\Omega)=W^{2,N}(\Omega)$ is strictly superharmonic, by strong maximum principle for strong solutions \cite[Theorem 9.6]{GT}, we deduce that it cannot achieve its minimum on the interior of $\Omega$, thus $\ubar(x)>0$ \textit{for every} $x\in\Omega$. By the $C^1$ convergence we deduce the same strict inequality for $u_\sigma$, with $\sigma\in(1,\sigma_1)$.
\end{remark}

\subsection{Extending positivity, part 2: Moreau dual cones decomposition}
Our aim is to investigate a further extension of the positivity result found in Theorem \ref{sigma1}, possibly for the whole range $\sigma\in(1,+\infty)$. It seems natural if we think to the following fact: similarly to what already obtained for the Navier problem, one can prove the convergence in $H^2(\Omega)$, as $\sigma\rightarrow+\infty$, of a sequence of ground states of $J_\sigma$ to a least-energy solution of the Dirichlet problem
\begin{equation}\label{dir}
\begin{cases}
\Delta^2u=g(x)|u|^{p-1}u\quad&\mbox{in }\Omega\\
u=u_n=0\quad&\mbox{on }\dOmega,
\end{cases}
\end{equation}
at least when $\kappa$ is positive a.e. on $\dOmega$.
Since we already know that in some cases the ground states of \eqref{dir} are positive (for instance if $\Omega$ is a ball, see \cite{FGW}), we expect to be able to completely extend positivity for such domains.\\
After a brief explanation of the convergence just mentioned above, we will apply Moreau's method of dual cones to infer the intervals of positivity for the semilinear problem. At the end, one may also compare the resulting analysis with the respective one for the linear problem with the same boundary conditions, due to Gazzola and Sweers in \cite{GS}.

\subsubsection{The Dirichlet Problem}
The argument is similar of what we used in Section 6.1 for the convergence to the Navier problem, but now we have to pay attention to the fact that in this case the two functional spaces are different ($H^2(\Omega)\cap\Ho$ for the Steklov problem and $H^2_0(\Omega)$ for the Dirichlet). We are not giving here the details of the proof of the existence of ground states of \eqref{dir}, as it can be obtained as for the Steklov framework by the Nehari method of Section 3.
In the following, we assume $\Omega$ to be a bounded convex domain in $\R^2$ with boundary of class $C^{1,1}$ and $\sigma>1$.
We suppose also that the curvature $k$ is positive a.e, that is $\dOmega$ has not flat parts.
Moreover, as usual, $u_k$ will always denote a ground state for $J_{\sigma_k}$ and $\overline u$ a ground state for $J_{DIR}: H^2_0(\Omega)\rightarrow\R$ defined as
$$J_{DIR}(u)=\dfrac{1}{2}\int_\Omega(\Delta u)^2-\dfrac{1}{p+1}\int_\Omega g(x)|u|^{p+1},$$
whose critical points are weak solutions of $\eqref{dir}$. Moreover, as in the Steklov case, we define the Nehari manifold for $J_{DIR}$:
$$\Ne_{DIR}:=\{u\in H^2_0(\Omega)\setminus\{0\} \,|\, J_{DIR}'(u)[u]=0\}.$$
First of all, notice that, by definition of $J_\sigma$, for each $\sigma$,
\begin{equation}\label{t*Hoo}
{J_{\sigma}}_{|_{H^2_0(\Omega)}}=J_{DIR},
\end{equation}
so $\Ne_\sigma$ restricted to the subspace $H^2_0(\Omega)$ coincides with $\Ne_{DIR}$.

\begin{thm}\label{towardsDirichlet}
	Let	$\sigma_k\rightarrow+\infty$ and $\Omega$ be a bounded convex domain in $\R^2$ with boundary of class $C^{1,1}$. Assume also that the curvature $k$ is positive a.e on $\dOmega$.
	Then the sequence $(u_k)_{k\in\N}$ of ground state of $(J_{\sigma_k})_{k\in\N}$ admits a subsequence $(u_{k_j})_{j\in\N}$ convergent in $H^2(\Omega)$ to $\overline{u}$, which is a ground state for the Dirichlet problem \eqref{dir}.
\end{thm}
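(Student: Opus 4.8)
The plan is to follow the scheme of Theorem~\ref{convergence}, the main new ingredient being identity \eqref{t*Hoo}: since $J_\sigma$ restricted to $\Hoo$ coincides with $J_{DIR}$ for every $\sigma$, one has $\Ne_{DIR}=\Ne_\sigma\cap\Hoo\subset\Ne_\sigma$. Passing to a tail of the sequence we may assume $\sigma_k>1$ for every $k$. First I would check that $(u_k)_{k\in\N}$ is bounded in $H^2(\Omega)$: fixing any $v\in\Ne_{DIR}\subset\Ne_{\sigma_k}$ one has $J_{\sigma_k}(v)=J_{DIR}(v)$, so by minimality $J_{\sigma_k}(u_k)\leq J_{DIR}(v)$, and taking the infimum over $\Ne_{DIR}$ gives $J_{\sigma_k}(u_k)\leq c_{DIR}:=\inf_{\Ne_{DIR}}J_{DIR}$; by the Nehari identity \eqref{equivonNehari} this bounds $\int_\Omega g(x)|u_k|^{p+1}$ uniformly. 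Since $\sigma_k>1$, $\int_\Omega(\Delta u_k)^2-(1-\sigma_k)\int_\dOmega\kappa(u_k)_n^2\geq\|\Delta u_k\|_2^2\geq C_A^{-1}\|u_k\|_{H^2(\Omega)}^2$ by \eqref{C_A}, which together with the Nehari identity yields the $H^2$-bound; a uniform lower bound $\|u_k\|_{H^2(\Omega)}\geq c_0>0$ follows exactly as in Lemma~\ref{Neclosed}. Up to a subsequence, $u_k\rightharpoonup u_\infty$ weakly in $H^2(\Omega)$, strongly in $L^\infty(\Omega)$, and $(u_k)_n\to(u_\infty)_n$ strongly in $L^2(\dOmega)$ by compactness of the normal-trace map.

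The crucial step, where the hypotheses $\sigma_k\to+\infty$ and $\kappa>0$ a.e. are exploited, is to show $u_\infty\in\Hoo$. Rewriting the Nehari identity for $u_k$ as
$$\|\Delta u_k\|_2^2+(\sigma_k-1)\int_\dOmega\kappa(u_k)_n^2=\int_\Omega g(x)|u_k|^{p+1}\leq C,$$
both summands on the left are nonnegative (as $\kappa\geq0$ and $\sigma_k>1$), whence $\int_\dOmega\kappa(u_k)_n^2\leq C/(\sigma_k-1)\to0$. Letting $k\to+\infty$, using $\kappa\in L^\infty(\dOmega)$ and the $L^2(\dOmega)$-convergence of the normal derivatives, we obtain $\int_\dOmega\kappa(u_\infty)_n^2=0$; since $\kappa>0$ a.e., this forces $(u_\infty)_n=0$ a.e. on $\dOmega$, so $u_\infty\in\Hoo$. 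Then $u_\infty$ is a weak solution of \eqref{dir}: testing the weak Steklov formulation \eqref{soldebStek} against any $\varphi\in\Hoo$ the boundary term drops ($\varphi_n=0$), and passing to the limit by weak $H^2$-convergence on the left and Dominated Convergence on the right yields $\int_\Omega\Delta u_\infty\Delta\varphi=\int_\Omega g(x)|u_\infty|^{p-1}u_\infty\varphi$.

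To upgrade to strong convergence I would repeat the computation of Lemma~\ref{fromweaktostrong} with test function $u_k-u_\infty\in H^2(\Omega)\cap\Ho$: the term $\int_\Omega\Delta u_\infty\Delta(u_k-u_\infty)$ vanishes by weak convergence, the nonlinear term tends to $0$ by Dominated Convergence, and -- using $(u_\infty)_n=0$ -- the remaining boundary term equals $-(\sigma_k-1)\int_\dOmega\kappa(u_k)_n^2\leq0$; hence $\|\Delta u_k-\Delta u_\infty\|_2\to0$ and, by \eqref{C_A}, $u_k\to u_\infty$ in $H^2(\Omega)$. In particular $\|u_\infty\|_{H^2(\Omega)}=\lim_k\|u_k\|_{H^2(\Omega)}\geq c_0>0$, so $u_\infty\neq0$ and $u_\infty\in\Ne_{DIR}$. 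Finally, strong convergence gives $\int_\Omega g(x)|u_k|^{p+1}\to\int_\Omega g(x)|u_\infty|^{p+1}$, so by \eqref{equivonNehari} (and its Dirichlet analogue) $J_{\sigma_k}(u_k)\to J_{DIR}(u_\infty)$; together with $J_{\sigma_k}(u_k)\leq c_{DIR}$ and $u_\infty\in\Ne_{DIR}$ this forces $J_{DIR}(u_\infty)=c_{DIR}$, i.e. $u_\infty$ is a ground state of the Dirichlet problem.

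I expect the main obstacle to be precisely the identification $(u_\infty)_n=0$ on $\dOmega$: on flat portions of $\dOmega$ the estimate $\int_\dOmega\kappa(u_k)_n^2\leq C/(\sigma_k-1)$ carries no information on the normal derivatives, which is exactly why the assumption $\kappa>0$ a.e. (no flat parts) is needed; once that is settled, the remaining steps are a routine adaptation of the arguments used for the Navier problem.
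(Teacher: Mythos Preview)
Your proposal is correct and follows essentially the same route as the paper's proof: boundedness via $\Ne_{DIR}\subset\Ne_{\sigma_k}$, the identification $(u_\infty)_n=0$ from $\int_\dOmega\kappa(u_k)_n^2\to0$ together with $\kappa>0$ a.e., and the ground-state level comparison. Your treatment of two points is in fact slightly more explicit than the paper's: you justify strong $H^2$-convergence by exploiting $(u_\infty)_n=0$ so that the boundary term has a sign (the paper only says ``similarly to Lemma~\ref{fromweaktostrong}'', which cannot be copied verbatim since now $|1-\sigma_k|\to+\infty$), and you secure $u_\infty\neq0$ via the uniform lower bound from Lemma~\ref{Neclosed}, which indeed is independent of $\sigma$ once $\sigma\geq1$.
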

\begin{proof}
	We follow the same steps as in Section 6.1 to deduce Theorem \ref{convergence}. Firstly, we prove that $(u_k)_{k\in\N}$ is bounded in $H^2(\Omega)$. Indeed, fix $\overline w\in\Hoo$ a ground state for the Dirichlet problem $\eqref{dir}$, then
	\begin{equation}\label{dirbound}
	\begin{split}
	\|\Delta u_k\|_2^2&\leq\int_\Omega(\Delta u_k)^2-(1-\sigma_k)\int_\dOmega \kappa(u_k)_n^2=\int_\Omega g(x)|u_k|^{p+1}=\inf_{v\in\Ne_{\sigma_k}}\int_\Omega g(x)|v|^{p+1}\\
	&\leq\inf_{v\in\Ne_{\sigma_k}\cap H^2_0(\Omega)}\int_\Omega g(x)|v|^{p+1}=\int_\Omega g(x)|\overline w|^{p+1}.
	\end{split}
	\end{equation}
	Hence, there exists $\overline{u}\in H^2(\Omega)\cap\Ho$ such that, up to a subsequence, $u_k\rightharpoonup\overline u$ weakly in $H^2(\Omega)$. Moreover, $\eqref{dirbound}$ implies that
	$$0\leq (\sigma_k-1)\int_\dOmega \kappa(u_k)_n^2\leq \int_\Omega g(x)|u_k|^{p+1}\leq C(\Omega,p)\|g\|_1\|u_k\|_{H^2(\Omega)}^{p+1}\leq D(\Omega,p,g)$$
	and, taking into account that $\sigma_k\rightarrow+\infty$, we deduce that
	$$\int_\dOmega\kappa(u_k)_n^2\rightarrow 0.$$
	Furthermore, by the compactness of the map $\partial_n: H^2(\Omega)\rightarrow L^2(\dOmega)$, we have also that
	$$\int_\dOmega\kappa(u_k)_n^2\rightarrow \int_\dOmega \kappa\overline{u}_n^2.$$
	Hence, combining the two and recalling that we assumed $\kappa>0$ on $\dOmega$, we deduce that $\overline{u}_n\equiv 0$ on $\dOmega$ and thus $\overline{u}\in\Hoo$.\\
	\noindent Finally, testing the weak formulation of problem \eqref{PDEpSteklov_k} with $\varphi\in\Hoo$ and passing to the limit as $k\rightarrow+\infty$, we deduce that
	$$\int_\Omega\Delta\overline{u}\Delta\varphi=\int_\Omega g(x)|\overline{u}|^{p-1}\overline{u}\varphi,$$
	so $\overline u$ is a solution of the Dirichlet problem \eqref{dir} and, similarly to Lemma \ref{fromweaktostrong}, we can prove that the convergence is strong in $H^2(\Omega)$.
	It remains to prove that $\ubar$ is actually a ground state for $J_{DIR}$. Let $\overline w\in\Hoo$ be a ground state solution of $J_{DIR}$. Then, by \eqref{t*Hoo}:
		\begin{equation*}
		m=J_{DIR}(\overline w)=J_{\sigma_k}(t^*_{\sigma_k}(\overline w)\overline w)\geq\inf_{\Ne_{\sigma_k}\cap H^2_0(\Omega)}J_{\sigma_k}\geq\inf_{\Ne_{\sigma_k}}J_{\sigma_k}=J_{\sigma_k}(u_k),
		\end{equation*}
		hence we deduce that $m\geq\liminf_{k\rightarrow+\infty}J_{\sigma_k}(u_k)$. Moreover, by strong convergence,
		$$J_{DIR}(\overline u)=\bigg(\dfrac{1}{2}-\dfrac{1}{p+1}\bigg)\int_\Omega g(x)|\overline u|^{p+1}=\lim_{k\rightarrow+\infty}\bigg(\dfrac{1}{2}-\dfrac{1}{p+1}\bigg)\int_\Omega g(x)|u_k|^{p+1}=\lim_{k\rightarrow+\infty}J_{\sigma_k}(u_k).$$
		Finally, since $\overline u$ is a solution of the Dirichlet problem \eqref{dir}, then $\overline{u}\in\Ne_{DIR}$, so:
		$$m\leq J_{DIR}(\overline{u})\leq \liminf_{k\rightarrow+\infty}J_{\sigma_k}(u_k)\leq m.$$
\end{proof}

\subsubsection{Moreau dual cones decomposition}
So far, we have proved the existence of ground states for the Dirichlet problem \eqref{dir} and the convergence result as $\sigma\rightarrow+\infty$. Proving positivity of ground states of \eqref{dir} is quite a hard subject, since it strongly relies to the geometry of the domain, even in the linear case, where $f(x,u)=f(x)$: we refer to \cite{SwRew} for a short survey. Anyway, there are some cases in which it holds: for instance, the Dirichlet problem in the ball has been studied in \cite{FGW}, which covers the case where $g\equiv1$, but whose arguments hold also in the general situation.\\
\noindent Our strategy is mainly inspired by this last work and it was firstly applied to fourth order problems by Gazzola and Grunau in \cite{GGr}. Briefly, we use Moreau decomposition in dual cones (for the original paper, see \cite{Moreau}) to obtain from a supposed sign-changing ground state solution $u$, a function $w$ of one sign and in the same space with a strictly lower energy level, leading to a contradiction.
In our case, in order to apply this machinery, we have to impose that the associated linear problem is positivity preserving: this will be the connection between the two problems.
\begin{defn}
	Let $\Omega\subset\R^2$ be a bounded domain of class $C^{1,1}$ and fix $\sigma\in\R$. The linear Steklov boundary problem
	\begin{equation}\label{PDElinSteklov}
	\begin{cases}
	\Delta^2u=f\quad&\mbox{in }\Omega,\\
	u=\Delta u-(1-\sigma)\kappa u_n=0\quad&\mbox{on }\dOmega,
	\end{cases}
	\end{equation}
	is \textit{positivity preserving} in $\Omega$ if there exists a unique solution $u\in H^2(\Omega)\cap\Ho$ and $f\geq0$ implies $u\geq0$, and this holds for each $f\in L^2(\Omega)$. We shortly say that "$\Omega$ is a [PPP$_\sigma$] domain for \eqref{PDElinSteklov}".
\end{defn}
\begin{defn}
	Let $H$ be a Hilbert space with scalar product $(\cdot,\cdot)_H$ and $K\subset H$ be a nonempty closed convex cone. Its \textit{dual cone} $K^*$ is defined as
	$$K^*:=\{w\in H\,|\,(w,v)_H\leq0,\,\,\forall v\in K\}.$$
\end{defn}

\begin{thm}[Moreau Dual Cone Decomposition, \cite{GGS}, Theorem 3.4]
	Let $H$ be a Hilbert space with scalar product $(\cdot,\cdot)_H$ and $K$ and $K^*$ as before. Then for every $u\in H$, there exists a unique couple $(u_1,u_2)\in K\times K^*$ such that $u=u_1+u_2$ and $(u_1,u_2)_H=0$.
\end{thm}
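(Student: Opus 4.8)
The plan is to produce the decomposition by taking $u_1$ to be the metric projection of $u$ onto the cone $K$, and then to read off the membership $u_2\in K^*$ and the orthogonality from the variational characterisation of the projection. First I would recall the standard Hilbert-space projection theorem: since $K$ is a nonempty closed convex subset of $H$, for each $u\in H$ there is a unique $P_Ku\in K$ minimising $\|u-v\|_H$ over $v\in K$ (existence via a minimising sequence, which is Cauchy by the parallelogram identity and converges by completeness; uniqueness again by the parallelogram identity), and $u_1=P_Ku$ is characterised by the variational inequality $(u-u_1,v-u_1)_H\le 0$ for every $v\in K$. Set $u_1:=P_Ku$ and $u_2:=u-u_1$, so that $u=u_1+u_2$ with $u_1\in K$.

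Next I would exploit that $K$ is a \emph{cone}. Note first that $0\in K$: picking any $v\in K$, one has $\tfrac1n v\in K$ and $\tfrac1n v\to 0$, so closedness gives $0\in K$. Taking $v=0$ in the variational inequality yields $(u_2,u_1)_H\ge 0$, while taking $v=2u_1\in K$ yields $(u_2,u_1)_H\le 0$; hence $(u_1,u_2)_H=0$. For the membership $u_2\in K^*$, fix $v\in K$; then $tv\in K$ for every $t>0$, so the variational inequality and the orthogonality just proved give $t\,(u_2,v)_H=(u_2,tv-u_1)_H\le 0$ for all $t>0$, whence $(u_2,v)_H\le 0$. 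As $v\in K$ was arbitrary, $u_2\in K^*$.

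It remains to prove uniqueness. Suppose $u=u_1+u_2=u_1'+u_2'$ with $u_1,u_1'\in K$, $u_2,u_2'\in K^*$ and $(u_1,u_2)_H=(u_1',u_2')_H=0$. Writing $u_1-u_1'=u_2'-u_2$ and expanding,
\begin{equation*}
\|u_1-u_1'\|_H^2=(u_1-u_1',\,u_2'-u_2)_H=(u_1,u_2')_H+(u_1',u_2)_H,
\end{equation*}
and each term on the right is $\le 0$ because $K$ and $K^*$ pair nonpositively; hence $u_1=u_1'$ and consequently $u_2=u_2'$. There is no serious obstacle here: the argument is the classical Moreau decomposition, and the only point requiring a little care is the passage from the projection inequality to $u_2\in K^*$, where the unbounded rescaling available in the cone $K$ must be combined with bilinearity of the scalar product; everything else is the standard projection theorem.
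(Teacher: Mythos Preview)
Your proof is correct and is precisely the classical argument for Moreau's decomposition. Note, however, that the paper does not supply its own proof of this statement: it is quoted as \cite[Theorem 3.4]{GGS} and used as a black box, so there is nothing to compare against beyond observing that what you wrote is the standard projection-based proof one finds in the cited reference (or in Moreau's original note \cite{Moreau}).
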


\noindent Our aim is to apply this result with $(H,\|\cdot\|_H)=(H^2(\Omega)\cap\Ho;\|\cdot\|_{H_\sigma})$, where $\|\cdot\|_{H_\sigma}$ is the norm \eqref{usualequivnorm}, and $K:=\{v\in H\,|\,v\geq0\}$, the cone of nonnegative functions, looking for a decomposition of each element of the space in positive and negative "parts". Hence we need a characterization of the dual cone $K^*$:

\begin{lem}\label{dualcone}
	If $\Omega$ is a [PPP$_\sigma$] domain for \eqref{PDElinSteklov} for a fixed $\sigma\in\R$, then $K^*\subseteq\{w\in H\,|\,w<0\,\,\mbox{a.e.}\}\cup\{0\}$.
\end{lem}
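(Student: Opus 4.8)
The plan is to read a sign for $w$ off the positivity-preserving property, and then to upgrade it to a strict one. Let $w\in K^*$.

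\textbf{Step 1: $w\le0$ a.e.} Fix an arbitrary $f\in L^2(\Omega)$ with $f\ge0$. Since $\Omega$ is a [PPP$_\sigma$] domain, \eqref{PDElinSteklov} with right-hand side $f$ has a unique weak solution $v\in H^2(\Omega)\cap\Ho$, and $v\ge0$, i.e. $v\in K$. The weak formulation of \eqref{PDElinSteklov} reads $(v,\varphi)_{H_\sigma}=\int_\Omega f\varphi$ for every $\varphi\in H^2(\Omega)\cap\Ho$, where $(\cdot,\cdot)_{H_\sigma}$ is the scalar product polarizing the norm of Lemma \ref{eqnormnew}. Choosing $\varphi=w$ and using $w\in K^*$, $v\in K$,
\[
\int_\Omega fw=(v,w)_{H_\sigma}=(w,v)_{H_\sigma}\le0 .
\]
As $f\ge0$ is arbitrary (take $f=\chi_A$ for $A\subseteq\Omega$ measurable), this gives $w\le0$ a.e. in $\Omega$; since $w_{|\dOmega}=0$, it also follows that $w_n\ge0$ on $\dOmega$.

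\textbf{Step 2: strictness.} Now I would test the inequality $(w,v)_{H_\sigma}\le0$ against the particular functions $v=(-\Delta)^{-1}\phi$ with $0\le\phi\in L^2(\Omega)$, which lie in $K$ because $\Omega$ is convex and the second-order maximum principle applies. Integrating by parts and using the Poisson representation of the normal derivative on $\dOmega$, this yields $\int_\Omega\phi\,\bigl(\Delta w-(1-\sigma)H\bigr)\ge0$ for every such $\phi$, hence
\[
\Delta w\ \ge\ (1-\sigma)\,H\qquad\text{a.e. in }\Omega,
\]
where $H\ge0$ is the harmonic extension to $\Omega$ of the nonnegative boundary function $\kappa\,w_n$. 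If $\sigma\le1$ this says that $w$ is subharmonic; since $w\le0$, $w_{|\dOmega}=0$ and $w\not\equiv0$, the strong maximum principle \cite[Theorem 9.6]{GT} forces $w<0$ throughout $\Omega$, and we are done. If $\sigma>1$ the subharmonicity argument fails, and the plan is instead to exploit that $-w$ is, in the weak sense, a subsolution of \eqref{PDElinSteklov} with nonnegative interior datum $-\Delta^2w\ge0$ and a boundary datum of the favorable sign: the strong form of [PPP$_\sigma$] — data which are nonnegative and not identically zero produce an a.e.-strictly-positive solution, equivalently the Steklov Green function is strictly positive, a property available on the domains at hand under the standing hypothesis $\kappa>0$ a.e. on $\dOmega$, cf. \cite{GS} — then forces $-w>0$ a.e. unless all data vanish; in that last case $w$ would solve the homogeneous problem \eqref{PDElinSteklov}, which has only the trivial solution because $1-\sigma<0$ is not a Steklov eigenvalue, all of them being positive (see Section 5). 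In either case $w<0$ a.e. or $w\equiv0$.

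\textbf{Main obstacle.} The technical heart is Step 2 for $\sigma>1$: passing from the bare positivity-preserving property $f\ge0\Rightarrow u\ge0$ to its strict version — equivalently, excluding that a nonzero element of $K^*$ vanishes on a set of positive measure. For $\sigma\le1$ this is circumvented cleanly through the subharmonicity of $w$, but for $\sigma>1$ it truly relies on the strict positivity of the Green function of \eqref{PDElinSteklov}, which is where the geometric assumptions on $\Omega$ must be used; one should also take some care in formulating and handling the weak (measure) version of the boundary condition satisfied by $w$.
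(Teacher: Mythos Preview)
Your Step~1 is correct and matches the paper. The subharmonicity route for $\sigma\le1$ is a valid alternative in that range, but note that the lemma is applied in the paper only for $\sigma>1$, so that is the case that carries the content.

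For $\sigma>1$ there is a genuine gap. You want to treat $-w$ as a subsolution of the linear Steklov problem and invoke \emph{strict} positivity of its Green function, but (i) the hypothesis is only the bare [PPP$_\sigma$] property $f\ge0\Rightarrow u\ge0$, not its strong form, so you are assuming more than the statement gives; and (ii) $w\in H^2(\Omega)\cap\Ho$ satisfies no boundary value problem --- $\Delta^2w$ is only a nonpositive distribution (you do not justify even this, though it follows by testing $(w,v)_{H_\sigma}\le0$ against $0\le v\in C^\infty_c(\Omega)$), your ``boundary datum of favorable sign'' is undefined, and no Green-function representation is available at this regularity. You flag this as the main obstacle but do not resolve it.

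The paper sidesteps both issues by a different device. Assuming the null set $N:=\{w=0\}$ has positive measure, it lets $v_0$ solve the \emph{Navier} problem $\Delta^2v_0=\chi_N$, $v_0=\Delta v_0=0$ on $\dOmega$. The second-order maximum principle and Hopf lemma give $v_0>0$ in $\Omega$ and $(v_0)_n<0$ on $\dOmega$ for free --- no Steklov Green function needed --- so for any $v\in H^2(\Omega)\cap\Ho$ one can find $\alpha,\beta>0$ with $v+\alpha v_0\in K$ and $-(v-\beta v_0)\in K$. Because $v_0$ solves the Navier (not Steklov) problem, $\int_\Omega\Delta w\,\Delta v_0=\int_\Omega w\chi_N=0$, so $(w,v_0)_{H_\sigma}$ reduces to the boundary term, whose sign the paper controls via $\sigma>1$, $\kappa\ge0$, $(v_0)_n<0$ and the sign of $w_n$ from Step~1. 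A sandwich between $(w,v+\alpha v_0)_{H_\sigma}\le0$ and $(w,v-\beta v_0)_{H_\sigma}\ge0$ then forces $(w,v)_{H_\sigma}=0$ for every $v$; taking $v$ to be the Steklov solution with right-hand side $w$ yields $\|w\|_2=0$. The idea you are missing is precisely to borrow strict positivity and the Hopf property from the Navier problem, where they come from the classical maximum principle, rather than trying to extract a strong maximum principle from the Steklov problem itself.
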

\begin{proof}
	We adapt here the proof of \cite[Proposition 3.6]{GGS}. Let $\varphi\in C^\infty_c(\Omega)$, $\varphi\geq0$ and let $v_\varphi\in H^2(\Omega)\cap\Ho$ the unique weak solution of the linear problem
	\begin{equation*}
	\begin{cases}
	\Delta^2v_\varphi=\varphi\quad&\mbox{in }\Omega\\
	v_\varphi=\Delta v_\varphi-(1-\sigma)\kappa (v_\varphi)_n=0\quad&\mbox{on }\dOmega,
	\end{cases}
	\end{equation*}
	that is, for every test function $w\in H^2(\Omega)\cap\Ho$, we have
	$$(v_\varphi,w)_{H_\sigma}:=\int_\Omega\Delta v_\varphi\Delta w-(1-\sigma)\int_\dOmega\kappa(v_\varphi)_n w_n=\int_\Omega \varphi w.$$
	Hence, suppose $w=u\in K^*$: as $\Omega$ is a [PPP$_\sigma$] domain and $\varphi\geq0$, we deduce that $v_\varphi\geq0$, so $v_\varphi\in K$ and thus $(v_\varphi,u)_{H_\sigma}\leq 0$. As a result, we have obtained that for every $\varphi\in C^\infty_c(\Omega)$, $\varphi\geq0$, $\int_\Omega\varphi u\leq 0$, which implies that $u\leq 0$ a.e. in $\Omega$.\\
	Moreover, let us suppose that the null-set of $u$, namely $N:=\{x\in\Omega\,|\,u(x)=0\}$, has positive measure, consider $\psi:=\chi_N\neq 0$ and let $v_0$ be the unique solution of the following linear Navier problem:
	\begin{equation}\label{v0lin}
	\begin{cases}
	\Delta^2v_0=\psi\quad&\mbox{in }\Omega,\\
	v_0=\Delta v_0=0\quad&\mbox{on }\dOmega.
	\end{cases}
	\end{equation}
	Then $v_0$ is strictly superharmonic by the maximum principle, thus $v_0>0$ and, by Hopf Lemma, $(v_0)_n<0$. As a result, for any function $v\in H^2(\Omega)\cap\Ho$ one can produce two positive constant $\alpha$, $\beta$ such that $v+\alpha v_0\geq 0$ and $v-\beta v_0\leq 0$. Moreover we claim that $(u,v_0)_{H_\sigma}\geq 0$. In fact, as $v_0$ is the weak solution of \eqref{v0lin} and by definition of $\psi$:
	$$\int_\Omega\Delta u\Delta v_0=\int_\Omega u\psi=\int_N u=0,$$
	thus, since $\sigma>1$, $\kappa\geq 0$, $u_n\leq 0$ as $u\geq 0$, and $(v_0)_n<0$:
	$$(u,v_0)_{H_\sigma}:=\int_\Omega\Delta u\Delta v_0-(1-\sigma)\int_\dOmega\kappa u_n(v_0)_n\geq 0.$$
	As a result, recalling that $u\in K^*$, $v+\alpha v_0\in K$ and  $v-\beta v_0\in(-K)$, we have the following chain of inequalities:
	\begin{equation*}
	0\geq(u,v+\alpha v_0)_{H_\sigma}=(u,v)_{H_\sigma}+\alpha(u,v_0)_{H_\sigma}\geq(u,v)_{H_\sigma}\geq(u,v)_{H_\sigma}-\beta(u,v_0)_{H_\sigma}=(u,v-\beta v_0)_{H_\sigma}\geq 0,
	\end{equation*}
	which implies that $(u,v)_{H_\sigma}=0$, and this holds for all $v\in H^2(\Omega)\cap\Ho$. Hence this is true also for $v$ defined as the unique solution of the following Steklov problem:
	\begin{equation}\label{vlin}
	\begin{cases}
	\Delta^2v=u\quad&\mbox{in }\Omega,\\
	v=\Delta v-(1-\sigma)\kappa v_n=0\quad&\mbox{on }\dOmega,
	\end{cases}
	\end{equation}
	and, using $u$ as a test function, we deduce that
	$$0=(u,v)_{H_\sigma}=
	\int_\Omega u^2=\|u\|^2_2,$$
	which implies $u=0$ a.e.
\end{proof}

\begin{prop}\label{possigma>1}
	Let $\sigma>1$ and suppose $\Omega$ is a [PPP$_\sigma$] domain for \eqref{PDElinSteklov}. Then the ground states of $J_\sigma$ are a.e. strictly of only one sign.
\end{prop}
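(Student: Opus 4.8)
The plan is to apply Moreau's dual cone decomposition in the Hilbert space $(H^2(\Omega)\cap\Ho,(\cdot,\cdot)_{H_\sigma})$ — which is legitimate since, by Lemma \ref{eqnormnew}, $\|\cdot\|_{H_\sigma}$ is an equivalent scalar product for every $\sigma>\sigma^*$, in particular for $\sigma>1$ — with respect to the closed convex cone $K:=\{v\in H^2(\Omega)\cap\Ho\,|\,v\geq0\text{ a.e.}\}$. Recall also that for $\sigma>1$ the whole Nehari construction of Section 3 goes through with this norm. Let $u$ be a ground state of $J_\sigma$ and write $u=u_1+u_2$ with $u_1\in K$, $u_2\in K^*$ and $(u_1,u_2)_{H_\sigma}=0$. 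Since $\Omega$ is a $[\mathrm{PPP}_\sigma]$ domain, Lemma \ref{dualcone} yields the dichotomy: either $u_2\equiv0$, or $u_2<0$ a.e. in $\Omega$.

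Next I would compare the energy of $u$ with that of the "unfolded" competitor $w:=u_1-u_2$. As $u_1\geq0$ and $-u_2\geq0$ a.e., one has $w\geq0$ and, pointwise, $w=u_1+(-u_2)\geq|u_1+u_2|=|u|$, with strict inequality exactly on $\{u_1>0\}\cap\{u_2<0\}$; moreover, by orthogonality, $\|w\|_{H_\sigma}^2=\|u_1\|_{H_\sigma}^2+\|u_2\|_{H_\sigma}^2=\|u\|_{H_\sigma}^2$. Since $g>0$ a.e., this gives $\int_\Omega g|w|^{p+1}\geq\int_\Omega g|u|^{p+1}$, and $w\neq0$ because $u\not\equiv0$ (recall $0\notin\overline{\Ne_\sigma}$ by Lemma \ref{Neclosed}). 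Using the identity (F) of Theorem \ref{detbordo2} to rewrite the determinant term as $\|\cdot\|_{H_\sigma}^2$, Lemma \ref{t*lemma} shows that the unique point of $\Ne_\sigma$ on the ray through $w$ is $t^*(w)w$ and that, by \eqref{equivonNehari}--\eqref{t*}, $J_\sigma(t^*(w)w)=\max_{t>0}J_\sigma(tw)=\frac{p-1}{2(p+1)}\|w\|_{H_\sigma}^{2(p+1)/(p-1)}\big(\int_\Omega g|w|^{p+1}\big)^{-2/(p-1)}$, which for fixed $\|w\|_{H_\sigma}$ is strictly decreasing in $\int_\Omega g|w|^{p+1}$. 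Hence $J_\sigma(t^*(w)w)\leq J_\sigma(u)$; since $t^*(w)w\in\Ne_\sigma$ and $u$ minimizes $J_\sigma$ on $\Ne_\sigma$, equality holds, so by strict monotonicity $\int_\Omega g|w|^{p+1}=\int_\Omega g|u|^{p+1}$. Together with $w\geq|u|\geq0$ and $g>0$ a.e. this forces $w=|u|$ a.e., i.e. $u_1-u_2=|u_1+u_2|$ a.e., so $\{u_1>0\}\cap\{u_2<0\}$ is a null set.

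Finally I would combine the two facts. If $u_2\equiv0$, then $u=u_1\geq0$ a.e.; if instead $u_2<0$ a.e., then $\{u_1>0\}$ is null, whence $u_1=0$ a.e. and $u=u_2<0$ a.e. — and in this case we are done. In the former case, I apply the entire argument to $-u$, which is again a ground state because $J_\sigma$ is even and $\Ne_\sigma$ is symmetric: it yields either $-u\geq0$ a.e. — which with $u\geq0$ a.e. would give $u\equiv0$, contradicting $0\notin\overline{\Ne_\sigma}$ — or $-u<0$ a.e. Therefore every ground state satisfies $u>0$ a.e. in $\Omega$ or $u<0$ a.e. in $\Omega$.

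The only place where genuine work is needed beyond bookkeeping is the energy comparison between $u$ and $w$: it rests on the scaling structure of $J_\sigma$ along half-lines encoded in \eqref{equivonNehari}--\eqref{t*} (so that passing to $w$, which has the same $H_\sigma$-norm but a larger weighted $L^{p+1}$-mass, cannot raise the Nehari energy), and it is precisely here that the mere sign bound $u_2\leq0$ a.e. has to be promoted — via Lemma \ref{dualcone}, hence via the $[\mathrm{PPP}_\sigma]$ hypothesis — to the strict dichotomy "$u_2\equiv0$ or $u_2<0$ a.e." that rules out sign changes. Without the positivity-preserving assumption one only has $K^*\subseteq\{w\leq0\text{ a.e.}\}$, which is not enough to separate the two parts of $u$.
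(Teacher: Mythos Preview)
Your proof is correct and follows the paper's approach: Moreau decomposition in $(H^2(\Omega)\cap\Ho,(\cdot,\cdot)_{H_\sigma})$ with respect to the cone $K$ of nonnegative functions, Lemma~\ref{dualcone} to get the dichotomy on $u_2$, and the energy comparison with the unfolded competitor $w=u_1-u_2$ via the Nehari structure. The paper phrases the comparison as a contradiction (assuming $u$ sign-changing forces both $u_1\neq0$ and $u_2<0$ a.e., whence $w>|u|$ strictly and $J_\sigma(t^*(w)w)<J_\sigma(u)$), while you do a direct case split and use the explicit formula for $\max_{t>0}J_\sigma(tw)$ to squeeze equality; these are equivalent.

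The one genuine difference is the upgrade from $u\geq0$ to $u>0$ a.e. You reapply the whole argument to $-u$, which works. The paper instead observes that, once $u\geq0$ is known, the Euler--Lagrange equation gives $(u,\varphi)_{H_\sigma}=\int_\Omega g\,u^p\varphi\geq0$ for every $\varphi\in K$, i.e.\ $-u\in K^*$; Lemma~\ref{dualcone} then yields $u>0$ a.e.\ directly. This is shorter and uses only that $u$ is a critical point rather than a ground state, but your route reaches the same conclusion.
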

\begin{proof}
	Let $u\in H^2(\Omega)\cap\Ho$ be such a ground state and suppose by contradiction that $u$ is sign-changing. Denoting as before  the cone of nonnegative functions by $K$, by Moreau Decomposition there exists a unique couple $(u_1,u_2)\in K\times K^*$ such that $u=u_1+u_2$ and $(u_1,u_2)_{H_\sigma}=0$. Hence we know that $u_1\geq0$ and, by the Lemma \ref{dualcone}, $u_2<0$. Moreover, $u$ is supposed to change sign, so $u_1\neq 0$.\\
	\noindent Defining $w:=u_1-u_2\in H^2(\Omega)\cap\Ho$, then $w>|u|$. Indeed,
	$$w=u_1-u_2> u_1+u_2=u,$$
	$$w=u_1-u_2> -u_1-u_2=-u.$$
	Consequently, $\int_\Omega g(x)|w|^{p+1}>\int_\Omega g(x)|u|^{p+1}$ and, since the decomposition is orthogonal under that norm, $\|w\|_{H_\sigma}^2=\|u_1\|_{H_\sigma}^2+\|u_2\|_{H_\sigma}^2=\|u\|_{H_\sigma}^2$. Moreover, by Lemma \ref{t*lemma}, there exists $t^*:=t^*(w)\in (0,+\infty)$ such that $w^*:=t^*(w)w\in\Ne_\sigma$. Hence we deduce
	\begin{equation*}
	\begin{split}
	J_\sigma(w^*)&=\dfrac{(t^*)^2}{2}\|w\|_{H_\sigma}^2-\dfrac{(t^*)^{p+1}}{p+1}\int_\Omega g(x)|w|^{p+1}\\
	&<\dfrac{(t^*)^2}{2}\|u\|_{H_\sigma}^2-\dfrac{(t^*)^{p+1}}{p+1}\int_\Omega g(x)|u|^{p+1}=J_\sigma(t^*(w)u)\leq J_\sigma(u),
	\end{split}
	\end{equation*}
	since $u$ is the maximum of $J_\sigma$ on the half-line $\{tu\,|\,t\in(0,+\infty)\}$ by Lemma \ref{t*lemma}, thus a contradiction again, since $u$ was the infimum of $J_\sigma$ on the Nehari manifold $\Ne_\sigma$. Hence we infer that $u\geq 0$.\\
	\noindent Finally, as $u$ is a critical point of $J_\sigma$, we have for each a positive test function $\varphi\in H^2(\Omega)\cap\Ho$:
	$$(u,\varphi)_{H_\sigma}=\int_\Omega\Delta u\Delta\varphi-(1-\sigma)\int_\dOmega\kappa u_n\varphi_n=\int_\Omega g(x)u^p\varphi\geq 0,$$
	which implies that $-u\in K^*$. Applying now Lemma \ref{dualcone}, we get $-u<0$, i.e. $u>0$.
\end{proof}

\noindent As a consequence, the problem of proving positivity of ground state is led back to a problem of positivity preserving for the linear problem, which was already tackled and solved by Gazzola and Sweers in \cite{GS}.

\begin{thm}\label{possigma>1thm}
	Let $\sigma>1$ and $\Omega\subset\R^2$ be a bounded convex domain with $\dOmega$ of class $C^2$. There exists $\tilde\delta_c(\Omega)\in(1,+\infty]$ such that if $\sigma\in(1,\tilde\delta_c(\Omega))$, the ground states of the functional $J_\sigma$ are a.e. strictly of only one sign.
\end{thm}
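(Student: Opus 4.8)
The plan is to reduce the claim, by means of Proposition \ref{possigma>1}, to the positivity preserving property of the \emph{linear} Steklov problem \eqref{PDElinSteklov}, and for the latter to invoke the analysis of Gazzola and Sweers in \cite{GS}. Concretely, I would first define
$$\tilde\delta_c(\Omega):=\sup\bigl\{\delta>1\,:\,\Omega\text{ is a [PPP}_\sigma\text{] domain for }\eqref{PDElinSteklov}\text{ for every }\sigma\in(1,\delta)\bigr\},$$
and the heart of the matter is that $\tilde\delta_c(\Omega)>1$, so that the interval $(1,\tilde\delta_c(\Omega))$ is nonempty. This is precisely (part of) \cite[Theorem 4.1]{GS}: for a bounded convex domain of $\R^2$ with $C^2$ boundary, the set of parameters for which \eqref{PDElinSteklov} is positivity preserving contains a right neighbourhood of $\sigma=1$; the underlying reason is that at $\sigma=1$ the second boundary condition reduces to the Navier one $\Delta u=0$, for which positivity preserving is classical (apply the maximum principle to $-\Delta u$ and then to $u$), and this property is stable under the small perturbation $d=1-\sigma\to 0^{-}$.

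Next I would fix an arbitrary $\sigma\in(1,\tilde\delta_c(\Omega))$. By construction $\Omega$ is a [PPP$_\sigma$] domain for \eqref{PDElinSteklov}, so Proposition \ref{possigma>1} applies verbatim and yields that every ground state of $J_\sigma$ is a.e. strictly of only one sign, which is exactly the asserted conclusion. Nothing else is needed, since the existence of ground states of $J_\sigma$ for $\sigma>1$ has already been recorded at the beginning of Section 6: Lemma \ref{eqnormnew} makes $\|\cdot\|_{H_\sigma}$ an equivalent norm on $H^2(\Omega)\cap\Ho$, and the Nehari-manifold construction of Section 3 carries over without change.

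The only substantive ingredient is therefore external to the present paper: it is the identification of the critical value $\tilde\delta_c(\Omega)$ and, above all, the fact that it is strictly bigger than $1$ — this is the content of \cite{GS}, and it is where all the difficulty is concentrated. In writing this out I would take care to check two compatibility points: that the notion of [PPP$_\sigma$] used here (existence and uniqueness in $H^2(\Omega)\cap\Ho$ of the weak solution of \eqref{PDElinSteklov}, together with the implication $f\ge 0\Rightarrow u\ge 0$ for every $f\in L^2(\Omega)$) agrees with the one adopted in \cite{GS}, and that our standing $C^2$ hypothesis on $\dOmega$ is at least as strong as the regularity required there. I would also remark that this result is complementary to Theorem \ref{sigma1}: the latter proves positivity for $\sigma$ close to $1$ by a convergence argument, without isolating an explicit bound, whereas here the threshold is tied to the linear critical parameter and becomes completely explicit when $\Omega$ is a ball, which is what will be exploited in Section 7.
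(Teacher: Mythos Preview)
Your proposal is correct and follows essentially the same route as the paper: reduce to the linear problem via Proposition~\ref{possigma>1}, then invoke \cite[Theorem 4.1]{GS} to obtain a right neighbourhood of $\sigma=1$ where the positivity preserving property holds. The only cosmetic difference is that the paper identifies $\tilde\delta_c(\Omega)$ explicitly as $1+|\delta_{c,\kappa}(\Omega)|$ (taking $\beta=\kappa$ in part (iii) of \cite[Theorem 4.1]{GS}), whereas you define it abstractly as a supremum; both yield the same interval.
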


\begin{proof}
	We follow the notation of \cite{GS}. Choosing $\beta=\kappa$ in (iii) of \cite[Theorem 4.1]{GS}, we infer the existence of $\delta_{c,\kappa}(\Omega)\in[-\infty,0)$ such that if $(1-\sigma)\kappa\geq\delta_{c,\kappa}(\Omega)\kappa$, then the positivity preserving for problem \eqref{PDElinSteklov} holds in $\Omega$. Hence, denoting $\tilde\delta_c(\Omega):=1+|\delta_{c,\kappa}(\Omega)|$, we can apply Proposition \ref{possigma>1}, provided $\sigma<\tilde\delta_c(\Omega)$.
\end{proof}
	
	\noindent Comparing Theorems \ref{possigma>1thm} and \ref{sigma1}, one may argue that we have nothing more than what we already knew: in both we obtain the existence of $\sigma_1=\sigma_1(\Omega)>1$ such that for all $\sigma\in(1,\sigma_1)$ the ground state solutions of problem \eqref{PDEpSteklovg} are positive. Nevertheless, in Theorem \ref{possigma>1thm} we get a further precise information about how the interval of positivity depends on the domain, relating it strongly with the positivity preserving property. This fact is striking in the case of the disk and allows us to finally answer to the question which opened the section.
	
\begin{cor}\label{thmBALL}
	Let $B\subset\R^2$ be a disk and let $\sigma>1$. Then the ground states of the functional $J_\sigma$ are a.e. strictly of only one sign.
\end{cor}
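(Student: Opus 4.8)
The plan is to obtain the corollary from Theorem~\ref{possigma>1thm} by checking that for a disk the positivity interval is the whole half-line $(1,+\infty)$, i.e. that $\tilde\delta_c(B)=+\infty$. In view of Proposition~\ref{possigma>1}, it suffices to show that the disk $B$ is a [PPP$_\sigma$] domain for the linear Steklov problem \eqref{PDElinSteklov} for \emph{every} $\sigma>1$.

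To this end I would use that on a disk the signed curvature $\kappa$ is a positive constant, so that the boundary coefficient $(1-\sigma)\kappa$ appearing in \eqref{PDElinSteklov} is simply a constant multiple of $\kappa$. This is exactly the framework of part (iii) of \cite[Theorem~4.1]{GS} with $\beta=\kappa$: it provides a critical value $\delta_{c,\kappa}(B)\in[-\infty,0)$ such that the linear problem is positivity preserving whenever $(1-\sigma)\kappa\geq\delta_{c,\kappa}(B)\kappa$, equivalently whenever $1-\sigma\geq\delta_{c,\kappa}(B)$. The decisive point is that for the ball this critical value degenerates to $\delta_{c,\kappa}(B)=-\infty$: the biharmonic Steklov solution operator on $B$ is positivity preserving on the entire range of parameters for which the form $(\cdot,\cdot)_{H_\sigma}$ is an equivalent scalar product on $H^2(\Omega)\cap\Ho$, namely for all $\sigma>\sigma^*=1-\tilde{\delta}_1(B)=-1$ (recall that $\tilde{\delta}_1(B)=2$). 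This is precisely the content of the analysis carried out on the ball in \cite{GS} (compare also the explicit Boggio-type Green function computations in \cite{FGW}), and it yields $\tilde\delta_c(B)=1+|\delta_{c,\kappa}(B)|=+\infty$. Hence $(1,+\infty)=(1,\tilde\delta_c(B))$, and Theorem~\ref{possigma>1thm} applies for every $\sigma>1$, giving the assertion.

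The main obstacle is thus the identity $\delta_{c,\kappa}(B)=-\infty$, i.e. the absence of any upper threshold on $\sigma$ beyond which positivity preserving fails for the linear problem on the disk. This is not a consequence of the abstract dual-cone argument of Proposition~\ref{possigma>1} and Lemma~\ref{dualcone}: it rests on the special structure of the ball, where the Steklov solution operator is explicit and can be compared with Boggio's positive Green function for the Dirichlet bilaplacian, so that positivity persists all the way as $\sigma\to+\infty$ --- in accordance with the convergence of ground states to the (positive, on the ball) Dirichlet ground states established in Theorem~\ref{towardsDirichlet}.
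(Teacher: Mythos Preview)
Your proposal is correct and follows essentially the same route as the paper: reduce to Theorem~\ref{possigma>1thm} and verify $\tilde\delta_c(B)=+\infty$ by invoking the Gazzola--Sweers result that $\delta_{c,\kappa}(B)=-\infty$ for the disk. The paper's proof is just two lines, citing specifically \cite[Theorem~2.9]{GS} (rather than Theorem~4.1) for the fact that on the ball the linear Steklov problem is positivity preserving for all $\sigma>1$; your references to the ball analysis in \cite{GS} and to \cite{FGW} point to the same source but are slightly less precise.
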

\begin{proof}
	It is enough to notice that here $\kappa=1$ and applying \cite[Theorem 2.9]{GS} one can deduce $\delta_{c,\kappa}(B)=-\infty$, which implies $\tilde\delta_c(B)=+\infty$.
\end{proof}

	\noindent One should finally notice that here the positivity found by the dual cones method is up to a subset of the domain with zero Lebesgue measure, so \textit{almost everywhere} in $\Omega$. This is the price we have to pay to extend the positivity beyond the parameter $\sigma_1$ found in Theorem \ref{sigma1} (cf. also Remark \ref{remarkstrictpositivity}).

\begin{remark}\label{rmksub2}
	Again, up to some easy modifications in the proofs, both the convergence in Theorem \ref{towardsDirichlet} and the positivity result in Theorem \ref{possigma>1thm} hold also in the sublinear case $p\in(0,1)$.
\end{remark}

\section{Radial Case}

This section is devoted to some further investigations when the domain is a disk in $\R^2$ and the function $g$ is radial, regarding existence, positivity and some qualitative properties of radially symmetric solutions. Moreover, we establish the counterpart of the convergence results of Sections 5 and 6, but for general radial positive solutions.\\
For simplicity, we focus on the problem
\begin{equation}\label{PDEpSteklovRAD}
\begin{cases}
\Delta^2u=g(x)|u|^{p-1}u\quad&\mbox{in }B,\\
u=\Delta u-(1-\sigma)u_n=0\quad&\mbox{on }\partial B,
\end{cases}
\end{equation}
where $B:=B_1(0)\subset\R^2$, $g=g(|x|)$ lies in $L^1(B)$ and it is strictly positive inside $B$. Moreover, we let the parameter $\sigma\in\R$ and $p\in(0,1)\cup(1,+\infty)$ to cover both the sublinear and the superlinear case. Notice that the curvature does not appear in the mixed boundary condition since $\kappa(B)\equiv1$.

\subsection{Positive radially decreasing solutions and global bounds}

First of all, by Proposition \ref{nonexistence}, our analysis concerns only the range $\sigma>-1$: in fact, if $\Omega=B$, one has $\sigma^*=-1$, since the first Steklov eigenvalue $\tilde\delta_1(B)=2$ (see \cite[Proposition 12]{BGM}).\\
\noindent Retracing exactly the same steps of Sections 3 and 4, it is quite easy to obtain the existence of a positive radial solution. In fact, confining ourselves to the closed subspace of radial functions
$$H_{rad}(B):=\{u\in H^2(B)\cap H^1_0(B)\,|\,u(x)=u(|x|), \,\forall x\in B\}=Fix_{O(2)}(H^2(B)\cap H^1_0(B)),$$
we deduce the existence of a critical point of $J_\sigma$ restricted to $H_{rad}(B)$. Then it is enough to notice that $J_\sigma$ is invariant under the action of $O(2)$ and to apply the Principle of Symmetric Criticality due to Palais (see \cite[Theorem 1.28]{PrSyCr}), retrieving that these points are critical for $J_\sigma$ also with respect to the whole space.\\
\noindent Finally, if we restrict to the interval $(-1,1]$, the positivity of such critical points is proved as in Propositions \ref{positivitysublinear} and \ref{positivitysuperlinear}, realizing that the superharmonic function of a radially symmetric function is radial too (cf. \eqref{tildePb}). On the other hand, if $\sigma>1$, one can apply the dual cone decomposition to the Hilbert space $H_{rad}(B)$ and argue as in Lemma \ref{dualcone} and Proposition \ref{possigma>1}, taking into account that $B$ is a [PPP$_\sigma$] domain for every $\sigma>-1$. Summarizing, we have shown the following:

\begin{prop}
	Let $p\in(0,1)\cup(1,+\infty)$, $g=g(|x|)\in L^1(B)$, $g>0$. If $\sigma\leq-1$, there is no positive nonnegative nontrivial solution for \eqref{PDEpSteklovRAD}, while, if $\sigma>-1$, there exists at least a positive radial solution, which is strictly superharmonic whenever $\sigma\in(-1,1]$.
\end{prop}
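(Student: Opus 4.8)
The plan is to assemble the statement from results already established, restricting throughout to the closed subspace of radial functions $H_{rad}(B)$ and treating the three assertions (nonexistence, existence, positivity) in turn.

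First, the nonexistence part is immediate: since $\Omega=B$ one has $\tilde\delta_1(B)=2$ (see \cite[Proposition 12]{BGM}), so the threshold appearing in Proposition \ref{nonexistence} is $\sigma^*=1-\tilde\delta_1(B)=-1$. Hence Proposition \ref{nonexistence}, read with $\kappa\equiv1$, gives at once that \eqref{PDEpSteklovRAD} admits no nonnegative nontrivial weak solution when $\sigma\le-1$.

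For existence when $\sigma>-1$, I would first note that, since $g$ is radial, $J_\sigma$ is invariant under the action of $O(2)$, and that Lemma \ref{eqnormnew} guarantees $\|\cdot\|_{H_\sigma}$ is a norm equivalent to the standard one on $H^2(B)\cap\Ho$ for every $\sigma>\sigma^*=-1$. One then runs the variational schemes of Sections 3--6 inside $H_{rad}(B)$: for $p\in(0,1)$ the functional $J_\sigma|_{H_{rad}(B)}$ is coercive and weakly lower semicontinuous (Proposition \ref{coercKLg}), hence admits a global minimizer, nontrivial by the computation on the Nehari set noted after Proposition \ref{positivitysublinear}; for $p>1$ the Nehari manifold argument (Lemma \ref{t*lemma}, Lemma \ref{Neclosed}, Proposition \ref{infattained}, Theorem \ref{criticalpoint}) carries over verbatim in this closed subspace and yields a ground state. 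Finally, by the Principle of Symmetric Criticality of Palais (see \cite[Theorem 1.28]{PrSyCr}), any critical point of $J_\sigma|_{H_{rad}(B)}$ is a critical point of $J_\sigma$ on the whole space, hence a radial weak solution of \eqref{PDEpSteklovRAD}.

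For positivity one splits on the sign of $\sigma-1$. When $\sigma\in(-1,1]$, the superharmonic lift $\tilde u$ of \eqref{tildePb} associated to a radial $u$ is itself radial, by uniqueness of the solution of that Poisson problem together with the rotational invariance of $B$; thus the proofs of Propositions \ref{positivitysublinear} and \ref{positivitysuperlinear} apply inside $H_{rad}(B)$ and show the solution is strictly superharmonic, hence positive. When $\sigma>1$, I would instead run the Moreau dual-cone argument of Section 6.4 in the Hilbert space $(H_{rad}(B),(\cdot,\cdot)_{H_\sigma})$ with $K$ the cone of nonnegative radial functions: the proof of Lemma \ref{dualcone} goes through because the auxiliary linear Steklov and Navier problems used there are $O(2)$-equivariant, so their solutions stay radial whenever the data are, and the key hypothesis that $B$ is a [PPP$_\sigma$] domain holds for every $\sigma>-1$ (for $\sigma\in(-1,1]$ this is the positivity just proved, while for $\sigma>1$ it follows from $\tilde\delta_c(B)=+\infty$, Corollary \ref{thmBALL}). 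Proposition \ref{possigma>1} then gives that the radial ground state is a.e. of one sign and, testing the equation against positive test functions as in that proof, strictly positive. The only genuine point to verify is exactly this compatibility of the auxiliary constructions with the radial restriction and the availability of [PPP$_\sigma$] on the full range $\sigma>-1$ rather than only the range treated in Section 6; both follow from uniqueness of the relevant linear problems plus rotational symmetry, after which the rest is a repetition of the earlier arguments. Strict superharmonicity is asserted only on $(-1,1]$ because for $\sigma>1$ the dual-cone method yields positivity only up to a null set (cf. Remark \ref{remarkstrictpositivity}).
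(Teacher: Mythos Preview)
Your proposal is correct and follows essentially the same route as the paper: nonexistence via $\tilde\delta_1(B)=2$ and Proposition \ref{nonexistence}, existence by running the variational machinery of Sections 3--5 inside $H_{rad}(B)$ and invoking Palais' Principle of Symmetric Criticality, and positivity via the superharmonic method for $\sigma\in(-1,1]$ and the Moreau dual-cone argument for $\sigma>1$. One small slip: your parenthetical justification of [PPP$_\sigma$] on $(-1,1]$ conflates the semilinear positivity you just obtained with the \emph{linear} positivity preserving property required by Lemma \ref{dualcone}; this is harmless, since you only need [PPP$_\sigma$] for $\sigma>1$, where Corollary \ref{thmBALL} (i.e.\ $\tilde\delta_c(B)=+\infty$) already suffices.
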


Now, we want to prove some qualitative properties of radial positive solutions of \eqref{PDEpSteklovRAD}. The first result concerns the radial behaviour, while the second the uniform boundedness in $L^\infty(B)$. Before proving these results, one should notice that such solutions are strong solutions, namely in $W^{4,q}(B)$, provided $g\in L^q(B)$ for some $q>2$ and also classical assuming in addition that $g\in W^{1,q}(B)$ for some $q>2$. This is a straightforward application of Lemma \ref{GGSestimate} combined with Morrey's embeddings.

\begin{lem}\label{maxprlem}
	Let $B:=B_R(0)$ be the ball of radius $R$ in $\R^2$ centered in $0$, $q>2$ and  $\tilde h\in W^{2,q}(B)$ be radial. Defining $h:[0,R]\rightarrow\R$ its restriction to the radial variable, for all $t\in[0,R]$ the following equality holds:
	\begin{equation}\label{maxpr}
	th'(t)=\int_0^ts\Delta h(s)ds.
	\end{equation}
\end{lem}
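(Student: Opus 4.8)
The plan is to reduce \eqref{maxpr} to the elementary one-dimensional identity $r\,\Delta h(r)=\big(rh'(r)\big)'$ and then integrate, the only delicate point being the behaviour at the origin. Recall first that for a radial function the Laplacian reads $\Delta h=h''+\frac1r h'$ in the (weak and a.e.) sense, so that $r\,\Delta h(r)=rh''(r)+h'(r)=\big(rh'(r)\big)'$ for a.e.\ $r\in(0,R]$. Next I would observe that the regularity $\tilde h\in W^{2,q}(B)$, $q>2$, transfers, away from the origin, to the radial profile: on every annulus $\{\varepsilon<|x|<R\}$ the passage to polar coordinates is a smooth diffeomorphism with Jacobian bounded away from zero, so $h\in W^{2,q}((\varepsilon,R))$ for each $\varepsilon\in(0,R)$. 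By the one-dimensional Sobolev embedding ($q>1$), $h\in C^{1}([\varepsilon,R])$ with $h'$ locally absolutely continuous on $(0,R]$; hence $r\mapsto rh'(r)$ is locally absolutely continuous on $(0,R]$ with a.e.\ derivative $r\,\Delta h(r)$.

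Integrating between $\varepsilon$ and $t$ for $0<\varepsilon<t\le R$ then gives
\begin{equation*}
t\,h'(t)-\varepsilon\,h'(\varepsilon)=\int_\varepsilon^t s\,\Delta h(s)\,ds,
\end{equation*}
and it remains to let $\varepsilon\to0^+$. Since $q>2$, Morrey's embedding yields $\tilde h\in C^{1,\alpha}(\overline{B})$ with $\alpha=1-2/q$; in particular $\nabla\tilde h$ is continuous on $\overline{B}$, so $h'$ is bounded near $0$ and $\varepsilon h'(\varepsilon)\to0$ (moreover $h'(0)=0$, which also handles the trivial case $t=0$). For the right-hand side it suffices to know that $s\,\Delta h(s)\in L^1(0,R)$: by Hölder's inequality with exponents $q$ and $q'=q/(q-1)$,
\begin{equation*}
\int_0^R |\Delta h(s)|\,s\,ds=\int_0^R |\Delta h(s)|\,s^{1/q}\cdot s^{1/q'}\,ds\le\Big(\int_0^R|\Delta h(s)|^q s\,ds\Big)^{1/q}\Big(\int_0^R s\,ds\Big)^{1/q'}<+\infty,
\end{equation*}
the first factor being finite because it equals (up to $2\pi$) the norm $\|\Delta\tilde h\|_{L^q(B)}^q$ written in polar coordinates. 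Therefore $\int_\varepsilon^t s\,\Delta h(s)\,ds\to\int_0^t s\,\Delta h(s)\,ds$ by dominated convergence, and \eqref{maxpr} follows for every $t\in[0,R]$.

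The main obstacle, and the only place where the hypothesis $q>2$ is genuinely exploited, is the control of the boundary term $\varepsilon h'(\varepsilon)$ at the origin, which is why one needs the profile $h'$ to be continuous up to $0$; an equivalent route, which I would mention as an alternative, is to argue by density, approximating $\tilde h$ by smooth radial functions in $W^{2,q}(B)$, for which \eqref{maxpr} is immediate, and passing to the limit using the same embeddings $W^{2,q}(B)\hookrightarrow C^1(\overline{B})$ and $L^q(0,R;s\,ds)\hookrightarrow L^1(0,R;s\,ds)$.
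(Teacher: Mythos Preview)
Your argument is correct. Both routes---the direct one via absolute continuity and the density alternative you mention at the end---are sound, and the use of Morrey's embedding $W^{2,q}(B)\hookrightarrow C^{1}(\overline B)$ to kill the boundary term $\varepsilon h'(\varepsilon)$ is exactly the right way to exploit $q>2$.

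The paper takes precisely the density route you sketch as an alternative: it proves the identity first for $C^2$ radial functions by the same one-line computation $r\Delta h=(rh')'$, and then approximates $\tilde h$ in $W^{2,q}(B)$ by smooth \emph{radial} functions, passing to the limit on both sides via the $C^1$ embedding and the $L^1$ bound on $s\,\Delta h(s)$. The one point the paper handles with some care---and which your sketch of the alternative glosses over---is why the approximating sequence can be taken radial: standard density of $C^\infty(\overline B)$ in $W^{2,q}(B)$ does not immediately give radial approximants, and the paper obtains this by passing to the radial profile and invoking density in the weighted Sobolev space $W^{2,q}((0,R),r\,dr)$ (via Kufner). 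Your direct argument sidesteps this issue entirely by never leaving the original function, which makes it more self-contained; the paper's approach, on the other hand, avoids having to discuss local absolute continuity of $rh'(r)$ on $(0,R]$ and the transfer of Sobolev regularity under polar coordinates away from the origin.
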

\begin{proof}
	If $h$ is of class $C^2$, it comes directly from integration by parts and from the radial representation of the laplacian as
	$$\Delta \tilde h(x)=h''(|x|)+\dfrac{1}{|x|}h'(|x|).$$
	Otherwise, let $(\tilde f_k)_{k\in\N}\subset C^\infty(\overline B)$ be such that $\tilde f_k\rightarrow \tilde h$ in $W^{2,q}(B)$, so in $C^1(\overline B)$. Since $\tilde h$ is radial, we claim that it is possible to choose each $\tilde f_k$ to be radial and we denote its restriction to the radial variable as $f_k$. If so, for every $k\in\N$, we have:
	$$tf_k'(t)=\int_0^ts\Delta f_k(s)ds.$$
	As a result, as $k\rightarrow+\infty:$
	$$\bigg|\int_0^ts(\Delta f_k(s)- \Delta h(s))ds\bigg|=\dfrac{1}{2\pi}\|\Delta \tilde f_k-\Delta \tilde h\|_{L^1(B_t(0))}\leq C(q)\|\tilde f_k-\tilde h\|_{W^{2,q}(B)}\rightarrow 0.$$
	The result is proved by the convergence in $C^1(\overline B)$ and the uniqueness of the limit. Now we have to justify our previous claim. Since $\tilde h\in W^{2,q}(B)$, we have $$\sum_{i,\alpha}\int_B\bigg|\dfrac{\partial^\alpha \tilde h}{\partial i^\alpha}(x,y)\bigg|^qdxdy<+\infty,$$
	where $i\in\{x,y\}$ and $\alpha$ is a multi-index of length $0\leq|\alpha|\leq 2$. Since each $\frac{\partial^\alpha \tilde h}{\partial i^\alpha}$ is radial, this is equivalent to say that 
	$h\in W^{2,q}([0,R],r)$, that is the weighted Sobolev space with weight $r$. Hence, by \cite[Theorem 7.4]{Kufner} ($M=\{0\}$, $\varepsilon=1$ in notation therein), there exists a sequence $(f_k)_{k\in\N}\subset C^\infty([0,R])$ such that $f_k\rightarrow h$ in $W^{2,q}([0,R],r)$, that is
	$$\sum_{i,\alpha}\int_0^Rr\bigg|\dfrac{\partial^\alpha h}{\partial i^\alpha}(r)-f_k(r)\bigg|^qdr\rightarrow0.$$
	Hence, defining $F_k(x):=f_k(|x|)$, then each $F_k\in C^\infty(\overline B)$, is radial and
	$$\|\tilde h-F_k\|_{W^{2,q}(B)}=\sum_{i,\alpha}\int_B\bigg|\dfrac{\partial^\alpha \tilde h}{\partial i^\alpha}(x,y)-F_k(x,y)\bigg|^qdxdy=2\pi\sum_{i,\alpha}\int_0^Rr\bigg|\dfrac{\partial^\alpha h}{\partial i^\alpha}(r)-f_k(r)\bigg|^qdr\rightarrow0,$$
	and the claim is proved.	
\end{proof}

\begin{prop}[Radial Decay]\label{decreasing}
	Assume $g\in \Lq(B)$ for some $q>2$, radial and $g>0$. and let $u\not\equiv 0$ be a nonnegative radial solution of \eqref{PDEpSteklovRAD} with  $\sigma\in(-1,1]$ and $p\in(0,1)\cup(1,+\infty)$. Then $u$ is strictly radially decreasing, thus $u>0$ in $B$.
\end{prop}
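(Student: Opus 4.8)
The plan is to reduce the statement to the strict positivity of $w:=-\Delta u$ inside $B$, from which the radial monotonicity will follow at once from the one-dimensional identity \eqref{maxpr}. First I would invoke the regularity remark preceding the statement: since $g\in\Lq(B)$ with $q>2$, the solution satisfies $u\in W^{4,q}(B)$, hence $u\in C^3(\overline B)$ by Morrey's embedding, so $w\in C^1(\overline B)$ is radial and $\Delta^2u=g\,u^p$ holds pointwise a.e.\ in $B$ (note that $|u|^{p-1}u=u^p$ because $u\geq0$).

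The key step is to show that $u$ is strictly superharmonic, and this is exactly where the hypothesis $\sigma\leq1$ enters, through the sign of the boundary term. On $\partial B$ the Steklov condition reads $\Delta u=(1-\sigma)u_n$, so $w=-(1-\sigma)u_n$ there; since $1-\sigma\geq0$ and the nonnegative function $u$ vanishes on $\partial B$ (whence $u_n\leq0$ on $\partial B$), we obtain $w\geq0$ on $\partial B$. Together with $-\Delta w=g\,u^p\geq0$ in $B$, the maximum principle for strong solutions (\cite[Theorem 9.6]{GT}) gives $w\geq0$ in $B$, i.e.\ $-\Delta u\geq0$. Since $u\not\equiv0$ vanishes on $\partial B$, the strong maximum principle upgrades this to $u>0$ in $B$, and in particular $-\Delta u\not\equiv0$.

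Next I would bootstrap to $w>0$ in $B$. Indeed, once $u>0$ in $B$ and $g>0$ a.e., we have $-\Delta w=g\,u^p>0$ a.e.\ in $B$; combined with $w\geq0$, an interior zero of $w$ would be an interior minimum of the superharmonic function $w$, forcing $w$ to be constant and contradicting $-\Delta w\not\equiv0$. Hence $w>0$ in $B$. Finally I would apply Lemma \ref{maxprlem} to $u\in W^{2,q}(B)$ with $R=1$: writing $u$ also for its radial profile and using $\Delta u=-w$,
\[
r\,u'(r)=\int_0^r s\,\Delta u(s)\,ds=-\int_0^r s\,w(s)\,ds<0\qquad\text{for every }r\in(0,1].
\]
Thus $u'<0$ on $(0,1]$, so $u$ is strictly radially decreasing on $[0,1]$; since $u(1)=0$, this also re-proves $u>0$ in $B$.

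The only delicate point is the determination of the sign of the boundary value $w=-(1-\sigma)u_n$, which is the nonlinear counterpart of the obstruction pointed out in Remark \ref{extensionnotobvious}: for $\sigma>1$ the sign is reversed and the argument collapses, consistently with the fact that ground states need not be superharmonic in that regime. Everything else is a routine application of the maximum principle and of the identity \eqref{maxpr}; no case distinction between $p\in(0,1)$ and $p>1$ is needed, since only the sign of $u^p$ is used.
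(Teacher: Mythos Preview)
Your proof is correct and follows essentially the same route as the paper's. The only cosmetic difference is that the paper applies Lemma~\ref{maxprlem} \emph{twice}---first to $w:=\Delta u$ (using $\Delta w=g\,u^p\geq0$) to see that $\Delta u$ is nondecreasing, and then combines this with $\Delta u(1)=(1-\sigma)u'(1)\leq0$ to get $\Delta u<0$ in $[0,1)$ before a second application to $u$---whereas you obtain $-\Delta u\geq0$ via the maximum principle for strong solutions and invoke the radial identity only once at the end; in the radial setting these are equivalent arguments.
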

\begin{proof}
	By the assumption on $g$, we infer that $u$ is a strong solution, thus $w:=\Delta u\in W^{2,q}(B)$. Since $\Delta w=\Delta^2u=g(|x|)u^p\geq 0$ in $[0,1]$, applying Lemma $\ref{maxprlem}$, we have $w'>0$ in $(0,1]$. Hence $\Delta u$ is strictly increasing in $(0,1]$. Moreover, since $u$ is nonnegative and $u(1)=0$, we have $u'(1)\leq 0$; hence, using the second boundary condition, $\Delta u(1)=(1-\sigma)u'(1)\leq 0$. Since $\Delta u$ was strictly increasing in $(0,1]$, we deduce that $\Delta u<0$ in $[0,1)$, and finally, applying again Lemma $\ref{maxprlem}$, $u'<0$ in $(0,1]$.
\end{proof}

In the next result we find a uniform upper bound for positive radial solutions of \eqref{PDEpSteklovRAD}, which may be seen as a superlinear counterpart of Proposition \ref{upperboundH^2}. We will make use of a blow up method which goes back to Gidas and Spruck, \cite{GiSpr}, and which was adapted to the polyharmonic case by Reichel and Weth in \cite{RW,RW2}. Briefly, our argument will be the following: supposing the existence of a sequence of positive radial solutions with diverging $L^\infty$ norm, we rescale each of them in order to have an other sequence of functions with the same $L^\infty$ norm, satisfying the same equation in nested domains which tend to occupy the whole $\R^2$. Then we show that, up to a subsequence, it converges uniformly on compact subsets to a continuous nonnegative but nontrivial function. This turns out to be a solution of the same equation on $\R^2$, which is a contradiction with the following Liouville-type result by Wei and Xu, with $N=2$ and $m=2$:

\begin{lem}(\cite{WeiXu}, Theorem 1.4)\label{weixu}
	Let $m\in\N $ and assume that $p>1$ if $N\leq 2m$ and $1<p\leq\frac{N+2m}{N-2m}$ if $N>2m$. If $u$ is a classical nonnegative solution of $$(-\Delta)^m u=u^p\,\mbox{ in }\,\R^N,$$ then $u\equiv 0$.
\end{lem}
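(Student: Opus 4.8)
The plan is to establish the Liouville statement by two complementary arguments: the first, an elementary rescaled test-function argument, already settles the whole claim when $N\le2m$ (in particular when $N=2$, $m=2$, which is the case used below), while the second handles the remaining subcritical range when $N>2m$.

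\emph{Step 1: the rescaled test-function method}, covering $1<p\le\frac{N}{N-2m}$ (hence all $p>1$ when $N\le2m$). Fix $\psi\in C_c^\infty(\R^N)$ with $0\le\psi\le1$, $\psi\equiv1$ on $B_1$ and $\psi$ vanishing outside $B_2$; set $\psi_R(x):=\psi(x/R)$ and test the equation against $\psi_R^{\,k}$ with a large exponent $k>\frac{2mp}{p-1}$. Since $u$ is classical and $\psi_R^{\,k}$ has compact support, integrating by parts ($2m$ times) gives
\[
\int_{\R^N}u^{\,p}\,\psi_R^{\,k}=\int_{\R^N}u\,(-\Delta)^m\!\big(\psi_R^{\,k}\big),
\]
and Young's inequality with exponents $p$ and $p'=\tfrac{p}{p-1}$, together with the choice of $k$ (which keeps $\psi_R^{-k/(p-1)}|(-\Delta)^m\psi_R^{\,k}|^{p'}$ bounded near the zero set of $\psi_R$), yields for every $\varepsilon>0$
\[
\int_{\R^N}u^{\,p}\,\psi_R^{\,k}\le\varepsilon\int_{\R^N}u^{\,p}\,\psi_R^{\,k}+C_\varepsilon\int_{\{R\le|x|\le2R\}}\psi_R^{-k/(p-1)}\big|(-\Delta)^m\psi_R^{\,k}\big|^{p'}.
\]
Because $(-\Delta)^m\psi_R^{\,k}$ scales like $R^{-2m}$ on an annulus of measure $\sim R^N$, the last integral is $O\!\big(R^{\,N-2mp/(p-1)}\big)$; since $p>1$, the exponent $N-\tfrac{2mp}{p-1}$ is strictly negative whenever $N\le2m$, and also when $N>2m$ provided $p<\tfrac{N}{N-2m}$. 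Absorbing the first term on the left and letting $R\to+\infty$ forces $\int_{\R^N}u^{\,p}=0$, i.e. $u\equiv0$. At the borderline $p=\tfrac{N}{N-2m}$ the error term stays bounded, so first $u\in L^p(\R^N)$; a second application of H\"older on the annulus (whose contribution to $\int u^{\,p}$ now vanishes by absolute continuity of the integral) again gives $u\equiv0$.

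\emph{Step 2: moving planes}, for the remaining range $\tfrac{N}{N-2m}<p<\tfrac{N+2m}{N-2m}$ (and its endpoint, in the form appearing in \cite{WeiXu}), which is relevant only when $N>2m$. Here the rescaling method no longer closes, and one must exploit the structure of $(-\Delta)^m$. The crucial preliminary fact is the super-polyharmonicity of $u$: setting $u_j:=(-\Delta)^{j-1}u$, one shows $u_j>0$ in $\R^N$ for $1\le j\le m$, which recasts the scalar equation as the cooperative elliptic system $-\Delta u_j=u_{j+1}$ for $1\le j\le m-1$ and $-\Delta u_m=u_1^{\,p}$, with all $u_j$ positive. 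On such a positive cooperative system the method of moving planes applies after a Kelvin transform centred at an arbitrary point of $\R^N$: the condition $p\le\tfrac{N+2m}{N-2m}$ is exactly what makes the transformed nonlinearity carry a nonpositive singular weight, so the hyperplanes can be pushed all the way, forcing $u$ to be symmetric about every point. A function symmetric about every point is constant, and a positive constant cannot solve $(-\Delta)^mu=u^p$; hence $u\equiv0$.

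The genuinely delicate point is the super-polyharmonicity $u_j>0$ needed in Step 2: there is no maximum principle for $(-\Delta)^m$, so one cannot simply propagate the sign downward from $-\Delta u_m=u^p\ge0$. The positivity of the intermediate Laplacians is obtained through spherical averages and an integral analysis — in the radial variable, an iterated version of the identity of the type in Lemma~\ref{maxprlem}. For the case relevant to this paper, $N=2$, $m=2$ and any $p>1$, this obstacle does not arise at all, since Step 1 alone already yields $u\equiv0$.
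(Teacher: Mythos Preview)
The paper does not prove this lemma at all: it is quoted verbatim from Wei--Xu \cite{WeiXu} and used as a black box in Proposition~\ref{uniformupperbound}. So there is no ``paper's proof'' to compare against; your sketch is supplying an argument where the paper simply cites one.

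Your two-step outline is sound. Step~1 is the classical Mitidieri--Pohozaev capacity argument, and your bookkeeping is correct: the exponent on the error term is $N-\tfrac{2mp}{p-1}$, which is negative for all $p>1$ when $N\le 2m$, and for $p<\tfrac{N}{N-2m}$ otherwise. This alone disposes of the case $N=2$, $m=2$ actually used in the paper, as you rightly point out. Step~2 accurately describes Wei--Xu's own strategy for the genuinely subcritical range when $N>2m$: super-polyharmonicity of the intermediate Laplacians (the hard part, requiring spherical-average analysis rather than any maximum principle), followed by moving planes on the resulting cooperative system after Kelvin transform.

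One small caveat worth flagging: the statement as recorded in the paper allows the critical exponent $p=\tfrac{N+2m}{N-2m}$, but at that endpoint the Liouville conclusion $u\equiv0$ is \emph{false} --- the Sobolev bubbles are nontrivial positive solutions, and what Wei--Xu actually prove there is a classification, not nonexistence. Your parenthetical ``in the form appearing in \cite{WeiXu}'' hints at this, but your Step~2 still ends with ``hence $u\equiv0$'', which does not hold at the endpoint. Since the application in Proposition~\ref{uniformupperbound} has $N=2<2m=4$, this endpoint never comes into play and the issue is cosmetic.
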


\noindent In our proof we will make also use of the following local regularity estimate, which is a particular case of a more general result by Reichel and Weth:

\begin{lem}(\cite{RW}, Corollary 6)\label{RWthm}
	Let $\Omega=B_R(0)\subset\R^N$, $m\in\N$, $h\in\Lp(\Omega)$ for some $p\in(1,+\infty)$ and suppose $u\in W^{2m,p}(\Omega)$ satisfies
	$$(-\Delta)^m u=h\,\,\mbox{ in }\,\Omega,$$
	then there exists a constant $C=C(R,N,p,m)$, such that for any $\delta\in(0,1)$,
	\begin{equation*}
	\|u\|_{W^{2m,p}(B_{\delta R}(0))}\leq\dfrac{C}{(1-\delta)^{2m}}(\|h\|_{\Lp(B_R(0))}+\|u\|_{\Lp(B_R(0))}).
	\end{equation*}
\end{lem}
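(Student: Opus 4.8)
The plan is to reduce the estimate to the classical interior $L^p$ bound for the polyharmonic operator on two fixed concentric balls, and then to recover the dependence on $R$ and, above all, the sharp power $(1-\delta)^{-2m}$ by rescaling together with a covering argument of bounded overlap.

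\textbf{Step 1: a base estimate on unit balls.} First I would establish that there is $C=C(N,p,m)>0$ such that every $v\in W^{2m,p}(B_1(0))$ with $(-\Delta)^mv=H$ in $B_1(0)$ satisfies $\|v\|_{W^{2m,p}(B_{1/2}(0))}\leq C(\|H\|_{\Lp(B_1(0))}+\|v\|_{\Lp(B_1(0))})$. This is the standard localization argument: for $w$ with compact support one has the Calder\'on--Zygmund bound $\||\nabla^{2m}w|\|_{\Lp(\R^N)}\leq C\|(-\Delta)^mw\|_{\Lp(\R^N)}$, since each multiplier $\xi^\alpha/|\xi|^{2m}$ with $|\alpha|=2m$ is smooth and $0$-homogeneous, hence a Mikhlin--H\"ormander multiplier on $\Lp$ for $1<p<\infty$. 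For $1/2\leq s<t\leq1$ one picks a cutoff $\eta$ with $\eta\equiv1$ on $B_s(0)$, $\mathrm{supp}\,\eta\subset B_t(0)$ and $\||\nabla^k\eta|\|_\infty\leq C(t-s)^{-k}$, applies the Calder\'on--Zygmund bound to $\eta v$, and expands $(-\Delta)^m(\eta v)=\eta H+\sum_{|\beta|<2m}a_\beta\partial^\beta v$ by the Leibniz rule, obtaining
\begin{equation*}
\||\nabla^{2m}v|\|_{\Lp(B_s)}\leq C\|H\|_{\Lp(B_1)}+C\sum_{j=0}^{2m-1}(t-s)^{-(2m-j)}\||\nabla^jv|\|_{\Lp(B_t)}.
\end{equation*}
The intermediate terms are absorbed by the interpolation inequality $\||\nabla^jv|\|_{\Lp(B_t)}\leq\varepsilon\||\nabla^{2m}v|\|_{\Lp(B_t)}+C\varepsilon^{-j/(2m-j)}\|v\|_{\Lp(B_t)}$ (valid on balls of comparable radius with a uniform constant) with $\varepsilon\sim(t-s)^{2m-j}$; for $\Phi(r):=\||\nabla^{2m}v|\|_{\Lp(B_r)}$ this gives $\Phi(s)\leq\frac12\Phi(t)+C(t-s)^{-2m}(\|H\|_{\Lp(B_1)}+\|v\|_{\Lp(B_1)})$, and the usual iteration lemma over $[1/2,1]$ yields the claim, the lower-order terms on $B_{1/2}$ being controlled once $\Phi(1/2)$ is bounded.

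\textbf{Step 2: rescaling and covering.} Given $u$ on $B_R(0)$ with $(-\Delta)^mu=h$, I would fix $x_0\in\overline{B_{\delta R}(0)}$ and $0<\rho\leq(1-\delta)R$ (so $B_\rho(x_0)\subset B_R(0)$) and apply Step 1 to $u_{x_0,\rho}(y):=u(x_0+\rho y)$, which solves $(-\Delta)^mu_{x_0,\rho}=\rho^{2m}h(x_0+\rho\,\cdot\,)$ in $B_1(0)$. Unwinding the scaling of the $\Lp$-norms of the derivatives gives, for every $0\leq j\leq2m$,
\begin{equation*}
\||\nabla^ju|\|_{\Lp(B_{\rho/2}(x_0))}\leq C\big(\rho^{2m-j}\|h\|_{\Lp(B_\rho(x_0))}+\rho^{-j}\|u\|_{\Lp(B_\rho(x_0))}\big).
\end{equation*}
Then, taking $\rho=(1-\delta)R$, I would cover $\overline{B_{\delta R}(0)}$ by balls $B_{\rho/2}(x_i)$, $i=1,\dots,K$, with centres in $\overline{B_{\delta R}(0)}$ and such that both these balls and the doubled balls $B_\rho(x_i)\subset B_R(0)$ have overlap bounded by a constant depending only on $N$. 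Raising the last inequality to the $p$-th power, summing over $i$, and using the bounded overlap to estimate $\sum_i\|h\|_{\Lp(B_\rho(x_i))}^p\leq C(N)\|h\|_{\Lp(B_R)}^p$ and similarly for $u$, one gets $\||\nabla^ju|\|_{\Lp(B_{\delta R})}\leq C(N,p,m)(((1-\delta)R)^{2m-j}\|h\|_{\Lp(B_R)}+((1-\delta)R)^{-j}\|u\|_{\Lp(B_R)})$; summing over $j=0,\dots,2m$, absorbing powers of $R$ into the constant and using $((1-\delta)R)^{2m-j}\leq\max\{1,R\}^{2m}$ and $((1-\delta)R)^{-j}\leq R^{-j}(1-\delta)^{-2m}$ gives the asserted bound.

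\textbf{Main difficulty.} The delicate point is the sharp power $(1-\delta)^{-2m}$. A crude covering of $B_{\delta R}$ by $K\sim(1-\delta)^{-N}$ balls followed by a triangle inequality on $\Lp$-norms would lose an extra factor $K^{1/p}\sim(1-\delta)^{-N/p}$, which is in general worse; this is exactly what the passage to $p$-th powers together with a covering of bounded overlap avoids, since then $\sum_i\|\cdot\|_{\Lp(B_i)}^p$ is comparable to $\|\cdot\|_{\Lp}^p$ with a dimensional constant and the factor $(1-\delta)^{-2m}$ enters only through the scaling of the zero-order term in Step 2. Alternatively one may run the iteration of Step 1 directly on the family of balls with radii in $[\delta R,R]$ and read off the factor $(t-s)^{-2m}$ there, which is the route taken by Reichel and Weth.
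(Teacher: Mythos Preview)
The paper does not prove this lemma: it is quoted verbatim as Corollary~6 of Reichel--Weth \cite{RW} and used as a black box in the blow-up argument of Proposition~\ref{uniformupperbound}. So there is no ``paper's own proof'' to compare against.

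That said, your argument is correct and is essentially the standard proof one finds in the literature (and in \cite{RW}). Step~1 is the classical interior Calder\'on--Zygmund estimate via cutoff, Leibniz expansion, interpolation of intermediate derivatives, and the iteration lemma on nested radii; Step~2 is the rescaling-plus-covering that transfers the unit-scale estimate to arbitrary $\delta\in(0,1)$. Your remark about the ``main difficulty'' is exactly the right observation: a naive triangle-inequality over a covering by $\sim(1-\delta)^{-N}$ balls would introduce a spurious factor $(1-\delta)^{-N/p}$, and it is the passage to $p$-th powers together with the bounded-overlap property (e.g.\ via a maximal $\rho/4$-separated net of centres) that keeps the blow-up rate at $(1-\delta)^{-2m}$. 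One cosmetic point: in your final bookkeeping you should note that the term $((1-\delta)R)^{2m-j}\|h\|_{\Lp(B_R)}$ carries only \emph{nonnegative} powers of $(1-\delta)$, so the factor $(1-\delta)^{-2m}$ on $\|h\|_{\Lp}$ in the stated inequality is harmless overkill and your estimate is in fact slightly sharper on that term; the $(1-\delta)^{-2m}$ is forced only by the $j=2m$ contribution of the zero-order term $\|u\|_{\Lp}$.
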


\begin{prop}\label{uniformupperbound}
	Let $\sigma\in (-1,1]$ and $g\in\Lq(B)$ for some $q>2$, radial and $g>0$. Suppose also that $g$ is continuous in 0. Then, there exists $C>0$ independent of $\sigma$ such that $\|u\|_{\infty} \leq C$ for every $u$ radial positive solution of \eqref{PDEpSteklovRAD}.
\end{prop}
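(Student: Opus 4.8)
The plan is to argue by contradiction through a blow-up and rescaling procedure, in the spirit of Gidas--Spruck and of its polyharmonic adaptation by Reichel and Weth, ruling out the rescaled limit via the Liouville-type Lemma \ref{weixu}. Throughout I may assume $p>1$: if $p\in(0,1)$ a bound uniform in $\sigma$ is already contained in (the argument of) Proposition \ref{upperboundH^2}, using the norm equivalence of Lemma \ref{eqnorm}(ii) to cover also $\sigma=1$ on the ball. So suppose, for contradiction, that there are radial positive solutions $u_k$ of \eqref{PDEpSteklovRAD}, with parameters $\sigma_k\in(-1,1]$, such that $M_k:=\|u_k\|_\infty\to+\infty$. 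Since $g\in\Lq(B)$ with $q>2$, each $u_k$ is a strong solution, and by Proposition \ref{decreasing} each $u_k$ is strictly radially decreasing, hence $M_k=u_k(0)$. Set $\lambda_k:=M_k^{-(p-1)/4}\to 0$ and define, for $x\in B_{1/\lambda_k}(0)$,
$$v_k(x):=\frac{1}{M_k}\,u_k(\lambda_k x).$$
Then $0\le v_k\le 1$, $v_k(0)=1$, $v_k$ is radial and nonincreasing, and a direct computation using $M_k^{\,p-1}\lambda_k^4=1$ gives
$$\Delta^2 v_k=g(\lambda_k\,\cdot)\,v_k^{\,p}\qquad\text{in }B_{1/\lambda_k}(0).$$

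First I would prove a uniform interior estimate for the $v_k$. Fix $R>0$; for $k$ large $B_R(0)\subset B_{1/\lambda_k}(0)$, and since $g$ is continuous at $0$ and $\lambda_k x\to 0$ uniformly on $B_R(0)$, one has $g(\lambda_k\,\cdot)\to g(0)$ uniformly on $B_R(0)$; in particular $h_k:=g(\lambda_k\,\cdot)\,v_k^{\,p}$ is bounded in $\Lq(B_R(0))$ uniformly in $k$, because $\|v_k\|_\infty\le 1$. Applying Lemma \ref{RWthm} with $m=2$ to $\Delta^2 v_k=h_k$ on $B_R(0)$ yields, for each $\delta\in(0,1)$, a bound for $\|v_k\|_{W^{4,q}(B_{\delta R}(0))}$ depending only on $R,\delta,q$ and on $g$, but neither on $k$ nor on $\sigma_k$. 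Since $q>2=N$, the embedding $W^{4,q}\hookrightarrow C^{3}$ is compact on bounded domains (Morrey), so a standard diagonal argument extracts a subsequence with $v_k\to v$ in $C^{3}_{\mathrm{loc}}(\R^2)$, where $v\ge 0$ is radial and nonincreasing, $0\le v\le 1$, and $v(0)=1$, so $v\not\equiv 0$.

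Next I would identify the limit equation and conclude. As $v_k\rightharpoonup v$ weakly in $W^{4,q}_{\mathrm{loc}}(\R^2)$ we get $\Delta^2 v_k\rightharpoonup\Delta^2 v$ in $L^q_{\mathrm{loc}}$, while $h_k=g(\lambda_k\,\cdot)v_k^{\,p}\to g(0)\,v^{\,p}$ strongly in $L^q_{\mathrm{loc}}$ (uniform convergence of both factors on compact sets); hence $\Delta^2 v=g(0)\,v^{\,p}$ on $\R^2$. Since $g$ is continuous and positive near $0$, $g(0)>0$; moreover $v$ bounded makes the right-hand side bounded, so $v\in W^{4,s}_{\mathrm{loc}}$ for every $s<\infty$, whence $v\in C^{3,\alpha}_{\mathrm{loc}}$ and, by interior Schauder estimates applied to $\Delta^2 v=g(0)v^{\,p}$, $v$ is a classical solution. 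Then $w:=g(0)^{1/(p-1)}\,v$ is a nonnegative classical solution of $(-\Delta)^2 w=w^{\,p}$ on $\R^2$ with $w(0)=g(0)^{1/(p-1)}>0$. Since $N=2\le 2m=4$ and $p>1$, Lemma \ref{weixu} forces $w\equiv 0$, contradicting $w(0)>0$. This proves the existence of $C$; and since $\sigma_k$ entered the argument only through Proposition \ref{decreasing} (the blow-up uses nothing but the interior equation and the radial monotonicity, both available for every $\sigma\in(-1,1]$), the constant $C$ is independent of $\sigma$.

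The main obstacle is twofold. The hypothesis that $g$ be continuous at $0$ is used precisely in the second step: without it, $\|g(\lambda_k\,\cdot)\|_{\Lq(B_R(0))}^q=\lambda_k^{-2}\int_{B_{\lambda_k R}(0)}|g|^q$ could blow up as $\lambda_k\to 0$, destroying the uniform $W^{4,q}$ bound, and the positivity of $g(0)$ is what lets us rescale the limit to the pure power equation $(-\Delta)^2w=w^{\,p}$; so continuity (really, local boundedness near the origin together with $g(0)>0$) is indispensable. The second delicate point is to upgrade the local convergence to a genuine classical solution on the whole of $\R^2$ so that the Liouville theorem applies: this rests on the interior estimate of Lemma \ref{RWthm}, the compact Sobolev embedding and a short bootstrap, and crucially on the fact that the rescaling keeps the maximum pinned at the origin — i.e. on Proposition \ref{decreasing} — so that $v(0)=1$ and the limit is nontrivial.
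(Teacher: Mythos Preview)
Your argument is correct and follows essentially the same blow-up strategy as the paper: contradict, rescale so the maximum is pinned at the origin with value $1$, use the interior $W^{4,q}$ estimate of Lemma~\ref{RWthm} to get compactness, pass to a limit solving $\Delta^2 v=g(0)v^{\,p}$ on $\R^2$, and invoke Lemma~\ref{weixu}. The only cosmetic differences are that the paper controls $\|g(\lambda_k\,\cdot)\|_{L^q}$ via the Lebesgue differentiation theorem rather than your (cleaner) uniform convergence, and normalizes the limit equation by the spatial dilation $w(x)=v(g(0)^{-1/4}x)$ instead of your multiplicative rescaling $w=g(0)^{1/(p-1)}v$; your extra remark reducing $p\in(0,1)$ to Proposition~\ref{upperboundH^2} is a welcome clarification.
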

\begin{proof}
	By contradiction, suppose there exists a sequence $(v_k)_{k\in\N}$ of radial positive solutions such that $\|v_k\|_{\infty}\nearrow+\infty$. According to Proposition \ref{decreasing}, each $v_k$ is radially decreasing, so $v_k(0)=\|v_k\|_{\infty}\nearrow+\infty$. For each $k\geq 1$, define
	$$u_k(x)=\lambda_k^{\frac{4}{p-1}}v_k(\lambda_kx),$$
	where $\lambda_k\in\R^+$ are such that $\lambda_k^{\frac{4}{p-1}}=1/v_k(0)$. 
	With this choice, each $u_k$ satisfies
	\begin{equation}\label{PDEpSteklovRESCALED}
	\begin{cases}
	\Delta^2u_k=g(|\lambda_k x|)u_k^p\quad&\mbox{in }B_{\frac{1}{\lambda_k}}(0)\\
	u_k=\Delta u_k-(1-\sigma)\lambda_k(u_k)_n=0\quad&\mbox{on }\partial B_{\frac{1}{\lambda_k}}(0),
	\end{cases}
	\end{equation}
	is in $W^{4,q}(B_{\frac{1}{\lambda_k}}(0))$, radially decreasing and
	\begin{equation}\label{unif1}
	\|u_k\|_{L^\infty(B_{\frac{1}{\lambda_k}}(0))}=u_k(0)=\lambda_k^{\frac{4}{p-1}}v_k(0)=1,
	\end{equation}
	\noindent We claim that the sequence $(u_k)_{k\in\N}$ is uniformly bounded on compact sets of $\R^2$ in $W^{4,q}$ norm. In fact, let $K\subset\R^2$ be compact, then there exists $\rho>0$ such that $B_\rho(0)\supset K$ and, for $k$ large enough, each $u_k$ is well defined in $K$ since $B_{\frac{1}{\lambda_k}}(0)\supset B_{2\rho}(0)$ definitively. For such $k$, by \eqref{unif1} and applying Lemma \ref{RWthm} with $\Omega=B_{2\rho}(0)$, $m=N=2$ and $\delta=1/2$,
	\begin{equation}\label{stimaunifblowup}
	\begin{split}
	\|u_k\|_{W^{4,q}(K)}&\leq\|u_k\|_{W^{4,q}(B_\rho(0))}\leq\dfrac{C(\rho,q)}{(1/2^4)}(\|\Delta^2u_k\|_{\Lq(B_{2\rho}(0))}+\|u_k\|_{\Lq(B_{2\rho}(0))})\\
	&\leq 16C(\rho,q)(\|g(|\lambda_k \cdot|)\|_{L^q(B_{2\rho}(0))}+|B_{2\rho}(0)|^{\frac{1}{q}}).
	\end{split}
	\end{equation}
	Moreover, fixing $\varepsilon>0$ and supposing $k$ large enough,
	\begin{equation}\label{stimag}
	\|g(|\lambda_k \cdot|)\|_{L^q(B_{2\rho}(0))}=(4\pi\rho^2)^{\frac1q}\bigg(\dfrac1{|B_{2\rho\lambda_k}(0)|}\int_{B_{2\rho\lambda_k}(0)}|g(y)|^qdy\bigg)^{\frac1q}\leq(4\pi\rho^2)^{\frac1q}g(0)+\varepsilon
	\end{equation}
	where the last inequality follows from the Lebesgue Differentiation Theorem. Hence, combining \eqref{stimaunifblowup} with \eqref{stimag}, we infer $\|u_k\|_{W^{4,q}(K)}\leq C(p,q,K,g)$, so uniform on $k$. Incidentally, notice that this constant does not depend on $\sigma$. Hence we find $u\in W^{4,q}(K)$ such that, up to subsequences, $u_k\rightarrow u$ in $C^3(K)$, where $u\in C^3(\R^2)$, $u\geq0$ and $u(0)=1$ by \eqref{unif1} and
	satisfying 
	$$\Delta^2 u=g(0)u^p\quad\mbox{in}\,\,\R^2.$$
	so, by a bootstrap method, we deduce that $u$ is also a classical solution.
	Finally, setting for all $x\in\R^2$ $w(x):=u(bx)$ with $b:=g(0)^{-1/4}$, one has $w$ is a nonnegative solution of
	$$\Delta^2 w=w^p\quad\mbox{in}\,\,\R^2,$$
	with $w(0)=u(0)=1$, which contradicts Lemma \ref{weixu}.
\end{proof}

\subsection{Convergence results}
We want to investigate what happens at the endpoints of the interval $(-1,1]$ in which $\sigma$ lies, by means of the last results. More precisely, our aim is to examine if any result similar to Theorems \ref{approachingsigma*} and \ref{convergence} can be found assuming $(u_k)_{k\in\N}$ to be a sequence of positive radial solutions of \eqref{PDEpSteklovRAD} with $\sigma=\sigma_k$ but without imposing any "minimizing" requirement. Unless otherwise stated, we assume $g\equiv 1$ and $p>1$.
\vskip0.2truecm
\noindent Let us start with the behaviour for $\sigma\rightarrow1$, where the main ideas are taken from the same result for ground states. Notice that we know everything for the Navier problem in the ball: in fact, Dalmasso proved in \cite{Dalmasso} that there exists a unique positive solution, which is radially symmetric and radially decreasing thanks to a result by Troy, \cite{Troy}.

\begin{prop}\label{convergenceradposNAV}
	Let $(u_k)_{k\in\N}$ be a sequence of positive radial solutions of \eqref{PDEpSteklovRAD} with $\sigma_k\nearrow1$. Then $u_k\rightarrow\overline u$ in $H^2(B)$, where $\overline{u}$ is the unique positive solution of the Navier problem.
\end{prop}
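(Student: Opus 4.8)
The plan is to run the same scheme used in Section 6.1 to obtain Theorem \ref{convergence}, but replacing the variational a priori bounds valid for ground states by the radial a priori estimates established earlier in this section.

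First I would show that $(u_k)_{k\in\N}$ is bounded in $H^2(B)$. Since $\sigma_k\nearrow1$, we may assume $\sigma_k\in(\tfrac12,1)$ for $k$ large, so that $\|\cdot\|_{H_{\sigma_k}}$ is a norm on $H^2(B)\cap H^1_0(B)$ (Lemma \ref{eqnormnew}) and, by \eqref{PariniAdolfsson}, $\|u\|_{H_{\sigma_k}}^2\ge c\,\|u\|_{H^2(B)}^2$ for some $c>0$ independent of $k$. Testing the weak formulation of \eqref{PDEpSteklovRAD} with $u_k$ itself gives $\|u_k\|_{H_{\sigma_k}}^2=\int_B|u_k|^{p+1}\le|B|\,\|u_k\|_\infty^{p+1}$, and the right-hand side is bounded independently of $k$ by Proposition \ref{uniformupperbound}. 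Hence $(u_k)$ is bounded in $H^2(B)$; up to a subsequence, $u_k\rightharpoonup u_\infty$ weakly in $H^2(B)$, with $u_k\to u_\infty$ strongly in $L^\infty(B)$ and $(u_k)_n\to(u_\infty)_n$ strongly in $L^2(\partial B)$ by the relevant compact embeddings.

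Next I would pass to the limit in the weak formulation $\int_B\Delta u_k\Delta\varphi-(1-\sigma_k)\int_{\partial B}(u_k)_n\varphi_n=\int_B|u_k|^{p-1}u_k\varphi$, valid for all $\varphi\in H^2(B)\cap H^1_0(B)$: the boundary integral vanishes because $1-\sigma_k\to0$ while $\|(u_k)_n\|_{L^2(\partial B)}$ stays bounded (Trace Theorem), and the nonlinear term converges by dominated convergence thanks to the uniform $L^\infty$ bound and the pointwise convergence. This identifies $u_\infty$ as a nonnegative, radial weak solution of the Navier problem \eqref{PDEpNav} (with $g\equiv1$); by Remark \ref{fromweaktostrongremark} the convergence $u_k\to u_\infty$ is then in fact strong in $H^2(B)$.

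The crucial point, and the main obstacle, is to exclude $u_\infty\equiv0$. Here I would use that each $u_k$, being a nontrivial weak solution, lies in the Nehari manifold $\Ne_{\sigma_k}$, so by Lemma \ref{Neclosed} one has $\|u_k\|_{H^2(B)}\ge c(\sigma_k)>0$; crucially this lower bound does not degenerate as $\sigma_k\to1$, since the constant in the equivalence $\|\cdot\|_{H_{\sigma_k}}\ge c\|\cdot\|_{H^2(B)}$ stays bounded away from $0$. Combined with the strong $H^2(B)$-convergence of the previous step, this yields $\|u_\infty\|_{H^2(B)}\ge c>0$, so $u_\infty$ is a nontrivial nonnegative solution of \eqref{PDEpNav}. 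By Lemma \ref{GGSestimate} it is a strong solution, and since $-\Delta(-\Delta u_\infty)=u_\infty^p\ge0$ in $B$ with $-\Delta u_\infty=0$ on $\partial B$, the maximum principle gives $-\Delta u_\infty\ge0$, whence $u_\infty>0$ in $B$ (exactly as in Proposition \ref{positivitysuperlinear}). By the uniqueness result of Dalmasso \cite{Dalmasso}, $u_\infty=\overline u$. Finally, since every subsequence of $(u_k)$ admits a further subsequence converging in $H^2(B)$ to the same limit $\overline u$, the whole sequence converges: $u_k\to\overline u$ in $H^2(B)$.
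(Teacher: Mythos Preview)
Your proof is correct and follows essentially the same route as the paper: uniform $H^2$ bound via Proposition \ref{uniformupperbound}, weak-to-strong upgrade via Remark \ref{fromweaktostrongremark}, identification of the limit with the unique positive Navier solution of Dalmasso, and Urysohn's principle for the full sequence. You are in fact more careful than the paper on one point: you explicitly rule out $u_\infty\equiv0$ via the uniform Nehari lower bound (and you correctly use \eqref{PariniAdolfsson} rather than the estimate in Lemma \ref{eqnorm}, which would degenerate as $\sigma\to1$), whereas the paper passes directly to Proposition \ref{decreasing} without spelling this out. For the strict positivity of $u_\infty$ the paper invokes Proposition \ref{decreasing}, while you use the iterated maximum principle for the Navier system; both are fine, though your reference ``exactly as in Proposition \ref{positivitysuperlinear}'' is slightly off, since that proposition argues via the superharmonic rearrangement on the Nehari manifold rather than the direct maximum-principle splitting you actually employ.
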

\begin{proof}
	We firstly claim that such a sequence is bounded in $H^2(B)$. Indeed, by Proposition \ref{uniformupperbound}:
	\begin{equation*}
	\|u_k\|_{H^2(B)}^2\leq C_0\|\Delta u_k\|_2^2\leq C_0\bigg(1-\dfrac{1-\sigma_k}{2}\bigg)^{-1}\|u_k\|_{H_{\sigma_k}}^2=\dfrac{2C_0}{1+\sigma_k}\|u_k\|_{p+1}^{p+1}\leq 2\pi C_0C^{p+1}.
	\end{equation*}
	Hence, we can extract a subsequence $(u_{k_j})_{j\in\N}$ such that there exists $\overline{v}\in H^2(B)\cap H^1_0(B)$ such that $u_{k_j}\rightharpoonup\overline v$ weakly in $H^2(B)$. By Proposition \ref{fromweaktostrong}, together with Remark \ref{fromweaktostrongremark}, one can infer that this subsequence is actually strongly convergent in $H^2(B)$ and then that $\overline v$ is a weak solution of the Navier problem (thus classical by regularity theory). Moreover, since the convergence is pointwise, we immediately deduce that $\overline v$ is nonnegative, radially symmetric and radially non-increasing. Nevertheless, by Proposition \ref{decreasing}, $\overline{v}$ is actually strictly decreasing and positive in $B$, so it coincides with the unique positive solution $\overline{u}$ of the Navier problem. By the uniqueness of the limit and applying Urysohn subsequence principle, we retrieve the convergence of the whole sequence $(u_k)_{k\in\N}$ from which we started.
\end{proof}

Let us now investigate the case $\sigma\rightarrow-1$. As already noticed in Lemma \ref{approachingsigmastar}, it is enough to understand the behaviour of the $L^{p+1}(B)$ norm of a sequence of solutions to infer the convergence in $H^2(B)$ norm. Since the proof of Theorem \ref{approachingsigma*} strongly relies on the fact that it deals with ground states, we need a different technique. The first step is a Pohozaev-type identity by Mitidieri in \cite{Mitidieri}: it will allow us to prove an inequality involving $L^p(B)$ and $L^{p+1}(B)$ norms which, combined with the uniform bound of Proposition \ref{uniformupperbound}, will lead us to the convergence result.

\begin{lem}[\cite{Mitidieri}, Proposition 2.2]\label{mitidieri}
	Let $\Omega$ be a smooth domain and $u\in C^4(\Omegabar)$. The following identity holds:
	\begin{equation*}
	\begin{split}
	&\int_\Omega(\Delta^2u)x\cdot\nabla u -\dfrac{N}{2}\int_\Omega(\Delta u)^2 -(N-2)\int_\Omega \nabla\Delta u\cdot\nabla u=-\dfrac{1}{2}\int_\dOmega(\Delta u)^2x\cdot n\\
	&+\int_\dOmega\bigg((\Delta u)_n(x\cdot\nabla u)+u_n(x\cdot\nabla\Delta u)-\nabla\Delta u\cdot\nabla u(x\cdot n)\bigg).
	\end{split}
	\end{equation*}
\end{lem}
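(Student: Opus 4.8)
The plan is to derive the identity by a direct Rellich--Pohozaev argument: test the operator $\Delta^2u$ against the dilation-field multiplier $x\cdot\nabla u$ and integrate by parts. Since $u\in C^4(\Omegabar)$ and $\dOmega$ is smooth, every integration by parts below is classical and no approximation step is needed; the proof is a finite computation, and essentially all the care is spent on the boundary integrals.

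First I would record the two algebraic facts that drive everything. Setting $v:=x\cdot\nabla u=\sum_k x_k u_{x_k}$, a direct differentiation gives $\nabla v=\nabla u+(\nabla^2u)x$ and, taking the trace, $\Delta v=2\Delta u+x\cdot\nabla\Delta u$. Secondly, for any $w\in C^1(\Omegabar)$ one has the Rellich identity
\begin{equation*}
\int_\Omega w\,(x\cdot\nabla w)=\tfrac12\int_\Omega x\cdot\nabla(w^2)=\tfrac12\int_\dOmega w^2\,(x\cdot n)-\tfrac N2\int_\Omega w^2,
\end{equation*}
which follows from $\ddiv(xw^2)=Nw^2+x\cdot\nabla(w^2)$ together with the divergence theorem.

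Then I would apply Green's second identity with $f=v$, $g=\Delta u$, obtaining
\begin{equation*}
\int_\Omega(\Delta^2u)\,v=\int_\Omega\Delta u\,\Delta v+\int_\dOmega\big((\Delta u)_n\,v-\Delta u\,v_n\big),
\end{equation*}
insert $\Delta v=2\Delta u+x\cdot\nabla\Delta u$, and use the Rellich identity with $w=\Delta u$; this rewrites $\int_\Omega\Delta u\,\Delta v$ as $\tfrac{4-N}{2}\int_\Omega(\Delta u)^2+\tfrac12\int_\dOmega(\Delta u)^2(x\cdot n)$. To match the precise left-hand side of the claim one then subtracts $\tfrac N2\int_\Omega(\Delta u)^2$ and $(N-2)\int_\Omega\nabla\Delta u\cdot\nabla u$ and invokes the one-step identity $\int_\Omega(\Delta u)^2+\int_\Omega\nabla\Delta u\cdot\nabla u=\int_\dOmega\Delta u\,u_n$; the leftover interior integral $(2-N)\int_\Omega(\Delta u)^2$ cancels and the whole left-hand side collapses to a sum of boundary integrals.

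The step I expect to be the main obstacle is the final reduction of the accumulated boundary integrals to exactly the three terms $(\Delta u)_n(x\cdot\nabla u)$, $u_n(x\cdot\nabla\Delta u)$ and $\nabla\Delta u\cdot\nabla u\,(x\cdot n)$ on the right. This requires expanding $v_n=(x\cdot\nabla u)_n=\nabla v\cdot n=u_n+\big((\nabla^2u)x\big)\cdot n$ on $\dOmega$, splitting each gradient into its normal and tangential components along the boundary, and recombining; it is a purely boundary computation but a bookkeeping-heavy one, and it is precisely the content of \cite[Proposition 2.2]{Mitidieri}, whose statement we have reproduced. Once this boundary identity is verified, the proof is complete.
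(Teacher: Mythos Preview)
The paper does not prove this lemma at all: it is quoted verbatim from \cite{Mitidieri} with a citation and immediately followed by its corollary, so there is no ``paper's own proof'' to compare against. Your outline is the standard Rellich--Pohozaev computation and is correct in spirit; the interior steps you wrote down (the identities for $\Delta(x\cdot\nabla u)$, the Rellich identity for $w=\Delta u$, and the relation $\int_\Omega(\Delta u)^2+\int_\Omega\nabla\Delta u\cdot\nabla u=\int_{\dOmega}\Delta u\,u_n$) are all right.

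The one genuine issue is that the route you chose makes the boundary reduction harder than necessary. Using Green's \emph{second} identity leaves you with the term $\int_{\dOmega}\Delta u\,v_n$, and $v_n=(x\cdot\nabla u)_n$ involves second derivatives of $u$ mixed with the geometry of $\dOmega$; sorting this into the three target terms does require the tangential/normal splitting you anticipate and is indeed the hardest part of your scheme. A cleaner organization avoids this entirely: integrate by parts only \emph{once},
\[
\int_\Omega(\Delta^2u)(x\cdot\nabla u)=-\int_\Omega\nabla\Delta u\cdot\nabla(x\cdot\nabla u)+\int_{\dOmega}(\Delta u)_n(x\cdot\nabla u),
\]
expand $\nabla(x\cdot\nabla u)=\nabla u+(\nabla^2u)x$, and handle the resulting term $\int_\Omega\nabla\Delta u\cdot(\nabla^2u)x$ via the divergence identity
\[
\ddiv\big((\nabla\Delta u\cdot\nabla u)\,x\big)=N\,\nabla\Delta u\cdot\nabla u+(\nabla^2\Delta u)x\cdot\nabla u+\nabla\Delta u\cdot(\nabla^2u)x,
\]
together with $(\nabla^2\Delta u)x=\nabla(x\cdot\nabla\Delta u)-\nabla\Delta u$ and your Rellich identity for $w=\Delta u$. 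All boundary terms then come out \emph{directly} as $(\Delta u)_n(x\cdot\nabla u)$, $u_n(x\cdot\nabla\Delta u)$, $\nabla\Delta u\cdot\nabla u\,(x\cdot n)$ and $-\tfrac12(\Delta u)^2(x\cdot n)$, with no further boundary manipulation needed. This is presumably how Mitidieri argues, and it removes precisely the step you flagged as the obstacle.
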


\begin{cor}\label{PhozaevMitBGWlem}
	Suppose $u$ is a positive solution for problem \eqref{PDEpSteklovRAD} with $g\equiv1$, then the following identity holds:
	\begin{equation}\label{PhozaevMitBGW}
	\int_{\partial B_R}\bigg((\Delta u)_n+(1-\sigma)\bigg(1-\dfrac{1-\sigma}{2}\bigg)u_n\bigg)u_n=-\bigg(1+\dfrac{2}{p+1}\bigg)\int_{B_R} u^{p+1}.
	\end{equation}
\end{cor}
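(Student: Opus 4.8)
The plan is to specialize Mitidieri's Pohozaev identity (Lemma~\ref{mitidieri}) to $\Omega=B$ (so $R=1$ and $\kappa\equiv1$) with $N=2$, and then to eliminate every term in it by means of the equation $\Delta^2u=u^p$ and the two Steklov boundary conditions. Since $g\equiv1$, the regularity remark preceding the statement ensures that $u$ is a classical solution, so Lemma~\ref{mitidieri} applies; as $N=2$ the term $-(N-2)\int_\Omega\nabla\Delta u\cdot\nabla u$ drops out and one is left with
\begin{equation*}
\int_B(\Delta^2u)\,x\cdot\nabla u-\int_B(\Delta u)^2=-\frac12\int_\dB(\Delta u)^2\,x\cdot n+\int_\dB\Big((\Delta u)_n(x\cdot\nabla u)+u_n(x\cdot\nabla\Delta u)-\nabla\Delta u\cdot\nabla u\,(x\cdot n)\Big).
\end{equation*}

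Next I would simplify the boundary integrals. On $\dB$ one has $x=n$, hence $x\cdot n=1$, $x\cdot\nabla u=u_n$ and $x\cdot\nabla\Delta u=(\Delta u)_n$; moreover $u\equiv0$ on $\dB$ forces $\nabla u=u_n\,n$ there, so $\nabla\Delta u\cdot\nabla u=(\Delta u)_nu_n$. Substituting, the three integrals in the last term collapse to $\int_\dB(\Delta u)_nu_n$, and using the boundary condition $\Delta u=(1-\sigma)u_n$ to write $(\Delta u)^2=(1-\sigma)^2u_n^2$ on $\dB$, the whole right hand side becomes $\int_\dB(\Delta u)_nu_n-\frac{(1-\sigma)^2}{2}\int_\dB u_n^2$.

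Finally I would handle the two interior terms and assemble everything. For the first, $\int_B(\Delta^2u)(x\cdot\nabla u)=\int_Bu^p(x\cdot\nabla u)=\frac1{p+1}\int_Bx\cdot\nabla(u^{p+1})$, and one integration by parts (the boundary contribution vanishing because $u=0$ on $\dB$) gives $-\frac{2}{p+1}\int_Bu^{p+1}$; for the second, multiplying the equation by $u$ and integrating by parts twice (again using $u=0$ on $\dB$) yields $\int_B(\Delta u)^2=\int_Bu^{p+1}+\int_\dB\Delta u\,u_n=\int_Bu^{p+1}+(1-\sigma)\int_\dB u_n^2$. Plugging these into the specialized identity, the powers of $u$ combine into $-(1+\frac2{p+1})\int_Bu^{p+1}$, while the boundary terms reorganize and, after the factorization $(1-\sigma)u_n-\frac{(1-\sigma)^2}{2}u_n=(1-\sigma)(1-\frac{1-\sigma}{2})u_n$, produce precisely \eqref{PhozaevMitBGW}. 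I do not expect a genuine obstacle here: the only delicate points are pure bookkeeping---the orientation of the normal and the fact that $\nabla u$ is purely normal on $\dB$---and the classical regularity of $u$ noted above legitimizes every integration by parts.
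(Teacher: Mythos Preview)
Your proof is correct and follows essentially the same route as the paper's own argument: specialize Mitidieri's identity to $N=2$ and $\Omega=B$, simplify the boundary integrals using $x=n$, $u=0$ on $\dB$ (so $\nabla u=u_n n$) and $\Delta u=(1-\sigma)u_n$, and treat the interior terms via the equation and integration by parts. The paper merely delegates the intermediate computation to \cite[Section~6]{BGW}, arriving at a general-$N$ identity which is then specialized; your write-up carries out these same steps explicitly, and the final factorization $(1-\sigma)-\tfrac{(1-\sigma)^2}{2}=(1-\sigma)\big(1-\tfrac{1-\sigma}{2}\big)$ is identical.
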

\begin{proof}
	By similar computations as in the proof of Section 6 of \cite{BGW}, from Lemma \ref{mitidieri} one infers:
	\begin{equation}\label{mitquasifinita}
	\bigg(\dfrac{N-4}{2}-\dfrac{N}{p+1}\bigg)\int_\Omega u^{p+1}=\int_\dOmega\bigg(x\cdot\nabla\Delta u+\dfrac{N}{2}(1-\sigma)\kappa u_n-\dfrac{1}{2}(1-\sigma)^2\kappa^2u_n(x\cdot n)\bigg)u_n.
	\end{equation}
	If $N=2$, $\Omega=B$, we have $x=n$ and $\kappa=1$, so $x\cdot\nabla\Delta u=(\Delta u)_n$ and \eqref{PhozaevMitBGW} follows.
\end{proof}

\noindent The next result follows some ideas of Berchio and Gazzola: we give here a sketch, while we refer to \cite[Proposition 4]{BG}, for a more detailed proof.
\begin{lem}\label{somelowerbound}
	Let $\sigma\in(-1,1)$ and $u$ be a positive radial solution of problem \eqref{PDEpSteklovRAD} with $g\equiv1$. Then the following estimate holds:
	\begin{equation}\label{stimaconcave}	\|u\|_{p+1}^{p+1}\geq\dfrac{3}{64}\bigg(1-\dfrac{3}{64}(1-\sigma)\bigg)\dfrac{1}{\pi(1+\sigma)}\dfrac{p+1}{p+3}\|\Delta^2 u\|_1^2.
	\end{equation}
\end{lem}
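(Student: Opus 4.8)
The plan is to reduce \eqref{stimaconcave} to a one–variable estimate for the boundary quantity $\mu:=-u'(1)\geq0$, by combining the Pohozaev identity \eqref{PhozaevMitBGW} of Corollary \ref{PhozaevMitBGWlem} with two applications of Lemma \ref{maxprlem}. First I would record that, by Proposition \ref{decreasing}, $u$ is radially decreasing (and, since $g\equiv1$, a classical solution), so $u'\leq0$ on $(0,1]$, $u^p$ is radially decreasing, and $u(1)=0$. Writing $A:=\|\Delta^2u\|_1=\int_Bu^p=2\pi\int_0^1su(s)^p\,ds$ and applying Lemma \ref{maxprlem} to $\Delta u\in W^{2,q}(B)$ together with $\Delta(\Delta u)=\Delta^2u=u^p$ gives $(\Delta u)'(1)=\int_0^1su(s)^p\,ds=A/2\pi$. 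Since $u$ is radial, all boundary integrands in Corollary \ref{PhozaevMitBGWlem} are constant on $\partial B$ (recall $|\partial B|=2\pi$, $\kappa\equiv1$), so inserting $(\Delta u)'(1)=A/2\pi$ and $u'(1)=-\mu$ turns \eqref{PhozaevMitBGW} into the scalar identity
\begin{equation*}
\int_Bu^{p+1}=\frac{p+1}{p+3}\,f(\mu),\qquad\text{where}\quad f(\mu):=A\mu-\pi(1-\sigma^2)\mu^2 .
\end{equation*}
Thus everything reduces to bounding from below the concave parabola $f$, for which one must locate $\mu$.

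Next I would obtain a closed formula for $\mu$. Integrating $t(\Delta u)'(t)=\int_0^tsu(s)^p\,ds$ from $t$ to $1$ and using the Steklov condition $\Delta u(1)=(1-\sigma)u'(1)=-(1-\sigma)\mu$ yields an explicit expression for $\Delta u(t)$; plugging it into $u'(1)=\int_0^1s\,\Delta u(s)\,ds$ (Lemma \ref{maxprlem} applied to $u$) and exchanging the order of integration twice gives
\begin{equation*}
(1+\sigma)\mu=\frac12\int_0^1s(1-s^2)u(s)^p\,ds .
\end{equation*}
Since $1-s^2\leq1$, this produces the upper bound $\mu\leq\frac{A}{4\pi(1+\sigma)}$, and because $\sigma>-1$ one checks $\frac{A}{4\pi(1+\sigma)}\leq\frac{A}{2\pi(1-\sigma^2)}$, i.e.\ $\mu$ lies to the left of the vertex of $f$, where $f$ is nondecreasing.

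The crux is the matching lower bound $\mu\geq\mu_1:=\frac{3A}{64\pi(1+\sigma)}$. Dividing the previous identity by $\tfrac12\int_0^1su(s)^p\,ds$, this is equivalent to $\int_0^1s\bigl(\tfrac{13}{16}-s^2\bigr)u(s)^p\,ds\geq0$, which I would prove by integrating by parts against the primitive $\Phi(s):=\int_0^st\bigl(\tfrac{13}{16}-t^2\bigr)\,dt=\tfrac{s^2}{32}(13-8s^2)$, nonnegative on $[0,1]$: the boundary terms vanish because $\Phi(0)=0$ and $u(1)=0$, and the remaining term is $-\int_0^1\Phi(s)\,(u^p)'(s)\,ds\geq0$ since $u^p$ is radially decreasing. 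Combining $\mu_1\leq\mu\leq\frac{A}{2\pi(1-\sigma^2)}$ with the monotonicity of $f$ on $[0,\frac{A}{2\pi(1-\sigma^2)}]$ gives $f(\mu)\geq f(\mu_1)=\frac{3A^2}{64\pi(1+\sigma)}\bigl(1-\tfrac{3}{64}(1-\sigma)\bigr)$, and multiplying by $\frac{p+1}{p+3}$ is exactly \eqref{stimaconcave}. The step I expect to be the main obstacle is isolating this weighted inequality — in particular pinning down the constant $\tfrac{3}{64}$ so that the value of $f(\mu_1)$ matches the claimed constant — and recognizing that the radial monotonicity of $u$ is precisely what makes the integration by parts nonnegative; the remaining verifications (the two Fubini exchanges, and keeping $\mu$ on the increasing branch of $f$) are routine.
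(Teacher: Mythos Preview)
Your proof is correct and follows essentially the same strategy as the paper's: reduce the Pohozaev identity to a one-variable inequality and trap the variable in an interval on which a concave parabola can be bounded below. The only substantive difference is the choice of variable. The paper introduces the Steklov eigenfunction $w(x)=\tfrac14(1-|x|^2)$ and works with $s:=\int_B w\,\Delta^2u$, rewriting \eqref{PhozaevMitBGW} as $\bigl(A-(1-\sigma)s\bigr)s=\tfrac{p+3}{p+1}(1+\sigma)\pi\int_B u^{p+1}$ and bounding $\tfrac{3}{64}A\le s\le\tfrac14A$; it then uses concavity of $\psi(s)=As-(1-\sigma)s^2$ to conclude. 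You instead work with $\mu=-u'(1)$, but since $s=(1+\sigma)\pi\mu$ the two parametrizations are equivalent and your interval for $\mu$ is exactly the paper's interval for $s$. Your observation that the upper bound places $\mu$ on the increasing branch of $f$ replaces the paper's comparison of the two endpoint values of $\psi$; both give the same minimum at the left endpoint.

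One point where your write-up actually supplies more than the paper: the lower bound $s\ge\tfrac{3}{64}A$ is only stated there (with a reference to \cite{BG} for details), and your integration-by-parts argument using the primitive $\Phi(s)=\tfrac{s^2}{32}(13-8s^2)\ge0$ together with the radial decay of $u^p$ is precisely a clean self-contained proof of that step.
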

\begin{proof}
	By radial symmetry, \eqref{mitquasifinita} reduces to
	\begin{equation}\label{PhozaevMitBGWmodified}
	2(\Delta u)'(1)u'(1)+(1-\sigma)(1+\sigma)(u'(1))^2=-\dfrac{p+3}{p+1}\dfrac{1}{\pi}\int_B u^{p+1}
	\end{equation}
	Moreover, by Divergence Theorem we have
	$$u'(1)=\dfrac{1}{2\pi}\int_B\Delta u\quad\,\mbox{and}\quad\,(\Delta u)'(1)=\dfrac{1}{2\pi}\int_B\Delta^2 u,$$
	so, taking the first Steklov eigenfunction $w(x)=\frac{1}{4}(1-|x|^2)$ and after some elementary computations, one gets:
	\begin{equation}\label{LHSRHS}
	\bigg(\int_B\Delta^2 u-(1-\sigma)\int_B w\Delta^2 u\bigg)\int_B w\Delta^2 u=\dfrac{p+3}{p+1}(1+\sigma)\pi\int_B u^{p+1}.
	\end{equation}
	Noticing that $0\leq w\leq 1/4$, we have
	$$\dfrac{3}{64}\int_B \Delta^2 u\leq \int_B w\Delta^2u \leq\dfrac{1}{4}\int_B \Delta^2 u.$$
	Hence, defining now $d:=(1-\sigma)$, $s:=\int_B w\Delta^2u$ and $A:=\int_B \Delta^2 u$, LHS of \eqref{LHSRHS} becomes
	$$As-ds^2\quad\quad\mbox{with }s\in\bigg[\dfrac{3}{64}A;\dfrac{1}{4}A\bigg].$$
	Since $d>0$, $\psi:s\mapsto As-ds^2$ is a concave function, so it attains its minimum on the extremal values of the interval: in this case, with $0<d<2$, one has
	$$\psi(s)\geq\dfrac{3}{64}\bigg(1-\dfrac{3}{64}d\bigg)A^2.$$
	Combining this with \eqref{LHSRHS}, one finds the desired estimate \eqref{stimaconcave}.	
\end{proof}

\begin{thm}
	Let $\sigma_k\searrow-1$ and $(u_k)_{k\in\N}$ be a sequence of positive radial functions, each of them solution of the problem \eqref{PDEpSteklovRAD} with $g\equiv1$ and $\sigma=\sigma_k$. Then, $u_k\rightarrow 0$ in $H^2(B)$.
\end{thm}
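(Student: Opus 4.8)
The plan is to reduce everything to Lemma \ref{approachingsigmastar}: since each positive radial solution $u_k$ of \eqref{PDEpSteklovRAD} is in particular a critical point of $J_{\sigma_k}$ (and, because $g\equiv1$, a classical solution by the regularity remarks of this section), and since for the ball $\sigma^*=-1$, it suffices to prove that $\int_B u_k^{p+1}\to0$ as $\sigma_k\searrow-1$. Thus the whole task becomes an $L^{p+1}$-decay estimate, for which I would combine a $\sigma$-independent upper bound with the degeneracy of the energy quadratic form at $\sigma=-1$.

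First I would record that, for $k$ large, $\sigma_k\in(-1,1)$, and that Proposition \ref{uniformupperbound} provides $\|u_k\|_\infty\le C$ with $C$ independent of $\sigma$; in particular $\int_B u_k^{p+1}\le\pi C^{p+1}$ for every such $k$. The crucial step is then to invoke Lemma \ref{somelowerbound}: since $g\equiv1$ gives $\Delta^2u_k=u_k^p\ge0$, so that $\|\Delta^2u_k\|_1=\int_B u_k^p$, that estimate reads
$$\int_B u_k^{p+1}\ \ge\ \frac{3}{64}\Bigl(1-\tfrac{3}{64}(1-\sigma_k)\Bigr)\frac{1}{\pi(1+\sigma_k)}\,\frac{p+1}{p+3}\Bigl(\int_B u_k^{p}\Bigr)^{2}.$$
As $\sigma_k\searrow-1$ one has $1-\sigma_k\to2=\tilde\delta_1(B)$, hence $1-\tfrac{3}{64}(1-\sigma_k)\to\tfrac{29}{32}>0$ while $\frac{1}{1+\sigma_k}\to+\infty$, so the coefficient on the right-hand side diverges. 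Matching this against the uniform ceiling $\int_B u_k^{p+1}\le\pi C^{p+1}$ on the left forces $\bigl(\int_B u_k^{p}\bigr)^{2}\to0$, i.e. $\int_B u_k^{p}\to0$.

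To close the $L^{p+1}$-decay I would use $0\le u_k\le C$ once more: pointwise $u_k^{p+1}\le C\,u_k^{p}$, so $\int_B u_k^{p+1}\le C\int_B u_k^{p}\to0$. Feeding $\int_B u_k^{p+1}\to0$ into Lemma \ref{approachingsigmastar} with $\Omega=B$, $g\equiv1$ and $\sigma^*=-1$ then yields $\|u_k\|_{H^2(B)}\to0$, which is the claim.

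The main obstacle is the middle step: one must play off the degeneracy of $\|\cdot\|_{H_{\sigma_k}}^2$ as $\sigma_k\to-1$ — equivalently, the fact that $1-\sigma_k$ reaches the first Steklov eigenvalue $\tilde\delta_1(B)=2$ — against a bound that is uniform in $\sigma$. The Pohozaev-type identity underlying Lemma \ref{somelowerbound} is exactly what makes the blow-up of the coefficient explicit, and Proposition \ref{uniformupperbound} (itself obtained by a blow-up argument and the Liouville theorem of Wei--Xu, with a constant independent of $\sigma$) is what supplies the ceiling; everything else is routine bookkeeping.
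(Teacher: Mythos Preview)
Your proposal is correct and follows essentially the same argument as the paper: reduce to Lemma \ref{approachingsigmastar}, use the uniform $L^\infty$ bound from Proposition \ref{uniformupperbound} to cap $\int_B u_k^{p+1}$, feed this into the Pohozaev-based estimate of Lemma \ref{somelowerbound} whose coefficient blows up as $\sigma_k\searrow-1$ to force $\int_B u_k^{p}\to0$, and then use $u_k^{p+1}\le C u_k^p$ to pass to $L^{p+1}$. Your write-up matches the paper's proof step for step.
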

\begin{proof}
	By Lemma \ref{approachingsigmastar}, it is enough to prove the convergence in $L^{p+1}(B)$ norm. Since every solution of \eqref{PDEpSteklovRAD} is smooth, we have $\|\Delta^2 u_k\|_1=\|u_k\|_p^p$. Moreover, by the uniform $L^\infty$ estimate found in Proposition \ref{uniformupperbound}, we know that there exists a constant $C>0$ not depending on $\sigma_k$, such that
	$$\|u_k\|_{p+1}^{p+1}\leq\|u_k\|_\infty^{p+1}|B|\leq\pi C^{p+1}.$$
	As a result, using the estimate provided by Lemma \ref{somelowerbound}, one has
	\begin{equation}\label{estimate}
	\dfrac{1+\sigma_k}{1-\frac{3}{64}(1-\sigma_k)}\geq\dfrac{p+1}{p+3}\dfrac{3}{64\pi^2 C^{p+1}}\|u_k\|_p^{2p},
	\end{equation}
	so, letting $\sigma_k\rightarrow -1$ we deduce $\|u_k\|_p\rightarrow 0$. This, together with the $L^\infty(B)$ estimate of Proposition \ref{uniformupperbound}, gives us the convergence in $L^{p+1}(B)$ and so the desired result.
\end{proof}

\section{Open problems}
We end our paper with some unsolved questions that would complete the present investigation.
\begin{itemize}
	\item \textit{If $\Omega$ is a ball, are the ground states of $J_\sigma$ radially symmetric?}
\end{itemize}
In fact, we deduced the existence of ground states and radial solutions which are indeed ground states among all possible radial solutions; both of them are positive and have the same behaviour when $\sigma\rightarrow -1$ and $\sigma\rightarrow 1$. But no standard techniques such as Talenti symmetrization principle seem to apply (except for the Navier case) to prove that these classes of functions are indeed the same.
\begin{itemize}
	\item \textit{Are the radial positive solutions radially decreasing if $\sigma>1$?}
\end{itemize}
Indeed, the radial decay property proved in Proposition \ref{decreasing} does not apply in this setting and, by now, we cannot extend Proposition \ref{uniformupperbound} for these values of $\sigma$.
\vskip0.2truecm
\noindent Moreover, in the spirit of \cite{Dalmasso} and \cite{FGW},
\begin{itemize}
	\item \textit{can we say something about the uniqueness of (at least) the positive radially symmetric ground state of $J_\sigma$, for some values of $\sigma$?}
\end{itemize}
\vskip0.2truecm
Finally, all the techniques developed from Section 3 strongly relied on the assumptions we made on the boundary, that is $\dOmega$ of class $C^{1,1}$ in order to have $\kappa\in L^\infty(\dOmega)$. In particular, Theorem \ref{detbordo2} allowed us to rewrite in an appropriate way our functional. Also the convexity played a crucial role to prove the positivity: see in particular Propositions \ref{positivitysublinear}, \ref{positivitysuperlinear}, \ref{extension1} as well as Theorem \ref{possigma>1thm}.
\begin{itemize}
	\item \textit{May we deduce the positivity of ground states of $J_\sigma$ when the domain $\Omega$ is not convex anymore or with less regularity on the boundary?}
\end{itemize}
Since in the Navier case their positivity is always assured simply by the maximum principle, we expect that, even without the convexity assumption, it continues to hold whenever $\sigma$ belongs to a neighborhood of $1$ which may depend on "how far" the domain is from being convex.\\
\noindent Concerning the regularity of the boundary, if we consider the particular case of a convex polygon $\mathcal P$, it is known that ground states of $J_\sigma$ are positive for every $\sigma$: in fact, the superharmonic method applies easily once we have $\int_{\mathcal P}\det(\nabla^2u)=0$ thanks to a result by Grisvard \cite[Lemma 2.2.2]{Grisvard}. We believe that positivity for ground states of $J_\sigma$ still holds imposing, for instance, only Lipschitz regularity for $\dOmega$.

\bigskip


\begin{thebibliography}{47}
	
	
	\bibitem{A} Adams, R., Fournier, J.
	{\em Sobolev spaces}.  Second edition. Pure and Applied Mathematics (Amsterdam), 140. Elsevier/Academic Press, Amsterdam, 2003. xiv+305 pp.
	
	\bibitem{Adolfsson} Adolfsson, V.
	{\em $L^2$-integrability of second-order derivatives for Poisson's equation in nonsmooth domains.} Math. Scand. 70 (1992), no. 1, 146-160.
	
	\bibitem{ADN} Agmon, S., Douglis, A., Nirenberg, L.
	{\em Estimates near the boundary for solutions of elliptic partial differential equations satisfying general boundary conditions.} I. Comm. Pure Appl. Math., 12:623-727, 1959.
	
	\bibitem{AM} Ambrosetti, A., Malchiodi, A.
	{\em Nonlinear analysis and semilinear elliptic problems.} Cambridge Studies in Advanced Mathematics, 104. Cambridge University Press, Cambridge, 2007. xii+316 pp.
	
	\bibitem{BG} Berchio, E., Gazzola, F.
	{\em Positive solutions to a linearly perturbed critical growth biharmonic problem.} Discrete Contin. Dyn. Syst. Ser. S 4 (2011), no. 4, 809-823.
	
	\bibitem{BGM} Berchio, E., Gazzola, F., Mitidieri, E.
	{\em Positivity preserving property for a class of biharmonic elliptic problems.} J. Differential Equations 229 (2006), 1-23.
	
	\bibitem{BGW} Berchio, E., Gazzola, F., Weth, T.
	{\em Critical growth biharmonic elliptic problems under Steklov-type boundary conditions.} Adv. Differential Equations 12 (2007), no. 4, 381-406. 
	
	\bibitem{BM} Brezis, H., Mironescu, P. 
	{\em Gagliardo-Nirenberg, composition and products in fractional Sobolev spaces.} J. Evol. Equ. 1 (2001), no. 4, 387-404.
	
	\bibitem{BucurFG} Bucur, D., Ferrero, A., Gazzola, F. 
	{\em On the first eigenvalue of a fourth order Steklov problem.} Calc. Var. Partial Differential Equations, 35:103-131, 2009.

	\bibitem{CCN} Castro, A., Cossio, J., Neuberger, J.M.
	{\em A sign-changing solution for a superlinear Dirichlet problem.} Rocky Mountain J. Math. 27 (1997), no. 4, 1041-1053. 
	
	\bibitem{Dalmasso} Dalmasso, R.
	{\em Uniqueness theorems for some fourth-order elliptic equations.} Proc. Amer. Math. Soc. 123 (1995), no. 4, 1177-1183. 

	\bibitem{E} Evans, L.C.
	{\em Partial Differential Equations.} Second edition. Graduate Studies in Mathematics, 19. American Mathematical Society, Providence, RI, 2010. xxii+749 pp.

	\bibitem{FGW} Ferrero, A., Gazzola, F., Weth, T.
	{\em Positivity, symmetry and uniqueness for minimizers of second-order Sobolev inequalities.} Ann. Mat. Pura Appl. (4) 186 (2007), no. 4, 565-578.
	
	\bibitem{FGW2} Ferrero, A., Gazzola, F., Weth, T.
	{\em On a fourth order Steklov eigenvalue problem.} Analysis (Munich) 25 (2005), no. 4, 315-332.
	
	\bibitem{GP} Gasinski, L., Papageorgiou, N.S.
	{\em Nonlinear analysis.} Series in Mathematical Analysis and Applications, 9. Chapman and Hall/CRC, Boca Raton, FL, 2006. xii+971 pp.
	
	\bibitem{GGr} Gazzola, F., Grunau, H.-Ch.
	{\em Critical dimensions and higher order Sobolev inequalities with remainder terms.} NoDEA Nonlinear Differential Equations Appl. 8 (2001), no. 1, 35-44.
	
	\bibitem{GGS} Gazzola, F., Grunau, H.-Ch., Sweers, G.
	{\em Polyharmonic boundary value problems.} Springer Lecture Notes in Mathematics n. 1991, 2010. xviii+423 pp.
	
	\bibitem{GS} Gazzola, F., Sweers, G.
	{\em On positivity for the biharmonic operator under Steklov boundary conditions.}  Arch. Ration. Mech. Anal. 188 (2008), no. 3, 399-427. 
	
	\bibitem{GiSpr} Gidas, B., Spruck, J.
	{\em A priori bounds for positive solutions of nonlinear elliptic equations.} Comm. Partial Differential Equations 6 (1981), no. 8, 883-901.
	
	\bibitem{GT} Gilbarg, D., Trudinger, N.S.
	{\em Elliptic Partial Differential Equations of Second Order}. Third edition. Classics in Mathematics, Springer-Verlag, Berlin, 2001. xiv+517 pp.
	
	\bibitem{Grisvard} Grisvard, P.
	{\em Singularities in boundary value problems.} Recherches en Math\'{e}matiques Appliqu\'{e}es [Research in Applied Mathematics], 22. Masson, Paris; Springer-Verlag, Berlin, 1992. xiv+199 pp.
	
	\bibitem{PG} Grumiau, C., Parini, E.
	{\em On the asymptotics of solutions of the Lane-Emden problem for the p-Laplacian.} Arch. Math. 91 (2008). 354-365.
	
	\bibitem{Kufner} Kufner, A.
	{\em Weighted Sobolev spaces.} Translated from the Czech. A Wiley-Interscience Publication. John Wiley and Sons, Inc., New York, 1985. 116 pp.
	
	\bibitem{Marsch} Marschall, J.
	{\em The trace of Sobolev-Slobodeckij spaces on Lipschitz domains.} Manuscripta Math. 58 (1987), no. 1-2, 47-65.
	
	\bibitem{Mitidieri} Mitidieri, E.
	{\em A Rellich type identity and applications.} Comm. Partial Differential Equations 18 (1993), no. 1-2, 125-151.
	
	\bibitem{Moreau} Moreau, J.J.
	{\em D\'ecomposition orthogonale d'un espace hilbertien selon deux c\^ones mutuellement polaires.} (French) C. R. Acad. Sci. Paris 255 1962 238-240.
	
	\bibitem{NStyS} Nazarov, S.A., Stylianou, A., Sweers, G.
	{\em Hinged and supported plates with corners.} Z. Angew. Math. Phys. 63 (2012), no. 5, 929-960.

	\bibitem{PS} Parini, E., Stylianou, A.
	{\em On the positivity preserving property of hinged plates.} SIAM J. Math. Anal. 41 (2009), no. 5, 2031-2037. 
	
	\bibitem{RW} Reichel, W., Weth, T.
	{\em A priori bounds and a Liouville theorem on a half-space for higher-order elliptic Dirichlet problems},  Math. Z. 261 (2009), no. 4, 805-827
	
	\bibitem{RW2} Reichel, W., Weth, T.
	{\em Existence of solutions to nonlinear, subcritical higher order elliptic Dirichlet problems.} J. Differential Equations 248 (2010), no. 7, 1866-1878
	
	\bibitem{Sperb} Sperb, R.P.
	{\em Maximum Principles and Their Applications}. Mathematics in Science and Engineering, 157. Academic Press, New York-London, 1981. ix+224 pp.
	
	\bibitem{Stek} Stekloff, W.
	{\em Sur les probl\`{e}mes fondamentaux de la physique math\'ematique}. (French) Ann. Sci. \'{E}cole Norm. Sup. (3) 19 (1902), 191-259.
	
	\bibitem{Sty} Stylianou, A.
	{\em Comparison and sign preserving properties of bilaplace boundary value problems in domains with corners.} Ph.D. thesis, Verlag Dr. Hut - M\"{u}nchen, 2010.
	
	\bibitem{SwRew} Sweers, G.
	{\em When is the first eigenfunction for the clamped plate equation of fixed sign?} Proceedings of the USA-Chile Workshop on Nonlinear Analysis (Vi\~{n}a del Mar-Valparaiso, 2000), 285-296.
	
	\bibitem{Troy} Troy, W.C.
	{\em Symmetry properties in systems of semilinear elliptic equations.} J. Differ. Equ. 42, (1981), 400-413.
	
	\bibitem{VK} Ventsel, E., Krauthammer, T.
	{\em Thin plates and shells: theory: analysis, and applications.}  CRC press, 2001. 688 pp.
	
	\bibitem{WeiXu} Wei, J., Xu, X.
	{\em Classification of solutions of higher order conformally invariant equations.} Math. Ann. 313 (1999), no. 2, 207-228.
	
	\bibitem{PrSyCr} Willem, M.
	{\em Minimax theorems.} Progress in Nonlinear Differential Equations and their Applications, 24. Birkh\"{a}user Boston, Inc., Boston, MA, 1996. x+162 pp.
	
\end{thebibliography}
\end{document}